\documentclass[a4paper, reqno, 11pt, notitlepage]{amsart}
\usepackage{fullpage}
\usepackage[utf8]{inputenc}

    \makeatletter
    \def\paragraph{\@startsection{paragraph}{4}%
    \z@\z@{-\fontdimen2\font}%
    {\normalfont\bfseries}}
    \makeatother

\usepackage{latexsym}
\usepackage{amsmath}
\usepackage{amssymb}
\usepackage{amsthm}
\usepackage{amscd}
\usepackage{mathrsfs}
\usepackage[all]{xy}
\usepackage{graphicx}
\usepackage{comment}
\usepackage{arydshln}
\usepackage{mathtools}

\setcounter{tocdepth}{1}

\usepackage{tikz-cd}
\usepackage{nicefrac}
\usepackage{authblk}
\usepackage{textcomp}

\usepackage{framed} \usepackage{color}  

\usepackage{hyperref} 
    \definecolor{darkblue}{rgb}{0,0,.85} 
    \definecolor{darkred}{rgb}{0.84,0,0}
    \hypersetup{colorlinks = true,
            linkcolor = darkblue,
            urlcolor  = darkblue,
            citecolor = darkred,
            anchorcolor = darkblue}
    
\usepackage{accents}

\usepackage[shortlabels]{enumitem}

\usepackage[foot]{amsaddr}

\usepackage{bm}

\usepackage{stmaryrd}

\usepackage{graphics}

\usepackage{mathtools}

\newtheorem{thm}{Theorem}[section]
\newtheorem{prop}[thm]{Proposition}
\newtheorem{thmi}{Theorem}
\newtheorem{propi}{Proposition}
\newtheorem{lem}[thm]{Lemma}
\newtheorem{cor}[thm]{Corollary}

\newtheorem*{remn}{Remark}

\makeatletter
\newtheoremstyle{examplestyle}
  {1em}
  {1em}
  {\addtolength{\@totalleftmargin}{1.0em}
   \addtolength{\linewidth}{-1.0em}
   \parshape 1 1.0em \linewidth}
  {}
  {\bfseries}
  {.}
  {.5em}
  {}

\theoremstyle{examplestyle} 

\newtheorem{eg}[thm]{Example}

\newtheorem{rem}[thm]{Remark}

\newtheorem{defn}[thm]{Definition}

\DeclareMathOperator{\Spec}{Spec}

\DeclareMathOperator{\id}{id}

\DeclareMathOperator{\End}{End}
\DeclareMathOperator{\Hom}{Hom}

\DeclareMathOperator{\Ker}{Ker}

\DeclareMathOperator{\Aut}{Aut}

\DeclareMathOperator{\Lie}{Lie}

\DeclareMathOperator{\GL}{GL}
\DeclareMathOperator{\SL}{SL}

\DeclareMathOperator{\Ad}{Ad}
\DeclareMathOperator{\ad}{ad}
\DeclareMathOperator{\Int}{Int}


\newcommand{\bC}{\mathbb{C}}

\newcommand{\bG}{\mathbb{G}}

\newcommand{\bQ}{\mathbb{Q}}

\newcommand{\bZ}{\mathbb{Z}}

\newcommand{\bb}[1]{\mathbb{#1}}




\newcommand{\cI}{\mathcal{I}}

\newcommand{\cN}{\mathcal{N}}

\newcommand{\cW}{\mathcal{W}}

\newcommand{\mc}[1]{\mathcal{#1}}


\newcommand{\fu}{\mathfrak{u}}

\newcommand{\mf}[1]{\mathfrak{#1}}



\newcommand{\ol}{\overline}

\newcommand{\wh}{\widehat}
\newcommand{\wt}{\widetilde}
\newcommand{\lra}{\longrightarrow}

\newcommand{\mb}[1]{\mathbf{#1}}
\newcommand{\Z}{\mathbb{Z}}
\newcommand{\disc}{\mathrm{disc}}
\newcommand{\Q}{\mathbb{Q}}

\newcommand{\tw}{\mathrm{tw}}
\newcommand{\C}{\mathbb{C}}
\newcommand{\univ}{\mathrm{univ}}
\newcommand{\JM}{\mathsf{JM}}
\newcommand{\Transp}{\underline{\mathrm{Transp}}}

\makeatletter
\renewcommand{\email}[2][]{%
  \ifx\emails\@empty\relax\else{\g@addto@macro\emails{,\space}}\fi%
  \@ifnotempty{#1}{\g@addto@macro\emails{\textrm{(#1)}\space}}%
  \g@addto@macro\emails{#2}%
}
\makeatother

\newcommand{\der}{\mathrm{der}}

\newcommand{\stacks}[1]{\cite[\href{https://stacks.math.columbia.edu/tag/#1}{Tag~#1}]{StacksProject}}

\newcommand{\CG}{{^C}\!G}

\newcommand{\red}{\mathrm{red}}

\renewcommand{\ss}{\mathrm{ss}}

\newcommand{\LG}{{^L}\!G}

\newcommand{\WD}{\mathrm{WD}}

\newcommand{\cat}[1]{\mathbf{#1}}

\newcommand{\h}{\mathcal{O}}

\newcommand{\WDP}{\mathsf{WDP}}
\newcommand{\LP}{\mathsf{LP}}

\DeclareMathAccent{\wtilde}{\mathord}{largesymbols}{"65}

\newcommand*\isomto{%
        \xrightarrow{\raisebox{-0.2 em}{\smash{\ensuremath{\sim}}}}%
    }

    \newcommand{\ov}[1]{\overline{#1}}
    \newcommand{\et}{\mathrm{\acute{e}t}}

   \newcommand{\defeq}{\vcentcolon=}

\title{The Jacobson--Morozov morphism for Langlands parameters in the relative setting}
\author{Alexander Bertoloni Meli$^{(1)}$}
\address[1]{\scriptsize University of Michigan, 530 Church St, Ann Arbor, MI 48109, United States}
\email[1]{\scriptsize abertolo@umich.edu}
\author{Naoki Imai$^{(2)}$}
\author{Alex Youcis$^{(3)}$}
\address[2,3]{\scriptsize Graduate School of Mathematical Sciences, The University of Tokyo,
    3-8-1 Komaba, Meguro-ku, Tokyo, 153-8914, Japan}
\email[2]{\scriptsize naoki@ms.u-tokyo.ac.jp}
\email[3]{\scriptsize ayoucis@ms.u-tokyo.ac.jp}

\date{}

\begin{document}
\begin{abstract}
    We construct a moduli space $\LP_G$ of $\SL_2$-parameters over $\Q$, and show that it has good geometric properties (e.g.\@ explicitly parametrized geometric connected components and smoothness). We construct a \emph{Jacobson--Morozov morphism} ~$\JM\colon \LP_G\to\WDP_G$ (where $\WDP_G$ is the moduli space of Weil--Deligne parameters considered by several other authors). We show that $\JM$ is an isomorphism over a dense open of $\WDP_G$, that it induces an isomorphism between the discrete loci $\LP^\disc_G\to\WDP_G^\disc$, and that for any $\Q$-algebra $A$ it induces a bijection between Frobenius semi-simple equivalence classes in $\LP_G(A)$ and Frobenius semi-simple equivalence classes in $\WDP_G(A)$ with constant (up to conjugacy) monodromy operator.
\end{abstract}

\maketitle

\tableofcontents

\section{Introduction}

\paragraph{Motivation} A problem of fundamental importance in the study of harmonic analysis is the classification of irreducible complex admissible representations of $G(F)$ where $F$ is a non-archimedean local field, and $G$ is a reductive group over $F$. The local Langlands correspondence, a guiding principle for many areas of number theory in the last 40 years, posits a parameterization of such admissible representations in terms of equivalence classes of parameters related to the Galois theory of $F$. These parameters come in several forms. Chief amongst these are the \emph{complex $L$-parameters} which are homomorphisms $\psi\colon W_F\times \SL_2(\C)\to \LG(\C)$ satisfying certain properties (cf.\@ \cite[\S3]{SilbergerZink}), and \emph{complex Weil--Deligne parameters} which are pairs $(\varphi,N)$ where $\varphi\colon W_F\to \LG(\C)$ is a homomorphism and $N$ is a nilpotent element of the Lie algebra of $\wh{G}(\C)$, satisfying certain properties (cf.\@ \cite[\S2.1]{GRAinv}). The notion of equivalence in both cases is that of $\wh{G}(\C)$-conjugacy.

The classical theorem of Jacobson--Morozov (cf.\@ \cite[\S III.11, Theorem 17]{Jacobson}) asserts that the \emph{Jacobson--Morozov map} $\theta \mapsto d\theta\left(\left(\begin{smallmatrix} 0 & 1\\ 0 & 0\end{smallmatrix}\right)\right)$ gives a surjection

\begin{equation*}
    \JM\colon \left\{\begin{matrix}\text{Algebraic homomorphisms}\\ \theta\colon \SL_2(\C)\to \wh{G}(\C)\end{matrix}\right\}\to  \left\{\begin{matrix}\text{Nilpotent elements}\\ N\in\mathrm{Lie}(\widehat{G}(\mathbb{C}))\end{matrix}\right\},
\end{equation*}
which becomes a bijection on the level of $\wh{G}(\C)$-quotients. One may extend this to a \emph{Jacobson--Morozov map} 
\begin{equation*}
     \JM\colon \left\{\begin{matrix}\text{Complex }L
     \text{-parameters}\\ \psi\colon W_F\times \SL_2(\C)\to \LG(\C)\end{matrix}\right\}\to \left\{\begin{matrix}\text{Complex Weil--Deligne parameters}\\ (\varphi,N)\end{matrix}\right\}.
\end{equation*}
This map is not a bijection, even up to equivalence and, in fact, is not even surjective (see Example \ref{eg:JM-not-surj}). However, the Jacobson--Morozov map \emph{does} give a bijection between equivalence classes of Frobenius semi-simple parameters (see \cite[Proposition 2.2]{GRAinv} or \cite[Proposition 1.13]{ImaLLCell}), those which feature most prominently in the local Langlands correspondence. Therefore, in practice the Jacobson--Morozov map allows one to pass fairly freely between these two notions of parameter and to treat them as essentially equivalent. This is useful as each of these perspectives has its own advantages (e.g.\@ as illustrated quite well in \cite{GRAinv}).

The goal of this article is to put the above results on a \emph{moduli-theoretic footing}. Namely we define and study a moduli space of $L$-parameters, and construct a \emph{Jacobson--Morozov morphism} 
\begin{equation*}
    \JM\colon \LP_G\to\WDP_G
\end{equation*}
between the moduli space of $L$-parameters and the moduli space of Weil--Deligne parameters. We then show that there is a natural stratification of the moduli space of Weil--Deligne parameters with the property that over each stratum the Jacobson--Morozov morphism takes a particularly simple form. Using this, we show that the Jacobson--Morozov morphism satisfies some birational-like properties, is an isomorphism over the discrete locus, and that a version of the above bijection between equivalence classes of complex Frobenius semi-simple parameters has an analogue over an arbitrary $\Q$-algebra.

\begin{remn}
The reason we do not restrict our attention to semi-simple parameters is that they do not form a representable presheaf. Thus, to do geometry we are required to work with arbitrary parameters.
\end{remn}

\medskip

\paragraph{Statement of main results} Let $F$ be a non-archimedean local field and $G$ a reductive group over $F$. In \S\ref{ss:L-param-def} we define the \emph{moduli space of $L$-parameters} for $G$ which we denote $\LP_G$. 

\begin{propi}[{see Corollary \ref{cor:LP-pi0}}]\label{propi:L-nice} The moduli space $\LP_G$ is smooth over $\Q$ and has explicitly parameterized affine connected components.
\end{propi}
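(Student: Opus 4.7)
The plan is to present $\LP_G$ as a disjoint union, indexed by discrete combinatorial data, of smooth affine schemes of controlled shape, in direct analogy with the moduli space constructions for Weil--Deligne parameters by other authors. An $L$-parameter $\psi\colon W_F\times\SL_2\to\LG$ decomposes into two essentially independent pieces $\psi|_{\SL_2}$ and $\psi|_{W_F}$, and each contributes locally constant invariants. First, $\psi|_{\SL_2}$ lands in $\wh G$ and, being an algebraic homomorphism from $\SL_2$, is rigid in families, so its $\wh G(\ol\Q)$-conjugacy class is locally constant and takes only finitely many values (classified by the nilpotent orbits of $\wh G$). Second, continuity forces $\psi|_{I_F}$ to factor through a finite quotient $I_F/P_n$ for some $n$, and for each fixed $n$ its $\wh G(\ol\Q)$-conjugacy class is again a locally constant discrete invariant taking finitely many values. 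Fixing both pieces leaves only the image of a chosen Frobenius lift, subject to two constraints: that it commute with $\psi|_{\SL_2}$, and that conjugation by it implement the geometric Frobenius action on $\psi|_{I_F}$.

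Once this stratification is in place, I would establish smoothness one stratum at a time. Fix $\wh G(\ol\Q)$-representatives $(\theta,\tau)$ for the discrete data of a given stratum. Because we are in characteristic zero, the image of $\tau$ is a finite subgroup of $\wh G$ (hence reductive), and the image of $\theta$ is reductive; moreover the two commute. By a classical theorem of Richardson, the simultaneous centralizer $Z_{\wh G}(\tau,\theta)$ is therefore reductive, and in particular smooth over $\Q$. The Frobenius-compatibility condition then cuts out a $Z_{\wh G}(\tau,\theta)^\circ$-torsor (twisted by the $\Frob$-action on $\tau$) inside this centralizer, which is accordingly a smooth affine scheme over $\Q$. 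Gluing the strata yields smoothness of $\LP_G$ over $\Q$.

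For the explicit parametrization of geometric connected components, the above presentation exhibits each stratum as a torsor under an explicitly describable reductive group, so its geometric connected components are in bijection with the geometric component group of that centralizer; combining this with the finite combinatorial indexing set of strata yields the explicit list recorded in Corollary \ref{cor:LP-pi0}. The main technical obstacle I anticipate is not the smoothness itself, which is essentially formal once the stratification is in place, but rather (i) verifying that the two discrete invariants are genuinely locally constant over an arbitrary $\Q$-algebra --- which I expect to handle by a rigidity/deformation argument exploiting semi-simplicity of $\SL_2$-representations and finiteness of the inertia image --- and (ii) descending each stratum-wise description from $\ol\Q$ to $\Q$ so that the geometric components actually realize over the correct (cyclotomic) field of definition, rather than purely over $\ol\Q$.
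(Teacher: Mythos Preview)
Your outline follows the same strategy as the paper: stratify by the discrete data $(\theta,\phi)$ coming from $\psi|_{\SL_2}$ and $\psi|_{I_F}$, then analyze the remaining Frobenius degree of freedom via the simultaneous centralizer $Z_{\phi,\theta}=Z_{\wh G}(\phi,\theta)$. The reductivity of $Z_{\phi,\theta}^\circ$ and the rigidity of both pieces are exactly the inputs the paper uses (Proposition~\ref{prop:SL2-Hom-open-orbits} for $\theta$, Proposition~\ref{prop:cocycle-scheme} for $\phi$).

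There is, however, a genuine gap in how you pass from the fiber over a fixed representative $(\theta,\tau)$ to the full stratum. Two issues. First, for fixed $(\theta,\tau)$ the set of compatible Frobenius images is a full $Z_{\phi,\theta}$-coset, not a $Z_{\phi,\theta}^\circ$-torsor; it is a disjoint union of $|\pi_0(Z_{\phi,\theta})|$ such torsors, and after accounting for a twisted conjugation action this produces the $[c]$-indexing in the paper's Corollary~\ref{cor:LP-pi0}. Second, and more seriously, that coset is only the fibre of the stratum over the single point $(\theta,\tau)$; the actual locally closed stratum in $\LP_G$ carries the whole $\wh G$-orbit of $(\theta,\tau)$ as well. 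You do not explain why smoothness of the fibre promotes to smoothness (and openness) of the stratum. Relatedly, your stratification by \emph{separate} conjugacy classes of $\theta$ and $\tau$ is strictly coarser than what is needed: within a single pair of separate classes there can be several \emph{simultaneous} $\wh G$-orbits of the pair $(\phi,\theta)$ (equivalently, several $L$-orbits of $\theta$ in $\underline{\Hom}(\SL_2,L)$ for $L=Z_{\wh G}(\phi)$), and each of these contributes its own connected component.

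The paper closes this gap by working directly with the simultaneous orbit. It defines $U(\gamma,\phi,\theta)$ as the locus of points \'etale-locally of the form $g(h\gamma,\phi,\theta)g^{-1}$ with $(g,h)\in\wh G\times Z_{\phi,\theta}^\circ$, and proves (Proposition~\ref{prop:l-loc-mov-is-open}) via a transporter-scheme argument that this is an open immersion, and via the \'etale-surjective map $\wh G\times Z_{\phi,\theta}^\circ\to U(\gamma,\phi,\theta)$ that it is smooth and irreducible. Your sketch can be completed along these lines, but as written it stops at the fibre and does not supply the mechanism that makes the stratum open.
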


On the other hand, let $\WDP_G$ denote the moduli space of Weil--Deligne parameters (e.g.\@ as in \cite[\S3.1]{ZhuCohLp}). In \S\ref{ss:JM-mor} we define the \emph{Jacobson--Morozov} morphism 
\begin{equation*}
\JM\colon \LP_G\to \WDP_G.
\end{equation*}
Our major result may then be stated as follows.

\begin{thmi}[{see Theorem \ref{thm:JM-omnibus} and Theorem \ref{thm:JM-isom-disc-locus}}]\label{thmi:JM-weakly-bir} The Jacobson--Morozov morphism is weakly birational and induces an isomorphism $\LP^\disc_G\isomto\WDP_G^\disc$ over the discrete loci.
\end{thmi}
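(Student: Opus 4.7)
The plan is to stratify $\WDP_G$ by the conjugacy class of the nilpotent $N$ and analyze $\JM$ one stratum at a time. I would first check that on each connected component $C$ of $\WDP_G$, the set of nilpotent orbits which can appear as $N$ is finite (constrained by the Weil--Deligne relation $\Ad(\varphi(w))N = |w|^{-1} N$ together with the inertia action on $\varphi$) and that this set contains a unique maximal element under closure order. The locus $U_C \subseteq C$ where $N$ lies in this maximal orbit is open and dense, so $U \defeq \bigsqcup_C U_C$ is the candidate dense open over which I would like to exhibit $\JM$ as an isomorphism.

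For weak birationality I would show that $\JM^{-1}(U) \to U$ is an isomorphism. On points, the classical Jacobson--Morozov theorem attaches to each nilpotent $N \in U$ an $\SL_2$-triple, unique up to the action of $Z_{\wh G}(N)$; the compatibility of the resulting $\SL_2$-homomorphism $\theta$ with $\varphi$ together with the maximality of the orbit of $N$ removes the residual ambiguity, yielding a bijection of $A$-points for every $\Q$-algebra $A$. To upgrade this pointwise bijection to an isomorphism of schemes, I would combine the smoothness of $\LP_G$ from Proposition \ref{propi:L-nice} with an infinitesimal computation showing that $\JM$ is étale over $U$; a bijective étale morphism is an isomorphism, settling the first claim. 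One could alternatively construct a direct inverse by solving the $\SL_2$-triple relations in families using the maximal-orbit stratification, but the tangent-level approach is likely cleaner given the smoothness already established.

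For the discrete loci the argument runs in parallel but replaces the maximality/openness hypothesis with discreteness. The key observation is that if $(\varphi, N)$ is discrete, meaning $Z_{\wh G}(\varphi, N)$ is finite modulo $Z(\wh G)^{W_F}$, then any freedom in choosing an $\SL_2$-triple with nilpotent part $N$ would enlarge this centralizer, contradicting discreteness. Hence on geometric points there is a uniquely determined $\SL_2$-homomorphism attached to each discrete Weil--Deligne parameter, producing a candidate inverse $\WDP_G^\disc \to \LP_G^\disc$ to $\JM^\disc$. I would realize this inverse as a morphism of schemes by the same infinitesimal argument as above, and verify that $\JM$ preserves the discrete loci in both directions as a centralizer-matching exercise.

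The main obstacle is the infinitesimal computation showing étaleness of $\JM$ on $U$ (and on the discrete locus): this amounts to identifying the tangent spaces of $\LP_G$ and $\WDP_G$ at compatible points with appropriate cohomology groups of differential graded Lie algebras attached to $\psi$ and $(\varphi, N)$, and then showing the induced map on these cohomology groups is an isomorphism. A subsidiary difficulty is proving that a unique maximal nilpotent orbit exists on each component of $\WDP_G$, which ought to follow from the explicit parameterization of components but needs to be checked in sufficient generality to ensure $U$ is genuinely dense.
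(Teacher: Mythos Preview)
Your proposal diverges from the paper's proof in a way that introduces a real gap. The paper does not isolate a ``maximal nilpotent orbit'' locus; instead it works with the \emph{reductive centralizer locus} $\WDP_G^{K,\mathrm{rc}}$, defined as the set of points where $Z_{\wh G}(\varphi,N)^\circ$ is reductive. The argument then runs: (i) Proposition~\ref{prop:dense-tor-cent} shows the locus where the centralizer is a torus is dense, so $\WDP_G^{K,\mathrm{rc}}$ is dense; (ii) Proposition~\ref{prop:red-cent-ss} shows reductive centralizer forces Frobenius semi-simplicity; (iii) the Levi-type decomposition $Z_{\wh G}(\varphi,N)=U^N(\varphi)\rtimes Z_{\wh G}(\psi)$ of Proposition~\ref{prop:Zphidesc-rel} identifies the obstruction to $\JM$ being injective on points as the unipotent group $U^N(\varphi)$, which is forced to vanish when the centralizer is reductive; (iv) the relative Jacobson--Morozov for parameters (Theorem~\ref{thm:rel-JM-param}) then gives a pointwise bijection over $\WDP_G^{K,\mathrm{rc}}$, and matching centralizers upgrades this to a scheme isomorphism. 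The discrete-locus statement is an immediate special case, since finite implies reductive.

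Your claim that ``maximality of the orbit of $N$ removes the residual ambiguity'' is where the proposal breaks. The ambiguity in lifting $(\varphi,N)$ to $\psi$ is exactly $U^N(\varphi)=U^N\cap Z_{\wh G}(\varphi)$, and this does \emph{not} vanish just because $N$ lies in a maximal orbit: for regular nilpotent $N$ in a non-abelian group, $U^N$ is the full (nontrivial) unipotent centralizer of $N$, and whether $U^N(\varphi)$ vanishes depends on $\varphi$, not on the orbit of $N$. Moreover, the existence of a \emph{unique} maximal nilpotent orbit on each component of $\WDP_G$ is not established anywhere and is not used by the paper. Your proposed tangent-space / \'etaleness computation is also left as a black box; the paper avoids it entirely by exhibiting an explicit inverse over $\WDP_G^{K,\mathrm{rc}}$ via the relative $\mf{sl}_2$-triple construction (Proposition~\ref{prop:gr-prop}). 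Finally, Frobenius semi-simplicity---which you never mention---is the hinge on which the entire relative bijection (Theorem~\ref{thm:rel-JM-param}) turns, and the paper spends real effort showing it is automatic on the reductive centralizer locus.
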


Here we say a morphism of schemes $f\colon Y\to X$ is \emph{weakly birational} if there exists a dense open subset $U\subseteq X$ such that $f\colon f^{-1}(U)\to U$ is an isomorphism. A weakly birational map $f$ is birational if and only if $f$ induces a bijection at the level of irreducible components. Also, the discrete loci inside of $\LP_G$ and $\WDP_G$ are defined, at least when $G$ is semi-simple, as the locus of points where the centralizer of the universal parameter is quasi-finite over the base (see Definition \ref{defn:locus-of-red} and Definition \ref{defn:disc-locus-L} for general definitions).

To prove Theorem \ref{thmi:JM-weakly-bir} we stratify $\WDP_G$ by its nilpotent orbits. Denote by $\wh{\mc{N}}$ the nilpotent variety for $\wh{G}$ and form the stratification $\wh{\mc{N}}^\sqcup\defeq \bigsqcup_N \mc{O}_N$ by its $\wh{G}$-orbits which we treat as a disconnected scheme over $\Q$. We then obtain a stratification $\WDP^\sqcup_G$ by pulling back $\wh{\mc{N}}^\sqcup$ along the natural map $\WDP_G\to \wh{\mc{N}}$. We give an explicit description of the structure of this variety. 

\begin{propi}[{see Corollary \ref{cor:WDP-pi0}}]\label{propi:sqcup-nice} The moduli space $\WDP^\sqcup_G$ is smooth over $\Q$ and has explicitly parameterized connected components.
\end{propi}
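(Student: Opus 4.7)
The plan is to analyze $\WDP^\sqcup_G$ one nilpotent orbit at a time, reducing each stratum to a moduli of Weil parameters into a constrained subgroup of $\LG$, to which the same kind of analysis used for Proposition \ref{propi:L-nice} applies. By construction
\[
\WDP^\sqcup_G \;=\; \bigsqcup_{\mc{O}} \WDP_G \times_{\wh{\mc{N}}} \mc{O},
\]
with $\mc{O}$ ranging over the $\wh{G}$-orbits in $\wh{\mc{N}}$. Since each orbit is a smooth $\wh{G}$-homogeneous space and the fibre product is $\wh{G}$-equivariant, both smoothness and the description of connected components can be checked one stratum at a time.

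Fix an orbit $\mc{O}$, choose a representative $N_0 \in \mc{O}$, and put $Z_0 \defeq Z_{\wh{G}}(N_0)$. Then $\wh{G}$-equivariance gives the identification
\[
\WDP_G \times_{\wh{\mc{N}}} \mc{O} \;\cong\; \wh{G} \times^{Z_0} \WDP^{N_0}_G,
\]
where $\WDP^{N_0}_G$ is the scheme-theoretic fibre of $\WDP_G$ over $N_0$. By the Weil--Deligne relation, $\WDP^{N_0}_G$ parameterizes Weil parameters $\varphi\colon W_F \to \LG$ satisfying $\Ad(\varphi(w))N_0 = \|w\|\, N_0$ for all $w \in W_F$; equivalently, Weil parameters into the subgroup $\LG^{N_0} \subseteq \LG$ of elements scaling $N_0$, whose canonical character $\chi \colon \LG^{N_0} \to \Gm$ pulls back to the absolute-value character on $W_F$. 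Completing $N_0$ to a Jacobson--Morozov $\mathfrak{sl}_2$-triple presents $\LG^{N_0}$ as an extension of $Z_0$ by a one-dimensional torus along which $\chi$ becomes transparent, so the techniques that prove Proposition \ref{propi:L-nice} for $\LP_G$ can be transported to show $\WDP^{N_0}_G$ is smooth with explicitly parameterized connected components.

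To conclude, the twisted product $\wh{G} \times^{Z_0} (-)$ preserves smoothness, and its set of connected components is obtained from $\pi_0(\WDP^{N_0}_G)$ by quotienting out the induced action of the component group $\pi_0(Z_0)$; assembling over all $\mc{O}$ yields the proposition. The main obstacle is the careful identification of $\WDP^{N_0}_G$ with a Weil-parameter moduli amenable to the Proposition \ref{propi:L-nice}-style analysis, handling in particular the character constraint $\chi \circ \varphi = \|\cdot\|$ and the possibly disconnected centralizer $Z_0$. Once that bookkeeping is in place, smoothness passes through the twisted product formally, and the component analysis reduces to tracking a finite-group action on the already-explicit parameter set.
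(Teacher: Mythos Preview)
Your strategy—stratify by nilpotent orbit, write each stratum as $\wh{G}\times^{Z_0}\WDP^{N_0}_G$, then analyze the fiber $\WDP^{N_0}_G$ as a parameter space into the scaling subgroup $\LG^{N_0}$—is genuinely different from the paper's. The paper instead works over $\ov{\Q}$ with the pair $(\phi,N)$ simultaneously: it uses the map $\pi_K\colon\WDP^{K,\sqcup}_{G,\ov{\Q}}\to\underline{Z}^1(I_F/I_K,\wh{G})\times\wh{\mc{N}}^\sqcup$, decomposes the target into $\wh{G}\times\wh{G}$-orbits $\mc{O}_\phi\times\mc{O}_N$, and then shows each preimage breaks into smooth irreducible pieces $U(\gamma,\phi,N)$ (Proposition~\ref{prop:wd-loc-mov-is-open}, Theorem~\ref{thm:WD-const-decomp}). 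The heart of that argument is Lemma~\ref{lem:intersection-decomp}: for the reductive group $L=Z_{\wh{G}}(\phi)$, the intersection $\mc{O}_N\cap\mc{N}_L$ is scheme-theoretically a disjoint union of open $L$-orbits. This is where the paper pays the cost of keeping $\wh{G}$ (hence $L$) reductive while letting $N$ vary.

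Your reordering trades that lemma for a different difficulty, and this is where your proposal has a real gap. The group $Z_0=Z_{\wh{G}}(N_0)$ is \emph{not} reductive: by Proposition~\ref{prop:Zudec} it has unipotent radical $U^{N_0}$, and $\LG^{N_0}$ inherits this. Your appeal to ``the techniques that prove Proposition~\ref{propi:L-nice}'' is therefore not free: those techniques rest on Proposition~\ref{prop:cocycle-scheme}, which the paper states only for reductive $H$, and on Proposition~\ref{prop:SL2-Hom-open-orbits}, which has no analogue in your setup. To make your approach go through you must either (a) extend the smooth-orbit-map result for $\underline{Z}^1(\Sigma,H)$ to arbitrary smooth affine $H$ over $\Q$ (plausible in characteristic~$0$ via vanishing of $H^1(\Sigma,\mathfrak{h})$, but not in the paper), or (b) use the Levi splitting $Z_0=U^{N_0}\rtimes Z_{\wh{G}}(\theta_0)$ you allude to and carefully peel off the unipotent factor from the parameter space—which is essentially a fiber-by-fiber incarnation of the Jacobson--Morozov morphism itself. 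Either way the ``bookkeeping'' you flag is the substance, not a formality. (Minor: your extension is stated backwards; one has $1\to Z_0\to\LG^{N_0}\xrightarrow{\chi}\Gm\to 1$, split by the cocharacter from the $\mf{sl}_2$-triple.)
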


The Jacobson--Morozov morphism factorizes through $\WDP_G^\sqcup$ and interacts well with the explicit decompositions indicated in Proposition \ref{propi:L-nice} and Proposition \ref{propi:sqcup-nice}. Utilizing this we show the following, which implies the weakly birational portion of Theorem \ref{thmi:JM-weakly-bir}. 

\begin{propi}[{see Theorem \ref{thm:JM-omnibus}}]\label{propi:JM-bir} The morphism $\JM\colon \LP_G\to\WDP_G^\sqcup$ is birational.
\end{propi}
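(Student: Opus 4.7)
The plan is to verify the two defining properties of birationality---bijectivity on irreducible components and the existence of a dense open of $\WDP_G^\sqcup$ over which $\JM$ is an isomorphism---by exploiting the explicit parametrizations of connected components from Propositions~\ref{propi:L-nice} and~\ref{propi:sqcup-nice}. Since both schemes are smooth over $\Q$, their connected and irreducible components coincide, so it suffices to work one connected component at a time once the component bijection is established.

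For the bijection on irreducible components, I would compare the two parametrizations. The decomposition of $\LP_G$ is indexed in part by $\wh{G}$-conjugacy classes of algebraic homomorphisms $\theta\colon\SL_2\to\wh{G}$, while that of $\WDP_G^\sqcup$ is indexed in part by nilpotent $\wh{G}$-orbits $\mathcal{O}\subseteq\wh{\mathcal{N}}$. The classical Jacobson--Morozov theorem supplies a canonical bijection $[\theta]\leftrightarrow \wh{G}\cdot d\theta(e)$, and since $\JM$ is defined by $(\varphi,\theta)\mapsto(\varphi,d\theta(e))$, this is precisely the bijection induced by $\JM$ on the nilpotent-orbit factor. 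I would then verify that the remaining indexing data (coming from the Weil group factor) is preserved verbatim by $\JM$, yielding the component bijection.

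For the generic isomorphism, I would proceed componentwise. Fix a component $C\subseteq\LP_G$ with image $C'\subseteq\WDP_G^\sqcup$ lying over a single nilpotent orbit $\mathcal{O}$, and choose $N_0\in\mathcal{O}$ together with an extending $\SL_2$-triple $(H_0,N_0,F_0)$. By the uniqueness clause of the classical Jacobson--Morozov theorem, the scheme of $\SL_2$-triples extending a given nilpotent $N$ is a torsor under the unipotent radical of $Z_{\wh{G}}(N)$. The geometric fiber of $\JM|_C$ over $(\varphi,N)$ is therefore the subscheme of this torsor cut out by the requirement that the associated $\SL_2$-homomorphism commute with $\varphi$. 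I would then exhibit a dense open of $C'$ on which this subscheme is a single reduced point. Because $C$ and $C'$ are smooth irreducible $\Q$-schemes of the same dimension (readable off from the two explicit parametrizations), a quasi-finite, generically one-to-one morphism between them is an isomorphism over a dense open by Zariski's Main Theorem.

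The main obstacle is pinning down the dense open of $C'$ on which the commutation condition rigidifies the Kostant torsor. The expected mechanism is that for generic $\varphi$ along $C$, the intersection of $Z_{\wh{G}}(\varphi)$ with the unipotent radical of $Z_{\wh{G}}(N)$ is trivial, forcing uniqueness of the $\SL_2$-extension of $N$ commuting with $\varphi$. Verifying that this genericity condition actually defines a \emph{dense} open of $C'$---rather than, say, an empty subset---and relating it concretely to the indexing data of $C$ afforded by Proposition~\ref{propi:L-nice} is the technical heart of the argument.
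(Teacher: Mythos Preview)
Your overall architecture---match components, then find a dense open over which $\JM$ is an isomorphism---is exactly the paper's, but both halves of your execution have real gaps.

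For the component bijection, the claim that the Weil-group indexing data is ``preserved verbatim'' is where the argument actually lives. In the paper's parametrizations (Corollaries~\ref{cor:WDP-pi0} and~\ref{cor:LP-pi0}) the components of $\LP_{G,\ov{\Q}}^K$ are indexed by pairs $([(\gamma,\phi,\theta)],[c])$ with $[c]\in[\pi_0(Z_{\phi,\theta})]$, while those of $\WDP_{G,\ov{\Q}}^{K,\sqcup}$ are indexed by $([(\gamma,\phi,N)],[c'])$ with $[c']\in[\pi_0(Z_{\phi,N})]$. The equivalence relation defining $[(\gamma,\phi,\theta)]$ uses $Z_{\phi,\theta}$, whereas that for $[(\gamma,\phi,N)]$ uses the strictly larger group $Z_{\phi,N}$; and the secondary indices live in component groups of \emph{different} centralizers. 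To match these you need the Levi decomposition $Z_{\phi,N}=U^N\rtimes Z_{\phi,\theta}$ (so that $\pi_0(Z_{\phi,\theta})=\pi_0(Z_{\phi,N})$), and a separate argument that $\JM$ induces a bijection on the primary equivalence classes. The paper does the latter by showing every class on the $\WDP$ side contains a Frobenius semi-simple representative (via Propositions~\ref{prop:dense-tor-cent} and~\ref{prop:red-cent-ss}) and then invoking Theorem~\ref{thm:rel-JM-param}; injectivity is a direct computation using Proposition~\ref{prop:gr-prop}. None of this is ``verbatim''.

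For the generic isomorphism, you correctly isolate the crux---density of the locus where $U^N\cap Z_{\wh{G}}(\varphi)$ is trivial---but offer no mechanism to prove it, and this is precisely the substantial content of Proposition~\ref{prop:dense-tor-cent}. The paper's approach there is not a dimension count: it is an explicit construction inside each $U(\gamma,\phi,N)$ using maximal quasi-tori of $Z_H(\theta)$ and an eigenvalue-separation argument to produce a dominant family of points with toral centralizer. Your proposed route via Zariski's Main Theorem also presupposes that the components $C$ and $C'$ have equal dimension, which you assert is ``readable off'' the parametrizations; but the descriptions of $U(\gamma,\phi,\theta)$ and $U(\gamma,\phi,N)$ as \'etale-sheafified images of $\wh{G}\times Z^\circ_{\phi,\theta}$ and $\wh{G}\times Z^\circ_{\phi,N}$ do not make this immediate (the acting groups have different dimensions, as do the isotropy groups). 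The paper sidesteps this entirely by exhibiting the reductive centralizer locus as a concrete open, proving $\JM$ is an isomorphism over it directly from Theorem~\ref{thm:rel-JM-param} and the vanishing of $U^N(\varphi)$ (Proposition~\ref{prop:JM-isom-over-red-locus}), and then proving density.
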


A key component of our proof of Proposition \ref{propi:JM-bir} is a relative version of the bijection between equivalence classes of complex Frobenius semi-simple parameters. Here, Frobenius semi-simplicity is somewhat delicate and defined in Definition \ref{defn:Frob-ss-WD-param} and Definition \ref{defn:Frob-ss-L-param}.

\begin{thmi}[{see Theorem \ref{thm:rel-JM-param}}]\label{thmi:disc-isom}For any $\Q$-algebra $A$ the map 
\begin{equation*}
    \JM\colon \LP_G(A)/\wh{G}(A)\, \to \,\WDP_G^{\sqcup}(A)/\wh{G}(A)
\end{equation*}
is a bijection on Frobenius semi-simple elements.
\end{thmi}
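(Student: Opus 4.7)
The plan is to leverage the stratification $\WDP_G^\sqcup = \bigsqcup_\cO \WDP_{G,\cO}$ by nilpotent orbits, which pins the monodromy section of a WD parameter into a fixed $\wh{G}$-conjugacy class. The classical bijection for complex parameters (\cite[Proposition 2.2]{GRAinv}) already tells us what the map does fiberwise; the content is to make this work in families over an arbitrary $\Q$-algebra. To that end, I would first establish a \emph{relative Jacobson--Morozov}: for each orbit $\cO \subseteq \wh{\mc{N}}$, the functor sending a test scheme $T \to \cO$ to the set of $\mathfrak{sl}_2$-triples $(e,h,f)$ with $e$ equal to the pulled-back nilpotent is a torsor under a smooth affine $\cO$-group scheme (essentially the unipotent radical of the centralizer of the nilpotent), and hence admits sections étale-locally. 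Fixing the orbit is exactly what provides the smoothness and transitivity needed to upgrade the classical Kostant statement.

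For injectivity, suppose $\psi_1, \psi_2 \in \LP_G(A)$ have $\JM$-images that are $\wh{G}(A)$-conjugate in $\WDP_G^\sqcup(A)$. After applying such a conjugation one may assume $\JM(\psi_1) = \JM(\psi_2) = (\varphi, N)$ on the nose. Both $\psi_i|_{\SL_2}$ then complete $N$ to an $\mathfrak{sl}_2$-triple, so by the relative Jacobson--Morozov they are étale-locally conjugate by a section of the centralizer of $N$. Frobenius semi-simplicity forces the centralizer of $(\varphi, N)$ to be a (possibly disconnected) reductive group scheme over $A$, which both allows one to move the conjugating section into the centralizer of $\varphi$ and ensures vanishing of the étale $H^1$ classifying the descent obstruction on an affine base, producing a global $\wh{G}(A)$-conjugation.

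For surjectivity, given Frobenius semi-simple $(\varphi, N) \in \WDP_G^\sqcup(A)$, use the relative Jacobson--Morozov to choose étale-locally an $\mathfrak{sl}_2$-triple completing $N$. Integrate the semisimple element to a cocharacter, twist $\varphi$ by it to match the required weight shift, and use Frobenius semi-simplicity to check that the resulting map $W_F \times \SL_2 \to {^L}G$ is algebraic in the $\SL_2$-variable and continuous in the $W_F$-variable, giving a local $L$-parameter $\psi$ with $\JM(\psi) = (\varphi, N)$. The $\wh{G}(A)$-conjugacy class of $\psi$ descends by applying the injectivity proved in the previous paragraph to the two pullbacks of $\psi$ along an étale cover trivializing the torsor.

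The main obstacle will be the relative Jacobson--Morozov step, in particular establishing the torsor structure together with its equivariance under the Weil-group action prescribed by $\varphi$, and correctly formulating the Frobenius semi-simplicity hypothesis in families so that it delivers both the reductivity of the centralizer of $(\varphi,N)$ and the algebraicity of the constructed $\SL_2$-action. Once these foundational points are secured, the rest of the argument runs parallel to the classical proof over $\C$ combined with étale descent for the smooth reductive stabilizer.
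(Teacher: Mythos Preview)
Your proposal contains a genuine error in the injectivity argument. You claim that Frobenius semi-simplicity forces $Z_{\wh{G}}(\varphi,N)$ to be a reductive group scheme over $A$. This is false, and the paper gives an explicit counterexample (the $\GL_3$ example following Proposition~\ref{prop:Zphidec}): there is a Frobenius semi-simple $L$-parameter $\psi$ such that, writing $(\varphi,N)=\JM(\psi)$, the centralizer $Z_{\wh{G}(\C)}(\varphi,N)$ contains a nontrivial unipotent element of $U^N(\varphi)$, so its identity component is not reductive. The implication you are invoking goes in the wrong direction: Proposition~\ref{prop:red-cent-ss} shows that reductive centralizer implies Frobenius semi-simple, not conversely. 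Consequently, your appeal to ``vanishing of the \'etale $H^1$ classifying the descent obstruction on an affine base'' is also unfounded (and would be false even for reductive groups: think of $\PGL_n$-torsors over an affine scheme).

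The paper's actual mechanism for injectivity is different and does not require any $H^1$ vanishing. One first proves a relative Levi decomposition (Proposition~\ref{prop:Zphidesc-rel}): for any $\psi$ with $\JM(\psi)=(\varphi,N)$, one has $Z_{\wh{G}}(\varphi,N)=U^N(\varphi)\rtimes Z_{\wh{G}}(\psi)$. Given $\psi_1,\psi_2$ with the same $(\varphi,N)$, a relative version of the Gross--Reeder argument (Proposition~\ref{prop:gr-prop}) shows their $\mf{sl}_2$-triples are \'etale locally conjugate by an element of $Z_{\wh{G}}(\varphi,N)$, producing a class in $H^1_\et(\Spec(A),Z_{\wh{G}}(\psi_1))$ that dies in $H^1_\et(\Spec(A),Z_{\wh{G}}(\varphi,N))$. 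The semi-direct product then furnishes a set-theoretic section of this map on $H^1$, forcing the kernel to be trivial; no reductivity is needed. For surjectivity, the key point you are missing is that one needs the $\mf{sl}_2$-triple $(N,h,f)$ to lie in the specific eigenspaces $\mf{h}(q),\mf{h}(1),\mf{h}(q^{-1})$ for $\ov{\varphi}(w_0)$; this is where Frobenius semi-simplicity enters, via the eigenspace decomposition of Proposition~\ref{prop:eigen-decomp}, and the existence is established \emph{globally} over $A$ (Proposition~\ref{prop:gr-prop}) using a relative Kostant-type extension argument (Proposition~\ref{prop:Kostant-triples-prop}), so no descent step is required.
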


We finally mention that another important ingredient in our proof of Proposition \ref{propi:JM-bir} is a result which may be interpreted as a stronger version of the isomorphy of the Jacobson--Morozov morphism over the discrete loci, as stated in Theorem \ref{thmi:JM-weakly-bir}. Namely, in Proposition \ref{prop:JM-isom-over-red-locus} we show that the Jacobson--Morozov morphism is an isomorphism over the locus of points of $\WDP_G$ whose centralizer has reductive identity component. The relationship to birationality comes from Proposition \ref{prop:dense-tor-cent} which shows that the locus of such points is dense in $\WDP_G$ and thus, a fortiori, dense in $\WDP^\sqcup_G$ (the same holds true for $\LP_G$).

As the moduli space of Weil--Deligne parameters has featured quite prominently in recent developments in the Langlands program and adjacent fields (e.g.\@ see \cite{BeGeGdef}, \cite{DHKMModLp}, \cite{ZhuCohLp} and \cite{FaScGeomLLC}) we feel that these results will be valuable in the study of the fine structure of the space $\WDP_G$. In particular, one may in theory reduce many questions involving `generic' geometric structure of $\WDP_G$ to the study of $\LP_G$. More specifically, we have stratified the geometric space $\WDP_G$ into pieces such that each stratum is smooth and (essentially) like a homogenous space for a group, and thus simple geometrically (cf.\@ Theorem \ref{thm:WD-const-decomp}). Moreover, each of these strata is birational to similarly defined strata in the representation-theoretically simpler space $\LP_G$. In fact, such ideas have already implicitly appeared in several important geometric results concerning $\WDP_G$ (e.g.\@ see \cite[\S2.3]{BeGeGdef}). 

In addition to its potential uses to study the geometry of $\WDP_G$, we believe that these moduli-theoretic results are clarifying in several other ways. Namely, the weak birationality of the Jacobson--Morozov morphism helps qualify in the classical setting that almost every complex Weil--Deligne parameter is in the image of the Jacobson--Morozov map. Moreover, the isomorphy over the discrete locus may also be used to deduce results of interest even in this classical case (e.g.\@ see Proposition \ref{prop:disc-ss-prop}). Finally, we feel that our explicit description of the moduli space of $L$-parameters (e.g.\@ its set of connected components) helps explain some phenomena differentiating $\LP_G$ from $\WDP_G$ as previously observed by others (c.f.\@ the introduction to \cite{DHKMModLp}).

\medskip

\paragraph{Future directions} While our results are written over $\Q$, it is clear that they extend over $\Z[\frac{1}{N}]$ for sufficiently divisible $N$.
Evidently one cannot hope to extend our results over all of $\Z[\frac{1}{p}]$ as currently written. But, as in op.\@ cit.\@ (and \cite{Helm}), the correct analogue of $\WDP_G$ over $\Z[\frac{1}{p}]$ does not directly involve Weil--Deligne paramters but, instead, involves a scheme of $1$-cocycles for the discretization $W^0_F$ of the tame inertia group. One may then ask whether there is an analogous description of $\LP_G$ which allows our results to work over $\Z[\frac{1}{p}]$.

Also, as the morphism $\JM\colon \LP_G\to\WDP_G$ is weakly birational there exists a dense open subset $U$ of $\WDP_G$ such that $\JM\colon \JM^{-1}(U)\to U$ is an isomorphism. In Proposition \ref{prop:temp-cent-equal} below, we essentially show that the analytication $\JM^{-1}(U)_\C^\mathrm{an}$ contains all (essentially) tempered $L$-parameters. From a geometric perspective (e.g.\@ from the perspective of \cite{FaScGeomLLC}) it is more natural to consider $\ell$-adic $L$-parameters instead of complex ones. One is then naturally led to the ask whether $\JM^{-1}(U)_{\Q_\ell}^\mathrm{an}$ contains the analogue of (essentially) tempered representations, which are the (essentially) $\nu$-tempered representations of Dat (see \cite{DatNu}).

\medskip

\paragraph{Notation and conventions}

\begin{itemize}
    \item $F$ is a non-archimedean local field with residue field of characteristic $p$ and size $q$,
    \item $W_F$ is the Weil group of $F$,
    \item for a Galois extension of fields $k'/k$, we write the Galois group as $\Gamma_{k'/k}$ and we write $\Gamma_k$ for the absolute Galois group of $k$,
    \item for a ring $R$ we shall denote by $\cat{Alg}_R$ the category of $R$-algebras,
    \item we shall frequently abuse terminology and call a covariant functor $\cat{Alg}_R\to\mc{C}$ a $\mc{C}$-valued presheaf,
    \item a reductive group $S$-scheme $H$ will always have connected fibers,
    \item we use the notation $\Int(g)$ for the inner automorphisms associated to an element $g$ of a group,
    \item for a set $X$ we shall denote by $\underline{X}$ the associated constant scheme over $\Q$.
\end{itemize}

\medskip

\paragraph{Acknowledgements} We thank Brian Conrad, Rahul Dalal, Ildar Gaisin, Tasho Kaletha, Marcin Lara, Rachel (Nakyung) Lee, and Alexander Sherman for fruitful discussions. The last named author would particularly like to thank Piotr Achinger for several helpful discussions related to algebraic geometry. Finally, we would like to thank the anonymous referee for their detailed comments which helped significantly improve the clarity of this paper.

The first named author was partially supported by NSF RTG grant DMS-1840234. The second named author was supported by JSPS KAKENHI Grant Number 18H01109. The last named author completed this work under the auspices of the JSPS fellowship.

\section{Some group theoretic preliminaries}

In this section we establish some notation, definitions, and basic well-known results that we shall often use without comment in the sequel. We encourage the reader to skip this section on first reading, referring back only when necessary.

\subsection{The nilpotent variety, unipotent variety, and exponential map}

Let us fix $k$ to be a field of characteristic $0$ and $H$ to be a reductive group over $k$. We denote by $\mf{h}$ the Lie algebra of $H$ thought of both as a vector $k$-space and as a $k$-scheme. 

Let $A$ be a $k$-algebra and $x$ an element of $\mf{h}_A$. Recall then that as in \cite[II, \S6, \textnumero 3]{DemazureGabriel} one may associate an element $\exp(Tx)$ in $H(A\llbracket T\rrbracket)$ to $x$. We then say that $x$ is \emph{nilpotent} if it satisfies any of the following equivalent conditions.

\begin{prop}\label{prop:nilp-equiv} The following are equivalent:
\begin{enumerate}
    \item for all finite-dimensional representations $\rho\colon H\to\GL(V)$ the endomorphism $d\rho(x)$ of $V_A$ is nilpotent,
    \item there exists a faithful finite-dimensional representation $\rho\colon H\to\GL(V)$ such that the endomorphism $d\rho(x)$ of $V_A$ is nilpotent,
    \item $\exp(Tx)$ belongs to $H(A[T])$,
    \item there exists a morphism of group $A$-schemes $\alpha\colon \mathbb{G}_{a,A}\to H_A$ such that $x=d\alpha(1)$,
\end{enumerate}
if $A$ is in addition reduced, then (1)-(4) are equivalent to
\begin{enumerate}
    \item[(5)] $x$ belongs to $\mf{h}^\der_A$ and $\ad(x)$ is a nilpotent transformation of $\mf{h}^\der_A$.
\end{enumerate}
\end{prop}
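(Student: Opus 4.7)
The plan is to prove $(1)\Rightarrow (2)\Rightarrow (3)\Rightarrow (4)\Rightarrow (1)$, and then dispatch (5) separately in the reduced case.

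First, $(1)\Rightarrow (2)$ is immediate since $H$, being reductive, admits a faithful finite-dimensional representation over $k$. For $(2)\Rightarrow (3)$, I will invoke the naturality of the Demazure--Gabriel exponential: for any representation $\rho\colon H\to \GL(V)$ one has the identity $\rho(\exp(Tx)) = \exp(T\,d\rho(x))$ in $\GL(V)(A\ll T\rr)$. Nilpotency of $d\rho(x)$ makes the right hand side a polynomial, hence an element of $\GL(V)(A[T])$; since $\rho$ is faithful (so a closed immersion of affine group schemes) and $A[T]\hookrightarrow A\ll T\rr$ is injective, a direct diagram chase descends this factorization to place $\exp(Tx)$ in $H(A[T])$.

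The implication $(3)\Rightarrow (4)$ sets $\alpha(t)\defeq \exp(tx)$, viewed as a morphism $\mathbb{G}_{a,A}\to H_A$; the additive identity $\exp((s+t)x) = \exp(sx)\exp(tx)$, valid universally in $H(A\ll s,t\rr)$ by the same Demazure--Gabriel development, shows $\alpha$ is a homomorphism of group $A$-schemes, and by construction $d\alpha(1)=x$. The converse $(4)\Rightarrow (3)$ is obtained by pulling back the tautological point $T\in\mathbb{G}_a(A[T])$ through $\alpha$. Finally $(4)\Rightarrow (1)$ composes $\alpha$ with an arbitrary representation $\rho\colon H\to\GL(V)$: a morphism of group $A$-schemes $\mathbb{G}_{a,A}\to \GL(V)_A$ is necessarily of the form $t\mapsto \sum_{n\ge 0}\tfrac{t^n}{n!}N^n$ for some $N\in\End(V_A)$ (here we use that $A$ is a $\Q$-algebra so factorials are invertible, and polynomiality of the expansion forces $N$ to be nilpotent), and $N=d\rho(x)$.

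It remains to establish the equivalence with (5) when $A$ is reduced. Using the characteristic-zero decomposition $\mathfrak{h} = \mathfrak{z}\oplus \mathfrak{h}^\der$, I will first show $(1)\Rightarrow (5)$: applying any character $\chi\colon H\to \Gm$ (after base change to $\bar{k}$, where characters separate $\mathfrak{z}$) sends $x$ to a nilpotent element of the coefficient ring, which must vanish by reducedness; hence $x\in \mathfrak{h}^\der_A$, and $\ad(x)$ being nilpotent is exactly the instance of (1) for the adjoint representation. For the converse $(5)\Rightarrow (1)$, I will reduce to the case of a field by testing at residue fields of $A$ (using that for reduced $A$ a bounded-degree endomorphism of a finite free module is nilpotent iff it is so at every prime), where the classical fact that $\ad$-nilpotency of $x$ in a semisimple Lie algebra implies nilpotency in every representation (via Jordan decomposition) applies. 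The main obstacle I anticipate is precisely this last step: the field case is standard, but turning a pointwise nilpotency statement into an honest nilpotent endomorphism over the reduced ring $A$ requires a uniform-bound-plus-reducedness argument that must be made precise.
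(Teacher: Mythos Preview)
Your argument is correct. The chain $(1)\Rightarrow(2)\Rightarrow(3)\Rightarrow(4)\Rightarrow(1)$ is exactly the content of \cite[II, \S6, \textnumero 3, Corollaire 3.5]{DemazureGabriel}, which the paper simply cites; you have unpacked it faithfully. Your reduced-ring-to-field reduction for (5) is also fine: $V_A$ is free of finite rank, so nilpotency of $d\rho(x)$ is detected pointwise on $\Spec(A)$ when $A$ is reduced.

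Where you and the paper genuinely diverge is in the field case of $(5)\Rightarrow(1)$. You invoke the abstract Jordan decomposition in the semisimple Lie algebra $\mf{h}^\der$: ad-nilpotency forces the semisimple part of $x$ to be central, hence zero, and then preservation of Jordan decomposition under representations finishes. The paper instead builds a faithful representation $\sigma$ of $H/Z(H^\der)$ (adjoint on $\mf{h}^\der$ plus a faithful representation of $H^{\mathrm{ab}}$), observes that (5) makes $d\sigma(x)$ nilpotent, and then for an arbitrary irreducible $\rho$ takes the tensor power $\rho^{\otimes n}$ with $n=|Z(H^\der)|$ so that it factors through $H/Z(H^\der)$; an appeal to \cite[Proposition 3.1(a)]{DeligneHodge} (tensor constructions preserve nilpotency once one has a faithful representation) then gives $d\rho^{\otimes n}(x)$ nilpotent, from which $d\rho(x)$ nilpotent follows in characteristic $0$. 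Your route is the more classical and self-contained one; the paper's tensor-power argument avoids any explicit use of the Jordan decomposition theory but trades it for the Tannakian-flavored input from Deligne.
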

\begin{proof} The equivalence of (1)-(4) is given by \cite[II, \S6, \textnumero 3, Corollaire 3.5]{DemazureGabriel}. To see the equivalence of (1) and (5), in the case when $A$ is reduced, we may assume that $A$ is a field. Let $\sigma\colon H/Z(H^\der)\to \GL(W)$ be the faithful representation given by taking a direct sum of $\mathrm{Ad} \colon H \to \mathrm{GL}(\mathfrak{h}^{\mathrm{der}})$ and the composition of $H \to H^{\mathrm{ab}}$ with a faithful representation of $H^{\mathrm{ab}}$. It is clear that applying (1) to $\sigma$ shows that (5) holds. Conversely, suppose that (5) holds, so then $d\sigma (x)$ is nilpotent. 
Let $\rho$ be as in (1). We may assume that $\rho$ is irreducible. We put $n=\lvert Z(H^{\mathrm{der}}) \rvert$. Then $\rho^{\otimes n} \colon H \to \mathrm{GL}(V^{\otimes n})$ factors through $H/Z(H^{\mathrm{der}})$. Hence by \cite[Proposition 3.1 (a)]{DeligneHodge} $d\rho^{\otimes n} (x)$ is nilpotent. This implies that $d\rho(x)$ is nilpotent.
\end{proof}

Let us consider the symmetric algebra on $\mf{h}^\ast$ (resp.\@ the graded ideal of positive degree tensors)
\begin{equation*}
    S(\mf{h}^\ast)=\bigoplus_{d\geqslant 0}S^d(\mf{h}^\ast)=\Hom(\mf{h},\mathbb{A}^1_k),\qquad \bigg(\mathrm{resp.}\,\,S^+(\mf{h}^\ast)\defeq\bigoplus_{d>0}S^d(\mf{h}^\ast)\bigg).
\end{equation*}
 Let $S(\mf{h}^\ast)^H$ be the $k$-subalgebra of $S(\mf{h}^\ast)$ which is invariant for the adjoint action of $H$ on $\mf{h}$ (in the sense of \cite[Definition 0.5 i)]{Mumford}). Let us then consider the radical ideal
\begin{equation*} S^+(\mf{h}^\ast)^H\defeq S^+(\mf{h}^\ast)\cap S(\mf{h}^\ast)^H.
\end{equation*}
The \emph{nilpotent variety} of $H$ is the closed subshceme of $\mf{h}$ given by $\mc{N}\defeq V\left(S^+(\mf{h}^\ast)^H\right)$ (or $\mc{N}_H$ when we want to emphasize $H$). This name is justified as for any extension $k'$ of $k$ we have
\begin{equation*}
    \mc{N}(k')=\left\{x\in \mf{h}_{k'}: x\text{ is nilpotent}\right\}
\end{equation*}
(cf.\@ \cite[\S 6.1, Lemma]{Jantzen}). In particular, $\mc{N}$ is the unique reduced subscheme of $\mf{h}$ whose $\ov{k}$-points consist of the nilpotent elements of $\mf{h}_{\ov{k}}$.

The nilpotent variety $\mc{N}$ is an integral (cf.\@  \cite[\S6.2, Lemma]{Jantzen}) finite type affine $k$-scheme of dimension $\dim(H)-r$ where $r$ is the geometric rank of $H$ (see \cite[\S6.4]{Jantzen}). In fact, as $k$ is of characteristic $0$, it is normal by the results of \cite{KostantLieGroupReps}. Observe that the nilpotent variety is stable under the adjoint action of $H$. Also observe that if $f\colon H\to H'$ is a morphism of reductive groups over $k$ it induces a morphism $df\colon \mc{N}_H\to \mc{N}_{H'}$ and satisfies $df(\Ad(h)(x))=\Ad(f(h))(df(x))$. 

\begin{eg}\label{eg:gln-nilp}
Let $\mathrm{Mat}_{n,k}$ be the scheme of $n$-by-$n$ matrices over $k$, and let $I\subseteq \mc{O}(\mathrm{Mat}_{n,k})$ be generated by those polynomials corresponding to $(a_{ij})^n=0$. Then, $\mathcal{N}_{\GL_{n,k}}=V(\sqrt{I})$.
\end{eg}

From this example, and the functoriality of the nilpotent variety, it's easy to see that if $A$ is a $k$-algebra, then one has the containment
\begin{equation*}
    \mc{N}(A)\subseteq \{x\in \mf{h}_A : x\text{ is nilpotent}\},
\end{equation*}
which is an equality if $A$ is reduced, but can differ otherwise. From this containment we see that for any element $x$ of $\mathcal{N}(A)$ we may define an element $\exp(x)$ of $H(A)$ as in \cite[II, \S6, \textnumero 3, 3.7]{DemazureGabriel}. As this association is functorial we obtain an $H$-equivariant morphism of schemes $\mc{N}\to H$ called the \emph{exponential morphism} and denoted by $\exp$ (or $\exp_H$ when we want to emphasize $H$) which is functorial in $H$. We would now like to describe the image of $\exp$.

To this end, note that there exists a unique reduced closed subscheme $\mc{U}$ (or $\mc{U}_H$ when we want to emphasize $H$) of $H$ such that 
\begin{equation*}
    \mc{U}(k')=\left\{h\in H(k'): h\text{ is unipotent}\right\},
\end{equation*}
for all extensions $k'$ of $k$ (see \cite[Proposition 1.1]{SpringerUnipotent}). We call $\mc{U}$ the \emph{unipotent variety} associated to $H$. It is an integral finite type affine $k$-scheme of dimension $\dim(H)-r$ which is stable under the conjugation action of $H$ (see loc.\@ cit.\@). Moreover, as $k$ is of characteristic $0$, it is normal (see \cite[Proposition 1.3]{SpringerUnipotent}). We observe that $\mc{U}$ is stable under the conjugation action of $H$. 

Observe that $\exp$ factorizes through $\mc{U}$, as both are reduced, and so this may be checked on the level of $\ov{k}$-points. We have the following omnibus result concerning the exponential morphism.

\begin{prop}\label{prop:exp-omnibus}Let $H$ be a reductive group over a characteristic $0$ field $k$. Then, 
\begin{enumerate}
    \item the exponential map $\exp\colon \mc{N}_H\to \mc{U}_H$ is an $H$-equivariant isomorphism,
    \item for any $k$-algebra $A$ and any $x$ in $\mc{N}_H(A)$, $\Ad(\exp(x))$ is equal to $\sum_{i=0}^\infty \frac{1}{i!}\ad(x)^i$,
    \item for any $k$-algebra $A$ and any nilpotent Lie subalgebra $\mf{n}$ of $\mf{h}_A$ contained in $\mc{N}(A)$ the subset $\exp(\mf{n})\subseteq H(A)$ is a subgroup. If the functor $\mf{n}\mapsto \mf{n}\otimes_A B$ is representable by a closed subgroup scheme of $\mc{N}_A$ then $\exp(\mf{n})$ is actually a closed subgroup scheme of $H_A$ such that $\exp(\mf{n})_x$ is unipotent for all $x$ in $\Spec(A)$.
\end{enumerate}
\end{prop}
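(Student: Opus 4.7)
For part (1), my plan is to produce a two-sided inverse --- a logarithm morphism. Embedding $H$ into some $\GL(V)$ via a faithful representation $\rho$, the exponential morphism is identified (on $A$-points) with the usual series $\exp(X)=\sum_{i\geqslant 0} X^i/i!$, which terminates on nilpotent matrices. I would then introduce the (terminating) series $\log(1+Y)=\sum_{i\geqslant 1}(-1)^{i+1}Y^i/i$ on the unipotent variety of $\GL(V)$. On $\ov{k}$-points these realize the classical mutually inverse bijection between nilpotent and unipotent elements of $\End(V)$, so by the reducedness of $\mc{N}_H$ and $\mc{U}_H$ the containments $\exp(\mc{N}_H)\subseteq \mc{U}_H$ and $\log(\mc{U}_H)\subseteq \mc{N}_H$ may be checked on $\ov{k}$-points, and the composition identities $\exp\circ\log=\id$ and $\log\circ\exp=\id$ similarly reduce to the classical ones. $H$-equivariance falls out from functoriality of $\exp$ applied to $\Int(h)$.

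For part (2), I would apply the functoriality of $\exp$ along the adjoint representation $\Ad\colon H\to \GL(\mf{h})$, which gives the identity $\Ad(\exp(x))=\exp_{\GL(\mf{h})}(\ad(x))$. Since $x\in\mc{N}(A)$ implies that $\ad(x)$ is nilpotent on $\mf{h}_A$ by Proposition~\ref{prop:nilp-equiv}, the right-hand side is computed by the terminating series $\sum_{i\geqslant 0}\ad(x)^i/i!$.

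For part (3), the essential input is the Baker--Campbell--Hausdorff identity $\exp(x)\exp(y)=\exp(x*y)$, where $x*y$ is the universal BCH expression in iterated Lie brackets with $\Q$-coefficients; this can be transported to $H$ via a faithful representation. Since $\mf{n}$ is a nilpotent Lie subalgebra, $x*y$ terminates and lies in $\mf{n}$, giving closure under multiplication; closure under inversion is immediate from $\exp(-x)=\exp(x)^{-1}$. Under the representability hypothesis $\mf{n}$ is a closed subscheme of $\mc{N}_A$, so its image $\exp(\mf{n})$ under the isomorphism of (1) is a closed subscheme of $\mc{U}_A\subseteq H_A$; the multiplication on $\exp(\mf{n})$ is produced from the BCH formula, which upgrades from a pointwise identity to a scheme-theoretic group law because it is a polynomial identity valid on all $A$-algebras. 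Finally, each fiber $\exp(\mf{n})_x$ is contained in $\mc{U}_{H_{\kappa(x)}}$ and hence consists of unipotent elements.

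The main subtlety I anticipate is the scheme-theoretic content: because $\mc{N}(A)$ may be strictly smaller than the set of nilpotent elements of $\mf{h}_A$ for non-reduced $A$, one must repeatedly invoke reducedness of the source and target to reduce universal identities to $\ov{k}$-points, and then transport them back to arbitrary $A$. Once this bookkeeping is in place, both the exp/log inversion and the scheme-theoretic BCH law follow from polynomial identities in finitely many terms.
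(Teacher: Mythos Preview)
Your proofs of (2) and (3) match the paper's exactly: functoriality of $\exp$ along $\Ad$ for (2), and the Baker--Campbell--Hausdorff formula for (3).

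For (1), your route is genuinely different. The paper does not construct an inverse; instead it observes that $\mc{N}_H$ and $\mc{U}_H$ are connected and \emph{normal} (citing Kostant for $\mc{N}_H$ and Springer for $\mc{U}_H$), that $\exp$ is a bijection on $\ov{k}$-points, and then invokes Zariski's main theorem to conclude $\exp$ is an isomorphism. Your approach instead constructs $\log$ explicitly via a faithful representation and reduces the factorization $\log(\mc{U}_H)\subseteq \mc{N}_H$ and the composition identities to $\ov{k}$-points using reducedness alone. Your argument is more elementary in that it avoids the normality theorems for $\mc{N}_H$ and $\mc{U}_H$ (which are nontrivial results the paper takes as input) and gives an explicit inverse morphism; the paper's argument is slicker but leans on that normality. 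One small point worth making explicit in your write-up: the classical fact that $\log$ of a unipotent element of $H(\ov{k})$ lands in $\mf{h}_{\ov{k}}$ (not merely in $\End(V)_{\ov{k}}$) is what lets you check $\log(\mc{U}_H)\subseteq\mc{N}_H$ on geometric points, and deserves a sentence of justification.
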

\begin{proof} For (1), as $\mc{N}_H$ and $\mc{U}_H$ are connected and normal, and $\exp$ may be checked to be a bijection on $\ov{k}$-points, this follows from Zariski's main theorem as $k$ is of characteristic $0$. Claim (2) follows by the functoriality of the exponential map (cf.\@ \cite[II, \S6, \textnumero 3, 3.7]{DemazureGabriel}). Finally, (3) may be deduced by the Campbell--Hausdorff series (see \cite[II, \S6, \textnumero 4, Th\'eor\`eme 2]{Bourbaki}).
\end{proof}

\subsection{The $L$-group and $C$-group}\label{ss:L-and-C}
Fix $F$ to be a non-archimedean local field, and let $G$ be a reductive group over $F$. In this subsection we define the $C$-group of $G$, which is a modification of the $L$-group of $G$ that will be used to construct a moduli space of Weil--Deligne parameters over $\mathbb{Q}$ without choosing a square root of $q$ (see \S 5.1).

To begin, let $\Psi(G)$ denote the canonical based root datum of $G_{\ov{F}}$ (see \cite[\S1.1]{KotStfcus} and \cite[\S21.42]{MilneGroups}) which comes equipped with an action of $\Gamma_F$. We fix once and for all a \emph{Langlands dual group of} $G$ by which we mean a pinned reductive group $(\widehat{G},\wh{B},\wh{T},\{x_\alpha\})$ over $\Q$ (see \cite[\S23.d]{MilneGroups}) together with an isomorphism between $\Psi(\widehat{G},\wh{B},\wh{T})$ and $\Psi(G)^\vee$. We denote by $\wh{\mf{g}}$ the Lie algebra of $\wh{G}$, and by $\wh{\mc{N}}$ the nilpotent variety of $\wh{G}$.

Next, let $\cW_F$ denote the \emph{Weil group scheme} over $\bQ$ associated to $F$ 
as in \cite[(4.1)]{TatNtb}. For a $\Q$-algebra $A$ one may identify $\mc{W}_F(A)$ with the set of continuous maps $f\colon \pi_0(\Spec(A))\to W_F$ where here $\pi_0(\Spec(A))$ is thought of as a profinite space (cf.\@ \stacks{0906}) and $W_F$ is given its usual topology. In particular, $\mc{W}_F(A)=\underline{W_F}(A)$  when $\pi_0(\Spec(A))$ is discrete (e.g.\@ if $A$ is connected or Noetherian), but can differ otherwise. For $w$ in $W_F$ we shall occasionally abuse notation and use $w$ to also denote its image in $\mc{W}_F(A)$. 

Note that if $d\colon W_F\to \Z$ is the degree map sending a lift of arithmetic Frobenius to $-1$, then there is a morphism of $\Q$-group schemes $d\colon \mc{W}_F\to \underline{\Z}$ which takes a map $f$ to $d\circ f$. Observe that $\underline{\Z}$ admits an embedding of group $\Q$-schemes into $\bb{G}_{m,\Q}$ corresponding to $1\mapsto q^{-1}$ and we denote the composition of $d$ with this map by $\|\cdot\|\colon \mc{W}_F\to \bb{G}_{m,A}$. We define $\cI_F =\ker( \| \cdot \|)$, which is an \emph{affine scheme} equal to $\varprojlim \underline{I_F/I_K}$ as $K$ travels over all finite extensions of $F$. Note that if $A$ is a $\Q$-algebra and $X$ an $A$-scheme locally of finite presentation then any morphism of $A$-schemes $\mc{I}_{F,A}\to X$ must factorize through $\underline{I_F/I_K}$ for some $K$ (cf.\@ \stacks{01ZC}).

\begin{rem} One reason to prefer $\mc{W}_F$ over the constant group scheme $\underline{W_F}$ is that the topological group $\pi_0(\mc{W}_F)$ is equal to $W_F$ with its usual topology, and similarly for $\mc{I}_F$.
\end{rem}

Returning to $G$, note that the action of $\Gamma_F$ on $\Psi(G)$ gives rise to an action of $\Gamma_F$ on $(\wh{G},\wh{B},\wh{T},\{x_\alpha\})$ and, in particular, on $\wh{G}$ as a group $\Q$-scheme. 
We define a finite Galois extension $F^\ast$ of $F$ characterized by the equality
$\Gamma_{F^\ast}=\ker (W_F \to \Aut (\wh{G}))$. Equivalently, $F^\ast$ is the minimal field splitting $G^\ast$, the quasi-split inner form of $G$. We write $\Gamma_\ast$ for $\Gamma_{F^\ast/F}$. As $\underline{\Gamma_\ast}$ acts on $\wh{G}$ and $\mc{W}_F$ admits $\underline{\Gamma_\ast}$ as a quotient, we obtain an action of $\cW_F$ on $\wh{G}$. Define the \emph{$L$-group scheme} of $G$ to be the group $\Q$-scheme $\LG =\wh{G} \rtimes \cW_F$. Observe that there is a natural inclusion $\wh{G}\hookrightarrow \LG$ which identifies $\wh{G}$ as a normal subgroup scheme of $\LG$. In particular, there is a natural conjugation action of $\LG$ on $\wh{G}$, which in turn induces an adjoint action of $\LG$ on $\wh{\mf{g}}$. 
 
As the action of $\mc{W}_F$ on $\wh{G}$ factorizes through a finite quotient, we see by Lemma \ref{lem:fixed-points-reductive} below that the group presheaf associating a $\Q$-algebra $A$ to  $Z_0(\wh{G})(A)\defeq Z(\wh{G})(A)^{\mc{W}_F(A)}$ is representable.

\begin{lem}\label{lem:fixed-points-reductive} Let $A$ be a $\Q$-algebra, $H$ a reductive group over $A$, and $\Sigma$ a finite group acting on $H$ by group $A$-scheme automorphisms. Then, the group functor
\begin{equation*}
    H^\Sigma\colon\cat{Alg}_A\to \cat{Grp},\quad B\mapsto H(B)^\Sigma
\end{equation*}
is represented by a subgroup scheme of $H$ smooth over $A$, with $(H^\Sigma)^\circ$ reductive over $A$, and such that for all $A$-algebras $B$ one has the equality $\mathrm{Lie}(H^\Sigma)(B)=\mathrm{Lie}(H)(B)^\Sigma$.
\end{lem}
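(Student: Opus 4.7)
First, the representability by a closed subgroup scheme of $H$ is straightforward: the fixed-point functor $H^\Sigma$ coincides with the scheme-theoretic intersection $\bigcap_{\sigma\in\Sigma}\mathrm{Eq}(\sigma,\mathrm{id}_H)$, each factor being a closed subscheme of $H$ since $H$ is separated (being affine), and the group structure is inherited from $H$. The Lie algebra identity $\mathrm{Lie}(H^\Sigma)(B)=\mathrm{Lie}(H)(B)^\Sigma$ is then immediate from the description of both sides as $\Sigma$-invariants inside $\ker(H(B[\epsilon])\to H(B))$, since $\Sigma$ acts compatibly on source and target.

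For smoothness, my approach is to verify the infinitesimal lifting criterion. Given a square-zero surjection $B\twoheadrightarrow B_0$ of $A$-algebras with kernel $I$ and an element $h_0\in H^\Sigma(B_0)$, the smoothness of $H$ over $A$ provides some lift $h\in H(B)$ of $h_0$, and the set of all such lifts forms a torsor under $\mathrm{Lie}(H)(B_0)\otimes_{B_0}I$ carrying a natural $\Sigma$-action. Since $|\Sigma|$ is invertible in any $\Q$-algebra, averaging provides a cocycle splitting — equivalently $H^1(\Sigma,\mathrm{Lie}(H)(B_0)\otimes_{B_0}I)=0$ — and thus a $\Sigma$-fixed lift. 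Combined with the fact that $H^\Sigma$ is locally of finite presentation, being closed in $H$ cut out by the finitely many conditions $\sigma(h)=h$ for $\sigma\in\Sigma$, this yields smoothness over $A$.

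Finally, for reductivity of $(H^\Sigma)^\circ$ over $A$, I would reduce to checking on geometric fibers, which is permissible for smooth affine group schemes (using that the identity component is represented by an open subgroup scheme in this setting). At each geometric point $\bar s=\mathrm{Spec}(k)\to\mathrm{Spec}(A)$, one reduces to the classical statement: if a finite group $\Sigma$ acts by algebraic automorphisms on a reductive group $H_{\bar s}$ over an algebraically closed field $k$ of characteristic $0$, then the identity component of the fixed-point subgroup is reductive. This follows from Steinberg's theorem on fixed points of finite-order semisimple automorphisms of connected reductive groups, applicable because every element of $\Sigma$ has order invertible in $k$ and hence acts semisimply.

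The main obstacle is really just collating these classical ingredients at the required relative level of generality — in particular, confirming that reductivity of a smooth affine group scheme may be checked fiberwise, and that the identity component is well-defined as an open subgroup scheme in our setting. Beyond citing these standard facts (e.g.\@ from SGA 3, Exp.\@ VI$_B$) the argument requires no new input.
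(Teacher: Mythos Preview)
Your proposal is correct, but the paper's proof is organized differently and is somewhat slicker. For representability and smoothness, the paper observes that if $H=\Spec(R)$ then $H^\Sigma=\Spec(R_\Sigma)$ with $R_\Sigma$ the ring of $\Sigma$-coinvariants; since $A$ is a $\Q$-algebra, $R_\Sigma$ is a direct summand of $R$, hence flat over $A$, and smoothness then follows (flat plus smooth geometric fibers, the latter by Cartier's theorem). Your infinitesimal-lifting argument with averaging is equally valid and perhaps more transparent, but the coinvariant/direct-summand route avoids deformation theory entirely. For reductivity of $(H^\Sigma)^\circ$, the paper simply cites \cite[Theorem 2.1]{PrasadYu}, which proves the result for finite group actions directly (and in fact over any base, in good characteristic). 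Your fiberwise reduction is fine, but note that Steinberg's theorem as usually stated concerns a \emph{single} semisimple automorphism; to pass to an arbitrary finite $\Sigma$ one needs an induction on $|\Sigma|$ via a normal subgroup, which you should make explicit. The Prasad--Yu reference handles this in one stroke and moreover gives the relative statement without reduction to geometric fibers.
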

\begin{proof} Write $H=\Spec(R)$, then one easily verifies that $\Spec(R_\Sigma)$, where $R_\Sigma$ is the ring of coinvariants, represents $H^\Sigma$. As $A$ is a $\Q$-algebra, it is evident that $R_\Sigma$ is a direct summand of $R$ and thus $H^\Sigma$ is flat over $A$, and thus smooth. By \cite[Expos\'{e} VIB, Corollaire 4.4]{SGA3-1} we know that $(H^\Sigma)^\circ$ is representable and smooth over $A$, and it is then reductive by \cite[Theorem 2.1]{PrasadYu}). The claim about Lie algebras is clear as the functor of $\Sigma$-invariants preserves kernels.
\end{proof}

Let $X^\ast$ denote the cocharacter component of $\Psi(G)$ and $R^+$ the positive root component, and define $\delta$ to be the element of $X^\ast$ given by the sum over the elements of $R^+$. By our identification between $\Psi(\wh{G},\wh{B},\wh{T})$ and  $\Psi(G)^\vee$ we see that $\delta$ corresponds to an element of $X_\ast(\wh{T})$ which we also denote by $\delta$. Let us set $z_G\defeq\delta(-1)\in \wh{T}(\Q)[2]$. By the proof of \cite[Proposition 5.39]{BuGeconjc}, $z_G$ lies in $Z_0(\wh{G})(\Q)$. Thus, the action of $\mc{W}_F$ on $\wh{G}\times \bb{G}_{m,\Q}$ (with trivial action on the second component) fixes the pair $(z_G,-1)$. Therefore, $\mc{W}_F$ acts on $\check{G}\defeq (\widehat{G}\times \bb{G}_{m,\Q})/\langle (z_G,-1)\rangle$. We then define the \emph{$C$-group scheme} of $G$ to be $\CG =\check{G} \rtimes \cW_F$. Note that by \cite[Proposition 5.39]{BuGeconjc} there exists a central extension $\wt{G}$ of $G$ such that $\CG$ is naturally isomorphic to ${^L}\!\wt{G}$, which is the definition of the $C$-group as in \cite[Definition 5.38]{BuGeconjc}.

The group $\wh{G}$ admits a natural embedding into $\check{G}$, with normal image, via the first factor, and therefore we obtain a conjugation action of $\CG$ on $\wh{G}$, and thus an adjoint action of $\CG$ on $\wh{\mf{g}}$. Also, the morphism
\begin{equation*}
    (\wh{G}\times \bb{G}_{m,\Q})\rtimes \cW_F\to \bb{G}_{m,\Q} \times \cW_F,\qquad (g,z,w)\mapsto (z^2,w)
\end{equation*}
annihilates $\langle (z_G,-1)\rangle$, and thus induces a morphism 
\begin{equation*}
    p_C =(p_{\bG_m},p_{\cW_F})\colon \CG \to \bb{G}_{m,\Q} \times \cW_F . 
\end{equation*}
Finally, we observe that if $k$ is an extension of $\Q$, and $c$ is any element of $k$ such that $c^2=q$, then there is a morphism $i_c\colon \LG_k\to \CG_k$
obtained as the composition
\begin{equation*}
    \LG_k \xrightarrow{(g,w)\mapsto (g,c^{-d(w)},w)}(\wh{G}_k \times \bb{G}_{m,k} )\rtimes \cW_{F,k}\to \CG_k . 
\end{equation*}

\subsection{Scheme of homomorphisms and cross-section homomorphisms} We establish here some terminology and basic results concering the scheme of homomorphisms as well as the scheme of cross-section homomorphisms (in the sense of \cite[Appendix A]{DHKMModLp}). Throughout the following we fix $k$ to be field of characteristic $0$.

\medskip

\paragraph{Scheme of homomorphisms}

Let $H$ and $H'$ be reductive groups over $k$ with Lie algebras $\mf{h}$ and $\mf{h'}$. For a $k$-algebra $A$ denote by $\Hom(H_A,H'_A)$ the set of group $A$-scheme morphisms $H_A\to H'_A$. Consider the following functor
\begin{equation*}
    \underline{\Hom}(H,H')\colon \cat{Alg}_k\to \cat{Set},\qquad A\mapsto \Hom(H_A,H'_A),
\end{equation*}
and define the functor $\underline{\Hom}(\mf{h},\mf{h}')$ similarly, both of which carry a natural $H'$-conjugation action.

\begin{prop}\label{prop:hom-schem-omnibus} The following statements hold true.
\begin{enumerate}
    \item The functor $\underline{\Hom}(H,H')$ is representable by a smooth $k$-scheme for which the map
    \begin{equation*}
        \varrho \colon H'\times\underline{\Hom}(H,H')\to\underline{\Hom}(H,H')\times \underline{\Hom}(H,H'),\quad (h,f)\mapsto (hfh^{-1},f)
    \end{equation*}
    is smooth,
    \item if $H$ is semi-simple then $\underline{\Hom}(H,H')$ is affine, and if $H$ furthemore simply connected then the map
    \begin{equation*}
        \underline{\Hom}(H,H')\to\underline{\Hom}(\mf{h},\mf{h}'),\qquad f\mapsto df,
    \end{equation*}
    is an $H'$-equivariant isomorphism,
    \item for any $k$-algebra $A$ the natural map
    \begin{equation*}
        \Hom(H_A,H'_A)\to \Hom(H(A),H'(A))
    \end{equation*}
   is injective.
\end{enumerate}
\end{prop}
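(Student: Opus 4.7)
For part (1), the plan is to construct $\underline{\Hom}(H,H')$ in stages. Since $H$ is affine and of finite presentation over $k$, the functor of pointed $k$-scheme morphisms $H\to H'$ (sending identity to identity) is representable by an affine $k$-scheme, and imposing compatibility with multiplication cuts out a closed subscheme. Smoothness in characteristic zero follows from deformation theory: the tangent and obstruction spaces at a homomorphism $f$ are $Z^1(H,\mathfrak{h}')$ and $H^2(H,\mathfrak{h}')$ for the action of $H$ on $\mathfrak{h}'$ via $f$, and complete reducibility of algebraic $H$-representations in characteristic zero forces $H^i(H,\mathfrak{h}')=0$ for $i\geqslant 1$. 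The same cohomological vanishing yields smoothness of $\varrho$: its differential at $(h,f)$ surjects onto $T_{hfh^{-1}}\underline{\Hom}(H,H')=Z^1(H,\mathfrak{h}')$ with image the coboundaries $B^1$, and $H^1=Z^1/B^1=0$; flatness then follows from equidimensionality of fibers.

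For part (2), affineness when $H$ is semi-simple reduces to the simply connected case via the closed immersion $\underline{\Hom}(H,H')\hookrightarrow \underline{\Hom}(\widetilde{H},H')$ that cuts out the morphisms trivial on the finite central kernel of $\widetilde{H}\to H$. In the simply connected case, the differential map $\underline{\Hom}(H,H')\to\underline{\Hom}(\mathfrak{h},\mathfrak{h}')$ should be an isomorphism of schemes: injectivity because a group scheme morphism $H_A\to H'_A$ is determined by its restriction to the one-parameter unipotent subgroups $\exp(tX)$ (Proposition \ref{prop:exp-omnibus}(1)), which in turn is read off from the differential; surjectivity via the classical integrability of Lie algebra homomorphisms in characteristic zero, which I would establish schematically by Tannakian reconstruction using complete reducibility of the algebraic representation category of $H$. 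Affineness of $\underline{\Hom}(H,H')$ then transfers from the target $\underline{\Hom}(\mathfrak{h},\mathfrak{h}')$, which is a closed subscheme of the affine space $\Hom_k(\mathfrak{h},\mathfrak{h}')$.

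For part (3), the strategy is Zariski density. By Borel--Tits, the connected reductive group $H$ over the infinite characteristic-zero field $k$ is $k$-unirational, so $H(k)\subseteq H$ is Zariski dense. Given $f_1,f_2\colon H_A\to H'_A$ agreeing on $H(A)$, translate them into $A$-algebra maps $\phi_1,\phi_2\colon \mathcal{O}(H')\otimes_k A\to\mathcal{O}(H)\otimes_k A$ and consider the $k$-linear difference $\chi=(\phi_1-\phi_2)|_{\mathcal{O}(H')}\colon\mathcal{O}(H')\to\mathcal{O}(H)\otimes_k A$. Fix a $k$-basis $\{a_j\}_{j\in J}$ of $A$, and for each $\eta\in\mathcal{O}(H')$ write $\chi(\eta)=\sum_j g_{j,\eta}\otimes a_j$ uniquely with $g_{j,\eta}\in\mathcal{O}(H)$. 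For any $p\in H(k)\hookrightarrow H(A)$, the hypothesis forces $0=(p\otimes\id_A)(\chi(\eta))=\sum_j p(g_{j,\eta})\cdot a_j\in A$; since each $p(g_{j,\eta})\in k$ and the $a_j$ are $k$-linearly independent, this gives $p(g_{j,\eta})=0$ for all $j,\eta,p$, and Zariski density of $H(k)$ in $H$ then forces $g_{j,\eta}=0$, so $\chi=0$ and $\phi_1=\phi_2$.

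The main technical hurdle I anticipate is the smoothness of $\varrho$ in (1): surjectivity of its differential falls out of the cohomological vanishing, but upgrading this to honest smoothness requires handling flatness, for which I would rely on equidimensionality of fibers combined with smoothness of the source and target. The integrability step in part (2) also requires some care to work functorially over arbitrary $k$-algebras rather than just fields, which I would address via the Tannakian formulation using the completely reducible representation category of $H$.
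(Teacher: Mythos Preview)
Your argument for part (3) is essentially identical to the paper's: both invoke unirationality of $H$ over the infinite field $k$ and use a $k$-basis of $A$ to reduce to the vanishing of $k$-valued functions on the Zariski-dense set $H(k)$. The paper packages this as a separate lemma (Proposition~\ref{prop:unirational}) stated for general unirational $X$ and integral $Y$, reducing first to $Y=\mathbb{A}^1$ and $X$ a basic open in affine space, but the content is the same.

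For parts (1) and (2) the paper simply cites \cite[Theorem 2]{Brion} and \cite[Exp.~XXIV, Proposition 7.3.1]{SGA3-3new} respectively, so your sketches go beyond what the paper itself provides. Your deformation-theoretic outline for smoothness (vanishing of $H^{\geqslant 1}(H,\mathfrak{h}')$ by complete reducibility in characteristic zero) and your integrability argument in (2) are along the right lines and match the spirit of those references.

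There is, however, a genuine gap in your representability step for (1). The claim that ``the functor of pointed $k$-scheme morphisms $H\to H'$ is representable by an affine $k$-scheme'' is false: for affine varieties of positive dimension the morphism functor is typically only an ind-scheme (already $\underline{\mathrm{Mor}}(\mathbb{A}^1,\mathbb{A}^1)$ is the infinite-dimensional ind-affine space of polynomials). Representability of $\underline{\Hom}(H,H')$ is precisely the nontrivial content of Brion's theorem, and the argument must use the group structure in an essential way---for instance by embedding $H'\hookrightarrow\GL_n$ and identifying $\underline{\Hom}(H,\GL_n)$ with an open subscheme of the affine scheme of $n$-dimensional $H$-representations---rather than by cutting out from a morphism scheme that does not exist. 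Once representability is in hand, your cohomological arguments for smoothness of the scheme and of $\varrho$ are correct.
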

\begin{proof} Statements (1) and (2) follow from \cite[Theorem 2]{Brion} and \cite[Exp.\@ XXIV, Proposition 7.3.1]{SGA3-3new}  respectively. Statement (3) follows from Proposition \ref{prop:unirational} below as $H$ and $H'$ are integral and unirational (see \cite[Summary 1.36, Theorem 3.23, and Theorem 17.93]{MilneGroups}).
\end{proof}

\begin{prop}\label{prop:unirational} Suppose that $X$ and $Y$ are finite type integral $k$-schemes with $X$ unirational. Then for any $k$-algebra $A$, the natural map
\begin{equation*}
    \Hom(X_A,Y_A)\to \Hom(X(A),Y(A))
\end{equation*}
is injective.
\end{prop}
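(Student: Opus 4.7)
The plan is to reduce to the case $X = \Spec R$ affine, then use unirationality to embed $R \otimes_k A$ into a localization of a polynomial algebra over $A$, and finally apply a Vandermonde argument at $k$-points of affine space.

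First, I reduce to the affine case: any affine open $X_i \subseteq X$ inherits unirationality (by pulling back the dominant map from an open of $\bA^n_k$ and intersecting), and equality of morphisms of schemes can be verified after restricting to an affine open cover of the source. I also assume $Y$ is separated, so that the scheme-theoretic equalizer $Z$ of $f$ and $g$ is a closed subscheme of $X_A$ cut out by some ideal $I \subseteq R \otimes_k A$; this is satisfied in all applications in the paper, where $Y$ will be an affine group scheme. The hypothesis on $X(A)$-points translates to the condition that $\tilde{\rho}(I) = 0$ for every $A$-algebra map $\tilde{\rho}\colon R \otimes_k A \to A$, so it suffices to show that
\[
\bigcap_{\rho \in X(A)} \ker\bigl(\tilde{\rho}\colon R \otimes_k A \to A\bigr) \,=\, 0,
\]
where $\rho$ ranges over $X(A) = \Hom_{k\text{-alg}}(R,A)$ and $\tilde\rho$ denotes the canonical $A$-linear extension of $\rho$ to $R \otimes_k A \to A$.

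By unirationality, one may choose a dominant morphism $\pi\colon U \to X$ with $U = \Spec k[T_1, \ldots, T_n][1/f]$ a basic open of $\bA^n_k$. Dominance plus reducedness of $X$ make $\pi^\ast\colon R \hookrightarrow k[T_1, \ldots, T_n][1/f]$ injective, and base changing along the flat map $k \to A$ preserves injectivity, giving $R \otimes_k A \hookrightarrow A[T_1, \ldots, T_n][1/f]$. Any element $x$ of the intersection displayed above maps to some $x' = p/f^N$ with $p \in A[T_1, \ldots, T_n]$; the naturality of evaluation along $\pi$, applied to the hypothesis on $x$, gives $p(\sigma) = 0$ for every $\sigma \in U(A)$, and in particular for every $\sigma \in U(k) = \{a \in k^n : f(a) \neq 0\}$. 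For $a \in k^n$ with $f(a) = 0$ the equality $(p \cdot f)(a) = 0$ is automatic, so $p \cdot f \in A[T_1, \ldots, T_n]$ vanishes on all of $k^n$.

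The final step is a routine Vandermonde argument: because $k$ has characteristic zero and so is infinite, and the Vandermonde matrix associated to any $d+1$ distinct elements $c_0, \ldots, c_d \in k$ is invertible over $k$ (hence over any $k$-algebra $A$), a polynomial in $A[T_1, \ldots, T_n]$ vanishing on all of $k^n$ must vanish identically, by induction on $n$. Hence $p \cdot f = 0$, so $p = 0$ in the localization $A[T_1,\ldots,T_n][1/f]$, whence $x' = 0$ and, by injectivity, $x = 0$. The one genuine subtlety is the separatedness assumption on $Y$, needed to ensure that the equalizer is a closed subscheme; some modification of the argument would be required for general integral finite type $k$-schemes $Y$, but this case does not arise in the applications of this paper.
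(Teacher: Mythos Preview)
Your proof is correct and follows essentially the same strategy as the paper's: reduce via unirationality to showing that a polynomial in $A[T_1,\ldots,T_n]$ vanishing on enough $k$-points must be zero, then exploit that $k$ is infinite. The packaging differs only cosmetically---the paper reduces $Y$ to $\mathbb{A}^1_k$ and uses a $k$-basis of $A$ to detect a nonvanishing $k$-coefficient, whereas you use the equalizer ideal and a direct Vandermonde argument---and your explicit remark that separatedness of $Y$ is needed (and always satisfied in the applications) is a welcome clarification of a point the paper passes over.
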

\begin{proof} Let $f$ and $g$ be two different $A$-morphisms $X_A\to Y_A$. Note that the claim is clearly local on $Y$, and so we may assume that $Y$ is affine. It is also clear that by embedding $Y$ into $\mathbb{A}^n_k$, and checking coordinate-by-coordinate we may further assume that $Y=\mathbb{A}^1_k$. It also suffices to check locally on $X$, and so using
the unirationality of $X$ we may then further assume that $X=D(w)\subseteq \mathbb{A}^n_k$ for $w$ in $k[x_1,\ldots,x_n]$. With this we may interpret $f$ and $g$ as elements of $A[x_1,\ldots,x_n][w^{-1}]$. Taking the difference of $f$ and $g$ and multiplying by an appropriate power of $w$ allows us to further assume that $f$ lies in $A[x_1,\ldots,x_n]$ and $g$ is the zero map. By considering the map $X(k)\to X(A)$, we will be done if we can show that $f$ does not vanish identically on $D(w)(k)$. If $\{ a_i \}_{i \in I}$ is a basis of $A$ as a $k$-vector space then we may write $f=\sum_{i \in I} a_i f_i$ where $f_i \in k[x_1,\ldots, x_n]$. As $f$ is non-zero there exists some $i$ such that $f_i$ is non-zero. As $D(w)(k)$ is Zariski dense in $\mathbb{A}^n_k$ as $k$ is infinite, there then exists some $x$ in $D(w)(k)$ such that $f_i(x)\ne 0$. Then, by setup, $f(x)\ne 0$.
\end{proof}

In the future, we call a homomorphism of groups $H(A)\to H'(A)$ \emph{algebraic} if it is the map on $A$-points of a morphism (necessarily unique) of group $A$-schemes $H_A\to H'_A$.

\medskip

\paragraph{Schemes of cross-section homomorphisms} Fix an abstract group $\Sigma$ and a reductive group $H$ over $k$. We then consider the presheaf
\begin{equation*}
    \underline{\Hom}(\Sigma,H)\colon\cat{Alg}_k\to \cat{Set},\qquad \Hom(\Sigma,H(A))=\Hom(\underline{\Sigma}_A,H_A).
\end{equation*}
This presheaf clearly carries an $H$-conjugation action. If, in addition, $\Sigma$ acts on $H$ by group $k$-scheme morphisms then for a $k$-algebra $A$ we say a homomorphism $f\colon \underline{\Sigma}_A\to H_A\rtimes \underline{\Sigma}_A$ is a \emph{cross-section homomorphism} over $A$ if $p_2(f(\sigma))=\sigma$ for all $\sigma$, where $p_2\colon H_A\rtimes \underline{\Sigma}_A\to\underline{\Sigma}_A$ is the scheme-theoretic projection. We denote by $\underline{Z}^1(\Sigma,H)(A)$ the set of cross-section homomorphisms over $A$ which is clearly a presheaf on $k$-algebras which carries an $H$-conjugation action.

\begin{remn}
The notation $\underline{Z}^1(\Sigma,H)$ is used as this object is equal to the scheme of $1$-cocycles in \cite[Appendix A]{DHKMModLp}.
\end{remn}

\begin{prop}[{\cite[Lemma A.1 and Corollary A.2]{DHKMModLp}}]\label{prop:cocycle-scheme} Suppose that $\Sigma$ is finite. Then, $\underline{\Hom}(\Sigma,H)$ (resp.\@  $\underline{Z}^1(\Sigma,H)$) is represented by a finite type smooth affine $k$-scheme. Moreover, for all $k$-algebras $A$, and all $f$ in $\underline{\Hom}(\Sigma,H)(A)$ (resp.\@ $\underline{Z}^1(\Sigma,H)(A)$) the orbit map
\begin{equation*}
    \mu_f\colon H_A\to \underline{\Hom}(\Sigma,H)_A,\qquad \bigg(\text{resp.}\,\, \mu_f\colon H_A\to\underline{Z}^1(\Sigma,H)_A\bigg)
\end{equation*}
is smooth.
\end{prop}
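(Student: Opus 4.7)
I would prove the proposition in three stages—representability, smoothness of the representing scheme, and smoothness of the orbit map. The crucial input throughout is that $|\Sigma|$ is invertible in every $\Q$-algebra $A$, so that standard Maschke-style arguments make the group cohomology of $\Sigma$ vanish in positive degrees on any $A[\Sigma]$-module; this vanishing is the engine that drives both smoothness statements. For representability, I would realize each presheaf as a closed subscheme of $H^\Sigma \defeq \prod_{\sigma \in \Sigma} H$: since $\Sigma$ is finite, a group homomorphism $\underline{\Sigma}_A \to H_A$ is nothing other than a set map $\Sigma \to H(A)$ satisfying the finitely many multiplicative relations $f(\sigma)f(\tau) = f(\sigma\tau)$, and these cut out a closed subscheme of $H^\Sigma$. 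In the cross-section case, writing $f(\sigma) = (n(\sigma), \sigma)$, the cocycle condition $n(\sigma\tau) = n(\sigma)\bigl(\sigma \cdot n(\tau)\bigr)$ likewise gives a closed subscheme. Both schemes are visibly affine and of finite type over $k$.

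Second, I would prove smoothness via the infinitesimal lifting criterion. Given a square-zero thickening $A \twoheadrightarrow A/I$ of $\Q$-algebras and a homomorphism (resp.\@ cross-section homomorphism) $f \colon \Sigma \to H(A/I)$, the task is to lift $f$ to a homomorphism valued in $H(A)$. Smoothness of $H$ lets me choose an arbitrary set-theoretic lift $\tilde f(\sigma) \in H(A)$ of each $f(\sigma)$, and the multiplicative failure of $\tilde f$ to be a homomorphism produces a normalized $2$-cocycle with values in $\mathfrak{h}_A \otimes_A I$, viewed as an $A[\Sigma]$-module via $\Ad \circ f$ (with an additional cocycle twist in the cross-section case). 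The obstruction class lies in $H^2(\Sigma, \mathfrak{h}_A \otimes_A I)$ and vanishes by the above cohomological input, so the required lift exists.

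Third, I would handle smoothness of the orbit map $\mu_f$. By $H$-equivariance—left translation on the source, $H$-conjugation (resp.\@ twisted $H$-conjugation) on the target—it suffices to verify smoothness at the identity of $H_A$. Since both source and target are now known to be smooth over $A$, I only need to check surjectivity of the tangent map at $1$. A direct computation identifies this map with the coboundary
\[
    \mathfrak{h}_A \to Z^1(\Sigma, \mathfrak{h}_A), \qquad x \mapsto \bigl(\sigma \mapsto x - \Ad(f(\sigma))(x)\bigr)
\]
(with its evident twist for cross-section homomorphisms), whose cokernel is exactly $H^1(\Sigma, \mathfrak{h}_A)$. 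This again vanishes by Maschke, giving the desired surjectivity. The main obstacle here is less any individual computation than carrying out the cross-section bookkeeping correctly: $\Sigma$ already acts on $H$ by group $k$-scheme automorphisms, and one must keep careful track of how this action gets further twisted by the cocycle $f$ when identifying the tangent spaces, obstruction groups, and module structures in play.
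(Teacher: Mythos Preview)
Your proposal is correct and follows exactly the standard line of argument; the paper itself does not give a proof but cites \cite[Lemma A.1 and Corollary A.2]{DHKMModLp}, whose proof proceeds precisely as you describe (closed subscheme of $H^\Sigma$ for representability, infinitesimal lifting with obstruction in $H^2(\Sigma,\mf{h}_A\otimes I)$ for smoothness, and surjectivity of the tangent map with cokernel $H^1(\Sigma,\mf{h}_A)$ for smoothness of the orbit map, all vanishing because $|\Sigma|$ is invertible).
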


\subsection{Transporter and centralizer schemes} Let $R$ be a ring, $H$ a group-valued functor on $\cat{Alg}_R$, and $X$ a set-valued functor on $\cat{Alg}_R$. Then, for an $R$-algebra $S$ and two elements $\alpha$ and $\beta$ of $X(S)$ we define the \emph{transporter set} to be
\begin{equation*}
    \mathrm{Transp}_H(\alpha,\beta)\defeq \left\{h\in H(S): h\cdot \alpha=\beta\right\}.
\end{equation*}
We then define the \emph{transporter presheaf} to be the presheaf
\begin{equation*}
    \Transp_H(\alpha,\beta)\colon \cat{Alg}_S\to \cat{Set},\qquad T\mapsto \mathrm{Transp}_H(\alpha_T,\beta_T).
\end{equation*}
We abbreviate $\Transp_H(\beta,\beta)$ to $Z_H(\beta)$ and call it the \emph{centralizer presheaf}, which is clearly a group presheaf. We then have the following obvious proposition.

\begin{prop}\label{prop:transp-rep} Suppose that $H$ is a group $R$-scheme and that $X$ is a separated $R$-scheme of finite presentation. Then, for any $R$-algebra $S$ and any elements $\alpha$ and $\beta$ of $X(S)$, the presheaves $\Transp_H(\alpha,\beta)$ and $Z_H(\beta)$ are representable by closed finitely presented subschemes of $H_S$. Moreover, for any $S$-algebra $T$ one has the natural equalities
\begin{equation*}
    \Transp_H(\alpha,\beta)_T=\Transp_H(\alpha_T,\beta_T),\qquad Z_H(\beta)_T=Z_H(\beta_T).
\end{equation*}
\end{prop}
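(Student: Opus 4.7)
The plan is a standard fiber-product argument. First, I view $\alpha$ and $\beta$ as $S$-morphisms $\Spec(S)\to X_S$. Since both $H$ and $X$ are representable, the $H$-action on $X$ corresponds via Yoneda to an $R$-morphism $H\times_R X\to X$; precomposing with $(\id_H,\alpha)$ and base-changing to $S$ yields an $S$-morphism $a_\alpha\colon H_S\to X_S$ which on points sends $h\mapsto h\cdot\alpha$. Pairing $a_\alpha$ with the constant morphism $c_\beta\colon H_S\to X_S$ at $\beta$ gives
\[
\mu \defeq (a_\alpha,c_\beta)\colon H_S\to X_S\times_S X_S,
\]
and I claim that $\Transp_H(\alpha,\beta)$ is represented by the fiber product $P \defeq H_S\times_{X_S\times_S X_S}X_S$ formed along $\mu$ and the diagonal $\Delta_{X_S/S}\colon X_S\to X_S\times_S X_S$: for any $S$-algebra $T$, an element $h\in H(T)$ lies in $P(T)$ exactly when $(h\cdot\alpha_T,\beta_T)$ factors through $\Delta_{X_S/S}$, that is, when $h\cdot\alpha_T=\beta_T$.

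Next, I would exploit the hypotheses on $X$. Separatedness of $X\to\Spec R$ makes $\Delta_{X_S/S}$ a closed immersion, while finite presentation of $X\to\Spec R$ makes both $X_S$ and $X_S\times_S X_S$ of finite presentation over $S$, and hence $\Delta_{X_S/S}$ itself of finite presentation (using the standard fact that a morphism between two finitely presented schemes over a common base is of finite presentation). Pulling back along $\mu$ preserves both properties, so $P\hookrightarrow H_S$ is a closed immersion of finite presentation representing $\Transp_H(\alpha,\beta)$. The centralizer case is the specialization $\alpha=\beta$. The base-change statement is automatic because fiber products commute with base change: for any $S$-algebra $T$, the $T$-base change $P_T = H_T\times_{X_T\times_T X_T}X_T$ represents $\Transp_H(\alpha_T,\beta_T)$ by the same computation, and similarly for centralizers.

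I do not anticipate any substantial obstacle: the proposition is essentially a formal consequence of the fiber-product description together with separatedness and finite presentation of $X$. The only point requiring any care is the passage from the functorial $H$-action on $X$ to an actual morphism of schemes, which is harmless by Yoneda and representability of $H$ and $X$.
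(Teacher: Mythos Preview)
Your argument is correct and is exactly the standard fiber-product-with-the-diagonal proof one would expect here; the paper itself gives no proof, merely labeling the proposition as ``obvious.'' There is nothing to compare or correct.
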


\subsection{Eigenvalue decomposition of Lie algebras}

We record here the following result, which, for a cocharacter of a smooth group scheme, relates a character decomposition and an eigenvalue decomposition of the Lie algebra.

\begin{lem}\label{lem:Gm-Ad-ad}
Let $S$ be a scheme and $H$ a smooth group $S$-scheme with Lie algebra $\mf{h}$. 
Let $\rho \colon \bG_{m,S} \to H$ be a morphism of group $S$-schemes. Set $h=d\rho(1)$, and for an integer $i$ we set 
\begin{equation*}
    \mf{h}_{\rho,i}=\{ x \in \mf{h} : \Ad (\rho (z)) x=z^i x\text{ for all }z\},\qquad \mf{h}_{h,i}=\{ x \in \mf{h} : \ad (h)(x)=ix \}.
\end{equation*} Then we have $\mf{h}_{\rho,i} \subseteq \mf{h}_{h,i}$. 
This is an equality if $S$ is a $\Q$-scheme. 
\end{lem}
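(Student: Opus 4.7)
The composition $\Ad \circ \rho \colon \bG_{m,S} \to \GL(\mf{h})$ gives a $\bG_m$-action on the locally free sheaf $\mf{h}$, and by the standard equivalence between $\bG_m$-representations and $\Z$-graded modules (valid over any base) this produces a direct sum decomposition $\mf{h} = \bigoplus_{i \in \Z} \mf{h}_{\rho,i}$ by locally free subsheaves. I would record this decomposition up front, since both inclusions make use of it.

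For the containment $\mf{h}_{\rho,i} \subseteq \mf{h}_{h,i}$, the plan is to differentiate the defining identity. Working with $\mf{h}$ as a vector bundle $V \to S$, take $x \in \mf{h}_{\rho,i}(S)$ and compare the two $S$-morphisms $\bG_{m,S} \to V$ given by $z \mapsto \Ad(\rho(z))(x)$ and $z \mapsto z^i x$; they agree by hypothesis. Evaluating the induced maps on $\Lie(\bG_{m,S})$ at the canonical generator $1$, the first yields $\ad(d\rho(1))(x) = \ad(h)(x)$ by functoriality of $\Lie$ applied to $\bG_{m,S} \xrightarrow{\rho} H \xrightarrow{\Ad} \GL(\mf{h})$, while the second yields $i \cdot x$ by direct computation. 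Hence $\ad(h)(x) = i \cdot x$. This step is valid over any base.

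Suppose now that $S$ is a $\Q$-scheme, and take a local section $x$ of $\mf{h}_{h,i}$. Using the decomposition above, write $x = \sum_j x_j$ with $x_j$ a local section of $\mf{h}_{\rho,j}$. Applying $\ad(h)$ and using the already-proved forward inclusion on each summand, we obtain $\ad(h)(x) = \sum_j j \cdot x_j$; on the other hand $\ad(h)(x) = i \cdot x = \sum_j i \cdot x_j$. Subtracting and using that the sum is direct gives $(j - i) x_j = 0$ for every $j$. Since $j - i$ is invertible in $\mathcal{O}_S$ for all $j \ne i$, we conclude $x_j = 0$ for $j \ne i$, so $x = x_i \in \mf{h}_{\rho,i}$. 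This proves the reverse containment.

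The only real subtlety is the legitimacy of the weight decomposition $\mf{h} = \bigoplus_i \mf{h}_{\rho,i}$ over a general base $S$; I would cite this as the standard description of quasi-coherent $\bG_m$-representations as $\Z$-graded quasi-coherent sheaves. Everything else is formal: the forward inclusion is a derivative computation, and the converse is a one-line linear algebra argument enabled by invertibility of nonzero integers in characteristic zero. No deeper input from the theory of reductive groups is needed — $H$ is only required to be smooth so that $\mf{h}$ is locally free and the Lie functor behaves well.
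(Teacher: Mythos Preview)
Your proposal is correct and follows essentially the same route as the paper: both invoke the weight decomposition $\mf{h}=\bigoplus_i \mf{h}_{\rho,i}$ of a $\bG_m$-representation (the paper cites \cite[Lemma A.8.8]{CGP}), compute the derivative of $\Ad\circ\rho$ to obtain $\ad(h)$ and note that the $i^{\text{th}}$-power map differentiates to multiplication by $i$, and then over $\Q$ use that the eigenspaces $\mf{h}_{h,i}$ for distinct integers $i$ are linearly independent. Your version spells out the last step by expanding $x=\sum_j x_j$ and cancelling $(j-i)x_j$, whereas the paper phrases it as linear independence of the $\mf{h}_{h,i}$, but these are the same argument.
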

\begin{proof}
We have $d(\Ad \circ \rho)(1)=\ad (h)$ under the identification of the Lie algebra of $\GL (\mf{h})$ with $\End (\mf{h})$. 
By taking the weight decomposition of $\mf{h}$ under $\Ad \circ \rho$ (cf.\@ \cite[Lemma A.8.8]{CGP}), we obtain the claim from the fact that the derivative of the $i^\text{th}$-power map $\mathbb{G}_{m,S} \to \mathbb{G}_{m,S}$ is the multiplication-by-$i$ map. The last claim follows from $\mf{h}=\bigoplus_{i \in \bZ} \mf{h}_{\rho,i}$ and that $\mf{h}_{h,i}$ for $i \in \bZ$ are linearly independent if $S$ is a $\Q$-scheme. 
\end{proof}

\section{The classical setting}

In this section we recall the Jacobson--Morozov theorem and the Jacobson--Morozov theorem for parameters in their classical settings. This will not only serve to emphasize the results we wish to geometrize, but will play an important role in the proof of these more general results.

\subsection{The Jacobson--Morozov theorem}

Let $k$ be a field of characteristic $0$ and $H$ an algebraic group over $k$ such that $H^{\circ}$ is reductive. It will be useful to explicitly name the matrices
\begin{equation*}
    e_0=\begin{pmatrix}0 & 1\\ 0 & 0\end{pmatrix},\quad h_0=\begin{pmatrix}1 & 0\\ 0 & -1\end{pmatrix},\quad f_0=\begin{pmatrix}0 & 0\\ 1 & 0\end{pmatrix},
\end{equation*}
which form a $k$-basis of the Lie algebra $\mf{sl}_{2,k}$. We then have the Jacobson--Morozov Theorem as follows.

\begin{thm}[{cf.\cite[VIII, \S 11, \textnumero 2, Proposition 2 and Corollaire]{BourLie78}}]\label{thm:JM-classical} The map 
\begin{equation*}
    \JM\colon \Hom(\SL_{2,k},H)\to \mc{N}(k),\qquad \theta\mapsto d\theta(e_0)
\end{equation*}
is an $H(k)$-equivariant surjection, and induces a bijection 
\begin{equation*}
    \Hom(\SL_{2,k},H)/H(k)\to \mc{N}(k)/H(k).
\end{equation*}
\end{thm}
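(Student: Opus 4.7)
The plan is to establish three claims separately: $H(k)$-equivariance of $\JM$, surjectivity of $\JM$, and injectivity of the induced map on $H(k)$-orbit spaces. Equivariance is immediate from the chain rule: for $h\in H(k)$, one has $d(\Int(h)\circ\theta)(e_0) = \Ad(h)(d\theta(e_0))$, so that $\JM(\Int(h)\circ\theta) = \Ad(h)(\JM(\theta))$.

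For surjectivity, I would first use that $\SL_{2,k}$ is connected to factor any $\theta$ through $H^\circ$, reducing to the case where $H$ is itself connected reductive. Given $x\in\mc{N}(k)$, the classical Lie-algebraic Jacobson--Morozov theorem \cite[VIII, \S11, \textnumero 2, Proposition~2]{BourLie78} produces an $\mf{sl}_2$-triple $(e,h,f)$ in $\mf{h}$ with $e=x$, equivalently a Lie algebra homomorphism $\phi\colon \mf{sl}_{2,k}\to\mf{h}$ with $\phi(e_0)=x$. Since $\SL_{2,k}$ is semi-simple and simply connected, Proposition~\ref{prop:hom-schem-omnibus}(2) identifies $\underline{\Hom}(\SL_{2,k},H)$ with $\underline{\Hom}(\mf{sl}_{2,k},\mf{h})$, so $\phi$ integrates uniquely to some $\theta\in\Hom(\SL_{2,k},H)$ with $d\theta(e_0)=x$, as desired.

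For injectivity on orbits, suppose $\JM(\theta_1)$ and $\JM(\theta_2)$ are $H(k)$-conjugate. Replacing $\theta_1$ by an appropriate conjugate reduces us to the situation $d\theta_1(e_0)=d\theta_2(e_0)=x$. The derivatives $d\theta_1,d\theta_2$ then give two $\mf{sl}_2$-triples in $\mf{h}$ sharing the same $e$-component $x$. The uniqueness half of the classical theorem \cite[VIII, \S11, \textnumero 2, Corollaire]{BourLie78} (Kostant's theorem) asserts that any two such triples are conjugate by an element of $H^\circ(k)$ lying in the centralizer of $x$. Using Proposition~\ref{prop:hom-schem-omnibus}(2) once more to transport this Lie-algebraic conjugacy back to group homomorphisms, we conclude that $\theta_1$ and $\theta_2$ are $H(k)$-conjugate.

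The main subtlety, and the step requiring genuine care, is the rationality in the uniqueness statement: the conjugator must come from $H(k)$ rather than merely from $H(\overline{k})$. This is precisely what the cited Bourbaki corollary delivers, by constructing the conjugating element as the exponential of a $k$-rational nilpotent element of the Lie algebra centralizer of $x$. Once this classical Lie-algebraic input is in hand, the transfer to the group level is mechanical and driven entirely by the simple-connectedness of $\SL_2$ via Proposition~\ref{prop:hom-schem-omnibus}(2).
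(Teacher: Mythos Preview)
Your proposal is correct and is essentially what the paper does implicitly: the paper gives no proof of its own for this theorem, simply citing Bourbaki's Lie-algebraic Jacobson--Morozov theorem and its Kostant-type uniqueness corollary. Your argument is precisely the standard unpacking of how that citation yields the group-level statement, using Proposition~\ref{prop:hom-schem-omnibus}(2) to pass between $\Hom(\SL_{2,k},H^\circ)$ and $\Hom(\mf{sl}_{2,k},\mf{h})$ via the simple-connectedness of $\SL_2$, together with the observation that any homomorphism from the connected group $\SL_{2,k}$ lands in $H^\circ$.
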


Let us call a triple $(e,h,f)$ of elements an \emph{$\mf{sl}_2$-triple} in $\mf{h}$ if the following equalities hold
\begin{equation*}
    [h,e]=2e,\quad [h,f]=-2f,\quad [e,f]=h.
\end{equation*}
Let us denote by $\mc{T}(k)$ (or $\mc{T}_H(k)$ when we want to emphasize $H$), the set of $\mf{sl}_2$-triples in $\mf{h}$. The natural adjoint action of $H(k)$ on $\mf{h}$ induces an action of $H(k)$ on $\mc{T}(k)$.

\begin{thm}\label{thm:rel-JM-triples-classical} The following diagram is commutative and each arrow is a bijection 
\begin{equation*}
    \xymatrixrowsep{3pc}\xymatrixcolsep{5pc}\xymatrix{\Hom(\SL_{2,k},H)/H(k)\ar[r]^{\theta\,\longmapsto \,d\theta}\ar[d]^{\JM} & \Hom(\mf{sl}_{2,k},\mf{h})/H(k)\ar[d]^{\nu\mapsto (\nu(e_0),\nu(h_0),\nu(f_0))}\\ \mathcal{N}(k)/H(k)
    & \mathcal{T}(k)/H(k). 
    \ar[l]_{e\,\longmapsfrom \,(e,h,f)}}
\end{equation*}
\end{thm}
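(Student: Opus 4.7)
The plan is to exploit the already-established bijectivity of $\JM$ (Theorem~\ref{thm:JM-classical}) together with independent verifications that the top horizontal and right vertical arrows are bijections; then commutativity of the square forces the bottom arrow to be a bijection as well by a diagram chase. Commutativity is immediate: for $\theta\in\Hom(\SL_{2,k},H)$, applying $\JM$ yields $d\theta(e_0)$, whereas going right-then-down yields $d\theta\mapsto (d\theta(e_0),d\theta(h_0),d\theta(f_0))\mapsto d\theta(e_0)$.

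For the top arrow, since $\SL_{2,k}$ is connected, every homomorphism $\SL_{2,k}\to H$ factors through $H^\circ$, and by definition $\mf{h}=\Lie(H^\circ)$, so one may replace $H$ by its reductive identity component without altering either the source, the target, or the $H(k)$-action (which in both cases is induced by conjugation on $H^\circ$ and by $\Ad$ on $\mf{h}$, hence is defined for all of $H(k)$, not merely $H^\circ(k)$). Applying Proposition~\ref{prop:hom-schem-omnibus}(2) with source $\SL_{2,k}$ (semi-simple and simply connected) and target $H^\circ$ yields an $H^\circ$-equivariant isomorphism of $k$-schemes $\underline{\Hom}(\SL_{2,k},H^\circ)\isomto \underline{\Hom}(\mf{sl}_{2,k},\mf{h})$, sending $\theta$ to $d\theta$. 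Evaluating at $k$ and then quotienting by the $H(k)$-action (which is visibly compatible with the isomorphism) gives the asserted bijection of orbit spaces.

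For the right arrow, any Lie algebra homomorphism $\nu\colon \mf{sl}_{2,k}\to\mf{h}$ is determined by the triple $(\nu(e_0),\nu(h_0),\nu(f_0))$, and since the brackets among $e_0,h_0,f_0$ generate the bracket on $\mf{sl}_{2,k}$, the compatibility of $\nu$ with the Lie bracket is equivalent to this triple being an $\mf{sl}_2$-triple. This gives an obviously $H(k)$-equivariant bijection $\Hom(\mf{sl}_{2,k},\mf{h})\isomto \mc{T}(k)$, which descends to $H(k)$-orbits.

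With three of the four arrows known to be bijections and the square commutative, the fourth arrow $(e,h,f)\mapsto e$ is forced to be a bijection. I do not expect any substantive obstacle: the only subtlety is the bookkeeping of $H(k)$- versus $H^\circ(k)$-equivariance when $H$ fails to be connected, and this dissolves once one observes that $H(k)$ acts uniformly on all four corners through its images in $\Aut(H^\circ)$ and $\GL(\mf{h})$.
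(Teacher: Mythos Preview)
Your proof is correct and matches the paper's approach: although the paper does not prove this classical statement directly, its proof of the relative analogue (Theorem~\ref{thm:rel-JM-triples}) follows exactly the same three-out-of-four-then-diagram-chase structure, invoking the Jacobson--Morozov theorem for the left arrow, Proposition~\ref{prop:hom-schem-omnibus}(2) for the top arrow, and declaring the right arrow clear. Your careful reduction to $H^\circ$ to accommodate possibly disconnected $H$ is a detail the paper leaves implicit.
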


We end this subsection by explaining the relationship between the centralizers of $\theta$ and $N=\JM(\theta)$. Namely, let us set 
\begin{equation*}
 \mf{u}^N = \mathrm{im}(\ad (N)) \cap \ker (\ad (N)),\qquad U^N=\exp(\mf{u}^N). 
\end{equation*}
Then, we have the following Levi decomposition statement.

\begin{prop}\label{prop:Zudec}
The equality $Z_{H}(N) = U^N\rtimes Z_{H}(\theta)$ holds. Further we have 
\begin{equation*}
    \Lie (Z_{H}(\theta))=\Lie (Z_{H}(N))_0,\qquad \Lie (U^N)=\bigoplus_{i >0} \Lie (Z_{H}(N))_i,
\end{equation*}
where for an integer $i$ we set
\begin{equation*}
    \Lie (Z_{H}(N))_i=\{ x \in \Lie Z_{H}(N) : \Ad \left( \theta \left( \left( \begin{smallmatrix}z & 0\\ 0 & z^{-1} \end{smallmatrix} \right) \right) \right)x=z^i x \}.
\end{equation*}
\end{prop}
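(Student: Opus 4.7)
The plan is to decompose everything along the $\mathrm{SL}_2$-action, using that $\mathfrak{h}$ is a representation of $\mathfrak{sl}_{2,k}$ via $d\theta$, and that by Lemma \ref{lem:Gm-Ad-ad} applied to the cocharacter $z\mapsto \theta(\mathrm{diag}(z, z^{-1}))$ the weight-space decomposition of $\mathfrak{h}$ coincides with the $\mathrm{ad}(h)$-eigenspace decomposition for $h = d\theta(h_0)$. I first handle the Lie algebra equalities and then promote these to the group-level identity.

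For the Lie algebra claims, decompose $\mathfrak{h}$ into irreducible $\mathfrak{sl}_2$-subrepresentations under $d\theta$. In each irreducible of dimension $n+1$, the kernel of $\mathrm{ad}(N)$ is one-dimensional, spanned by the highest weight vector of $h$-weight $n \geq 0$, so $\ker(\mathrm{ad}(N)) = \mathrm{Lie}(Z_H(N))$ lives in non-negative weights. The weight-$0$ piece consists of the highest weight vectors of the trivial subrepresentations, i.e.,\@ the full $d\theta(\mathfrak{sl}_2)$-invariant subspace, which equals $\mathrm{Lie}(Z_H(\theta))$ since centralizing $\theta$ is infinitesimally the same as killing $d\theta$ (using the connectedness of $\mathrm{SL}_2$ together with Proposition \ref{prop:hom-schem-omnibus}(2)). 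For $i > 0$, the highest weight vector in an irreducible of dimension $i+1$ is automatically $\mathrm{ad}(N)$ applied to the next-to-top weight vector, showing the positive-weight part of $\ker(\mathrm{ad}(N))$ lies in $\mathrm{im}(\mathrm{ad}(N))$, while the weight-$0$ part does not (as $\mathrm{ad}(N)$ raises weight by $2$). This yields both Lie algebra equalities and shows en route that $\mathfrak{u}^N$ is an $\mathrm{ad}$-nilpotent Lie subalgebra contained in $\mathcal{N}_H(k)$ (nilpotency of elements via Proposition \ref{prop:nilp-equiv}, applied using that $\mathfrak{u}^N \subseteq [\mathfrak{h},\mathfrak{h}]$), so $U^N$ is a subgroup of $H(k)$ by Proposition \ref{prop:exp-omnibus}(3).

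Next I assemble the semi-direct product. Both inclusions $Z_H(\theta) \subseteq Z_H(N)$ and $U^N \subseteq Z_H(N)$ are immediate: the former because $\mathrm{Ad}(g)(N) = d(\mathrm{Int}(g)\circ\theta)(e_0) = N$ for $g \in Z_H(\theta)$, and the latter from $\mathrm{Ad}(\exp(x))(N) = \exp(\mathrm{ad}(x))(N) = N$ for $x \in \mathfrak{u}^N \subseteq \ker(\mathrm{ad}(N))$ by Proposition \ref{prop:exp-omnibus}(2). For $g \in Z_H(\theta)$, the operator $\mathrm{Ad}(g)$ commutes with $\mathrm{ad}(N)$ and so preserves $\mathfrak{u}^N = \ker(\mathrm{ad}(N)) \cap \mathrm{im}(\mathrm{ad}(N))$, giving normalization $g U^N g^{-1} \subseteq U^N$. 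Triviality of $U^N \cap Z_H(\theta)$ is immediate from the Lie algebra identities: the logarithm of an intersection element lies in $\mathfrak{u}^N \cap \mathrm{Lie}(Z_H(\theta)) = \mathfrak{u}^N \cap \mathrm{Lie}(Z_H(N))_0 = 0$. These combine into a group scheme morphism $m\colon U^N \rtimes Z_H(\theta) \to Z_H(N)$ whose differential at the identity is the direct sum isomorphism of Lie algebras just proved; as source and target are both smooth in characteristic $0$, $m$ is étale at the identity, hence étale everywhere by translation, and being set-theoretically injective it is an open immersion.

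The remaining, and main, obstacle is surjectivity of $m$, equivalently that $Z_H(\theta)$ meets every connected component of $Z_H(N)$. For this I would apply Kostant's rigidity for $\mathfrak{sl}_2$-triples: for any $g \in Z_H(N)$, the triple $(N, \mathrm{Ad}(g)(h), \mathrm{Ad}(g)(f))$ is another $\mathfrak{sl}_2$-triple extending $N$, and any two such extensions are conjugate by a (unique) element of $U^N$ (compare \cite[VIII, \S 11, \textnumero 2]{BourLie78}). Multiplying $g$ by the resulting element of $U^N$ yields an element of $Z_H(\theta)$ in the same $U^N$-coset as $g$, which completes the decomposition.
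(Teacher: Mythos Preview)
Your argument is correct and is essentially a fleshed-out version of the approach behind the references the paper cites (Barbasch--Vogan for the Levi decomposition via $\mathfrak{sl}_2$-weight analysis and Kostant rigidity, Elkington for the Lie algebra identities). One small point: in the trivial-intersection step, you should say explicitly why $\log g \in \Lie(Z_H(\theta))$ for unipotent $g \in Z_H(\theta)$---this follows from $\exp(\Ad(\theta(a))(\log g)) = \theta(a) g \theta(a)^{-1} = g$ and injectivity of $\exp$ on nilpotents---but this is a one-line fix, not a gap.
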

\begin{proof}
The first claim is proved in the same way as \cite[Proposition 2.4]{BaVoUnipss}. The second follows from \cite[Lemma 5.1]{Elkington} by taking the derived group of $H^{\circ}$. 
\end{proof}

\subsection{The Jacobson--Morozov theorem for parameters}\label{ss:JM-for-params-classical}

We now recall the analogue of the Jacobson--Morozov theorem for parameters. We use the notation from \S\ref{ss:L-and-C}.

\begin{defn} Topologize $\LG(\C)$ by giving $\wh{G}(\C)$ the classical topology.
\begin{enumerate} 
\item A \emph{(complex) Weil--Deligne parameter} for $G$ is a pair $(\varphi,N)$ where 
\begin{itemize}
    \item $\varphi\colon W_F\to \LG(\C)$ is a continuous cross-section homomorphism,
    \item $N\in\wh{\mc{N}}(\C)$ is such that $\mathrm{Ad}(\varphi(w))(N)=\|w\|N$ for all $w\in W_F$.
\end{itemize}
\item A \emph{(complex) $L$-parameter} for $G$ is a map 
\begin{equation*}
    \psi\colon W_F\times \SL_2(\C)\to \LG(\C),
\end{equation*}
such that 
\begin{itemize}
    \item $\psi|_{W_F}\colon W_F\to \LG(\C)$ is a continuous cross-section homomorphism,
    \item $\psi|_{\SL_2(\C)}\colon \SL_2(\C)\to \LG(\C)$ takes values in $\widehat{G}(\C)$ and is algebraic.
\end{itemize}
\end{enumerate}
\end{defn}

For $\tau\in \{L,\WD\}$ let us denote by $\Phi^{\tau,\square}_G$ the set of complex $\tau$-parameters for $G$. Recall that a Weil--Deligne parameter $(\varphi,N)$ (resp.\@ an $L$-parameter $\psi$) is called \emph{Frobenius semi-simple} if for one (equiv.\@ for any) lift $w_0$ of arithmetic Frobenius the element $\varphi(w_0)$ (resp.\@ $\psi(w_0)$) is semi-simple (in the sense of \cite[\S8.2]{BorelCorvallis}). We denote by $\Phi^{\tau,\ss,\square}_G$ the subset of Frobenius semi-simple $\tau$-parameters. For each $\tau$ there is a natural action of $\wh{G}(\C)$ on $\Phi^{\tau,\square}_G$ which stabilizes the subset $\Phi^{\tau,\ss,\square}_G$. We then define $\Phi^\tau_G\defeq \Phi^{\tau,\square}_G/\wh{G}(\C)$ and $\Phi^{\tau,\ss}_G\defeq \Phi^{\tau,\ss,\square}_G/\wh{G}(\C)$. For an element $\psi$ of $\Phi^{L,\square}_G$ we denote by $\theta$ (or $\theta_\psi$ when we want to emphasize $\psi$) the morphism $\psi|_{\SL_2(\C)}\colon \SL_2(\C)\to \wh{G}(\C)$.

To upgrade Theorem \ref{thm:JM-classical} to the parameter setting, we need to associate a Weil--Deligne parameter to any $L$-parameter. To this end, let us define a morphism of groups
\begin{equation*}
    i=(i_1,i_2)\colon W_F\to W_F\times \SL_2(\C),\qquad w\mapsto \left(w,\left(\begin{smallmatrix}\|w\|^{\frac{1}{2}} & 0\\ 0 & \|w\|^{-\frac{1}{2}}\end{smallmatrix}\right)\right).
\end{equation*}
We then define the \emph{Jacobson--Morozov map} to be the $\wh{G}(\C)$-equivariant map
\begin{equation*}
   \JM\colon \Phi^{L,\square}_G\to \Phi^{\WD,\square}_G,\qquad \psi \mapsto (\psi\circ i,d\theta(e_0)).
\end{equation*}
It is easy to check that $\JM^{-1}(\Phi^{\WD,\ss,\square}_G)$ is precisely $\Phi^{L,\ss,\square}_G$. As the Jacobson--Morozov map is $\widehat{G}(\C)$-equivariant it induces maps $\Phi^{L}_G\to \Phi^{\WD}_G$ and $\Phi^{L,\ss}_G\to \Phi^{\WD,\ss}_G$.

The Jacobson--Morozov map is not a bijection as the following example illustrates.

\begin{eg}\label{eg:JM-not-surj} Set $G=\GL_4$ and as $G$ is split we may replace $\LG(\C)$ with $\wh{G}(\C)=\GL_4(\C)$. Consider the Weil--Deligne parameter $(\varphi,N)$ given as follows
\begin{equation*}
   \varphi\colon w\mapsto \begin{pmatrix} q^2 & 0 & 0 & 0 \\ 0 & q &1 &0 \\ 0 & 0& q & 0 \\ 0 & 0& 0& 1 \end{pmatrix}^{d(w)},\qquad N=\begin{pmatrix} 0 & 0 & 1 & 0 \\ 0 & 0 &0 &1 \\ 0 & 0& 0 & 0 \\ 0 & 0& 0& 0 \end{pmatrix}.
\end{equation*}
Suppose for contradiction that $(\varphi,N)=\JM(\psi)$ for some $\psi$ in $\Phi^{L,\square}_G$. Then, $\psi$ is of the form $\rho \boxtimes \mathrm{Std}$, where $\rho$ is a representation of $W_F$ and $\mathrm{Std}$ is the standard representation of $\mathrm{SL}_2(\C)$. Indeed, as $N$ is conjugate to 
\begin{equation*}
  \begin{pmatrix}0 & 1 & 0 & 0\\ 0 & 0 & 0 & 0\\ 0 & 0 & 0 & 1\\ 0 & 0 & 0 & 0\end{pmatrix},
\end{equation*}
we see from the Jacobson--Morozov theorem that as an $\mathrm{SL}_2(\C)$ representation $\mathbb{C}^4$ is isomorphic to $\mathrm{Std}^{\oplus 2}$. One may then check that the morphism 
\begin{equation*}
    \Hom_{\SL_2(\C)}(\mathrm{Std},\mathbb{C}^4)\boxtimes \mathrm{Std}\to \mathbb{C}^4
\end{equation*}
is an isomorphism of $W_F\times\SL_2(\C)$-representations. Note that by examining the relationship between the $W_F$-actions of $\varphi$ and $\psi$, the twist of $\rho$ by the unramified character $w \mapsto \|w\|^{-1/2}$ must be isomorphic to the representation on $\mathrm{Ker} N$ induced by $\varphi$. In particular $\rho$ is semi-simple. Hence the Weil--Deligne parameter attached to $\psi$ must be Frobenius semi-simple, but the original $(\varphi,N)$ is not Frobenius semi-simple. 
\end{eg}

However, we have the following Jacobson--Morozov theorem for parameters.

\begin{thm}[{see \cite[Proposition 2.2]{GRAinv} or \cite[Proposition 1.13]{ImaLLCell}}]\label{thm:JM-params-classical} The Jacobson--Morzov map $\JM\colon\Phi^{L,\ss,\square}_G\to \Phi^{\WD,\ss,\square}_G$ is a surjection and induces a bijection $\Phi^{L,\ss}_G\to \Phi^{\WD,\ss}_G$.
\end{thm}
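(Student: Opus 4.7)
My plan is to prove surjectivity of $\JM$ on Frobenius semi-simple parameters first, and then to establish the bijection on $\wh{G}(\C)$-conjugacy classes. Both parts rely on the classical Theorem \ref{thm:JM-classical} together with the Levi decomposition of centralizers from Proposition \ref{prop:Zudec}.

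For surjectivity, given $(\varphi, N) \in \Phi^{\WD, \ss, \square}_G$, I would first apply Theorem \ref{thm:JM-classical} to $\wh{G}_\C$ to choose some $\theta_0 \colon \SL_2(\C) \to \wh{G}(\C)$ with $d\theta_0(e_0) = N$. The aim is to produce $\psi(w, g) := \varphi'(w)\theta(g)$ where $\theta$ is a conjugate of $\theta_0$ and $\varphi'(w) := \varphi(w)\theta(i_2(w))^{-1}$; such a $\psi$ is an $L$-parameter with $\JM(\psi) = (\varphi, N)$ precisely when $\varphi'$ commutes with $\theta$. A direct $\mathfrak{sl}_2$-weight computation using the twist identity $\Ad(\varphi(w))(N) = \|w\|N$ shows that $\varphi(w)\theta_0(i_2(w))^{-1}$ lies in $Z_{\LG(\C)}(N)$, which is however weaker than commuting with all of $\theta_0(\SL_2(\C))$.

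To bridge this gap, I would invoke the Levi decomposition $Z_{\LG(\C)}(N) = U^N \rtimes Z_{\LG(\C)}(\theta_0)$ from Proposition \ref{prop:Zudec} together with the Frobenius semi-simplicity of $\varphi(w_0)$. Using a Jordan-decomposition argument and the $U^N$-conjugacy of Levi subgroups of $Z_{\LG(\C)}(N)$, the goal is to find $u \in U^N$ such that, after setting $\theta := \Int(u) \circ \theta_0$, the modified map $\varphi'(w) := \varphi(w)\theta(i_2(w))^{-1}$ lands in $Z_{\LG(\C)}(\theta)$ for \emph{every} $w \in W_F$. Once this commutation is achieved, multiplicativity of $\varphi'$ is automatic (the obstruction to $\varphi'(w)\varphi'(w') = \varphi'(ww')$ was precisely the commutation of $\varphi'(w)$ with each $\theta(i_2(w'))$), continuity is inherited from $\varphi$ and $\theta \circ i_2$, and $\psi(w, g) := \varphi'(w)\theta(g)$ is then the desired Frobenius semi-simple $L$-parameter with $\JM(\psi) = (\varphi, N)$.

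For the bijection on conjugacy classes, given $\psi_1, \psi_2 \in \Phi^{L, \ss, \square}_G$ with $\JM(\psi_i)$ in a common $\wh{G}(\C)$-orbit, one reduces after conjugation to $\JM(\psi_1) = \JM(\psi_2) = (\varphi, N)$. The two $\SL_2$-components $\theta_1, \theta_2$ both lift $N$, so Theorem \ref{thm:JM-classical} furnishes $h \in Z_{\wh{G}(\C)}(N)$ with $\theta_1 = \Int(h) \circ \theta_2$. The remaining task is to adjust $h$ within its coset $h \cdot Z_{\wh{G}(\C)}(\theta_2)$ (which does not alter the relation between $\theta_1$ and $\theta_2$) so that the adjusted $h'$ also conjugates $\psi_2|_{W_F}$ to $\psi_1|_{W_F}$. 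Decomposing $h$ via the Levi decomposition and using Frobenius semi-simplicity of both $\psi_i|_{W_F}$ to handle the unipotent factor yields the required simultaneous conjugation, producing $\Int(h') \circ \psi_2 = \psi_1$. The main obstacle I anticipate is the key step of surjectivity: exploiting Frobenius semi-simplicity to upgrade membership in $Z_{\LG(\C)}(N)$ to membership in $Z_{\LG(\C)}(\theta)$ by a conjugation by an element of $U^N$. This is essentially a Jordan-decomposition argument inside the non-reductive group $Z_{\LG(\C)}(N)$ combined with $U^N$-conjugacy of its Levi subgroups, with additional care needed due to the $\wh{G}$ versus $\LG$ distinction, since $\LG(\C)$ is not algebraic in the usual sense because of its Weil group factor.
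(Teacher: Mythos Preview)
The paper does not give its own proof of this classical statement---it simply cites \cite{GRAinv} and \cite{ImaLLCell}---but it does prove a relative version (Theorem~\ref{thm:rel-JM-param}) whose argument specializes to the classical case and follows the method of \cite[Lemma~2.1 and Proposition~2.2]{GRAinv}. That method is different from yours and avoids the difficulties you flag. Rather than starting from an arbitrary lift $\theta_0$ of $N$ and trying to fix it by $U^N$-conjugation, one uses Frobenius semi-simplicity of $\varphi$ to decompose the $I_F$-invariants $\wh{\mf g}_\C^{\,\varphi(I_F)}$ into $\Ad(\varphi(w_0))$-eigenspaces $\mf h(\lambda)$, and then produces an $\mf{sl}_2$-triple $(N,h,f)$ with $N\in\mf h(q)$, $h\in\mf h(1)$, $f\in\mf h(q^{-1})$ via Kostant's completion procedure (this is Proposition~\ref{prop:gr-prop}). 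For such a $\theta$ one checks directly that $\Ad(\varphi(w))\circ\theta=\theta\circ\Int(i_2(w))$, which is exactly the commutation you need; the uniqueness part of Proposition~\ref{prop:gr-prop} then handles injectivity, with conjugating elements already lying in $Z_{\wh G(\C)}(\varphi,N)$.

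Your proposal has two concrete gaps in the surjectivity step. First, your Jordan-decomposition argument is applied to $\varphi(w_0)\theta_0(i_2(w_0))^{-1}$ in $Z_{\LG(\C)}(N)$, but there is no reason this element is semi-simple: $\varphi(w_0)$ is semi-simple and $\theta_0(i_2(w_0))$ is semi-simple, yet they need not commute, so their product can have nontrivial unipotent part for reasons unrelated to the Levi decomposition of $Z_{\LG(\C)}(N)$. Second, and more seriously, your adjustment by a single $u\in U^N$ is aimed at $w_0$, but you also need $\varphi(I_F)\subseteq Z_{\LG(\C)}(\theta)$ (since $\varphi'|_{I_F}=\varphi|_{I_F}$); the inertia image centralizes $N$ but not $\theta_0$ in general, and nothing in your sketch arranges this. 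The graded approach handles both issues at once by working inside $\wh{\mf g}_\C^{\,\varphi(I_F)}$ from the start and using the eigenspace grading rather than a Jordan decomposition of a particular element.
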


\subsection{Bijection over reductive centralizer locus and applications}\label{ss:red-loc-classical}

The Jacobson--Morozov theorem for parameters is stated at the level of $\wh{G}(\C)$-orbits. While this is a non-issue for now, when we attempt to geometrize this result it becomes more problematic due to the subtle nature of quotients in algebraic geometry. So, we wish to upgrade the Jacobson--Morozov theorem for parameters to a bijectivity statement before quotienting by $\wh{G}(\C)$. 

To begin, we give an analogue of Proposition \ref{prop:Zudec} for parameters. To state it, let $(\varphi,N)$ be an element of $\Phi^{\WD,\square}_G$ and set $U^N(\varphi) \defeq U^N(\C) \cap Z_{\widehat{G}(\C)}(\varphi)$. 

\begin{prop}{\label{prop:Zphidec}}
Let $\psi$ be an element of $\Phi^{L,\square}_G$ and set $(\varphi, N)=\JM(\psi)$. Then, the equality $Z_{\widehat{G}(\C)}(\varphi, N)=U^N(\varphi)\rtimes Z_{\wh{G}(\C)}(\psi)$ holds.
\end{prop}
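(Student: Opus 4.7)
The plan is to deduce this from the non-parameter Levi decomposition, Proposition \ref{prop:Zudec}, by showing that the semi-direct product structure is preserved by conjugation by $\varphi(w)$ for every $w \in W_F$, so that intersecting with $Z_{\wh{G}(\C)}(\varphi)$ respects the decomposition.

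First I would unravel definitions. Write $\psi_W(w) \defeq \psi(w,1)$ and $t_w \defeq \left(\begin{smallmatrix}\|w\|^{1/2} & 0\\ 0 & \|w\|^{-1/2}\end{smallmatrix}\right)$, so that $\varphi(w)=\psi_W(w)\theta(t_w)$. Because $\psi$ is a homomorphism on $W_F\times\SL_2(\C)$, the elements $\psi_W(w)$ and $\theta(s)$ commute for every $s$, whence $Z_{\wh{G}(\C)}(\psi)=Z_{\wh{G}(\C)}(\varphi)\cap Z_{\wh{G}(\C)}(\theta)$ and $Z_{\wh{G}(\C)}(\varphi,N)=Z_{\wh{G}(\C)}(\varphi)\cap Z_{\wh{G}(\C)}(N)$. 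Applying Proposition \ref{prop:Zudec} to $H=\wh{G}$ gives $Z_{\wh{G}(\C)}(N)=U^N\rtimes Z_{\wh{G}(\C)}(\theta)$, so it remains to intersect this semi-direct product with $Z_{\wh{G}(\C)}(\varphi)$.

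Next I would verify the two key stability statements for each $w\in W_F$. For $U^N$: the identity $\mathrm{Ad}(\varphi(w))(N)=\|w\|N$ yields $\mathrm{Ad}(\varphi(w))\circ\ad(N)=\|w\|\,\ad(N)\circ\mathrm{Ad}(\varphi(w))$, which implies $\mathrm{Ad}(\varphi(w))$ preserves both $\ker(\ad N)$ and $\mathrm{im}(\ad N)$, and therefore preserves $\mathfrak{u}^N$; applying $\exp$ shows $\mathrm{Int}(\varphi(w))$ preserves $U^N$. For $Z_{\wh{G}(\C)}(\theta)$: using $\varphi(w)=\psi_W(w)\theta(t_w)$ together with the commutation of $\psi_W(w)$ and $\theta(s)$, a direct computation gives $\varphi(w)\theta(s)\varphi(w)^{-1}=\theta(t_w s t_w^{-1})$, so $\mathrm{Int}(\varphi(w))$ preserves $\theta(\SL_2(\C))$ and hence its centralizer.

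With both stability statements in hand, if $g\in Z_{\wh{G}(\C)}(\varphi,N)$ is written uniquely as $g=uh$ with $u\in U^N$ and $h\in Z_{\wh{G}(\C)}(\theta)$, then for every $w$ one has $uh=\varphi(w)g\varphi(w)^{-1}=\bigl(\varphi(w)u\varphi(w)^{-1}\bigr)\bigl(\varphi(w)h\varphi(w)^{-1}\bigr)$, and uniqueness of the semi-direct factorization forces $u,h\in Z_{\wh{G}(\C)}(\varphi)$. Conversely, any product of an element of $U^N(\varphi)=U^N\cap Z_{\wh{G}(\C)}(\varphi)$ with an element of $Z_{\wh{G}(\C)}(\psi)=Z_{\wh{G}(\C)}(\theta)\cap Z_{\wh{G}(\C)}(\varphi)$ obviously lies in $Z_{\wh{G}(\C)}(\varphi,N)$, and the semi-direct product structure is inherited from that of $Z_{\wh{G}(\C)}(N)$. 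I expect the only real subtlety to be the second stability claim, where one must use the precise form of $\varphi$ as a twist of $\psi_W$ by $\theta(t_w)$ rather than any vague appeal to equivariance; the first stability claim and the formal uniqueness argument should go through essentially mechanically.
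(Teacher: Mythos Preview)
Your argument is correct and follows essentially the same route as the paper: reduce to Proposition~\ref{prop:Zudec}, show that $\Int(\varphi(w))$ stabilizes both $U^N$ and $Z_{\wh{G}(\C)}(\theta)$, and then use uniqueness of the semi-direct product factorization. The only minor variation is in the justification of the stability of $U^N$: the paper observes that $\Int(\varphi(w))$ preserves $Z_{\wh{G}(\C)}(N)$ and hence its unipotent radical, whereas you work directly at the Lie algebra level via the identity $\Ad(\varphi(w))\circ\ad(N)=\|w\|\,\ad(N)\circ\Ad(\varphi(w))$; both are valid and equally short.
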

\begin{proof} Given Proposition \ref{prop:Zudec} it suffices to show that if $ua$ belongs to $Z_{\widehat{G}(\C)}(\varphi, N)$, where  $u$ is in $U^N(\C)$ and $a$ is in $Z_{\widehat{G}(\C)}(\theta)$, then in fact $u$ belongs to $U^N(\varphi)$ and $a$ belongs to $Z_{\wh{G}(\C)}(\psi)$. To prove this, we note that conjugation by an element in the image of $\varphi$ stabilizes both $U^N(\C)$ and $Z_{\widehat{G}(\C)}(\theta)$. Indeed, since $\Ad(\varphi(w))(N) = \|w\| N$, we have that conjugation by $\varphi(w)$ stabilizes $Z_{\widehat{G}(\C)}(N)$ and hence its unipotent radical $U^N$. On the other hand, as $\varphi(w)$ equals $\psi(w,1)\theta(i_2(w))$, and $\psi(w,1)$ commutes with $\theta$, one may easily check the claim that $\varphi(w)$ normalizes $Z_{\wh{G}(\C)}(\theta)$. Now for each $w \in W_F$, $ua$ equals $\Int(\varphi(w))(u)\Int(\varphi(w))(a)$. Therefore, $ \Int(\varphi(w))(a)a^{-1} $ equals $\Int(\varphi(w))(u)^{-1}u$. By what we have proven, the former is an element of $Z_{\widehat{G}(\C)}(\theta)$ and the latter is an element of $U^N(\C)$. Since $U^N(\C)$ and $Z_{\widehat{G}(\C)}(\theta)$ have trivial intersection, we have that both sides are trivial and so $a$ and $u$ commute with $\varphi(w)$ as desired. 
\end{proof}

We may use this decomposition to exhibit an example of a semi-simple $L$-parameter $\psi$ whose associated Weil--Deligne parameter has strictly larger centralizer. 

\begin{eg}
Let $G=\GL_3$ and consider the element $\psi$ in $\Phi^{L,\ss,\square}_G$ given by the following
\begin{equation*}
 \psi \left(w, \begin{pmatrix}
 a & b \\ 
 c & d 
 \end{pmatrix} \right) = 
 \left( 
 \begin{pmatrix}
 a & b & 0  \\ 
 c & d & 0  \\ 
 0 & 0 & 1  
 \end{pmatrix} 
 \begin{pmatrix}
 \| w \|^{-\frac{1}{2}} & 0 & 0 \\ 
 0 & \| w \|^{-\frac{1}{2}} & 0 \\ 
 0 & 0 & 1 
 \end{pmatrix} 
 , w 
 \right) . 
\end{equation*}
and set $(\varphi,N)=\JM(\psi)$. In this case, we have 
\[
 \fu^N =\left\{ 
 \begin{pmatrix}
 0 & * & *  \\ 
 0 & 0 & 0  \\ 
 0 & * & 0  
 \end{pmatrix} 
 \right\} . 
\]
Hence 
\[
 \begin{pmatrix}
 1 & 0 & 1  \\ 
 0 & 1 & 0  \\ 
 0 & 0 & 1  
 \end{pmatrix} \in 
 Z_{\wh{G} (\bC)}(\varphi,N) \cap U^N(\C) , 
\]
but it does not belong to 
$Z_{\wh{G} (\bC)}(\psi)$ 
by Proposition \ref{prop:Zphidec}.
\end{eg}

\begin{rem}
We remark that although $Z_{\widehat{G}(\C)}(\psi)$ need not equal $Z_{\widehat{G}(\C)}(\JM(\psi))$, these groups are the same for the purposes of parametrizing $L$-packets as in \cite{KalLLCnqs} as they have the same component groups by Proposition \ref{prop:Zphidec}. More generally, one can consider the group $S^{\natural}_{\psi}$ (resp.\@ $S^{\natural}_{\JM(\psi)}$) that is related to \cite[Conjecture F]{KalLLCnqs} and is defined by 
\begin{equation*}
    Z_{\wh{G}(\C)}(\psi)/[Z_{\wh{G}(\C)}(\psi)\cap \widehat{G}(\C)^{\der}]^{\circ},\qquad \bigg(\text{resp.}\,\,Z_{\wh{G}(\C)}(\JM(\psi))/[Z_{\wh{G}(\C)}(\JM(\psi)) \cap \widehat{G}(\C)^{\der}]^{\circ}\bigg).
\end{equation*} 
These groups are equal by Proposition \ref{prop:Zphidec} as $U^N(\varphi)$ is contained in $[Z_{\wh{G}(\C)}(\JM(\psi)) \cap \widehat{G}(\C)^{\der}]^{\circ}$.
\end{rem}

This decomposition also allows us to give an algebraic condition for when a Weil--Deligne parameter is the image under the Jacobson--Morozov map of a semi-simple $L$-parameter with the same centralizer.
In the rest of this section, we use Proposition \ref{prop:red-cent-ss}, but the proof of the proposition does not depend on the rest of this section. 

\begin{prop}\label{prop:red-cent-equiv} The group $Z_{\widehat{G}(\C)}(\varphi,N)^\circ$ is reductive if and only if $(\varphi,N)=\JM(\psi)$ for a Frobenius semi-simple Weil--Deligne parameter $\psi$ such that $Z_{\wh{G}(\C)}(\psi)=Z_{\wh{G}(\C)}(\varphi,N)$.
\end{prop}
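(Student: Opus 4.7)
The plan is to treat the two directions separately, exploiting Proposition~\ref{prop:Zphidec}, Proposition~\ref{prop:red-cent-ss}, and the classical Jacobson--Morozov surjectivity theorem (Theorem~\ref{thm:JM-params-classical}).

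For the direction $(\Leftarrow)$, suppose $(\varphi,N)=\JM(\psi)$ for a Frobenius semi-simple $L$-parameter $\psi$ with $Z_{\wh{G}(\C)}(\psi)=Z_{\wh{G}(\C)}(\varphi,N)$. Frobenius semi-simplicity of $\psi$, together with the relevant direction of Proposition~\ref{prop:red-cent-ss}, implies that $Z_{\wh{G}(\C)}(\psi)^\circ$ is reductive, and the centralizer identification in the hypothesis then transports this reductivity to $Z_{\wh{G}(\C)}(\varphi,N)^\circ$.

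For the direction $(\Rightarrow)$, assume $Z_{\wh{G}(\C)}(\varphi,N)^\circ$ is reductive. First I would invoke Proposition~\ref{prop:red-cent-ss} to conclude that $(\varphi,N)$ is itself Frobenius semi-simple; Theorem~\ref{thm:JM-params-classical} then provides a Frobenius semi-simple $L$-parameter $\psi$ with $\JM(\psi)=(\varphi,N)$. Proposition~\ref{prop:Zphidec} supplies the semi-direct product decomposition
\begin{equation*}
Z_{\wh{G}(\C)}(\varphi,N) \,=\, U^N(\varphi) \rtimes Z_{\wh{G}(\C)}(\psi),
\end{equation*}
so it remains to show that the unipotent factor is trivial. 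The subgroup $U^N(\varphi)^\circ$ is a connected, unipotent, normal subgroup of $Z_{\wh{G}(\C)}(\varphi,N)^\circ$, hence lies in the unipotent radical of the latter; by the reductivity hypothesis, it must be trivial, so $U^N(\varphi)$ is a \emph{finite} subgroup of $U^N(\C)$. But $U^N(\C)$ is the exponential image of a nilpotent complex Lie algebra, hence connected and torsion-free, so its only finite subgroup is trivial. Therefore $U^N(\varphi)=1$, yielding $Z_{\wh{G}(\C)}(\varphi,N)=Z_{\wh{G}(\C)}(\psi)$.

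The principal input is the equivalence ``reductive centralizer $\Leftrightarrow$ Frobenius semi-simple'' provided by Proposition~\ref{prop:red-cent-ss}; beyond this, the argument is a bookkeeping exercise around the Levi-type decomposition of Proposition~\ref{prop:Zphidec}. The main obstacle, such as it is, is promoting the reductivity of the \emph{identity component} of $Z_{\wh{G}(\C)}(\varphi,N)$ all the way to the equality $U^N(\varphi)=1$ (rather than merely $U^N(\varphi)^\circ=1$); this is what forces the appeal to the torsion-freeness of $U^N(\C)$, without which one would only obtain the equality of identity components, not of the full centralizers.
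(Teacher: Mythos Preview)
Your $(\Rightarrow)$ direction is correct and matches the paper's approach, though your extra care about passing from $U^N(\varphi)^\circ=1$ to $U^N(\varphi)=1$ is not strictly needed: in characteristic~$0$ every closed algebraic subgroup of a unipotent group is itself unipotent and hence connected, so $U^N(\varphi)=U^N(\varphi)^\circ$ automatically.

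There is, however, a genuine gap in your $(\Leftarrow)$ direction. You invoke Proposition~\ref{prop:red-cent-ss} as if it were an equivalence, but it is stated and proved only as the implication ``reductive centralizer $\Rightarrow$ Frobenius semi-simple'', and only for Weil--Deligne parameters. The direction you need---that a Frobenius semi-simple $L$-parameter $\psi$ has $Z_{\wh{G}(\C)}(\psi)^\circ$ reductive---is a separate result, and indeed the converse implication is \emph{false} for Weil--Deligne parameters (the example preceding Proposition~\ref{prop:red-cent-equiv} exhibits a Frobenius semi-simple $(\varphi,N)$ whose centralizer has nontrivial unipotent part). The paper obtains the needed statement for $L$-parameters by citing \cite[Proposition~3.2]{SilbergerZink}; you must either invoke that external reference or supply an argument. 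Your summary sentence calling Proposition~\ref{prop:red-cent-ss} an ``equivalence'' is therefore incorrect and should be revised.
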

\begin{proof} Suppose first that $Z_{\wh{G}(\C)}(\varphi,N)^\circ$ is reductive. We shall show in Proposition \ref{prop:red-cent-ss} that this implies that $(\varphi,N)$ is Frobenius semi-simple. Let $\psi$ be any element of $\Phi^{L,\ss,\square}_G$ such that $\JM(\psi)=(\varphi,N)$. By Proposition \ref{prop:Zphidec} the reductivity of $Z_{\wh{G}(\C)}(\varphi,N)^\circ$ implies that $U^N(\varphi)$ is trivial, and thus $Z_{\wh{G}(\C)}(\psi)=Z_{\wh{G}(\C)}(\varphi,N)$ as desired. Conversely, if $(\varphi,N)=\JM(\psi)$ for an element of $\Phi^{L,\ss,\square}_G$ and $Z_{\wh{G}(\C)}(\psi)=Z_{\wh{G}(\C)}(\varphi,N)$, then $Z_{\wh{G}(\C)}(\varphi,N)^\circ$ is reductive by \cite[Proposition 3.2]{SilbergerZink}
\end{proof}

Let $\Phi^{\WD,\mathrm{rc},\square}_G$ consist of those $(\varphi,N)$ with $Z_{\wh{G}(\C)}(\varphi,N)^\circ$ reductive. We call this the \emph{reductive centralizer locus} of $\Phi^{\WD,\square}_G$.

\begin{cor}\label{cor:JM-rd-bij-classical} The map $\JM\colon \JM^{-1}\left(\Phi^{\WD,\mathrm{rc},\square}_G\right)\to \Phi^{\WD,\mathrm{rc},\square}_G$ is a $\wh{G}(\C)$-equivariant bijection.
\end{cor}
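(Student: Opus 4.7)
The plan is to verify $\wh{G}(\C)$-equivariance, surjectivity, and injectivity separately, drawing on Proposition \ref{prop:red-cent-equiv}, Proposition \ref{prop:Zphidec}, and Theorem \ref{thm:JM-params-classical}. The $\wh{G}(\C)$-equivariance of $\JM$ is immediate from its definition, so the substance of the statement lies in bijectivity of the restricted map.

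For surjectivity, given any $(\varphi, N) \in \Phi^{\WD,\mathrm{rc},\square}_G$, Proposition \ref{prop:red-cent-equiv} directly furnishes a Frobenius semi-simple $\psi \in \Phi^{L,\ss,\square}_G$ with $\JM(\psi) = (\varphi, N)$; such a $\psi$ automatically lies in $\JM^{-1}(\Phi^{\WD,\mathrm{rc},\square}_G)$.

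For injectivity, I would suppose $\JM(\psi_1) = \JM(\psi_2) = (\varphi, N)$ with $(\varphi, N) \in \Phi^{\WD,\mathrm{rc},\square}_G$. Since $\JM^{-1}(\Phi^{\WD,\ss,\square}_G) = \Phi^{L,\ss,\square}_G$ and $\Phi^{\WD,\mathrm{rc},\square}_G \subseteq \Phi^{\WD,\ss,\square}_G$ by Proposition \ref{prop:red-cent-ss}, both $\psi_1$ and $\psi_2$ are Frobenius semi-simple. Theorem \ref{thm:JM-params-classical} then produces $g \in \wh{G}(\C)$ with $\psi_2 = g \cdot \psi_1$, and $\wh{G}(\C)$-equivariance forces $g \in Z_{\wh{G}(\C)}(\varphi, N)$. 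The crux is then to show that any such $g$ already lies in $Z_{\wh{G}(\C)}(\psi_1)$, since this would immediately yield $\psi_2 = \psi_1$. By Proposition \ref{prop:Zphidec}, $Z_{\wh{G}(\C)}(\varphi, N) = U^N(\varphi) \rtimes Z_{\wh{G}(\C)}(\psi_1)$, so it suffices to verify that $U^N(\varphi)$ is trivial. But $U^N(\varphi)$ is a connected unipotent subgroup of $Z_{\wh{G}(\C)}(\varphi, N)^\circ$, and so its triviality is forced by the standing reductivity hypothesis.

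The main conceptual obstacle, rather than a technical one, is that Proposition \ref{prop:red-cent-equiv} only asserts the \emph{existence} of some Frobenius semi-simple $\psi$ with $Z_{\wh{G}(\C)}(\psi) = Z_{\wh{G}(\C)}(\varphi, N)$, not a priori this equality for the arbitrary $\psi_1$ arising in the injectivity step. The semi-direct product decomposition of Proposition \ref{prop:Zphidec} bypasses this obstruction by showing that the equality of centralizers automatically holds for \emph{every} $L$-parameter preimage whenever the Weil--Deligne centralizer has reductive identity component.
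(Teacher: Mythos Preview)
Your argument is correct and follows essentially the same route as the paper's proof, which simply invokes Theorem \ref{thm:JM-params-classical}, Proposition \ref{prop:red-cent-equiv}, and the equivalence of Frobenius semi-simplicity for $\psi$ and $\JM(\psi)$. You have unpacked these ingredients in more detail, and your observation in the final paragraph is exactly the point: the proof of Proposition \ref{prop:red-cent-equiv} already establishes that $U^N(\varphi)$ is trivial for \emph{any} $\psi$ mapping to $(\varphi,N)$, not just for some distinguished one.

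One small wording issue: in the injectivity step you write that $U^N(\varphi)$ is ``a connected unipotent subgroup of $Z_{\wh{G}(\C)}(\varphi,N)^\circ$, and so its triviality is forced by the standing reductivity hypothesis.'' Taken literally this is false, since reductive groups can certainly contain nontrivial connected unipotent subgroups. What you need is that $U^N(\varphi)$ is \emph{normal} in $Z_{\wh{G}(\C)}(\varphi,N)$, which is immediate from the semi-direct product decomposition of Proposition \ref{prop:Zphidec} that you cite in the same breath; then it lies in the unipotent radical of $Z_{\wh{G}(\C)}(\varphi,N)^\circ$, which is trivial by reductivity. Just insert the word ``normal'' and the argument is complete.
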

\begin{proof}
This follows from Theorem \ref{thm:JM-params-classical}, Proposition \ref{prop:red-cent-equiv} and that $\psi$ is Frobenius semi-simple if and only if $\JM (\psi)$ is for $\psi \in \Phi^{L,\square}_G$. 
\end{proof}

\subsection{Essentially tempered parameters} To make Corollary \ref{cor:JM-rd-bij-classical} useful, we now show that $\JM^{-1}(\Phi^{\WD,\mathrm{rc},\square}_G)$ contains a large class of important $L$-parameters. To this end, let us call an element $\psi$ of $\Phi^{L,\square}_G$ \emph{essentially tempered} if the projection of $\psi(W_F)$ to $\wh{G}(\C)/Z_0(\wh{G})(\C)$ is relatively compact. Let $\Phi^{L,\mathrm{est},\square}_G$ be the set consisting of essentially tempered $L$-parameters. We will soon show that every essentially tempered $L$-parameter maps into the reductive centralizer locus, but first we must establish some results concerning Frobenius semi-simple parameters.

\begin{prop}\label{prop:ess-temp-Fss} Any element $\psi$ of $\Phi^{L,\mathrm{est},\square}_G$ is Frobenius semi-simple. 
\end{prop}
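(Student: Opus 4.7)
The goal is to show that for any lift $w_0 \in W_F$ of arithmetic Frobenius, the element $\psi(w_0) \in \LG(\C)$ is semi-simple. The plan is to first establish semi-simplicity of $\Ad(\psi(w_0))$ acting on $\wh{\mf{g}}(\C)$ using essential temperedness, and then to lift this to semi-simplicity of $\psi(w_0)$ itself via Jordan decomposition in the complex algebraic group $\wh{G}(\C) \rtimes \Gamma_\ast$.

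First, I would show that essential temperedness forces $\{\Ad(\psi(w_0))^n : n \in \bZ\}$ to be bounded in $\GL(\wh{\mf{g}}(\C))$. Writing $\psi(w) = (g_w, w) \in \wh{G}(\C) \rtimes \cW_F(\C)$ (using that $\psi$ is a cross-section homomorphism), essential temperedness says that $\{g_w \bmod Z_0(\wh{G})(\C) : w \in W_F\}$ is relatively compact in $\wh{G}(\C)/Z_0(\wh{G})(\C)$. Since $Z_0(\wh{G}) \subseteq Z(\wh{G}) = \ker(\Ad|_{\wh{G}})$, and since the adjoint representation of the adjoint group yields a closed immersion $\wh{G}/Z(\wh{G}) \hookrightarrow \GL(\wh{\mf{g}})$, the induced continuous map $\wh{G}(\C)/Z_0(\wh{G})(\C) \to \GL(\wh{\mf{g}}(\C))$ carries relatively compact subsets to relatively compact subsets. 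Combining this with the fact that the $\cW_F$-action on $\wh{\mf{g}}$ factors through the finite group $\Gamma_\ast$, the image $\{\Ad(\psi(w)) : w \in W_F\}$ is bounded in $\GL(\wh{\mf{g}}(\C))$, and in particular so is $\{\Ad(\psi(w_0))^n : n \in \bZ\}$.

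Next I would invoke the elementary fact that any $T \in \GL(V)$ for $V$ a finite-dimensional complex vector space whose $\bZ$-powers are bounded must be semi-simple: if its Jordan decomposition $T = T_s T_u$ had $T_u \neq 1$, then expanding $T_u^n = (1 + (T_u-1))^n$ via the binomial series applied to the nilpotent $T_u - 1$ would give polynomial growth in $\lvert n \rvert$, which the commuting bounded factor $T_s^n$ cannot cancel. Applied to $T = \Ad(\psi(w_0))$, this shows $\Ad(\psi(w_0))$ is semi-simple on $\wh{\mf{g}}(\C)$. To conclude, I would project $\psi(w_0)$ to the complex algebraic group $\wh{G}(\C) \rtimes \Gamma_\ast$ via the quotient $\cW_F \to \Gamma_\ast$ and let $su$ be its Jordan decomposition there. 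Then $\Ad(s)\Ad(u)$ is the Jordan decomposition of $\Ad(\psi(w_0))$, so semi-simplicity of the latter forces $\Ad(u) = 1$; since $u$ is unipotent it lies in the identity component $\wh{G}(\C)$, whence $u \in Z(\wh{G})(\C)$; but a central unipotent element of a reductive group is trivial, so $u = 1$ and $\psi(w_0)$ is semi-simple.

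The main subtle point lies in the first step, namely in producing the boundedness of $\{\Ad(\psi(w)) : w \in W_F\}$: one must reconcile the relatively compact $\wh{G}$-part (coming from essential temperedness via the closed immersion $\wh{G}/Z(\wh{G}) \hookrightarrow \GL(\wh{\mf{g}})$) with the finite $\cW_F$-part (coming from the factorization of the $\cW_F$-action through $\Gamma_\ast$). Once the boundedness is in place, the remaining implications via the classical Jordan decomposition are routine.
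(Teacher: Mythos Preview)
Your argument is correct and takes a genuinely different route from the paper. The paper reduces, via Lemma~\ref{lem:L-group-ss}, to showing that the image of $\psi(w_0^m)$ in $\wh{G}(\C)/Z_0(\wh{G})(\C)$ is semi-simple for suitable $m$; essential temperedness places this element in a maximal compact subgroup, which up to conjugacy is $H(\bR)$ for a compact real form $H$, and every element of such a group is semi-simple. Your approach instead works directly with the adjoint representation: essential temperedness plus finiteness of $\Gamma_\ast$ gives boundedness of $\{\Ad(\psi(w_0))^n:n\in\bZ\}$, whence $\Ad(\psi(w_0))$ is semi-simple by an elementary Jordan-block growth argument, and you then lift this to $\psi(w_0)$ by analyzing the Jordan decomposition in $\wh{G}(\C)\rtimes\Gamma_\ast$, using that unipotent elements lie in the identity component and that central unipotents in a reductive group are trivial. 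Your method is more elementary in that it avoids the structure theory of compact real forms, at the cost of the extra lifting step through $\Ad$; the paper's method is shorter once that structure theory is granted. One small point you leave implicit: the passage from semi-simplicity in $\wh{G}(\C)\rtimes\Gamma_\ast$ back to semi-simplicity of $\psi(w_0)$ in $\LG(\C)$ is exactly the content of Lemma~\ref{lem:L-group-ss}, so you may wish to cite it explicitly at the end.
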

\begin{proof}
The map $\psi'$ obtained by composing $\psi|_{W_{F^\ast}}$ with the projection to $\wh{G}(\C)/Z_0(\wh{G})(\C)$ is a homomorphism. By Lemma \ref{lem:L-group-ss} below it suffices to show that if $w_0$ is an arithmetic Frobenius lift and $m$ is divisible by $[F^\ast:F]$, then $\psi'(w_0^m)$ is semi-simple. But, by essentially temperedness we know that the image of $\psi'(w_0^m)$ in $\wh{G}(\C)/Z_0(\wh{G})(\C)$ is contained in a maximal compact subgroup $K$ of $\wh{G}(\C)/Z_0(\wh{G})(\C)$. Up to conjugation, we may then assume that $K=H(\mathbb{R})$ for $H$ a compact form of $\wh{G}(\C)/Z_0(\wh{G})(\C)$ (see \cite[Theorem D.2.8, Proposition D.3.2, and Example D.3.3]{ConRgrsch}). But, as $H(\mathbb{R})$ consists only of semi-simple elements, the claim follows.
\end{proof}

\begin{lem}\label{lem:L-group-ss} Let $(s,w)$ be an element of $\LG(\C)$ and write $(s,w)^m= (s_m,w^m)$. Then, $(s,w)$ is Frobenius semi-simple if and only if $s_m$ is semi-simple for some non-zero integer $m$ divisible by $[F^\ast:F]$.
\end{lem}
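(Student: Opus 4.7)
The plan is to reduce the question to ordinary semi-simplicity inside the connected reductive group $\wh{G}$ by means of the adjoint action on $\wh{\mf{g}}$, exploiting that a sufficiently divisible power of $w$ acts trivially on $\wh{G}$.

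First, I would recall that Frobenius semi-simplicity of an element $g$ of $\LG(\C)$ in the sense of \cite[\S8.2]{BorelCorvallis} is characterized by the condition that $\Ad(g)$ acts semi-simply on the finite-dimensional $\C$-vector space $\wh{\mf{g}}$. An endomorphism of a finite-dimensional $\C$-vector space is semi-simple if and only if any one of its non-zero integer powers is, by the uniqueness of Jordan decomposition in characteristic zero. Since $\Ad$ is a group homomorphism, $\Ad(s,w)^m=\Ad((s,w)^m)=\Ad(s_m,w^m)$. Consequently $(s,w)$ is Frobenius semi-simple if and only if $\Ad(s_m,w^m)$ is semi-simple, for some (equivalently, any) non-zero integer $m$.

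Second, I would observe that $w^m$ acts trivially on $\wh{G}$ whenever $m$ is divisible by $[F^\ast:F]$, since $F^\ast$ is by construction the fixed field of $\ker(W_F\to\Aut(\wh{G}))$. Writing $\Ad(s_m,w^m)=\Ad(s_m)\circ\sigma_{w^m}$ in $\GL(\wh{\mf{g}})$, where $\sigma_{w^m}$ denotes the induced action on $\wh{\mf{g}}$, this collapses to $\Ad(s_m)$ for such $m$, so the question reduces to semi-simplicity of $\Ad(s_m)$. One then invokes the standard fact that, for an element $h$ of a connected reductive group $H$ over a characteristic zero field, $h$ is semi-simple if and only if $\Ad(h)$ is semi-simple on $\Lie(H)$; this is a formal consequence of Jordan decomposition in $H$ together with the observation that $\ker(\Ad)\subseteq Z(\wh{G})$ is of multiplicative type and hence contains no non-trivial unipotent element.

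The main obstacle I anticipate lies in the very first step, namely the equivalence between Borel's definition of Frobenius semi-simplicity and semi-simplicity of $\Ad(g)$ on $\wh{\mf{g}}$. In the purely reductive setting this follows from embedding into a faithful algebraic representation, but for the $L$-group one must additionally use that $\mc{W}_F$ acts on $\wh{G}$ through a finite quotient, so that the Zariski closure in $\LG$ of the cyclic group generated by $(s,w)$ sits inside an extension of a finite group by $\wh{G}$ — close enough to a linear algebraic group that the classical Jordan decomposition arguments apply. Once this equivalence is in hand, the computation outlined above is essentially formal.
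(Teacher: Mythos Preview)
Your argument is correct and follows the same core idea as the paper's proof: semi-simplicity is invariant under taking non-zero powers, and once $m$ is divisible by $[F^\ast:F]$ the element $(s,w)^m$ lies (for all practical purposes) in $\wh{G}$, where semi-simplicity is the classical notion.

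The only real difference is the choice of representation used to test semi-simplicity. The paper fixes an arbitrary representation $r\colon {}^LG\to\GL_n$ and argues directly from $r((s,w)^k)=r(s,w)^k$, together with the fact that for $m$ divisible by $[F^\ast:F]$ and suitable $k$ one has $r((s,w)^{mk})=r(s_m^k,1)$; this immediately reduces to semi-simplicity of $s_m$ in $\wh{G}$. You instead specialise to the adjoint representation on $\wh{\mf{g}}$, which forces you to check that $\ker(\Ad)$ contains no non-trivial unipotent elements (your observation that $\ker(\Ad)\cap\wh{G}=Z(\wh{G})$ is of multiplicative type handles this). Both routes are short; the paper's avoids the kernel analysis entirely, while yours has the minor advantage of never invoking the existence of a faithful representation of $\wh{G}\rtimes\Gamma_\ast$.
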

\begin{proof} Fix any representation $r\colon {^L}G\to \mathrm{GL}_n$. As $r((s,w)^k)=r(s,w)^k$ we see that $(s,w)$ is semi-simple if and only if $(s,w)^k$ is for some $k> 0$. But, if $m$ is divisible by $[F^\ast:F]$ then as $r((s,w)^{mk})=r(s_m^k,1)$ for some $k>0$, the conclusion follows.
\end{proof}

The following shows that the naming of essentially tempered $L$-parameters is reasonable. 

\begin{prop}\label{prop:ess-temp-twist} For $\psi \in \Phi^{L,\square}_G$, 
the following conditions are equivalent:
\begin{enumerate}
    \item $\psi \in \Phi^{L,\mathrm{est},\square}_G$,
    \item there is a continuous character $\chi \colon W_F\times \SL_2(\C) \to Z_0(\wh{G})(\C)$ such that the projection of $(\chi \psi)(W_F)$ to $\wh{G}(\C)$ is relatively compact. 
\end{enumerate}
\end{prop}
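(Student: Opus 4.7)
The direction (2) $\Rightarrow$ (1) is immediate. Writing $\psi(w) = (g_w, w) \in \wh{G}(\C) \rtimes W_F$, we have $(\chi\psi)(w) = (\chi(w,1) g_w, w)$, and since $\chi(w,1) \in Z_0(\wh{G})(\C)$ the images of $\chi(w,1) g_w$ and $g_w$ in $\overline{H} \defeq \wh{G}(\C)/Z_0(\wh{G})(\C)$ agree. Relative compactness of $\{\chi(w,1) g_w\}_{w \in W_F}$ in $\wh{G}(\C)$ therefore yields relative compactness of its image in $\overline{H}$, which is precisely the projection of $\psi(W_F)$.

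For (1) $\Rightarrow$ (2), I plan to build $\chi$ as a continuous homomorphism $W_F \to Z_0(\wh{G})^\circ(\C)$, extended trivially over $\SL_2(\C)$ (which is forced, as $\SL_2(\C)$ is perfect and $Z_0(\wh{G})(\C)$ is abelian) and taken trivially on the inertia subgroup $I_F$. Its construction reduces to choosing an element $z \defeq \chi(w_0) \in Z_0(\wh{G})^\circ(\C)$ for a fixed Frobenius lift $w_0$. By Proposition \ref{prop:ess-temp-Fss}, $\psi$ is Frobenius semi-simple. Fix $m \geqslant 1$ divisible by $\lvert \Gamma_\ast \rvert$, so that $w_0^m$ acts trivially on $\wh{G}$ and $\psi(w_0)^m$ lies in the direct product subgroup $\wh{G}(\C) \times \langle w_0^m \rangle \subseteq \LG(\C)$; the semi-simplicity of $\psi(w_0)^m$ then ensures its $\wh{G}$-component $g_m$ is semi-simple. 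After conjugating, I may assume $g_m \in T(\C)$ for a maximal torus $T \subseteq \wh{G}$, which automatically contains the central subtorus $Z_0(\wh{G})^\circ$. Using the polar decomposition $T(\C) = T_u \cdot A_T$ with its compatible restriction $Z_0(\wh{G})^\circ(\C) = Z_u \cdot A_Z$ (where $A_Z \subseteq A_T$), hypothesis (1) applied to $\psi(w_0)^m$ forces the positive part of $g_m$ to lie inside $A_Z$. Writing $g_m = t_u \cdot a$ with $t_u \in T_u$ and $a \in A_Z$, and using that $A_Z \simeq \mathbb{R}^{\dim Z_0(\wh{G})^\circ}$, I define $z \in A_Z$ to be the unique positive $m$-th root of $a^{-1}$.

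The bulk of the remaining work is verifying that $\chi(w) g_w$ stays in a bounded subset of $\wh{G}(\C)$ as $w$ ranges over $W_F$. Using the cocycle identity $g_{w_1 w_2} = g_{w_1} \cdot {}^{w_1} g_{w_2}$, the centrality and $W_F$-invariance of $z$, and that $\sigma \defeq \Ad(w_0)|_{\wh{G}}$ has order dividing $m$, one finds $g_{w_0^{mk}} = g_m^k$ and hence $g_{w_0^n} = g_m^k \cdot g_{w_0^r}$ for $n = mk + r$ with $0 \leqslant r < m$. Therefore
\begin{equation*}
    \chi(w_0^n) g_{w_0^n} \;=\; (z^m g_m)^k \cdot z^r g_{w_0^r} \;=\; t_u^k \cdot z^r g_{w_0^r},
\end{equation*}
which lies in the bounded subset $T_u \cdot \{z^r g_{w_0^r}\}_{0 \leqslant r < m}$. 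Combined with boundedness of $\{g_i\}_{i \in I_F}$ (from continuity of $\psi$ on compact $I_F$) and the fact that the $W_F$-action on $\wh{G}(\C)$ preserves bounded subsets uniformly (as it factors through the finite group $\Gamma_\ast$), this yields (2). The main obstacle is the bookkeeping around this finite $W_F$-action on $\wh{G}$; the essential content is the elementary observation that, in a complex reductive group, an element whose image in the quotient by a central subtorus lies in a compact subgroup differs from a compact element of the ambient group by an element of that central subtorus.
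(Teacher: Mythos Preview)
Your proof is correct and follows the same overall plan as the paper --- extract the ``non-compact part'' of $\psi(w_0^m)$, show it lies in $Z_0(\wh{G})^\circ(\C)$, take an $m$-th root to define $\chi$, and then bound $(\chi\psi)(W_F)$ by writing every $w$ as $i\,w_0^n$ and controlling the two pieces separately. The implementation of the key step, however, is genuinely different.

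The paper works inside the centralizer $H=Z_{\wh{G}(\C)}(\psi)$: it invokes Proposition~\ref{prop:ess-temp-Fss} together with \cite[Proposition~3.2]{SilbergerZink} to know $H^\circ$ is reductive, arranges $\wh{\psi}(w_0^m)\in Z(H)^\circ$, and then uses essential temperedness to write $\wh{\psi}(w_0^m)=cz$ with $c$ in a compact subgroup of $Z(H)^\circ$ and $z\in Z(H)^\circ\cap Z(\wh{G})(\C)$; the point that $z$ actually lands in $Z_0(\wh{G})(\C)$ is deduced from the fact that $z$ commutes with $\psi(W_F)$. Your argument bypasses the centralizer entirely: you only use Frobenius semi-simplicity (Proposition~\ref{prop:ess-temp-Fss}) to place $g_m$ in a maximal torus $T$ after conjugation, and then the polar decomposition $T(\C)=T_u\cdot A_T$ together with the bounded-powers argument forces the $A_T$-part of $g_m$ to lie in $A_Z$. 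This is more elementary, in that it does not appeal to the external result \cite[Proposition~3.2]{SilbergerZink} on reductivity of $Z_{\wh{G}(\C)}(\psi)^\circ$; the paper's route, on the other hand, makes the structural role of the centralizer visible and connects more directly with the surrounding material (Propositions~\ref{prop:Zphidec} and~\ref{prop:red-cent-equiv}).

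One point you gloss over but which is fine: the assertion that the positive part of $g_m$ lies in $A_Z$ amounts to the fact that in the quotient torus $T(\C)/Z_0(\wh{G})^\circ(\C)$ an element of the positive-real part with bounded powers is trivial, together with the observation that the further finite quotient by $\pi_0(Z_0(\wh{G}))$ does not affect boundedness. This is exactly the ``elementary observation'' you flag at the end.
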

\begin{proof}
It is clear that (2) implies (1). We show that (1) implies (2). Fix a Frobenius lift $w_0 \in W_F$. Set $H=Z_{\widehat{G}(\C)}(\psi)$, which has reductive identity component by Proposition \ref{prop:ess-temp-Fss} and \cite[Proposition 3.2]{SilbergerZink}. Let $\wh{\psi}$ be the $\wh{G}$-component of $\psi$. Taking a positive integer $m$ to be divisible by $|\Aut (\psi(I_F))|$ and $[F^\ast:F]$ we see that $\wh{\psi}(w_0^m) \in H$, and thus in fact $\wh{\psi}(w_0^m) \in Z(H)$. By replacing $m$ by a power, we may assume that  $\wh{\psi}(w_0^m) \in Z(H)^{\circ}$. Since $\psi \in \Phi^{L,\mathrm{est},\square}_G$, there is a compact subgroup $C \subseteq Z(H)^{\circ}$ such that $\wh{\psi}(w_0^m) \in C \cdot (Z(H)^{\circ} \cap Z(\widehat{G})(\C))$. We write $\wh{\psi}(w_0^m)=c z$ for $c \in C$ and $z \in Z(H)^{\circ} \cap Z(\widehat{G})(\C)$. Since elements of $Z(H)^{\circ} \cap Z(\widehat{G})(\C)$ commute with $\psi(W_F)$, we have $Z(H)^{\circ} \cap Z(\widehat{G})(\C)=Z(H)^{\circ} \cap Z_0(\wh{G})(\C)$. Replacing $m$ again, we may assume that $z \in (Z(H)^{\circ} \cap Z_0(\wh{G})(\C))^\circ$. We take $z_0 \in (Z(H)^{\circ} \cap Z_0(\widehat{G})(\C))^\circ$ such that $z_0^m =z$, which exists as $(Z(H)^\circ\cap Z_0(\widehat{G})(\mathbb{C}))^\circ$ is a torus since it is a connected algebraic subgroup of  $Z_0(\widehat{G})^\circ$, which is a torus. Further we define $\chi$ as the unramified character sending $w_0$ to $z_0^{-1}$. Then the image of $(\chi \psi)(W_F)$ in $\wh{G}(\C)$ is contained in the 
image of $\bigcup_{i=0}^{m-1} \psi (I_F) (\chi\psi)(w_0^i) C$ in $\wh{G}(\C)$, which is compact. 
\end{proof}

We now relate $\Phi^{L,\mathrm{est},\square}_G$ to the reductive centralizer locus of $\Phi^{\WD,\square}_G$. 

\begin{prop}\label{prop:temp-cent-equal} The containment $\Phi^{L,\mathrm{est},\square}_G\subseteq \JM^{-1}(\Phi^{\WD,\mathrm{rc},\square}_G)$ holds.
\end{prop}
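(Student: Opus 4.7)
The plan is to invoke Proposition \ref{prop:red-cent-equiv}, which reduces the containment to showing that for any $\psi \in \Phi^{L,\mathrm{est},\square}_G$ with $\JM(\psi) = (\varphi, N)$, one has $Z_{\wh{G}(\C)}(\psi) = Z_{\wh{G}(\C)}(\varphi, N)$; the required Frobenius semi-simplicity of $\psi$ is furnished by Proposition \ref{prop:ess-temp-Fss}. By the semidirect product decomposition $Z_{\wh{G}(\C)}(\varphi, N) = U^N(\varphi) \rtimes Z_{\wh{G}(\C)}(\psi)$ of Proposition \ref{prop:Zphidec}, the whole problem collapses to proving $U^N(\varphi) = \{1\}$.

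To this end, I would take $u = \exp(x) \in U^N(\varphi)$ with $x \in \mathfrak{u}^N$ and show $x = 0$. Since $\Ad(\varphi(w))(N) = \|w\| N$, conjugation by $\varphi(w)$ preserves $U^N$, so $\Ad(\varphi(w))$ preserves $\mathfrak{u}^N$; as $\exp$ is injective on $\mathfrak{u}^N$, the centralizer condition translates to $\Ad(\varphi(w))(x) = x$ for all $w \in W_F$. Because $\psi(w,1)$ centralizes the image of $\theta$ and $\theta(i_2(w))$ lies in the image of the diagonal torus of $\SL_2$ under $\theta$, the operator $\Ad(\varphi(w))$ commutes with $\Ad(\theta(\mathrm{diag}(z, z^{-1})))$; it thus preserves the weight decomposition $\mathfrak{u}^N = \bigoplus_{i > 0} \Lie(Z_{\wh{G}(\C)}(N))_i$ supplied by Proposition \ref{prop:Zudec}, and I may therefore assume $x$ is homogeneous of some weight $i > 0$. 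Writing $\varphi(w) = \psi(w, 1)\,\theta(i_2(w))$ and using $\Ad(\theta(i_2(w)))(x) = \|w\|^{i/2} x$, the fixed-point equation rearranges to
\[
    \Ad(\psi(w, 1))(x) = \|w\|^{-i/2} x \quad \text{for all } w \in W_F.
\]

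The final step, where the essentially tempered hypothesis truly enters, is to invoke Proposition \ref{prop:ess-temp-twist} and pick a continuous central character $\chi \colon W_F \times \SL_2(\C) \to Z_0(\wh{G})(\C)$ such that $(\chi\psi)(W_F)$ has relatively compact image in $\wh{G}(\C)$. Since $\chi$ is central, $\Ad \circ (\chi\psi)|_{W_F}$ agrees with $\Ad \circ \psi|_{W_F}$, so it still satisfies the displayed equation while now factoring through a relatively compact subgroup of $\mathrm{GL}(\wh{\mathfrak{g}})$. Evaluating at an arithmetic Frobenius lift $w_0$ (so $\|w_0\| = q$) exhibits $x$ as an eigenvector with eigenvalue $q^{-i/2}$, which has absolute value strictly less than $1$ when $i > 0$ — impossible for an element of a relatively compact subgroup of $\mathrm{GL}(\wh{\mathfrak{g}})$, whose eigenvalues must all have absolute value $1$. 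Hence $x = 0$, as desired. I expect no serious obstacle beyond verifying that the twist is compatible with the weight decomposition, which is immediate from the centrality of $\chi$.
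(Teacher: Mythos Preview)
Your proposal is correct and follows essentially the same route as the paper: reduce via Proposition~\ref{prop:red-cent-equiv} and Proposition~\ref{prop:Zphidec} to showing $U^N(\varphi)$ is trivial, then use the weight decomposition of $\mathfrak{u}^N$ under $\theta|_{T_2}$ together with the factorization $\varphi(w)=\psi(w,1)\theta(i_2(w))$ to exhibit an unbounded eigenvalue for $\Ad(\psi(w_0,1))$, contradicting compactness. The only cosmetic difference is that the paper invokes the definition of essentially tempered directly (the adjoint action already factors through $\wh{G}(\C)/Z_0(\wh{G})(\C)$, so no twist is needed), whereas you route through Proposition~\ref{prop:ess-temp-twist}; both yield the same contradiction.
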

\begin{proof} Let $\psi$ be an element of $\Phi^{L,\mathrm{est},\square}_G$ and set $(\varphi,N)=\JM(\psi)$. Then $\psi$ is Frobenius semi-simple by Proposition \ref{prop:ess-temp-Fss}. We claim that $Z_{\wh{G}(\C)}(\psi)=Z_{\wh{G}(\C)}(\varphi,N)$, from where we will be done by Proposition \ref{prop:red-cent-equiv}. By Proposition \ref{prop:Zphidec}, it suffices to show that $U^N(\varphi)$ is trivial. 
We assume that $U^N(\varphi)$ is non-trivial 
and take a non-trivial weight vector $v$ of 
$\Lie (U^N(\varphi))$ with respect to the adjoint action of $\theta|_{T_2}$, where $T_2$ is the standard maximal torus of $\SL_{2,\C}$. We put $u= \exp(v)$. For each $w \in W_F$ we have that $\varphi(w)= \psi(w,1)\theta(i_2(w))$. Since $\varphi(w)$ commutes with $u$, we see that $\Int(\psi(w,1)^{-1})(u)$ is equal to $\Int(\theta(i_2(w)))(u)$, and therefore
\begin{equation*}
   \Ad(\psi(w,1)^{-1})(v) =  \Ad(\theta(i_2(w)))(v). 
\end{equation*}
But, observe that if $w_0$ is a lift of arithmetic Frobenius in $W_F$ then $i_2(w_0^{2n})=\left(\begin{smallmatrix}q^n & 0\\ 0 & q^{-n}\end{smallmatrix}\right)$. By Proposition \ref{prop:Zudec}, we deduce that $\Ad(\theta(i_2(w_0^{2n})))(v)=q^{jn} v$ for some $j\geqslant 1$. Letting $n$ tend towards infinity, and using the fact that $u$ is non-trivial, we deduce that the adjoint orbit of $W_F$ on $v$ is non-compact, which is a contradiction.
\end{proof}

We now state a corollary to Proposition \ref{prop:temp-cent-equal}. Before doing so, we recall an even smaller subset of $\Phi^{L,\mathrm{est},\square}_G$ that will feature prominently below. Namely, recall that $(\varphi,N)$ in $\Phi^{\WD,\square}_G$ (resp.\@ $\psi$ in $\Phi^{L,\square}_G$) is called \emph{discrete} if the quotient
\begin{equation*}
    Z_{\widehat{G}(\C)}(\varphi,N)/Z_0(\widehat{G})(\C)\qquad \bigg(\text{resp.}\,\,Z_{\widehat{G}(\C)}(\psi)/Z_0(\widehat{G})(\C)\bigg)
\end{equation*}
is finite. Denote by $\Phi^{\WD,\disc,\square}_G$ (resp.\@ $\Phi^{L,\disc,\square}_G$) the set of discrete parameters and $\Phi^{\WD,\disc}_G$ (resp.\@ $\Phi^{L,\disc}_G$) its $\wh{G}(\C)$-quotient. Note that $\Phi^{L,\disc,\square}_G$ is contained in $\Phi^{L,\mathrm{est},\square}_G$ (cf.\@ \cite[Lemma 3.1]{GRAinv} and \cite[Lemma 5.2]{SilbergerZink}), and thus $\psi$ is discrete if and only if $\JM(\psi)$ discrete as they have the same centralizers by Proposition \ref{prop:temp-cent-equal} and its proof.

\begin{cor}\label{cor:bij-et-disc} The map
\begin{equation*}
    \JM\colon \Phi^{L,\mathrm{est},\square}_G\to \Phi^{\WD,\square}_G,\qquad \bigg(\text{resp.}\,\, \JM\colon \Phi^{L,\disc,\square}_G\to\Phi^{\WD,\disc,\square}_G\bigg)
\end{equation*}
is a $\wh{G}(\C)$-equivariant injection (resp.\@ bijection).
\end{cor}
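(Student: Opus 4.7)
The plan is to deduce both statements essentially formally from Corollary~\ref{cor:JM-rd-bij-classical}, which already supplies a $\wh{G}(\C)$-equivariant bijection $\JM\colon \JM^{-1}(\Phi^{\WD,\mathrm{rc},\square}_G) \to \Phi^{\WD,\mathrm{rc},\square}_G$. Both claims reduce to showing that the relevant subsets land in, or are surjected onto, the reductive centralizer locus, so the serious input is already packaged in earlier results.

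For the essentially tempered case, Proposition~\ref{prop:temp-cent-equal} directly gives the inclusion $\Phi^{L,\mathrm{est},\square}_G \subseteq \JM^{-1}(\Phi^{\WD,\mathrm{rc},\square}_G)$. Restricting the bijection from Corollary~\ref{cor:JM-rd-bij-classical} to this subset then yields the desired $\wh{G}(\C)$-equivariant injection into $\Phi^{\WD,\square}_G$ immediately.

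For the discrete case, injectivity is inherited from the essentially tempered case via the inclusion $\Phi^{L,\disc,\square}_G \subseteq \Phi^{L,\mathrm{est},\square}_G$ recorded just above the corollary, and well-definedness of the restricted map into $\Phi^{\WD,\disc,\square}_G$ follows from the centralizer equality also noted there. The remaining content is surjectivity. Given $(\varphi,N) \in \Phi^{\WD,\disc,\square}_G$, the finiteness of $Z_{\wh{G}(\C)}(\varphi,N)/Z_0(\wh{G})(\C)$ forces $Z_{\wh{G}(\C)}(\varphi,N)^\circ \subseteq Z_0(\wh{G})(\C)$. Since $Z_0(\wh{G})(\C)$ is a closed subgroup of the diagonalizable group $Z(\wh{G})(\C)$, its identity component is a torus, and hence so is $Z_{\wh{G}(\C)}(\varphi,N)^\circ$; in particular it is reductive, so $(\varphi,N) \in \Phi^{\WD,\mathrm{rc},\square}_G$. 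I would then invoke Corollary~\ref{cor:JM-rd-bij-classical} to produce a $\psi$ with $\JM(\psi) = (\varphi,N)$, and use Proposition~\ref{prop:red-cent-equiv} to conclude that $Z_{\wh{G}(\C)}(\psi) = Z_{\wh{G}(\C)}(\varphi,N)$ is itself discrete, so that $\psi \in \Phi^{L,\disc,\square}_G$.

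The only nontrivial verification is the implication ``$(\varphi,N)$ discrete $\Rightarrow$ $(\varphi,N) \in \Phi^{\WD,\mathrm{rc},\square}_G$'', which is essentially the observation that a connected algebraic subgroup of a diagonalizable group is a torus; all remaining pieces are citations of earlier results, so I do not anticipate any real obstacle.
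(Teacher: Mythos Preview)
Your proposal is correct and follows essentially the approach the paper intends: the corollary is stated without proof, but the preceding paragraph points to exactly the ingredients you use (Proposition~\ref{prop:temp-cent-equal} and Corollary~\ref{cor:JM-rd-bij-classical}), and your surjectivity argument for the discrete case---observing that discreteness forces $Z_{\wh{G}(\C)}(\varphi,N)^\circ$ to be a torus, hence reductive, and then applying Corollary~\ref{cor:JM-rd-bij-classical} together with the centralizer equality from Proposition~\ref{prop:red-cent-equiv}---spells out the step the paper leaves implicit.
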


Note that implicit in the above is the following result of independent interest.

\begin{prop}\label{prop:disc-Frob-ss-classical}
Any element of $\Phi^{\WD,\disc,\square}_G$ (resp.\@ $\Phi^{L,\disc,\square}_G$) is Frobenius semi-simple. 
\end{prop}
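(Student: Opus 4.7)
The plan is to dispatch each half of the statement by reducing it to results already available (at least implicitly) in the excerpt.

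For the $L$-parameter assertion, I would simply combine two observations made just above the proposition. First, the cited Lemma 3.1 of \cite{GRAinv} and Lemma 5.2 of \cite{SilbergerZink}, noted immediately before the proposition, give the inclusion $\Phi^{L,\disc,\square}_G \subseteq \Phi^{L,\mathrm{est},\square}_G$. Second, Proposition \ref{prop:ess-temp-Fss} shows that every essentially tempered $L$-parameter is Frobenius semi-simple. Composing the two finishes this case with no further work.

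For the Weil--Deligne assertion, let $(\varphi,N) \in \Phi^{\WD,\disc,\square}_G$. I would first verify that the quotient appearing in the definition of discreteness is well-defined, i.e.\@ that $Z_0(\wh{G})(\C) \subseteq Z_{\wh{G}(\C)}(\varphi,N)$; this is clear because any $z \in Z(\wh{G})(\C)^{\mc{W}_F}$ is central in $\wh{G}(\C)$ and $\mc{W}_F$-fixed, so it commutes with each $\varphi(w) \in \LG(\C)$ (viewed inside the semidirect product $\wh{G} \rtimes \cW_F$) and fixes $N$ under the adjoint action. Finiteness of $Z_{\wh{G}(\C)}(\varphi,N)/Z_0(\wh{G})(\C)$ then forces the containment $Z_{\wh{G}(\C)}(\varphi,N)^\circ \subseteq Z_0(\wh{G})(\C)^\circ$. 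The right-hand side is a torus, being a connected algebraic subgroup of the torus $Z(\wh{G})(\C)^\circ$, so $Z_{\wh{G}(\C)}(\varphi,N)^\circ$ is reductive. Invoking Proposition \ref{prop:red-cent-ss} (the forward reference made in the proof of Proposition \ref{prop:red-cent-equiv}, which asserts that reductivity of the centralizer's identity component for a Weil--Deligne parameter implies Frobenius semi-simplicity) yields the desired conclusion.

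Neither step presents a genuine obstacle. The only minor care point is checking that $Z_0(\wh{G})(\C)$ really sits inside the centralizer, since otherwise the quotient in the definition of $\Phi^{\WD,\disc,\square}_G$ would need reinterpretation. All of the heavy lifting is encapsulated in Proposition \ref{prop:red-cent-ss} on the Weil--Deligne side and in Proposition \ref{prop:ess-temp-Fss} together with the cited inclusion on the $L$-parameter side; once one accepts those, the result is immediate.
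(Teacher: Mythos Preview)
Your proposal is correct and follows essentially the same approach as the paper's proof, which likewise invokes Proposition \ref{prop:red-cent-ss} for the Weil--Deligne case and combines the inclusion $\Phi^{L,\disc,\square}_G \subseteq \Phi^{L,\mathrm{est},\square}_G$ with Proposition \ref{prop:ess-temp-Fss} for the $L$-parameter case. Your added verification that $Z_0(\wh{G})(\C)$ lies in the centralizer and that the identity component is a torus simply unpacks what the paper leaves implicit in the phrase ``special case of Proposition \ref{prop:red-cent-ss}.''
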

\begin{proof}
The first claim is a special case of Proposition \ref{prop:red-cent-ss}. 
The second claim follows from 
$\Phi^{L,\disc,\square}_G \subseteq \Phi^{L,\mathrm{est},\square}_G$ and Proposition \ref{prop:ess-temp-Fss}. 
\end{proof}

We end this subsection by showing that one may apply Corollary \ref{cor:bij-et-disc} to show that the association of $\psi\circ i$ to $\psi$ is injective when restricted to the set of discrete $L$-parameters. This result plays an important technical role in \cite{Characterization}.

\begin{prop}\label{prop:disc-ss-prop} The maps
\begin{equation*}
    \Phi^{\WD,\disc}_G\xrightarrow{(\varphi,N)\mapsto \varphi} \Hom(W_F,\LG(\C))/\wh{G}(\C),\qquad  \Phi^{L,\disc}_G\xrightarrow{\psi\mapsto \psi\circ i} \Hom(W_F,\LG(\C))/\wh{G}(\C)
\end{equation*}
are injective.
\end{prop}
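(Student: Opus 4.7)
The plan is first to reduce the two injectivity claims to one using Corollary~\ref{cor:bij-et-disc}. Since $\JM(\psi)=(\psi\circ i,d\theta_\psi(e_0))$, the first coordinate of $\JM$ is precisely the assignment $\psi\mapsto\psi\circ i$; as $\JM\colon\Phi^{L,\disc,\square}_G\to\Phi^{\WD,\disc,\square}_G$ is a $\wh{G}(\C)$-equivariant bijection by Corollary~\ref{cor:bij-et-disc}, this identifies the two maps of the statement after passing to $\wh{G}(\C)$-orbits. Thus the two injectivity claims are equivalent, and it suffices to prove either one; we treat the Weil--Deligne version.

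Suppose $(\varphi,N_1),(\varphi,N_2)\in\Phi^{\WD,\disc,\square}_G$ share the same underlying $\varphi$, which we may arrange after replacing one of them by a $\wh{G}(\C)$-conjugate. The goal is to produce an element of $Z_{\wh{G}(\C)}(\varphi)$ carrying $N_1$ to $N_2$. Via Corollary~\ref{cor:bij-et-disc}, each parameter corresponds to a discrete L-parameter $\psi_i$ with $\JM(\psi_i)=(\varphi,N_i)$. Proposition~\ref{prop:temp-cent-equal}, combined with the inclusion $\Phi^{L,\disc,\square}_G\subseteq\Phi^{L,\mathrm{est},\square}_G$, gives $Z_{\wh{G}(\C)}(\psi_i)=Z_{\wh{G}(\C)}(\varphi,N_i)$, whose identity component is $Z_0(\wh{G})(\C)^\circ$ by discreteness. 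Write $\theta_i=\psi_i|_{\SL_2(\C)}$ and fix a Frobenius lift $w_0\in W_F$. Since $i$ is trivial on $I_F$ we have $\psi_i|_{I_F}=\varphi|_{I_F}$, so $\theta_i$ takes values in the connected reductive group $H\defeq Z_{\wh{G}(\C)}(\varphi|_{I_F})^\circ$; moreover, since $\theta_i$ centralizes $\psi_i(w_0,1)$, it in fact lands in the Levi subgroup $L_i\defeq Z_H(\psi_i(w_0,1))^\circ$.

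The crux will be to establish that $\theta_i$ is a principal $\SL_2$-homomorphism of $L_i$. We expect this to follow from the chain
\[
Z_0(\wh{G})(\C)^\circ\subseteq Z(L_i)^\circ\subseteq Z_{L_i}(\theta_i)^\circ\subseteq Z_{\wh{G}(\C)}(\psi_i)^\circ=Z_0(\wh{G})(\C)^\circ,
\]
in which the third containment uses that $L_i$ itself centralizes all of $\psi_i|_{W_F}$: it centralizes $\varphi|_{I_F}$ because $L_i\subseteq H$, and it centralizes $\psi_i(w_0,1)$ by its very definition, while these elements topologically generate $\psi_i(W_F)$. The chain collapses, forcing $Z_{L_i}(\theta_i)^\circ=Z(L_i)^\circ$, which is the defining characterization of a principal $\SL_2$-homomorphism. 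Theorem~\ref{thm:JM-classical} applied to $L_i^{\der}$ then provides the expected uniqueness of such homomorphisms up to conjugacy.

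The main obstacle we anticipate is the final reconciliation step: principal-$\SL_2$ uniqueness initially produces a conjugator inside $H$, whereas the desired conjugator between $(\varphi,N_1)$ and $(\varphi,N_2)$ must lie in $Z_{\wh{G}(\C)}(\varphi)=Z_H(\varphi(w_0))$. One must therefore match the Levis $L_1$ and $L_2$ via an element commuting with $\varphi(w_0)$, which reduces to showing that the semisimple elements $\psi_i(w_0,1)=\varphi(w_0)\theta_i(i_2(w_0))^{-1}$ are conjugate inside $Z_H(\varphi(w_0))^\circ$. Since $\theta_i|_{T_2}$ is the principal cocharacter of $L_i$ by the step above, and $\varphi(w_0)\in Z(L_i)$, this should follow from the uniqueness of the principal cocharacter in a connected reductive group with a prescribed central semisimple element; translating back through Corollary~\ref{cor:bij-et-disc} will then give the required injectivity.
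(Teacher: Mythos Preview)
Your reduction to the Weil--Deligne case via Corollary~\ref{cor:bij-et-disc} matches the paper, and your argument that each $\theta_i$ is a principal $\SL_2$-homomorphism of $L_i\defeq Z_H(\psi_i(w_0,1))^\circ$ is correct. However, the final reconciliation step contains a genuine error and does not close.

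The claim that $\varphi(w_0)$ centralizes $L_i$ is false. Writing $\varphi(w_0)=\psi_i(w_0,1)\,\theta_i(i_2(w_0))$, the factor $\psi_i(w_0,1)$ does centralize $L_i$, but $\theta_i(i_2(w_0))$ is the value of the principal cocharacter of $L_i$ at a point of infinite order and hence is never central in $L_i$ unless $L_i$ is a torus. So your proposed mechanism for matching $L_1$ and $L_2$ inside $Z_H(\varphi(w_0))$ breaks down. There is a second problem even granting such a matching: having conjugated $L_1$ to $L_2$ by some $g\in Z_{\wh{G}(\C)}(\varphi)$, you still need a further element $l\in L_2$ carrying $\Ad(g)N_1$ to $N_2$, and such an $l$ has no reason to centralize $\varphi(w_0)=\psi_2(w_0,1)\theta_2(i_2(w_0))$, since $l$ need not commute with $\theta_2(i_2(w_0))$. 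Thus the reduction you propose in the last paragraph is both incorrectly justified and, as stated, insufficient.

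The paper takes an entirely different and more robust route. Fixing $\lambda=\varphi$, it identifies the fibre $P(G,\lambda)=\{N:(\lambda,N)\in\Phi^{\WD,\ss,\square}_G\}$ with the vector space $\wh{\mf{g}}^{\,\lambda(I_F)}_q$ via \cite[Proposition~4.5]{VoganLL}, on which $\wh{G}(\C)^\lambda$ acts with finitely many orbits. A discrete parameter has isotropy of dimension $\dim Z_0(\wh{G})(\C)$, so its orbit has the maximal possible dimension $\dim\wh{G}(\C)^\lambda-\dim Z_0(\wh{G})(\C)$; being locally closed of top dimension in an irreducible variety, this orbit is open, and open orbits in an irreducible space are unique. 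This dimension-counting argument sidesteps entirely the need to exhibit an explicit conjugator, which is where your approach gets stuck.
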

\begin{proof} By Corollary \ref{cor:bij-et-disc} it suffices to show that the former map is injective. Fix $\lambda$ in the set $\Hom(W_F,\LG(\C))$. By Proposition \ref{prop:disc-Frob-ss-classical} it then suffices to show that (if non-empty) the set
\begin{equation*}
    P(G,\lambda)\defeq\left\{(\varphi,N)\in \Phi^{\WD,\ss,\square}_G: \varphi=\lambda\right\}
\end{equation*}
intersects at most one $\wh{G}(\C)$-orbit of discrete parameters. As in \cite[\S4]{VoganLL}, set $\wh{G}(\C)^\lambda$ to be $Z_{\wh{G}(\C)}(\lambda)$, and
\begin{equation*}
    \widehat{\mf{g}}^{\,\lambda(I_F)}_q\defeq\left\{x\in \widehat{\mf{g}}_\C:
    \begin{aligned}(1)&\quad \mathrm{Ad}(\lambda(w))(x)=x\text{ for all }w\in I_F\\ (2)&\quad \mathrm{Ad}(w_0)(x)=qx\end{aligned}\right\}
\end{equation*}
where $w_0$ is any lift of arithmetic Frobenius. Both $P(G,\lambda)$ and $\widehat{\mf{g}}^{\,\lambda(I_F)}_q$ carry an action of $\widehat{G}(\C)^\lambda$, and \cite[Proposition 4.5]{VoganLL} establishes a $\widehat{G}(\C)^\lambda$-equivariant bijection $P(G,\lambda)\to \widehat{\mf{g}}^{\lambda(I_F)}_{q}$, and that the latter space has only finitely many orbits. Therefore, $P(G,\lambda)$ carries the structure of a vector space on which $\widehat{G}(\C)^\lambda$ acts algebraically and with only finitely many orbits.

Suppose then that $(\lambda,N)$ is a discrete element of $P(G,\lambda)$ and let $\mc{O}\subseteq P(G,\lambda)$ denote its $\widehat{G}(\C)^\lambda$-orbit. Now, $\mc{O}$ is a locally closed subscheme of $P(G,\lambda)$ (see \cite[Proposition 1.65 (2)]{MilneGroups}) of dimension $\dim(\widehat{G}(\C)^\lambda)-\dim(H)$ where $H$ is the isotropy subgroup of $(\lambda,N)$ in $\widehat{G}(\C)^\lambda$ (\cite[Proposition 5.23 and Proposition 7.12]{MilneGroups}). But, note that $H=Z_{\widehat{G}(\C)}(\lambda,N)$ and so contains $Z_0(\wh{G})(\C)$ as a finite index subgroup. We deduce that $\dim(\mc{O})$ is equal to $\dim(\widehat{G}(\C)^\lambda)-\dim(Z_0(\widehat{G})(\C))$. But, as $\wh{G}(\C)^\lambda$ acts through $\wh{G}(\C)^\lambda/Z_0(\wh{G})(\C)$, and has finitely many (locally closed) orbits, we see that $\dim P(G,\lambda)$ is at most $\dim(\widehat{G}(\C)^\lambda)-\dim(Z_0(\widehat{G})(\C))$. Thus, we deduce that $\dim(\mc{O})=\dim(P(G,\lambda))$. As $\mc{O}$ is locally closed in $P(G,\lambda)$ we deduce that $\mc{O}$ is open. As $P(G,\lambda)$ is a vector space it is irreducible, so open orbits are unique, and the conclusion follows.
\end{proof}

\section{The geometric and relative Jacobson--Morozov theorems}

Before we can geometrize the Jacobson--Morozov theorem for parameters, we now first geometrize the Jacobson--Morozov theorem. After doing so, we derive a version of the Jacobson--Morozov on the level of $A$-points. We fix for the remainder of this section a field $k$ of characteristic $0$ and $H$ a reductive group over $k$.

\begin{rem} In this section we often assume that $H$ is split. This will be sufficient for us as $\wh{G}$ is a split group. Most of these statements admit obvious generalizations to arbitrary reductive $H$, with similar proofs. The exception is Theorem \ref{thm:relative-jm}, but we suspect that the statement is still true and that one can employ a similar strategy to prove it.
\end{rem}

\subsection{The orbit separation space} Pivotal to our formulation of a geometric version of the Jacobson--Morozov theorem is a certain construction which, in a precise sense, replaces a variety with group action with the disjoint union of its orbits. Throughout this subsection we fix a reduced quasi-projective scheme $X$ over $k$ equipped with an action of $H$. We also assume that the map
\begin{equation*}
    X(k)/H(k)\to X(\ov{k})/H(\ov{k})
\end{equation*}
is surjective (although one may deal with the general case by Galois descent). Whenever we speak of the class of $x$ in $X(\ov{k})/H(\ov{k})$ we assume without loss of generality that $x$ is in $X(k)$.

For each element $x$ of $X(k)$ let us denote by $\mathcal{O}_x$ the \emph{orbit scheme} given as the fppf sheafification of the presheaf 
\begin{equation*}
    \cat{Alg}_{k}\to\cat{Set},\qquad A\mapsto \left\{g\cdot x: g\in H(A)\right\}\subseteq X(A).
\end{equation*}
Since $X$ is itself an fppf sheaf, we see that $\h_x$ is an $H$-stable subsheaf of $X$. There is a natural map $\mu_x\colon H\to \mc{O}_x$ called the \emph{orbit map} given on $R$-points by sending $h\in H(R)$ to $h\cdot x\in \mc{O}_x(R)$.

\begin{prop}\label{prop:orbit-map} The orbit scheme is representable by a reduced locally closed subscheme of $X$ smooth over $k$. Moreover, the orbit map $\mu_x\colon H\to \mc{O}_x$ is smooth and surjective and identifies $\mc{O}_x$ as the fppf sheaf quotient $H/Z_H(x)$.
\end{prop}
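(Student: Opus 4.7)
The plan is to identify the orbit sheaf $\mc{O}_x$ with the fppf quotient $H/Z_H(x)$, and then invoke the classical theory of quotients of algebraic groups together with the standard theorem that orbits of algebraic group actions on finite-type schemes are locally closed. First, by Proposition \ref{prop:transp-rep} the centralizer $Z_H(x)$ is representable by a closed subgroup scheme of $H$, and since $k$ has characteristic zero Cartier's theorem ensures that $Z_H(x)$ is automatically smooth over $k$. Standard results on fppf quotients by closed smooth subgroup schemes (as in \cite{SGA3-1} or \cite{DemazureGabriel}) then give that $H/Z_H(x)$ is representable by a smooth finite-type $k$-scheme, and that the projection $\pi\colon H\to H/Z_H(x)$ is a $Z_H(x)$-torsor in the fppf topology, hence faithfully flat, smooth, and surjective.

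Next, the universal property of the fppf quotient produces a monomorphism of fppf sheaves $\bar\mu_x\colon H/Z_H(x)\to X$ through which $\mu_x$ factors as $\bar\mu_x\circ\pi$. The central geometric step will be to upgrade $\bar\mu_x$ to a locally closed immersion, so as to identify $H/Z_H(x)$ with a smooth reduced locally closed subscheme of $X$; this is precisely the classical theorem that orbits of algebraic group actions on locally finite-type $k$-schemes are locally closed and that the canonical map from the quotient by the stabilizer is an isomorphism onto the orbit, the same statement invoked later in the paper via \cite[Proposition 1.65 (2)]{MilneGroups}. It then remains to identify this subscheme with $\mc{O}_x$: by construction $\mc{O}_x$ is the fppf sheafification of the image presheaf of $\mu_x$, while topos-theoretic generalities identify the fppf sheaf image of $\mu_x$ with $H/\ker(\mu_x)$, and $\ker(\mu_x)=H\times_X\Spec(k)\simeq Z_H(x)$. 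This yields $\mc{O}_x\simeq H/Z_H(x)$ as subsheaves of $X$, and all remaining assertions (representability, smoothness, local closedness, and smoothness and surjectivity of $\mu_x$) follow at once.

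The main obstacle will be upgrading the monomorphism $\bar\mu_x$ to a locally closed immersion. In characteristic zero this goes through cleanly by combining Cartier-smoothness of $H/Z_H(x)$ with Chevalley's constructibility theorem and the $H$-homogeneity of the image, which together force the constructible image to contain a dense open of its closure and hence to be locally closed, with $\bar\mu_x$ then an isomorphism onto this image by smoothness and the bijectivity on geometric points. The remaining ingredients are essentially formal.
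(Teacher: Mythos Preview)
Your proposal is correct and follows essentially the same approach as the paper's proof: both identify $\mc{O}_x$ with the fppf quotient $H/Z_H(x)$ and then invoke the classical theory of orbits of algebraic group actions (locally closed image via Chevalley plus homogeneity, smoothness of the quotient, smoothness of the orbit map). The paper's version is terser, citing \cite[Propositions 1.65, 7.15, 7.17 and Corollary 5.26]{MilneGroups} directly for each step, whereas you unpack the argument more explicitly; one minor point is that your phrase ``$\ker(\mu_x)$'' is slightly loose since $\mu_x$ is not a group homomorphism, but your intent---that the relevant equivalence relation $H\times_X H$ is the graph of the $Z_H(x)$-action---is clear and correct.
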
 
\begin{proof} Clearly the orbit map identifies $\mc{O}_x$ as the fppf sheaf quotient $H/Z_H(x)$. In \cite[Proposition 1.65]{MilneGroups} it is shown that $\mu_x(H)$ is a locally closed subset of $H$, which one may endow with the reduced scheme structure. In \cite[Proposition 7.17]{MilneGroups} it is shown that $\mu_x(H)$ represents $\mathcal{O}_x$. The smoothness of the orbit map is then confirmed by \cite[Proposition 7.15]{MilneGroups}, and the smoothness of $\mc{O}_x$ over $k$ is handled by \cite[Corollary 5.26]{MilneGroups}.
\end{proof}

It will be useful to have a more explicit description of the $A$-points of $\mc{O}_x$ for a $k$-algebra $A$.

\begin{prop}\label{prop:orbit-desc-gen} For any $k$-algebra $A$, there are identifications
\begin{equation*}
    \mc{O}_x(A)=\left\{\mathbf{x}\in X(A):x\emph{ and }\mathbf{x}\emph{ lie in the same }H(A)\emph{-orbit \'etale locally on }A\right\},
\end{equation*}
and
\begin{equation*}
    \mc{O}_x(A)/H(A)=\ker\bigg(H^1_\et(\Spec(A),Z_{H}(x))\to H^1_\et(\Spec(A),H)\bigg),
\end{equation*}
functorial in $(X,x)$, where a map $(X,x)\to (Y,y)$ is an $H$-equivariant map $X\to Y$ sending $x$ to $y$.
\end{prop}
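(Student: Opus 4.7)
The plan is to deduce both identifications from the short exact sequence of fppf sheaves of pointed sets
\begin{equation*}
    1 \to Z_H(x) \to H \xrightarrow{\mu_x} \mc{O}_x \to 1,
\end{equation*}
which is directly provided by Proposition \ref{prop:orbit-map} (smoothness of $\mu_x$ ensures surjectivity as a sheaf of sets). Since $k$ has characteristic zero, $Z_H(x)$, being an affine group scheme of finite type over $k$, is smooth by Cartier's theorem, so fppf and \'etale cohomology coincide for both $Z_H(x)$ and $H$; we may therefore work entirely in the \'etale topology.

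For the first identification, I would use that $\mu_x \colon H \to \mc{O}_x$ is smooth and surjective. Given $\mathbf{x} \in \mc{O}_x(A)$, the base change $H \times_{\mc{O}_x,\mathbf{x}} \Spec(A) \to \Spec(A)$ is smooth and surjective, hence admits a section on some \'etale cover $A \to A'$, giving $g \in H(A')$ with $g \cdot x = \mathbf{x}_{A'}$. Conversely, if $\mathbf{x} \in X(A)$ lies \'etale-locally in the $H(A)$-orbit of $x$, then \'etale-locally $\mathbf{x}$ factors through the subsheaf $\mc{O}_x \subseteq X$, and since $\mc{O}_x$ is representable (hence an \'etale sheaf), $\mathbf{x}$ itself factors through $\mc{O}_x$ by \'etale descent.

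For the second identification, I would appeal to the standard non-abelian cohomology exact sequence associated to the homogeneous space presentation $\mc{O}_x = H/Z_H(x)$ (see, for instance, Serre, \emph{Cohomologie Galoisienne}, Ch.~I, \S5):
\begin{equation*}
    1 \to Z_H(x)(A) \to H(A) \to \mc{O}_x(A) \xrightarrow{\delta} H^1_{\et}(\Spec(A), Z_H(x)) \to H^1_{\et}(\Spec(A), H),
\end{equation*}
where $\delta$ sends $\mathbf{x}$ to the class of the $Z_H(x)$-torsor $\{g \in H_A : g \cdot x = \mathbf{x}\}$ (which is genuinely a torsor by the first identification). The standard twisting argument in non-abelian cohomology then shows that $\delta$ descends to an injection $\mc{O}_x(A)/H(A) \hookrightarrow H^1_{\et}(\Spec(A), Z_H(x))$ whose image is precisely the kernel of the map to $H^1_{\et}(\Spec(A), H)$. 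Functoriality in $(X,x)$ of both identifications is immediate: an $H$-equivariant map $f\colon (X,x) \to (Y,y)$ induces $\mc{O}_x \to \mc{O}_y$ compatibly with the orbit maps and the inclusion $Z_H(x) \hookrightarrow Z_H(y)$.

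I expect the second identification to be the more delicate piece: one must verify that the connecting map $\delta$ descends to and is injective on the $H(A)$-quotient, rather than merely having kernel equal to the $H(A)$-orbit of the basepoint. This is a matter of invoking the twisting trick correctly; the first identification, by contrast, is essentially formal once the smoothness of $\mu_x$ in Proposition \ref{prop:orbit-map} is in hand.
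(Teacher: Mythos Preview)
Your proposal is correct and follows essentially the same route as the paper: the first identification is deduced from the fact that the smooth surjection $\mu_x$ admits sections \'etale locally (the paper cites EGA IV, 17.16.3(ii) for this), and the second from the long exact sequence in non-abelian cohomology for the homogeneous space $H/Z_H(x)$ together with the coincidence of fppf and \'etale cohomology for smooth group schemes (the paper cites Giraud, III.3.2.3 and Grothendieck's Brauer III rather than Serre, but the content is the same). Your explicit invocation of Cartier's theorem for the smoothness of $Z_H(x)$ is a point the paper leaves implicit.
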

\begin{proof} The first claim follows from the fact that the orbit map $\mu_x\colon H\to \mc{O}_x$ is a smooth surjection and \cite[Corollaire 17.16.3.(ii)]{EGA4-4}. The second claim follows by combining  \cite[Chaptire III, Corollaire 3.2.3]{Giraud} with the fact that as $H_A$ and $Z_H(x)_A$ are smooth over $A$, their \'etale cohomology functorially agrees with their fppf cohomology (cf.\@ \cite[Th\'{e}or\`{e}me 11.7]{GrothendieckBrauerIII}).
\end{proof}

When $A$ is a reduced $k$ algebra, one may give a simpler description. Say an element $\mb{x}$ of $X(A)$ is \emph{everywhere geometrically conjugate (egc)} to $x$ if for all geometric points $\Spec(k')\to\Spec(A)$ one has that $x$ and $\mb{x}$ have images in $X(k')$ belonging to the same $H(k')$-orbit.

\begin{prop}\label{prop:ever-geom-conj} For a reduced $k$-algebra $A$ there is a functorial identification 
\begin{equation*}
    \h_x(A)=\left\{\mb{x}\in X(A):\mb{x}\emph{ is egc to }x\right\}.
\end{equation*}
\end{prop}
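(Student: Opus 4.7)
The plan is to establish the two containments separately, using Proposition \ref{prop:orbit-desc-gen} to pin down the $A$-points of $\h_x$ when $A$ is an algebraically closed field, and then leveraging reducedness to convert set-theoretic information into scheme-theoretic factorization.

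For the containment $\h_x(A)\subseteq \{\mathbf{x}\in X(A):\mathbf{x}\text{ is egc to }x\}$, I would take $\mathbf{x}\in\h_x(A)$ and restrict along an arbitrary geometric point $\mathrm{Spec}(k')\to \mathrm{Spec}(A)$ to obtain an element $\mathbf{x}_{k'}\in\h_x(k')$. Proposition \ref{prop:orbit-desc-gen} expresses $\h_x(k')/H(k')$ as the kernel of a map of first étale cohomology groups; since $k'$ is separably closed this kernel is trivial, so $\h_x(k')=H(k')\cdot x$ and the egc condition is immediate.

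For the reverse containment, I would take $\mathbf{x}\in X(A)$ egc to $x$ and show that the corresponding morphism $\mathbf{x}\colon\mathrm{Spec}(A)\to X$ factors through the locally closed reduced subscheme $\h_x$ supplied by Proposition \ref{prop:orbit-map}. First I would check that the set-theoretic image of $\mathbf{x}$ is contained in $|\h_x|$: for each $\mathfrak{p}\in\mathrm{Spec}(A)$ the geometric point above $\mathfrak{p}$ with value field $\overline{\kappa(\mathfrak{p})}$ maps, by the egc hypothesis and the identification $\h_x(\overline{\kappa(\mathfrak{p})})=H(\overline{\kappa(\mathfrak{p})})\cdot x$ just established, into $|\h_x|$. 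Then I would promote this set-theoretic factorization to a scheme-theoretic one as follows: let $\overline{\h_x}$ denote the topological closure of $\h_x$ in $X$ equipped with its reduced structure. Because $A$ is reduced, pulling back the defining ideal of $\overline{\h_x}$ gives an ideal of $A$ that vanishes at every prime and hence is zero, so $\mathbf{x}$ factors through $\overline{\h_x}$; since $\h_x$ is an open subscheme of $\overline{\h_x}$ whose underlying set contains the image, the factorization refines to a map $\mathrm{Spec}(A)\to\h_x$.

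Functoriality in $(X,x)$ is manifest from both descriptions, so it only needs to be noted. The main subtle point is the final promotion step: it crucially uses both that $A$ is reduced and that $\h_x$ is reduced and locally closed (rather than merely locally closed) in $X$, and this is really the only place where the reducedness hypothesis on $A$ plays an essential role.
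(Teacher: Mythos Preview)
Your proof is correct and follows essentially the same approach as the paper's: establish the set-theoretic containment $\mathbf{x}(|\Spec(A)|)\subseteq |\h_x|$ from the egc hypothesis, then use reducedness of $A$ together with the fact that $\h_x$ is a reduced locally closed subscheme to promote this to a scheme-theoretic factorization. The paper's proof is simply terser, asserting both steps without the intermediate passage through $\overline{\h_x}$ or the cohomological justification that $\h_x(k')=H(k')\cdot x$ for algebraically closed $k'$; your added detail is correct and harmless.
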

\begin{proof} Evidently any element of $\h_x(A)$ is egc to $x$. If $\mb{x}$ is egc to $x$ then the morphism $\mb{x}\colon \Spec(A)\to X$ has the property that $\mb{x}(|\Spec(A)|)\subseteq |\h_x|$. As $\Spec(A)$ is reduced this implies that $\mb{x}$ factorizes through $\h_x$ as desired.
\end{proof}

We then assemble the spaces $\mc{O}_x$ into one as follows.

\begin{defn} We define the \emph{orbit separation} of $X$, denoted by $X^\sqcup$, to be the space 
\begin{equation*}
    X^\sqcup\defeq \bigsqcup_{x\in X(\ov{k})/H(\ov{k})}\mc{O}_x.
\end{equation*}
\end{defn}

We have a tautological $H$-equivariant map $X^\sqcup\to X$. Observe that the orbit separation space is a functorial construction. Namely, if $Y$ is another quasi-projective scheme over $k$ equipped with an action of $H$ with the same properties, then  for any $H$-equivariant morphism $X\to Y$, the composition $X^\sqcup\to X\to Y$ factorizes uniquely through $Y^\sqcup\to Y$.

We end this section with the following omnibus result concerning its properties in the case when $X(\ov{k})/H(\ov{k})$ is finite, which is the case of most interest to us. Below, and in the sequel, we call a morphism of schemes $f\colon Y\to X$ \emph{weakly birational} if there exists a dense open subset $U$ of $X$ such that $f^{-1}(U)\to U$ is an isomorphism. 

\begin{prop}\label{prop:sqcup-omnibus} The following statements are true.
\begin{enumerate}
    \item The map $X^\sqcup\to X$ is an isomorphism if and only if the map
    \begin{equation*} \varrho\colon H\times X\to X\times X,\qquad (h,x)\mapsto (hx,x),
    \end{equation*}
    is smooth.
    \item In general, if $X(\ov{k})/H(\ov{k})$ is finite then the map $X^\sqcup\to X$ is a weakly birational surjective monomorphism.
\end{enumerate}
\end{prop}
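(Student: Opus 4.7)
The plan is to identify $X^\sqcup\to X$ being an isomorphism with the condition that the finitely many geometric orbits $\h_x$ form a clopen partition of $X$. For (1), I will relate this condition to smoothness of $\varrho$ via an analysis of its image together with the fibral criterion of smoothness. For (2), once the orbits are locally closed and finite in number, elementary arguments yield surjectivity and the monomorphism property, and one exhibits a dense open subset over which the orbits become clopen.

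For the forward implication of (1), smoothness of $\varrho$ makes it an open morphism, so its image $V\subseteq X\times X$ is open. Given a closed point $x_0\in X$ with residue field $\kappa$, the fiber $V\cap(X_\kappa\times\{x_0\})\subseteq X_\kappa$ is open and, by construction of $\varrho$, coincides with the base-changed orbit $\h_{x_0}|_\kappa$. Descending along the finite \'etale surjection $X_\kappa\to X$ yields that $\h_{x_0}$ is open in $X$. Since $X$ is quasi-projective and hence Jacobson, every orbit is a nonempty locally closed subscheme containing a closed point of $X$, so every orbit is open. Quasi-compactness of $X$ then forces the number of orbits to be finite, giving $X=\bigsqcup_i \h_i$ scheme-theoretically and $X^\sqcup\to X$ an isomorphism. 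Conversely, if $X=\bigsqcup_i \h_i$, then $\varrho$ decomposes as $\bigsqcup_i \psi_i\colon H\times \h_i\to \h_i\times \h_i$ with $(h,y)\mapsto(hy,y)$. Regarding each $\psi_i$ as a morphism of smooth $\h_i$-schemes via the second projection, its fiber over $y\in \h_i$ is the orbit map $\mu_y\colon H\to\h_i$, which is smooth by Proposition~\ref{prop:orbit-map}. The fibral criterion of smoothness then gives that each $\psi_i$, and hence $\varrho$, is smooth.

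For (2), assume $X(\ov{k})/H(\ov{k})$ is finite, yielding finitely many pairwise disjoint locally closed orbits $\h_1,\ldots,\h_n$ which partition $|X|$. Surjectivity of $X^\sqcup\to X$ on underlying topological spaces is immediate. For the monomorphism property, given a $k$-algebra $A$ and a morphism $\mb{x}\colon\Spec(A)\to X$ factoring through both $\h_i$ and $\h_j$ with $i\ne j$, the set-theoretic image of $\mb{x}$ would lie in $|\h_i|\cap|\h_j|=\emptyset$, so no connected component of $\Spec(A)$ admits such an ambiguous lift and $X^\sqcup(A)\to X(A)$ is injective. For weak birationality, set $U\defeq X\setminus\bigcup_i(\ov{\h_i}\setminus \h_i)$, which is open because each $\ov{\h_i}\setminus \h_i$ is closed in $X$. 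A direct check gives $\ov{\h_i}\cap U=\h_i\cap U$, so each $\h_i\cap U$ is clopen in $U$ with $U=\bigsqcup_i(\h_i\cap U)$ scheme-theoretically, whence $X^\sqcup\times_X U\isomto U$. Density of $U$ follows because every generic point $\eta$ of an irreducible component of $X$ lies in $U$: the unique orbit $\h_{j_0}$ containing $\eta$ is dense in that component, and any other orbit with closure containing $\eta$ would also be dense and hence open in the component, contradicting the fact that two disjoint nonempty opens in an irreducible space must meet.

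The trickiest point will be the backward direction of (1), where one must carefully invoke the fibral criterion of smoothness. Once both $H\times \h_i$ and $\h_i\times \h_i$ are recognized as smooth (and in particular flat) over $\h_i$ via the second projection (using smoothness of $H$ and $\h_i$ over $k$), the smoothness of $\psi_i$ reduces to the smoothness of the orbit maps established in Proposition~\ref{prop:orbit-map}, and summing over the finitely many orbits delivers the smoothness of $\varrho$.
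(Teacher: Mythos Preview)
Your proof is correct, and the overall strategy matches the paper's, but several technical steps are executed differently.

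For the backward direction of (1), you invoke the fibral criterion of smoothness (source flat over the base $\h_i$, fiberwise smoothness of orbit maps). The paper instead establishes smoothness of each $\psi_i$ by descent: it pulls back $\psi_i$ along the smooth surjection $\mu_{x_i}\times\mu_{x_i}\colon H\times H\to \h_i\times\h_i$ and identifies the pullback explicitly with the projection $Z_H(x_i)\times H\times H\to H\times H$, which is visibly smooth. Both arguments work; yours is slightly cleaner conceptually but relies on knowing that the orbit map $\mu_y$ is smooth for \emph{every} point $y$ of $\h_i$, not just $k$-points, whereas Proposition~\ref{prop:orbit-map} is stated only for $k$-points (this is easy to justify in characteristic $0$ since stabilizers are smooth, but you should note it).

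For the forward direction of (1), your argument via the open image $V$ and closed points is valid but more circuitous than the paper's: the paper simply pulls back $\varrho$ along the section $x\mapsto(x,x_0)$ to obtain the smooth orbit map $H\to X$ directly, bypassing the Jacobson argument and the descent along $X_\kappa\to X$.

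For (2), your construction $U=X\setminus\bigcup_i(\ov{\h_i}\setminus\h_i)$ is arguably simpler and more natural than the paper's, which works component-by-component via an auxiliary lemma (constructing for each irreducible component $Z$ an open $U_Z$ by picking an orbit dense in $Z$ and removing the other components meeting $Z$). Your density argument implicitly uses that each $\h_i$ is irreducible (so that $\ov{\h_i}\ni\eta$ forces $\ov{\h_i}$ to equal the component $Z$); this holds because $H$ is connected, but it is worth making explicit.
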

\begin{proof} To prove (1) suppose first that $\varrho$ is smooth, then for any $x_0$ in $X(\ov{k})$ the pullback of $\varrho$ along the map $X\to X\times X$ given by $x\mapsto (x,x_0)$ is also smooth. This pullback map can be identified with the composition of $\mu_{x_0}\colon H\to \mc{O}_{x_0}$ with the inclusion $\mc{O}_{x_0}\to X$. Thus, $\mc{O}_{x_0}$, being the image of a smooth morphism, is open by \stacks{056G}. Since $x_0$ was arbitrary we conclude that $X^\sqcup\to X$ is an isomorphism by applying part (3) of Lemma \ref{lem:stratification-isom} below with the set $\{Y_i\}=\{\mc{O}_{x_0}\}_{x_0\in X(\ov{k})/H(\ov{k})}$. Conversely, suppose that $X^\sqcup\to X$ is an isomorphism. Then, it is easy to see that $\varrho$ is smooth if and only if for all $x_0$ in $X(\ov{k})$ the map 
\begin{equation*}
    \varrho_{x_0}\colon H\times \mc{O}_{x_0}\to\mc{O}_{x_0}\times\mc{O}_{x_0},\qquad (h,x)\mapsto (hx,x)
\end{equation*}
is smooth. By Proposition \ref{prop:orbit-map} and \stacks{0429} it suffices to prove that the pullback of $\varrho_{x_0}$ along the map $\mu_{x_0}\times \mu_{x_0}\colon H\times H\to \mc{O}_{x_0}\times \mc{O}_{x_0}$ is smooth. This pullback, as an $H\times H$-scheme, may be identified with the map 
\begin{equation*}
    H_{x_0}\times H\times H\to H\times H,\qquad (h,h',h'')\mapsto (h',h'')
\end{equation*}
which is evidently smooth. Explicitly, this identification is given by the map
\begin{equation*}
    (H\times\mathcal{O}_{x_0})\times_{\mathcal{O}_{x_0}\times\mathcal{O}_{x_0}}(H\times H)\to H_{x_0}\times H\times H,\quad ((h,x),(h',h''))\mapsto ((h')^{-1}hh'',h',h''),
\end{equation*}
which is easily verified to be an isomorphism of $H\times H$-schemes.

To prove (2) observe that the map the map $X^\sqcup(\ov{k})\to X(\ov{k})$ is surjective, and so by applying Lemma \ref{lem:stratification-isom} parts (1) and (2) and \cite[Corollary 3.36]{GortzWedhorn} we know that $X^\sqcup\to X$ is a weakly birational monomorphism. As the image of $X^\sqcup\to X$ is a finite union of locally closed subsets it is locally closed and as it contains $X(\ov{k})$ it must be all of $X$ by loc.\@ cit.\@, and so surjectivity also follows.
\end{proof}

\begin{lem}\label{lem:stratification-isom} Let $f\colon Y\to X$ be a morphism of reduced schemes locally of finite type over $k$, with $X$ quasi-compact. Suppose that $Y_{\ov{k}}$ admits a decomposition $Y=\bigsqcup_i Y_i$, with each $Y_i$ clopen, such that $f|_{Y_i}$ is a locally closed immersion, and $f(Y_i(\ov{k}))\cap f(Y_j(\ov{k}))$ is empty for $i\ne j$. Then,
\begin{enumerate}
    \item $f$ is a monomorphism,
    \item if $\{Y_i\}$ is a finite set, then $f$ is weakly birational if and only if $f(Y(\ov{k}))$ is dense in $X$,
    \item $f$ is an isomorphism if and only if $f(Y(\ov{k}))=X(\ov{k})$ and each $Y_i$ is open in $X_{\ov{k}}$.
\end{enumerate}
\end{lem}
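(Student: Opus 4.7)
The plan is to reduce each of the three claims to the case $k = \ov{k}$ (being a monomorphism and being an isomorphism are fpqc-local on the base; for (2), any dense open $U$ constructed over $\ov{k}$ from finitely many locally closed subsets can be made $\Gal(\ov{k}/k)$-invariant by intersecting with its finitely many Galois translates, so it descends to $X$). After this reduction, identify each $Y_i$ with its image in $X$ via the locally closed immersion $f|_{Y_i}$. A key preliminary observation used throughout is that the hypothesis $f(Y_i(\ov{k})) \cap f(Y_j(\ov{k})) = \emptyset$ for $i \neq j$ upgrades to $|Y_i| \cap |Y_j| = \emptyset$ in $|X|$: a nonempty locally closed subscheme of a finite type Jacobson scheme over $\ov{k}$ always contains a closed point.

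For (1), given $g_1, g_2 \colon T \to Y$ with $f g_1 = f g_2$, decompose $T = \bigsqcup_i T^{(i)}$ with $T^{(i)} = g_1^{-1}(Y_i)$. The disjointness of the $Y_i$ at the level of underlying sets forces $g_2^{-1}(Y_i) = T^{(i)}$ as clopen subschemes of $T$ (clopen subschemes being determined by their underlying sets). One then has $g_1|_{T^{(i)}} = g_2|_{T^{(i)}}$ as maps into $Y_i$ since $f|_{Y_i}$ is a monomorphism.

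For (2), the forward direction is immediate by the Jacobson property. For the reverse, set $Z_i = \overline{Y_i}$ and $T_i = Z_i \setminus Y_i$, both closed in $X$ (since a locally closed subset is open in its closure), and define
\[
U = X \setminus \biggl(\bigcup_{i \neq j}(Z_i \cap Z_j) \cup \bigcup_i T_i\biggr).
\]
By construction each point of $U$ lies in a unique $Z_i$, and in fact in $Y_i$, so $U \cap Y_i = U \cap Z_i$ is simultaneously closed and (as the complement in $U$ of the other finitely many $U \cap Z_j$) open in $U$. Hence $U = \bigsqcup_i (U \cap Y_i)$ as subschemes, so $f^{-1}(U) \to U$ is an isomorphism piece by piece. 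The main obstacle is verifying that $U$ is dense. The plan is to show $U$ contains the generic point $\eta_\alpha$ of each irreducible component $X_\alpha$ of $X$: density of $\bigcup_i Y_i$ in $X$ combined with $X$ being Noetherian implies $\bigcup_i(Y_i \cap X_\alpha)$ is dense in $X_\alpha$, and because a dense locally closed subset of an irreducible scheme is open-dense while two disjoint dense opens cannot coexist, exactly one $Y_i \cap X_\alpha$ is dense in $X_\alpha$. This places $\eta_\alpha$ in that $Y_i$ (hence not in $T_i$) and prevents $\eta_\alpha \in Z_j$ for $j \neq i$ (otherwise $X_\alpha \subseteq Z_j$ would make a second $Y_j \cap X_\alpha$ dense in $X_\alpha$).

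For (3), the forward direction is immediate. For the reverse, after base change the $Y_i$ are pairwise disjoint open subschemes of $X_{\ov{k}}$, and the open subscheme $\bigsqcup_i Y_i \hookrightarrow X_{\ov{k}}$ contains all closed points of $X_{\ov{k}}$ by hypothesis, so by the Jacobson property it equals $X_{\ov{k}}$; thus $f_{\ov{k}}$ is an isomorphism, and hence so is $f$ by fpqc descent.
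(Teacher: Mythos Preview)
Your proof is correct and follows the same overall strategy as the paper: reduce to $k=\ov{k}$, exploit the Jacobson property to pass between closed points and all points, and for (2) exhibit an explicit dense open $U$ over which $f$ is an isomorphism. The reduction to $\ov{k}$ for (2) via Galois descent is more explicit than the paper's one-line remark, and is fine once one notes that a finite intersection of dense opens in a Noetherian scheme is again dense (each meets every irreducible component, and finitely many nonempty opens in an irreducible space have nonempty intersection).

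The only substantive difference is the construction of $U$ in (2). The paper works component by component: for each irreducible component $Z$ of $X$ it picks the unique $Y_{i_0}$ with $Y_{i_0}\cap Z$ open in $Z$, sets $U_Z=(Y_{i_0}\cap Z)\setminus C$ with $C$ the union of the other components meeting $Z$ properly, and takes $U=\bigcup_Z U_Z$. Your global formula $U=X\setminus\bigl(\bigcup_{i\ne j}(Z_i\cap Z_j)\cup\bigcup_i T_i\bigr)$ is arguably cleaner and avoids the component bookkeeping. Both yield the same conclusion.

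One step in your density argument deserves a word more. You assert that $\eta_\alpha\in Z_j$ for $j\ne i_0$ would make $Y_j\cap X_\alpha$ dense in $X_\alpha$. The missing observation is that $\eta_\alpha$, being a maximal point of $X$, is then also a maximal point of $Z_j$, hence the generic point of an irreducible component of $Z_j$; since $Y_j$ is dense open in $Z_j$ it contains every such generic point, so $\eta_\alpha\in Y_j$ and thus $Y_j\cap X_\alpha$ is a nonempty open of the irreducible $X_\alpha$. With that sentence added, the argument is complete.
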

\begin{proof} As all of these claims may be checked over $\ov{k}$ we may assume without loss of generality that $k$ is algebraically closed. The final claim is clear, thus we focus on the first two claims. For the first claim, as each $f|_{Y_i}$ is a monomorphism, it suffices to show that $f(Y_i)$ and $f(Y_j)$ are disjoint for $i\ne j$. But, as $f(Y_i)\cap f(Y_j)$ is locally closed, if non-empty it would contain a $k$-point which is a contradiction.

To see the second claim, it suffices to show the if direction. For each irreducible component $Z$ of $X$ note that $\{Y_i\cap Z\}$ is a finite set of locally closed subsets with dense union. This implies that there exists some $i_0$ such that $Y_{i_0}\cap Z$ is open.  Let $C$ be the union of irreducible components of $X$ which intersect $Z$ at a proper non-empty subset. Set $U_Z\defeq (Y_{i_0}\cap Z)-C$. Then, it is clear that if $U$ is the union of the $U_Z$, then $U$ is a dense open subset of $X$ and as $X$ is reduced that $f\colon f^{-1}(U)\to U$ is an isomorphism.
\end{proof}

\subsection{The geometric Jacobson--Morozov theorem}

We now move to the geometrization of the Jacobson--Morozov theorem. Let us now assume that $H$ is split. To begin, observe that one has a \emph{Jacobson--Morozov morphism}
\begin{equation*}
    \JM\colon \underline{\Hom}(\SL_{2,k},H)\to \mc{N},\qquad \theta\mapsto d\theta(e_0).
\end{equation*}
We would like to apply the orbit separation construction from the last subsection to this map, but before we do so, we should first observe that the actions of $H$ on $\underline{\Hom}(\SL_{2,k},H)$ and $\mc{N}$ satisfy the properties used in the last section.

\begin{prop}\label{prop:split-nilp-desc} The maps 
\begin{equation*}
    \mc{N}(k)/H(k)\to \mc{N}(\ov{k})/H(\ov{k}),\qquad \Hom(\SL_{2,k},H)/H(k)\to \Hom(\SL_{2,\ov{k}},H_{\ov{k}})/H(\ov{k})
\end{equation*} 
are surjections.
\end{prop}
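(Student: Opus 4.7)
My plan is to reduce the two statements to one via Theorem \ref{thm:JM-classical}, and then find a $k$-point in any given geometric nilpotent orbit using Kostant's section theorem together with the infiniteness of $k$. Applying Theorem \ref{thm:JM-classical} over $k$ and over $\ov k$, and using functoriality of $\JM$, yields a commutative square whose horizontal arrows are bijections, so the two surjectivity statements are equivalent; I will prove the one for $\mc N$.

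For the main step, fix a split pair $B \supseteq T$ defined over $k$. Given $\ov N \in \mc N(\ov k)$, Theorem \ref{thm:JM-classical} over $\ov k$ produces $\ov \theta \colon \SL_{2,\ov k} \to H_{\ov k}$ with $d\ov\theta(e_0) = \ov N$, giving an $\mf{sl}_2$-triple $(\ov N, \ov h, \ov f)$ in $\mf h_{\ov k}$. Since $\ov h$ is semisimple, after $H(\ov k)$-conjugating the entire triple I may assume $\ov h \in \mf t(\ov k)$, and after a further conjugation by a Weyl group representative I may assume $\ov h$ is $B$-dominant. Each value $\alpha(\ov h)$ for $\alpha \in R(H,T)$ is a weight in a finite-dimensional $\mf{sl}_2$-representation, hence an integer; so $\ov h$ is the derivative at $1$ of an integer cocharacter of $T_{\ov k}$. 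As $T$ is $k$-split, $X_\ast(T_{\ov k}) = X_\ast(T)$, and so $\ov h$ in fact lies in $\mf t(k) \subseteq \mf h(k)$. Call this element $h$, and set $L = Z_H(h)$, a $k$-rational connected reductive subgroup acting on the $k$-vector subspace $\mf h(h; 2) = \{x \in \mf h : [h,x] = 2x\}$.

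Now, by Kostant's section theorem, the new $\ov N$ lies in the unique open $L(\ov k)$-orbit of $\mf h(h; 2)_{\ov k}$. Since $L$ and $\mf h(h; 2)$ are both defined over $k$, the Galois action permutes the $L_{\ov k}$-orbits on $\mf h(h; 2)_{\ov k}$ and preserves their dimensions, so fixes the unique orbit of maximal dimension; equivalently, this orbit is the complement of the closed, Galois-stable union of the lower-dimensional orbits, hence descends to a dense open subscheme $U \subseteq \mf h(h; 2)$ defined over $k$. Since $k$ is of characteristic zero and in particular infinite, any non-empty open subset of the affine space $\mf h(h; 2)$ contains a $k$-point, so I pick $N \in U(k) \subseteq \mc N(k)$. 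By construction, $N$ and the conjugated $\ov N$ lie in the same $L(\ov k)$-orbit of $\mf h(h; 2)_{\ov k}$, and are therefore $H(\ov k)$-conjugate to the original $\ov N$, yielding the desired surjectivity.

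The main obstacle will be justifying the $k$-rational form of Kostant's theorem used in the last paragraph: specifically, identifying the unique open $L(\ov k)$-orbit with the $\ov k$-points of an open subscheme defined over $k$. As sketched, this I would handle by Galois descent from the intrinsic characterization of this orbit (as either the unique open orbit or the complement of the closed union of lower-dimensional ones), followed by the observation that non-empty open subsets of affine space have rational points over any infinite field.
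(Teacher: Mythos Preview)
Your argument is correct and takes a genuinely different route from the paper's. The paper invokes Bala--Carter theory: it conjugates $\ov N$ into the open orbit of a distinguished parabolic $\ov P$ of a Levi $\ov L$ acting on $\Lie(R_u(\ov P))$, and then uses splitness of $H$ to descend $\ov L$ and $\ov P$ to $k$. You instead use the Dynkin--Kostant approach: you descend the semisimple element $h$ of an $\mf{sl}_2$-triple to $\mf t(k)$ and then use that $\ov N$ lies in the open $Z_H(h)$-orbit on the $2$-eigenspace $\mf h(h;2)$. Both arguments terminate identically---a nonempty Zariski open in a $k$-vector space has a $k$-point since $k$ is infinite---but yours avoids the Bala--Carter black box and stays closer to results already available in the paper (indeed the paper itself uses \cite[Proposition 5.7.3]{CarFinLie}, which is exactly the statement that $\ov N$ lies in the open orbit on the $2$-graded piece, in Step~4 of Proposition~\ref{prop:reductive-cent-constant-N}).

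Two small points worth tightening. First, what you call ``Kostant's section theorem'' is not the usual Kostant section of the adjoint quotient; the statement you need is precisely \cite[Proposition 5.7.3]{CarFinLie}, and you might cite it as such. Second, your justification that $\ov h \in \mf t(k)$ via ``integer values on roots implies $\ov h$ is the derivative of a cocharacter'' is slightly loose when $H$ has nontrivial center: integrality on roots alone does not constrain the central component. The cleanest fix is to observe that $\ov h = [\ov N, \ov f] \in \mf h^{\der}_{\ov k}$, so $\ov h$ lies in $\mf t(\ov k) \cap \mf h^{\der}_{\ov k}$, which is the $\ov k$-span of the simple coroots; since the Cartan matrix is invertible over $\bQ$, integrality of $\alpha(\ov h)$ for simple $\alpha$ forces $\ov h$ into the $\bQ$-span of the coroots, hence into $\mf t(k)$.
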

\begin{proof} By Theorem \ref{thm:JM-classical} it suffices to show the first map is a surjection. Let $N$ be an element of $\mc{N}(\ov{k})$. Bala--Carter theory (see \cite[Proposition 4.7 and Theorem 4.13]{Jantzen}) says that there exists a Levi subgroup $\ov{L}$ of $H_{\ov{k}}$ and a parabolic subgroup $\ov{P}$ of $\ov{L}$ such that $N$ is conjugate to an element contained in the unique open orbit of $\ov{P}$ acting on $\Lie(R_u(\ov{P}))$. Now, as $H$ is split, we may assume up to conjugacy, that $\ov{L}=L_{\ov{k}}$ for a Levi subgroup $L$ of $H$ (see \cite{Solleveld}). As $L$ is also split we may also assume, up to conjugacy, that $\ov{P}=P_{\ov{k}}$ for a parabolic subgroup $P$ of $L$. As the unique open orbit of $P$ acting on $\Lie(R_u(P))$ has a $k$-point, being a Zariski open of a vector $k$-space, we are done.
\end{proof}

\begin{rem} The morphism $\mc{N}(k)/H(k)\to \mc{N}(\ov{k})/H(\ov{k})$ is rarely injective. As a concrete example, if $H=\SL_{2,\Q}$ then $\left(\begin{smallmatrix}0 & 1\\ 0 & 0\end{smallmatrix}\right)$ and $\left(\begin{smallmatrix}0 & -1\\ 0 & 0\end{smallmatrix}\right)$ are $H(\ov{\Q})$-conjugate, but not $H(\Q)$-conjugate.
\end{rem}

Before we show that our two spaces with $H$-action have finitely many $H(\ov{k})$-orbits, we observe the following.

\begin{prop}\label{prop:SL2-Hom-open-orbits} The morphism $\underline{\Hom}(\SL_{2,k},H)^\sqcup\to\underline{\Hom}(\SL_{2,k},H)$ is an isomorphism.
\end{prop}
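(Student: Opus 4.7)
The plan is to apply Proposition \ref{prop:sqcup-omnibus}(1) to $X=\underline{\Hom}(\SL_{2,k},H)$ equipped with the $H$-conjugation action. That proposition reduces the assertion to checking that the map
\begin{equation*}
    \varrho\colon H\times \underline{\Hom}(\SL_{2,k},H)\to \underline{\Hom}(\SL_{2,k},H)\times \underline{\Hom}(\SL_{2,k},H),\quad (h,f)\mapsto (hfh^{-1},f),
\end{equation*}
is smooth, together with the running hypotheses on $X$ from \S 4.1 (reduced, quasi-projective, with surjective orbit map at the level of $k$-rational classes).

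First, I would verify the hypotheses on $X$. By Proposition \ref{prop:hom-schem-omnibus}(2), since $\SL_{2,k}$ is semi-simple and simply connected, $\underline{\Hom}(\SL_{2,k},H)$ is affine (in fact, identified $H$-equivariantly with $\underline{\Hom}(\mf{sl}_{2,k},\mf{h})$, a closed subscheme of the affine space of $k$-linear maps), hence quasi-projective over $k$. Smoothness over $k$, hence reducedness, is Proposition \ref{prop:hom-schem-omnibus}(1). Finally, the surjectivity $\Hom(\SL_{2,k},H)/H(k)\to \Hom(\SL_{2,\ov{k}},H_{\ov{k}})/H(\ov{k})$ is Proposition \ref{prop:split-nilp-desc}.

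With those hypotheses in place, the smoothness of $\varrho$ is exactly what Proposition \ref{prop:hom-schem-omnibus}(1) asserts. Thus Proposition \ref{prop:sqcup-omnibus}(1) applies and yields the claim. There is no real obstacle; the statement is essentially a packaging of the two previously proven results, where the key geometric input is the smoothness of the conjugation map on the scheme of $\SL_2$-homomorphisms (due to \cite[Theorem 2]{Brion} via Proposition \ref{prop:hom-schem-omnibus}).
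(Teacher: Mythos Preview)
Your proof is correct and is exactly the paper's approach: the paper's proof reads ``This follows immediately from combining Proposition~\ref{prop:hom-schem-omnibus} and Proposition~\ref{prop:sqcup-omnibus},'' and you have simply spelled out the verification of the hypotheses (reducedness, quasi-projectivity via affineness, and the $k$-rational orbit surjectivity from Proposition~\ref{prop:split-nilp-desc}) needed to invoke Proposition~\ref{prop:sqcup-omnibus}(1).
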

\begin{proof} This follows immediately from combining Proposition \ref{prop:hom-schem-omnibus} and Proposition \ref{prop:sqcup-omnibus}.
\end{proof}

\begin{prop} The sets $\Hom(\SL_{2,\ov{k}},H_{\ov{k}})/H(\ov{k})$ and $\mc{N}(\ov{k})/H(\ov{k})$ are finite.
\end{prop}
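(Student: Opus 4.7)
The plan is to reduce both statements to the finiteness of $\mc{N}(\ov{k})/H(\ov{k})$. Applying Theorem \ref{thm:JM-classical} over $\ov{k}$, the Jacobson--Morozov map $\JM$ induces a bijection
\begin{equation*}
    \Hom(\SL_{2,\ov{k}},H_{\ov{k}})/H(\ov{k}) \isomto \mc{N}(\ov{k})/H(\ov{k}),
\end{equation*}
so the two finiteness assertions are equivalent and it suffices to show the right-hand side is finite.

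For this I would invoke Bala--Carter theory, essentially in the form already used in the proof of Proposition \ref{prop:split-nilp-desc}. By \cite[Proposition 4.7 and Theorem 4.13]{Jantzen}, every nilpotent $H(\ov{k})$-orbit in $\mc{N}(\ov{k})$ is obtained from a pair $(\ov{L},\ov{P})$, where $\ov{L}$ is a Levi subgroup of $H_{\ov{k}}$ and $\ov{P}$ is a parabolic subgroup of $\ov{L}$, by taking the $H(\ov{k})$-orbit of any element of the unique open $\ov{P}$-orbit in $\Lie(R_u(\ov{P}))$. Hence the set of $H(\ov{k})$-conjugacy classes of such pairs surjects onto $\mc{N}(\ov{k})/H(\ov{k})$.

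It then remains to note that this parametrizing set is finite. The group $H_{\ov{k}}$ admits only finitely many $H(\ov{k})$-conjugacy classes of Levi subgroups, since after conjugating into a fixed maximal torus they correspond to Weyl-group orbits of subsets of a fixed set of simple roots; and for each such Levi $\ov{L}$ there are again only finitely many conjugacy classes of parabolic subgroups. This gives the required finiteness. I do not anticipate any serious obstacle here, as this is a purely standard consequence of Bala--Carter theory together with the classical classification of parabolic subgroups of a reductive group.
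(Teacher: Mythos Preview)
Your proposal is correct and follows essentially the same route as the paper: both reduce to one side via the Jacobson--Morozov bijection of Theorem \ref{thm:JM-classical}, and then invoke the classical finiteness of nilpotent orbits (the paper simply cites \cite[\S2.8, Theorem 1]{Jantzen}, while you spell out the Bala--Carter argument). The paper also records an alternative argument for the other side: by Proposition \ref{prop:SL2-Hom-open-orbits} the orbits in $\underline{\Hom}(\SL_{2,\ov{k}},H_{\ov{k}})$ coincide with its connected components, and this scheme is of finite type by Proposition \ref{prop:hom-schem-omnibus}, hence has finitely many components.
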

\begin{proof} By Theorem \ref{thm:JM-classical} these two sets are in bijection, so it suffices to prove the finiteness of either. The finiteness of the latter set is a classical result (e.g.\@ see \cite[\S2.8, Theorem 1]{Jantzen}). Alternatively, one may prove the finiteness of the former set by observing that by Proposition \ref{prop:SL2-Hom-open-orbits} the sets $\Hom(\SL_{2,\ov{k}},H_{\ov{k}})/H(\ov{k})$ and $\pi_0(\underline{\Hom}(\SL_{2,{\ov{k}}},H_{\ov{k}}))$ have the same cardinality. But, by Proposition \ref{prop:hom-schem-omnibus} the scheme $\underline{\Hom}(\SL_{2,{\ov{k}}},H_{\ov{k}})$ is finite type over $k$ and thus  $\pi_0(\underline{\Hom}(\SL_{2,{\ov{k}}},H_{\ov{k}}))$ is finite.
\end{proof}

By the functoriality of the orbit separation construction the Jacobson--Morozov morphism factors uniquely through $\mc{N}^\sqcup$ and we also denote the resulting map $\underline{\Hom}(\SL_{2,k},H)\to \mc{N}^\sqcup$ by $\JM$. But, unlike $\underline{\Hom}(\SL_{2,k},H)$, the orbit separation space $\mc{N}^\sqcup$ is essentially never equal to $\mc{N}$.

\begin{prop} The morphism $\mc{N}^\sqcup\to\mc{N}$ is an isomorphism if and only if $H$ is abelian.
\end{prop}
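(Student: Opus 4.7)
The plan is to reduce the claim to an orbit-counting statement and then separately treat the two directions. By Proposition \ref{prop:sqcup-omnibus}(2) the map $\mc{N}^\sqcup \to \mc{N}$ is a surjective monomorphism; combining this with Lemma \ref{lem:stratification-isom}(3), I would conclude that it is an isomorphism if and only if each orbit $\mc{O}_N$ (for $N \in \mc{N}(\ov k)/H(\ov k)$) is open in $\mc{N}_{\ov k}$. Since $\mc{N}$ is (geometrically) integral, $\mc{N}_{\ov k}$ is irreducible, and an irreducible space cannot decompose as a disjoint union of more than one non-empty open subscheme. Hence the isomorphism condition is equivalent to $\mc{N}(\ov k)$ consisting of a single $H(\ov k)$-orbit.

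For the forward implication, suppose $H$ is abelian. Being connected reductive in characteristic zero, $H$ is a torus, so $\mf{h}^\der = 0$, and by Proposition \ref{prop:nilp-equiv}(5) we have $\mc{N}(\ov k) = \{0\}$, which is trivially a single orbit. For the reverse implication, suppose $H$ is non-abelian. Then $H^\der_{\ov k}$ is a non-trivial semisimple group, so the root-space decomposition of $\mf{h}^\der_{\ov k}$ with respect to a maximal torus contains a non-zero root space, and any root vector therein is a non-zero element of $\mc{N}(\ov k)$. Since the origin is fixed by the adjoint action and hence forms its own orbit, $\mc{N}(\ov k)$ contains at least two distinct $H(\ov k)$-orbits, and by the reduction above the map fails to be an isomorphism. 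The only genuine subtlety is the reduction step, which relies on the integrality of $\mc{N}$ to translate the scheme-level isomorphism into a single-orbit condition; the two case analyses are then immediate consequences of standard structure theory of reductive groups in characteristic zero.
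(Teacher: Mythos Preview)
Your proof is correct and follows essentially the same approach as the paper: both reduce to showing the map is an isomorphism precisely when $\mc{N}(\ov{k})$ is a single $H(\ov{k})$-orbit, then handle the two directions via elementary structure theory. The paper is marginally more direct (it notes that if the map is an isomorphism then the orbit $\{0\}$ is clopen in the connected scheme $\mc{N}$, forcing $\dim\mc{N}=\dim H-r(H)=0$), but your route through irreducibility and root vectors is equally valid.
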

\begin{proof} If $H$ is abelian then $\mc{N}$ is a single point. If $\mc{N}^\sqcup\to\mc{N}$ is an isomorphism then the orbit of $0$ is open, but as it is also closed and $\mc{N}$ is connected we deduce that it is equal to $\mc{N}$. As $\dim(\mc{N})$ is equal to $\dim(H)-r(H)$, we see that $H$ is a torus as desired.
\end{proof}

\begin{eg}\label{eg:naive-JM-fails} The element $\mathbf{N}=\left(\begin{smallmatrix}0 & t\\ 0 & 0\end{smallmatrix}\right)$ defines a point of $\mc{N}_{\GL_{2,k}}(k[t])$ not in $\mc{N}_{\GL_{2,k}}^\sqcup(k[t])$. 
\end{eg}

To state our geometric Jacobson--Morozov theorem, note that by Theorem \ref{thm:JM-classical} the map 
\begin{equation*}
    \JM\colon \Hom(\SL_{2,k},H)/H(k)\to \mc{N}(k)/H(k),
\end{equation*}
is a bijection. For each $\theta$, writing $N=\JM(\theta)$, define $\JM_\theta$ to be the map $\mc{O}_\theta \to\mc{O}_N$ which may be described as the quotient map $H/Z_H(\theta)\to H/Z_H(N)$.

\begin{thm}[Geometric Jacobson--Morozov]\label{thm:geom-JM-split} Suppose that $H$ is split. The morphism $\JM\colon  \underline{\Hom}(\SL_{2,k},H)\to \mc{N}$ factorizes through $\mc{N}^\sqcup$, where it may be described as $\bigsqcup_\theta \JM_\theta$.
\end{thm}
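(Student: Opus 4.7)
The plan is to combine the decomposition of $\underline{\Hom}(\SL_{2,k},H)$ from Proposition~\ref{prop:SL2-Hom-open-orbits} with the $H$-equivariance of $\JM$ and the reducedness of the orbit schemes. First, by Proposition~\ref{prop:SL2-Hom-open-orbits} we may write
\begin{equation*}
    \underline{\Hom}(\SL_{2,k},H)=\bigsqcup_{\theta}\mc{O}_\theta,
\end{equation*}
where $\theta$ ranges over a set of representatives of $\Hom(\SL_{2,\ov{k}},H_{\ov{k}})/H(\ov{k})$, which (by Proposition~\ref{prop:split-nilp-desc}) we may choose to lie in $\Hom(\SL_{2,k},H)$. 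Similarly, $\mc{N}^\sqcup=\bigsqcup_N \mc{O}_N$ for representatives $N$ in $\mc{N}(k)$ of the $H(\ov{k})$-orbits on $\mc{N}(\ov{k})$. The classical Jacobson--Morozov bijection of Theorem~\ref{thm:JM-classical} applied over $\ov{k}$ shows that $\theta \mapsto \JM(\theta)$ induces a bijection between these two indexing sets. Thus it suffices to show that for each $\theta$ the restriction $\JM\colon \mc{O}_\theta\to\mc{N}$ factorizes through $\mc{O}_{\JM(\theta)}$ and equals the quotient map $H/Z_H(\theta)\to H/Z_H(N)$ described in the statement.

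The $H$-equivariance of $\JM$ is immediate from the identity $d(\Int(h)\circ\theta)(e_0)=\Ad(h)(d\theta(e_0))$, valid functorially in every $k$-algebra. Combined with the classical Jacobson--Morozov theorem over $\ov{k}$, this forces the image of $\mc{O}_\theta(\ov{k})$ under $\JM$ to equal $\mc{O}_{\JM(\theta)}(\ov{k})$, i.e.\ the morphism of $k$-schemes $\JM\colon \mc{O}_\theta\to \mc{N}$ sends every $\ov{k}$-point into the locally closed subscheme $\mc{O}_{\JM(\theta)}\subseteq \mc{N}$. Since $\mc{O}_\theta$ is smooth, and hence reduced, by Proposition~\ref{prop:orbit-map}, set-theoretic containment of the image in the locally closed reduced subscheme $\mc{O}_{\JM(\theta)}$ promotes to a unique factorization of the morphism through $\mc{O}_{\JM(\theta)}$, yielding a morphism $\JM_\theta \colon \mc{O}_\theta\to\mc{O}_{\JM(\theta)}$.

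To identify this $\JM_\theta$ with the quotient map $H/Z_H(\theta)\to H/Z_H(N)$, I would compose with the smooth surjective orbit maps $\mu_\theta\colon H\to \mc{O}_\theta$ and $\mu_N\colon H\to \mc{O}_N$. By the construction of $\mu_\theta$ and the $H$-equivariance of $\JM$, the composition $\JM_\theta\circ\mu_\theta$ sends $h$ to $\Ad(h)(N)=\mu_N(h)$; thus the diagram
\begin{equation*}
    \xymatrix{H\ar[r]^-{\mu_\theta}\ar@{=}[d] & \mc{O}_\theta\ar[d]^{\JM_\theta} \\ H\ar[r]^-{\mu_N} & \mc{O}_N}
\end{equation*}
commutes. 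Since by Proposition~\ref{prop:orbit-map} the orbit maps identify $\mc{O}_\theta$ and $\mc{O}_N$ with the fppf sheaf quotients $H/Z_H(\theta)$ and $H/Z_H(N)$ respectively, the descended morphism $\JM_\theta$ is necessarily the canonical quotient $H/Z_H(\theta)\to H/Z_H(N)$, as desired.

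There is no single hard step here; the argument is essentially a bookkeeping exercise tying together Proposition~\ref{prop:SL2-Hom-open-orbits}, Theorem~\ref{thm:JM-classical} over $\ov{k}$, and the universal properties of the orbit schemes. The only subtlety worth flagging is ensuring that reducedness of $\mc{O}_\theta$ is indeed enough to upgrade the containment on $\ov{k}$-points to a scheme-theoretic factorization through the locally closed subscheme $\mc{O}_{\JM(\theta)}\subseteq \mc{N}$; this is where it is essential that $\mc{O}_{\JM(\theta)}$ was constructed as the reduced locally closed subscheme supported on the orbit, as in Proposition~\ref{prop:orbit-map}.
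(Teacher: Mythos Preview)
Your proof is correct and is essentially what the paper intends; note that the paper does not supply an explicit proof of this theorem, treating it as immediate from the surrounding setup (the functoriality of the orbit separation construction together with Proposition~\ref{prop:SL2-Hom-open-orbits}). Your argument spells out those details carefully, and your use of reducedness of $\mc{O}_\theta$ to promote the $\ov{k}$-point containment to a scheme-theoretic factorization is exactly how one verifies the functoriality claim the paper states without proof; alternatively, one could bypass reducedness entirely by observing that $H$-equivariance gives $Z_H(\theta)\subseteq Z_H(N)$ directly, so the quotient map $H/Z_H(\theta)\to H/Z_H(N)$ exists a priori and agrees with $\JM|_{\mc{O}_\theta}$ by your commutative square.
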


\subsection{The relative Jacobson--Morozov theorem} 

We now apply the geometric Jacobson--Morozov theorem to obtain a more concrete result on the level of $A$-points.

\begin{thm}[Relative Jacobson--Morozov]\label{thm:relative-jm} Let $A$ be a $k$-algebra. Then, the map 
\begin{equation*}
    \JM\colon \Hom(\SL_{2,A},H_A)/H(A)\to \mc{N}(A)/H(A)
\end{equation*}
is a bijection onto $\mc{N}^\sqcup(A)/H(A)$.
\end{thm}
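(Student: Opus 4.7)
The strategy is to reduce the claim via the geometric Jacobson--Morozov theorem (Theorem \ref{thm:geom-JM-split}) to a computation of nonabelian cohomology that exploits the split decomposition of centralizers from Proposition \ref{prop:Zudec}.

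First I would use that $\JM$ factorizes through $\mathcal{N}^\sqcup$ (which gives the claim that the image lies in $\mathcal{N}^\sqcup(A)/H(A)$) and decomposes as $\bigsqcup_\theta \JM_\theta\colon \bigsqcup_\theta \mathcal{O}_\theta \to \bigsqcup_N \mathcal{O}_N$ under the isomorphism $\underline{\Hom}(\SL_{2,k},H) \cong \underline{\Hom}(\SL_{2,k},H)^\sqcup$ of Proposition \ref{prop:SL2-Hom-open-orbits}. Since the clopen stratifications are preserved by the $H$-action, an $A$-point on each side decomposes canonically along a clopen decomposition $\Spec(A) = \bigsqcup_{\theta} \Spec(A_\theta)$, and the $H(A) = \prod_\theta H(A_\theta)$-action respects this. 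Thus both sides decompose as disjoint unions over such decompositions and it suffices, for each pair $(\theta, N = \JM(\theta))$ and each $k$-algebra $B$, to establish that $\JM_\theta$ induces a bijection
\begin{equation*}
    \mathcal{O}_\theta(B)/H(B) \xrightarrow{\,\sim\,} \mathcal{O}_N(B)/H(B).
\end{equation*}

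Next I would apply the identification of Proposition \ref{prop:orbit-desc-gen}: each side is the kernel of the natural pointed map $H^1_\et(\Spec(B), Z_H(\bullet)) \to H^1_\et(\Spec(B), H)$, so we need to prove that the morphism induced by $Z_H(\theta) \hookrightarrow Z_H(N)$ is a bijection on these kernels. The morphism $Z_H(\theta) \to H$ factors through $Z_H(N)$, so compatibility with the maps to $H^1_\et(\Spec(B), H)$ is automatic; it then suffices to prove that $H^1_\et(\Spec(B), Z_H(\theta)) \to H^1_\et(\Spec(B), Z_H(N))$ is a bijection of pointed sets. For this I use Proposition \ref{prop:Zudec}, which supplies the split short exact sequence
\begin{equation*}
    1 \to U^N \to Z_H(N) \to Z_H(\theta) \to 1,
\end{equation*}
with $U^N$ unipotent. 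The splitting provides a retraction on $H^1$ and yields injectivity, while surjectivity follows from the standard torsor-twisting argument: the fiber of $H^1_\et(\Spec(B), Z_H(N)) \to H^1_\et(\Spec(B), Z_H(\theta))$ over any class $[P']$ is non-empty (the splitting provides a lift) and is a torsor under $H^1_\et(\Spec(B), {}^{P'}U^N)$, where ${}^{P'}U^N$ is the inner twist.

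The key input that makes this work is the vanishing of $H^1_\et(\Spec(B), U)$ for every unipotent $k$-group $U$. Since $k$ has characteristic $0$, any such $U$ (including every twist ${}^{P'}U^N$, which remains unipotent) admits a central filtration whose successive quotients are of the form $\mathbb{G}_a^m$; because $H^1_\et(\Spec(B), \mathbb{G}_a) = H^1_\Zar(\Spec(B), \mathcal{O}) = 0$ for affine $\Spec(B)$, an iterated application of the long exact sequence of nonabelian cohomology yields the desired vanishing. I expect the most delicate step to be the handling of the twisting argument for surjectivity, specifically verifying that the inner form ${}^{P'}U^N$ remains unipotent so that its first cohomology vanishes; everything else is either bookkeeping along the clopen decomposition or a direct appeal to the cited propositions.
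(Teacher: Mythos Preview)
Your proposal is correct and follows essentially the same route as the paper: reduce via Theorem~\ref{thm:geom-JM-split} to the orbit-by-orbit statement, apply Proposition~\ref{prop:orbit-desc-gen} to rewrite both sides as kernels in nonabelian \'etale $H^1$, and then use the semi-direct product decomposition $Z_H(N)=U^N\rtimes Z_H(\theta)$ from Proposition~\ref{prop:Zudec} to conclude. The only differences are cosmetic: the paper reduces to the connected case by Noetherian approximation rather than by your clopen decomposition indexed by orbits, and it cites \cite[Lemma~4.14]{GillePianzola} for the cohomological step where you spell out the splitting/twisting argument and the vanishing of $H^1_\et$ for unipotent groups in characteristic~$0$.
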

\begin{proof} Assume first that $\Spec(A)$ is connected. By Theorem \ref{thm:geom-JM-split}, it suffices to show that for each $\theta$ the map $\JM_\theta$ induces a bijection $\mathcal{O}_\theta(A)/H(A)\to \mathcal{O}_N(A)/H(A)$. But, by Proposition \ref{prop:orbit-desc-gen} it suffices to show that the natural map $H^1_\et(\Spec(A),Z_H(\theta))\to H^1_\et(\Spec(A),Z_H(N))$ is a bijection. But, this follows from Proposition \ref{prop:Zudec} and \cite[Lemma 4.14]{GillePianzola}. For the general case we reduce to the Noetherian case by standard approximation arguments, and then working on each component to the case when $\Spec(A)$ is connected.
\end{proof}

We now pursue the analogue of Theorem \ref{thm:rel-JM-triples-classical} in the relative setting. 

\begin{defn} Let $A$ be a $k$-algebra and $\mf{a}$ a Lie algebra over $A$. We call a triple of elements $(e,h,f)$ in $\mf{a}^3$ such that
\begin{equation*}
    [h,e]=2e,\quad [h,f]=-2f,\quad [e,f]=h,
\end{equation*}
an \emph{$\mf{sl}_2$-triple} in $\mf{a}$. 
\end{defn}

Denote by $\mc{T}(A)$ (or $\mc{T}_H(A)$ when we want to emphasize $H$) the set of $\mf{sl}_2$-triples in $\mf{h}_A$. Evidently $\mc{T}(A)$ carries a natural conjugation action by $H(A)$. 

\begin{thm}\label{thm:rel-JM-triples} The following diagram is commutative and each arrow is a bijection 
\begin{equation*}
    \xymatrixrowsep{3pc}\xymatrixcolsep{5pc}\xymatrix{\Hom(\SL_{2,A},H_A)/H(A)\ar[r]^{\theta\,\longmapsto \,d\theta}\ar[d]^{\JM} & \Hom(\mf{sl}_{2,A},\mf{h}_A)/H(A)\ar[d]^{\nu\mapsto (\nu(e_0),\nu(h_0),\nu(f_0))}\\ \mc{N}^\sqcup(A)/H(A) & \mathcal{T}(A)/H(A).
    \ar[l]_{e\,\longmapsfrom \,(e,h,f)}}
\end{equation*}
\end{thm}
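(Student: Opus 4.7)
The plan is to establish commutativity of the square first, then show that the left, top, and right arrows are bijections; the bottom arrow will then be forced to be a bijection. Commutativity is immediate from the definitions: starting from $\theta$, going right-then-down sends it to $(d\theta(e_0), d\theta(h_0), d\theta(f_0))$ which maps under the bottom arrow to $d\theta(e_0)$, while going down-then-right along $\JM$ produces the same element $d\theta(e_0) = \JM(\theta)$.

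The left arrow is a bijection by the just-proven Theorem \ref{thm:relative-jm} applied to $H$. For the top arrow, since $\SL_{2,k}$ is semi-simple and simply connected, Proposition \ref{prop:hom-schem-omnibus}(2) gives an $H$-equivariant isomorphism of $k$-schemes $\underline{\Hom}(\SL_{2,k},H) \isomto \underline{\Hom}(\mathfrak{sl}_{2,k},\mathfrak{h})$, $\theta\mapsto d\theta$. Evaluating at $A$ yields an $H(A)$-equivariant bijection on sets of $A$-points, which descends to a bijection on $H(A)$-quotients.

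For the right arrow, note that $\mathfrak{sl}_{2,A}$ is the free $A$-module on $e_0,h_0,f_0$ with bracket determined by the standard relations $[h_0,e_0]=2e_0$, $[h_0,f_0]=-2f_0$, $[e_0,f_0]=h_0$. Hence a Lie algebra $A$-homomorphism $\nu\colon\mathfrak{sl}_{2,A}\to\mathfrak{h}_A$ is uniquely determined by $(\nu(e_0),\nu(h_0),\nu(f_0))$, and a triple $(e,h,f)$ in $\mathfrak{h}_A^3$ arises from such a $\nu$ if and only if it is an $\mathfrak{sl}_2$-triple. The resulting bijection $\Hom(\mathfrak{sl}_{2,A},\mathfrak{h}_A)\to\mathcal{T}(A)$ is manifestly $H(A)$-equivariant for the adjoint actions on both sides, so it descends to the required bijection of quotients.

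By commutativity, the bottom arrow is then forced to be a bijection. The one subtlety worth checking explicitly is that this bottom arrow is well-defined: given an $\mathfrak{sl}_2$-triple $(e,h,f)\in\mathcal{T}(A)$, one must verify $e\in\mathcal{N}^\sqcup(A)$, not merely $e\in\mathcal{N}(A)$. This follows because lifting $(e,h,f)$ via the right and then top bijections produces some $\theta\colon\SL_{2,A}\to H_A$ with $e=d\theta(e_0)=\JM(\theta)$, and $\JM$ factors through $\mathcal{N}^\sqcup$ by Theorem \ref{thm:relative-jm}. No step is a serious obstacle; the only genuine content beyond formal bookkeeping is the representability result Proposition \ref{prop:hom-schem-omnibus}(2) and the already-established Theorem \ref{thm:relative-jm}.
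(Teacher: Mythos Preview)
Your proof is correct and follows essentially the same approach as the paper: establish bijectivity of the left arrow via Theorem~\ref{thm:relative-jm}, the top arrow via Proposition~\ref{prop:hom-schem-omnibus}, and the right arrow directly, then deduce that the bottom arrow is well-defined (lands in $\mc{N}^\sqcup(A)$) and bijective. The paper is terser but the logical structure is identical.
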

\begin{proof} By Theorem \ref{thm:relative-jm} the left vertical arrow is a bijection. The right vertical arrow is clearly a bijection, and the top horizontal arrow is a bijection by Proposition \ref{prop:hom-schem-omnibus}. We thus deduce that the bottom horizontal arrow is well-defined (i.e.\@ takes values in $\mc{N}^\sqcup(A)$) and is bijective.
\end{proof}

\subsection{A relative version of Kostant's characterization of $\mf{sl}_2$-triples}

This final subsection is dedicated to giving a proof of the following relative version of \cite[Corollary 3.5]{KostdsB}.

\begin{prop}\label{prop:Kostant-triples-prop} Let $A$ be a $k$-algebra and $\mf{a}$ a Lie subalgebra of $\mf{h}_A$. Then, for a pair $(e,h)$ in $\mf{a}^2$, there exists an $\mf{sl}_2$-triple of the form $(e,h,f)$ in $\mf{a}$ if and only if the following conditions hold:
\begin{enumerate}
    \item $e\in\mc{N}^\sqcup(A)$,
    \item $h$ is in the image of $\mathrm{ad}(e)\colon \mf{a}\to \mf{a}$,
    \item $[h,e]=2e$.
\end{enumerate}
\end{prop}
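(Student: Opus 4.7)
The ``only if'' direction is immediate: given an $\mf{sl}_2$-triple $(e,h,f)$ in $\mf{a}$, conditions (2) and (3) follow from the relations $h=[e,-f]$ (with $-f\in\mf{a}$) and $[h,e]=2e$; condition (1) follows by applying Theorem \ref{thm:rel-JM-triples} to the triple viewed in $\mf{h}_A$.

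For the ``if'' direction, the idea is to extract a usable $\SL_{2,A}$-representation structure on $\mf{h}_A$ from condition (1), and then use conditions (2) and (3) to locate $f$ inside $\mf{a}$. By (1) and Theorem \ref{thm:rel-JM-triples} there exists an $\mf{sl}_2$-triple $(e,h_1,f_1)$ in $\mf{h}_A$ coming from an algebraic homomorphism $\theta\colon \SL_{2,A}\to H_A$. Lemma \ref{lem:Gm-Ad-ad} applied to the cocharacter $\theta|_{\bG_{m,A}}$ gives a weight decomposition $\mf{h}_A=\bigoplus_{i\in\Z}\mf{h}_{A,i}$ coinciding with the eigenspace decomposition under $\mathrm{ad}(h_1)$. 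Standard $\mf{sl}_2$-representation theory then yields $\mf{h}_{A,i}^e:=\mf{h}_{A,i}\cap\ker(\mathrm{ad}(e))=0$ for $i<0$, and that $\mf{u}^e:=\bigoplus_{i>0}\mf{h}_{A,i}^e$ coincides with $\mf{h}_A^e\cap\mathrm{im}(\mathrm{ad}(e))$; since this is a nilpotent Lie subalgebra of elements centralizing $e$, Proposition \ref{prop:exp-omnibus} exponentiates it to a subgroup of $Z_H(e)(A)$.

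Conditions (2) and (3) force $h-h_1\in\mf{h}_A^e\cap\mathrm{im}(\mathrm{ad}(e))=\mf{u}^e$. The key technical step is to produce $w\in\mf{u}^e$ with $\mathrm{Ad}(\exp(w))(h_1)=h$: since $\mathrm{ad}(h_1)$ acts as multiplication by $-i$ on each $\mf{h}_{A,i}^e$ with $i>0$, one constructs $w$ by solving weight-by-weight up the grading of $\mf{u}^e$, and the iteration terminates because only finitely many weights occur. It follows that $\mathrm{ad}(h)$ is $\mathrm{Ad}(\exp(w))$-conjugate to $\mathrm{ad}(h_1)$, hence is diagonalizable with integer eigenvalues on $\mf{h}_A$, and its eigenvalues on $\mf{h}_A^e$ are non-negative (as $\mathrm{Ad}(\exp(w))$ preserves $\mf{h}_A^e$, fixing $e$).

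To conclude, use (2) to pick $z\in\mf{a}$ with $[e,z]=h$ and set $u_0:=[h,z]+2z$. A Jacobi computation invoking (3) gives $[e,u_0]=0$, so $u_0\in\mf{a}^e:=\mf{a}\cap\ker(\mathrm{ad}(e))$. Writing $f=z+u$ with $u\in\mf{a}^e$, the $\mf{sl}_2$-relations reduce to the single equation $(\mathrm{ad}(h)+2)(u)=-u_0$. Since $\mathrm{ad}(h)$ preserves $\mf{a}$ (as $h\in\mf{a}$) and has finitely many distinct integer eigenvalues on $\mf{h}_A$, a Vandermonde argument (valid in characteristic zero) decomposes $\mf{a}$, and hence $\mf{a}^e$, into $\mathrm{ad}(h)$-eigenspaces $\mf{a}^e=\bigoplus_{j\geqslant 0}\mf{a}_j^e$ (with $\mf{a}_j^e=0$ for $j<0$ by the previous paragraph). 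Decomposing $u_0=\sum_{j\geqslant 0}u_{0,j}$ accordingly, the element $u:=-\sum_{j\geqslant 0}(j+2)^{-1}u_{0,j}\in\mf{a}^e$ solves the equation, and $f:=z+u$ is the desired completion. The main obstacle is the iterative construction of the conjugating element $w$; once this is in hand, the remainder is linear algebra exploiting the resulting weight structure.
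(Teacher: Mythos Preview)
Your proof is correct but takes a different route from the paper's. Both arguments reduce to showing that $\ad(h)+2$ is invertible on $\mf{a}^e$, after which the construction of $f$ is the same linear algebra step: write $h=[e,z]$, check $[h,z]+2z\in\mf{a}^e$, and solve $(\ad(h)+2)u=-([h,z]+2z)$.

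The paper establishes invertibility more directly and without invoking an auxiliary triple. After an \'etale localization to assume $e\in\mc{N}(k)$ (which is where condition (1) enters), it defines the filtration $\mf{d}_i=(\ad(e)^i(\mf{h})\cap\mf{h}^e)\otimes_k A$ of $\mf{h}_A^e$ and shows by a short computation (using only $[h,e]=2e$ and $h\in\mathrm{im}(\ad(e))$) that $(\ad(h)-i)(\mf{d}_i)\subseteq\mf{d}_{i+1}$. Hence $p(\ad(h)|_{\mf{h}_A^e})=0$ with $p(T)=\prod_{i=0}^m(T-i)$, and the Chinese Remainder theorem makes $T+2$ a unit in $A[T]/(p(T))$. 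This is essentially Kostant's original argument transported to the relative setting.

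Your approach instead uses Theorem~\ref{thm:rel-JM-triples} to import an auxiliary triple $(e,h_1,f_1)$, then shows $h$ and $h_1$ are conjugate via $\Ad(\exp(w))$ for $w\in\mf{u}^e$, and reads off diagonalizability of $\ad(h)$ with non-negative integer eigenvalues on $\mf{h}_A^e$. This is heavier machinery (and your ``standard $\mf{sl}_2$-representation theory'' claims for $\mf{h}_{A,i}^e$ over a general $A$ still implicitly require an \'etale localization to a constant $\theta$, just as the paper's argument does), but it yields extra information: you actually exhibit the conjugacy between $h$ and $h_1$, which is the same iterative construction the paper later uses in the proof of Proposition~\ref{prop:gr-prop}. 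So your argument is longer but more constructive; the paper's is a cleaner direct attack via the Kostant filtration that avoids building any auxiliary $\SL_2$ at all. A small slip: $\ad(h_1)$ acts by $+i$ (not $-i$) on $\mf{h}_{A,i}^e$; the sign reversal you need occurs in $\ad(w)(h_1)=-\ad(h_1)(w)$.
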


Let us set
\begin{equation*}
    \mf{h}^e_A\defeq\ker\left(\ad(e)|\mf{h}_A\to\mf{h}_A\right),\qquad \mf{a}^e\defeq\ker\left(\ad(e)|\mf{a}\to\mf{a}\right).
\end{equation*}
If $\ad(e)(x)$ is zero then $\ad(e)(\ad(h)(x))$ is also zero. Thus, $\ad(h)$ stabilizes $\mf{h}_A^e$ and $\mf{a}^e$.

\begin{lem}\label{lem:ad(x)+2-invertible}The $A$-linear map $\ad(h)+2\colon \mf{a}^e\to\mf{a}^e$ is an isomorphism.
\end{lem}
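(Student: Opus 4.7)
The approach is to apply the relative Jacobson--Morozov theorem (Theorem \ref{thm:relative-jm}) to realize $e$ as $d\theta(e_0)$ for some homomorphism $\theta\colon\SL_{2,A}\to H_A$, to exploit the resulting $\SL_{2,A}$-representation structure on $\mf{h}_A$ to understand $\ad(h)|_{\mf{h}_A^e}$, and finally to descend the resulting invertibility to $\mf{a}^e$ by noting that the inverse of $\ad(h)+2$ can be written as a polynomial in $\ad(h)$.

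For the setup, condition (1) of the proposition together with Theorem \ref{thm:relative-jm} produces $\theta$ with $d\theta(e_0)$ in the $H(A)$-orbit of $e$, and after conjugating $\theta$ by an appropriate element of $H(A)$ we may assume $d\theta(e_0)=e$. Set $h' = d\theta(h_0)$ and $f' = d\theta(f_0)$, so $(e,h',f')$ is an $\mf{sl}_2$-triple. Via $\theta$ the Lie algebra $\mf{h}_A$ becomes an $\SL_{2,A}$-representation, and by semisimplicity of such representations over $\Q$-algebras (using the identification $\ad(e)=e_0\cdot$) the module $\mf{h}_A^e$ is a direct summand of $\mf{h}_A$, and hence finitely generated projective over $A$. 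By Lemma \ref{lem:Gm-Ad-ad} it further decomposes as $\mf{h}_A^e = \bigoplus_{n\geq 0}(\mf{h}_A^e)_n$, where $\ad(h')$ acts on $(\mf{h}_A^e)_n$ as the scalar $n$.

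The crux of the argument, and the step where the main obstacle lies, is to show that $h'' \defeq h - h'$ has vanishing weight-$0$ component in this decomposition. First, $h''\in\mf{h}_A^e$ since $[h-h',e]=2e-2e=0$. Next, $h\in\mathrm{Im}(\ad(e)|_\mf{a})\subseteq\mathrm{Im}(\ad(e)|_{\mf{h}_A})$ by hypothesis (2), while $h' = [e,f']\in\mathrm{Im}(\ad(e)|_{\mf{h}_A})$ tautologically; hence $h''\in\mathrm{Im}(\ad(e))\cap\mf{h}_A^e$. Now $(\mf{h}_A^e)_0$ coincides with the trivial isotypic component $\mf{h}_A^{\SL_{2,A}}$ of the $\SL_{2,A}$-representation $\mf{h}_A$ (being precisely the weight-$0$ highest-weight vectors), and $\ad(e)$ acts as zero on the trivial isotypic component, so $\mathrm{Im}(\ad(e))$ is contained in the complementary sum of non-trivial isotypic components. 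Therefore $\mathrm{Im}(\ad(e))\cap(\mf{h}_A^e)_0=0$, and writing $h''=\sum_n h''_n$ with $h''_n\in(\mf{h}_A^e)_n$, we conclude $h''_0=0$.

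Since $h'' = \sum_{n\geq 1}h''_n$ and each $\ad(h''_n)$ shifts grading by $n$, the operator $\ad(h'')|_{\mf{h}_A^e}$ strictly raises the bounded grading on $\mf{h}_A^e$ and is therefore nilpotent. Hence $\ad(h)|_{\mf{h}_A^e} = \ad(h')|_{\mf{h}_A^e}+\ad(h'')|_{\mf{h}_A^e}$ is block upper-triangular with diagonal blocks $n\cdot\mathrm{id}$ on $(\mf{h}_A^e)_n$, so its characteristic polynomial is $p(t) = \prod_n(t-n)^{r_n}$ with $r_n=\mathrm{rank}_A(\mf{h}_A^e)_n$. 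Thus $p(-2) = \pm\prod_n(n+2)^{r_n}$ is a nonzero integer, hence a unit in the $\Q$-algebra $A$, and Cayley--Hamilton applied to the finitely generated projective module $\mf{h}_A^e$ expresses $(\ad(h)+2)^{-1}|_{\mf{h}_A^e}$ as a polynomial in $\ad(h)|_{\mf{h}_A^e}$ with coefficients in $A$. Finally, since $h\in\mf{a}$ and $\mf{a}$ is a Lie subalgebra, $\ad(h)$ preserves $\mf{a}$ and hence $\mf{a}^e=\mf{a}\cap\mf{h}_A^e$; any polynomial in $\ad(h)$ does as well, so the restriction of this polynomial to $\mf{a}^e$ supplies the desired two-sided inverse.
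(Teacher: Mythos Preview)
Your argument is correct, and the final step---expressing the inverse on $\mf{a}^e$ as a polynomial in $\ad(h)$---matches the paper's exactly. The route to the underlying polynomial identity, however, is quite different.

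The paper does not invoke Theorem~\ref{thm:relative-jm} at all. It first passes to an \'etale cover to arrange $e\in\mc{N}(k)$ (legitimate since $e\in\mc{N}^\sqcup(A)$), and then proves a direct relative version of Kostant's computation (Lemma~\ref{lem:eigenvalues-ad(x)+2}): with $\mf{d}_i\defeq(\ad(e)^i(\mf{h})\cap\mf{h}^e)\otimes_kA$, the single relation $[h,e]=2e$ forces $(\ad(h)-i)(\mf{d}_i)\subseteq\mf{d}_{i+1}$, whence $\prod_{i=0}^m(\ad(h)-i)$ annihilates $\mf{h}_A^e$ and a fortiori $\mf{a}^e$. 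Since $T+2$ is a unit in $A[T]/\prod_i(T-i)$, the result follows. This uses no $\SL_2$-action and no representation theory.

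Your approach is more structural: producing an actual $\SL_2$-action and writing $h=h'+h''$ explains \emph{why} $\ad(h)$ has nonnegative integer ``eigenvalues'' on $\mf{h}_A^e$. Two remarks. First, your appeals to semisimplicity and the isotypic decomposition of $\SL_{2,A}$-representations over an arbitrary $\Q$-algebra are not quite immediate; they can be justified either by an \'etale-local reduction via Proposition~\ref{prop:SL2-Hom-open-orbits} (so that the representation is base-changed from one over $k$) or by a direct $\mf{sl}_2$-calculation, but you should say which. Second, once you know $h''_0=0$, Cayley--Hamilton is unnecessary: $\ad(h)+2$ preserves the finite decreasing filtration $F^i=\bigoplus_{n\geqslant i}(\mf{h}_A^e)_n$ and acts as the unit $(n+2)\cdot\id$ on each graded piece, so it is invertible, and the inverse is visibly a polynomial in $\ad(h)$.
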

\begin{proof} It suffices to show this result after passing to an etale cover $\Spec(B)\to\Spec(A)$. Indeed, since $A\to B$ is faithfully flat we have that $(\mf{a}^e)_B=\mf{a}_B^e$, and moreover that $\ker(\ad(h)+2)$ and $\mathrm{coker}(\ad(h)+2)$ are trivial if and only if they are so after tensoring with $B$. Thus, we may assume without loss of generality that $e$ is an element of $\mc{N}(k)$. Indeed, the statement of the lemma is insensitive to conjugating the pair $(e,h)$, and so this follows by the definition of $\mc{N}^\sqcup$ and Proposition \ref{prop:split-nilp-desc}. With notation as in Lemma \ref{lem:eigenvalues-ad(x)+2} below, the $A$-algebra map $A[T]\to \End_A(\mf{a}^e)$ sending $T$ to $\ad(h)$ factorizes through $A[T]/(p(T))$. But, by the Chinese remainder theorem $T+2$ is a unit in this ring.
\end{proof}

\begin{lem}[{cf.\@ \cite[Lemma 3.4]{KostdsB}}]\label{lem:eigenvalues-ad(x)+2} Suppose that $e$ is an element of $\mc{N}(k)$. Let $m$ be the smallest element such that $\ad(e)^{m+1}$ is trivial on $\mf{h}$. Then, $p(\ad(h)|_{\mf{h}^e_A})=0$ where 
\begin{equation*}
    p(T)=\prod_{i=0}^m\left(T-i \right).
\end{equation*}
Thus, a fortiori, we see that $p\left(\ad(h)|_{\mf{a}^e}\right)=0$.
\end{lem}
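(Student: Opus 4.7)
The plan is to complete $e$ to a Jacobson--Morozov $\mf{sl}_2$-triple $(e,h_0,f_0)$ with $h_0,f_0\in\mf{h}$, which exists by Theorem~\ref{thm:JM-classical}. Under the resulting adjoint $\mf{sl}_2$-action on $\mf{h}$ we obtain a finite direct-sum decomposition of $\mf{h}$ into irreducible $\mf{sl}_2$-representations, and $\mf{h}^e$ is spanned by their highest weight vectors. Write $\mf{h}^e=\bigoplus_{j=0}^{m}V_j$ for the resulting $\ad(h_0)$-weight decomposition, so that $V_j$ is the span of highest weight vectors of weight $j$; since $\mf{h}^e$ is a Lie subalgebra (from Jacobi) and the bracket is additive on weights, $[V_i,V_j]\subseteq V_{i+j}$, with the convention $V_k=0$ for $k>m$.

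Next I would unpack the hypothesis from the enclosing Proposition~\ref{prop:Kostant-triples-prop}, namely $[h,e]=2e$ and $h\in\Image(\ad(e)|_\mf{a})\subseteq\Image(\ad(e)|_{\mf{h}_A})$, in order to show $h-h_0\in\bigoplus_{j\geqslant 1}V_j\otimes_k A$. Indeed, $[h-h_0,e]=0$ places $h-h_0$ in $\mf{h}^e_A$, while $h_0=[e,f_0]\in\Image(\ad(e))$ combined with $h\in\Image(\ad(e)|_{\mf{h}_A})$ gives $h-h_0\in\Image(\ad(e))\otimes_k A$. By $\mf{sl}_2$-representation theory, $\mf{h}^e\cap\Image(\ad(e))=\bigoplus_{j\geqslant 1}V_j$, because copies of the trivial $\mf{sl}_2$-subrepresentation of $\mf{h}$ contribute exactly $V_0$ and are annihilated by $\ad(e)$. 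Write accordingly $h=h_0+\sum_{i\geqslant 1}z_i$ with $z_i\in V_i\otimes_k A$.

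The core of the argument is a short filtration computation. Define the decreasing filtration $F_j\defeq\bigoplus_{k\geqslant j}V_k\otimes_k A$ on $\mf{h}^e_A$, so $F_0=\mf{h}^e_A$ and $F_{m+1}=0$. The operator $\ad(h_0)$ preserves every $V_k\otimes_k A$ and acts on it by $k\cdot\id$, while for $i\geqslant 1$ the operator $\ad(z_i)$ sends $V_k\otimes_k A$ into $V_{i+k}\otimes_k A\subseteq F_{k+1}$, strictly raising the filtration. Consequently $\ad(h)$ preserves each $F_j$ and acts on the associated graded piece $V_j\otimes_k A$ as $j\cdot\id$, i.e.\ $(\ad(h)-j)(F_j)\subseteq F_{j+1}$. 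Applying the commuting operators $\ad(h),\ad(h)-1,\ldots,\ad(h)-m$ in turn to any $v\in\mf{h}^e_A=F_0$ carries it successively into $F_1,F_2,\ldots,F_{m+1}=0$, so $p(\ad(h))$ vanishes on $\mf{h}^e_A$. The \emph{a fortiori} statement for $\mf{a}^e$ is then immediate: since $h\in\mf{a}$ and $\mf{a}$ is a Lie subalgebra, $\ad(h)$ preserves $\mf{a}^e=\mf{a}\cap\mf{h}^e_A$ and the identity is inherited by restriction.

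I expect the conceptual crux to be recognizing and using the hypothesis $h\in\Image(\ad(e)|_\mf{a})$, which the statement supplies only implicitly via the enclosing Proposition~\ref{prop:Kostant-triples-prop}: without it, $h-h_0$ could have a nonzero weight-zero component $z_0\in V_0\otimes_k A$, and then $\ad(z_0)$ would merely preserve (rather than strictly raise) the filtration $F_\bullet$, so on each graded piece $V_j\otimes_k A$ the operator $\ad(h)-j$ would act by $\ad(z_0)|_{V_j}$ rather than zero, generically spoiling the polynomial identity.
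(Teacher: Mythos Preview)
Your proof is correct. Both your argument and the paper's are filtration arguments showing that $(\ad(h)-j)$ carries the $j$-th filtered piece into the $(j{+}1)$-st, and indeed the two filtrations coincide: by $\mf{sl}_2$-representation theory one has $\ad(e)^i(\mf{h})\cap\mf{h}^e=\bigoplus_{j\geqslant i}V_j$, so the paper's $\mf{d}_i$ equals your $F_i$.

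The difference lies in how the filtration step is verified. You introduce an auxiliary $\mf{sl}_2$-triple $(e,h_0,f_0)$, decompose $\mf{h}^e$ into $h_0$-weight spaces, and then split $h=h_0+\sum_{i\geqslant 1}z_i$ to see that $\ad(h)$ is upper-triangular with diagonal entries $j$ on $V_j$; the hypothesis $h\in\Image(\ad e)$ is used to kill the $V_0$-component of $h-h_0$. The paper instead defines the filtration intrinsically in terms of $e$ alone (no auxiliary triple) and defers to Kostant's direct algebraic manipulation, which uses only the relations $[h,e]=2e$ and $h\in[e,\mf{h}_A]$ to push $\ad(e)^i(z)$ into $\ad(e)^{i+1}(\mf{h})$. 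Your route is more transparent about where the hypothesis $h\in\Image(\ad e)$ enters and why it is essential (your final paragraph is exactly right); the paper's route is more self-contained in that it avoids invoking Jacobson--Morozov to produce $(h_0,f_0)$.
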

\begin{proof} For each $i=0,\ldots,m+1$ let us set 
\begin{equation*}
    \mf{d}_i\defeq(\ad(e)^i(\mf{h})\cap \mf{h}^{e})\otimes_k A.
\end{equation*}
Observe that 
\begin{equation*}
    \mf{h}^{e}_A=\mf{d}_0\supseteq\cdots\supseteq \mf{d}_{m+1}=0.
\end{equation*}
We claim then that $(\ad(h)-i)(\mf{d}_i)\subseteq \mf{d}_{i+1}$. Note that $\mf{d}_i$ is generated as an $A$-algebra by elements of the form $\ad(e)^i(z)$ for $z$ in $\mf{h}$. The exact same algebra as in \cite[Lemma 3.4]{KostdsB} then shows the desired containment, from where the claim is clear.
\end{proof}

Returning to the proof of Proposition \ref{prop:Kostant-triples-prop}, let us write $h=\ad(e)(f)$. Note that $[[h,f]+2f,e]$ vanishes and thus $[h,f]+2f$ is in $\mf{a}^e$. By Lemma \ref{lem:ad(x)+2-invertible} we may write $[h,f]+2f=[h,g]+2g$ for some $g$ in $\mf{a}^e$. So then, if we take $f''=f-g$ then 
\begin{equation*}
    [h,e]=2e,\qquad [h,f'']=[h,f]-[h,g]=-2f'',\qquad [e,f'']=[e,f]-[e,g]=h-0=h,
\end{equation*}
as desired.

\section{Moduli spaces of Weil--Deligne parameters}

To give a geometrization of the results of \S\ref{ss:JM-for-params-classical} it is useful to first develop a space intermediary between the moduli space of $L$-parameters (see \S\ref{s:L-param}) and the moduli space of Weil--Deligne parameters. We give such a space in this section which, in short, parameterizes Weil--Deligne parameters whose monodromy operator lies in $\mathcal{N}^\sqcup$.

\subsection{The moduli space of Weil--Deligne parameters}\label{ss:WD-params}

We first recall the moduli space of Weil--Deligne parameters roughly following the presentation as in \cite{ZhuCohLp}. In particular, we use the $C$-group of Buzzard--Gee in lieu of the $L$-group for our definition of parameters. The theory of $C$-groups is better suited to dealing with $L$-parameters valued in arbitrary $\mathbb{Q}$-algebras $R$ as many constructions involving $L$-parameters in terms of $L$-groups require a choice of square root of $q$ in $R$. Also, the authors find that many necessary arguments involving $L$-parameters in terms of $L$-groups (e.g.\@ see Proposition \ref{prop:red-cent-ss} and Proposition \ref{prop:dense-tor-cent}) require the consideration of certain ancillary groups which, ultimately, end up being equivalent to working in the $C$-group.

\medskip

\paragraph{Initial definitions} We begin by defining the relative analogue of a Weil--Deligne parameter. 

\begin{defn}
For a $\Q$-algebra $A$, we define a \emph{Weil--Deligne parameter over} $A$ to be a pair $(\varphi,N)$ where 
\begin{enumerate}[leftmargin=2cm,widest=iiiiii]
    \item[\textbf{(WDP1)}] $\varphi\colon \cW_{F,A}\to \CG_A$ is a morphism of group $A$-schemes such that $p_C \circ \varphi=(\| \cdot \|,\id)$,
    \item[\textbf{(WDP2)}] $N$ is an element of $\wh{\mc{N}}(A)$ such that $\Ad(\varphi(w))(N)=\|w\| N$ for all $w \in \cW_F (A)$.
\end{enumerate}
\end{defn}
We denote the set of Weil--Deligne parameters over $A$ by $\WDP_G(A)$ which clearly constitutes a presheaf on $\Q$-algebras. The presheaf $\WDP_G$ has a natural action by $\widehat{G}$ given by 
\begin{equation*}
    g(\varphi,N)g^{-1}\defeq (\Int(g)\circ \varphi,\Ad(g)(N)).
\end{equation*}
So, for a Weil--Deligne parameter $(\varphi,N)$ we may consider the centralizer group presheaf $Z_{\wh{G}}(\varphi,N)$.

We define the morphism $\check{\varphi}\colon \cW_{F,A}\to \check{G}_A$ of schemes 
as the composition of $\varphi$ with the projection to $\check{G}_A$. We denote by $\ov{\varphi}$ the homomorphism $\cW_{F,A}\to (\wh{G}\rtimes \underline{\Gamma_\ast})_A$ obtained by composing $\varphi$ with the quotient map $\CG_A\to (\wh{G}\rtimes \underline{\Gamma_\ast})_A$. Observe that while $\check{\varphi}$ may not be a homomorphism, it becomes so after restriction to $\cW_{F^\ast,A}$. In particular, for any $w \in \cW_F (A)$ the restriction of $\check{\varphi}$ to $\langle w^m\rangle$ is a homomorphism whenever $[F^\ast:F]$ divides $m$. 

Let $K$ be a finite extension of $F^\ast$ Galois over $F$, and let us define for a $\Q$-algebra $A$ the set
\begin{equation*}
    \WDP_G^K(A)\defeq \left\{(\varphi,N)\in \WDP_G(A): \cI_{K,A}\subseteq \ker(\check{\varphi}|_{\cW_{F^\ast,A}})\right\}. 
\end{equation*} 
We observe that $\WDP_G^K$ forms a $\wh{G}$-stable subfunctor of $\WDP_{G}$. In fact, one sees that there is an equality of functors $\WDP_G=\varinjlim \WDP_G^K$ as $K$ travels over all such extensions.

We finally observe that $\WDP_G$ has a more familiar form over an extension $k$ of $\Q$ containing an element $c$ such that $c^2=q$. 
More precisely, for a $k$-algebra $A$, we equip $\wh{G}(A)$ with the discrete topology and put 
\begin{equation*}
    \WDP_{G,k}'(A)\defeq \left\{(\varphi,N):\begin{aligned} (1)&\quad \varphi\colon W_F\to \widehat{G}(A) \rtimes W_F \text{ is a a continuous cross-section homomorphism}, 
    \\ (2)&\quad N\in\widehat{\cN}(A)\text{ is such that }\Ad(\varphi(w))(N)=\|w\|N\text{ for all }w\in W_F\end{aligned}\right\}.
\end{equation*}
It is clear that $\WDP'_G$ is a functor on the category of $k$-algebras and comes equipped with a natural action of $\wh{G}_k$. Let us also observe that if $i_c$ is the map from \S\ref{ss:L-and-C} then there is a morphism $i_c^{\mathrm{WD}} \colon \WDP_{G,k}'\to \WDP_{G,k}$ which on $A$-points is given by sending $(\varphi',N)$ to the unique element of $\WDP_G(A)$ of the form $(\varphi,N)$ which is equal to $(i_c\circ \varphi',N)$ on $A$-points.

\begin{prop}\label{prop:WD-C-L-comparison} The morphism of functors $i_c^{\mathrm{WD}} \colon \WDP'_{G,k}\to \WDP_{G,k}$ is an isomorphism.
\end{prop}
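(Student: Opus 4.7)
The plan is to verify that $i_c^{\mathrm{WD}}$ induces a bijection on $A$-points for every $k$-algebra $A$, by constructing a natural inverse. The key structural fact is that the morphism of $\Q$-schemes
\begin{equation*}
    \widehat{G} \times \bG_m \to \check{G} \times_{\bG_m} \bG_m, \qquad (g,z) \mapsto ([(g,z)], z),
\end{equation*}
where $\check{G} \to \bG_m$ is $p_{\bG_m}$ and $\bG_m \to \bG_m$ is the squaring map, is an isomorphism. Indeed, two distinct representatives of a class in $\check{G}$ differ by $(z_G, -1)$, hence have opposite $\bG_m$-coordinates, so fixing the $\bG_m$-coordinate pins down a unique representative. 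The element $c \in k$ with $c^2 = q$ thus canonically sections the cover $\widehat{G} \times \bG_m \to \check{G}$ over any class whose image under $p_{\bG_m}$ is a $q$-power.

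Well-definedness and injectivity of $i_c^{\mathrm{WD}}$ are direct: one computes $p_C \circ (i_c \circ \varphi') = (\|\cdot\|, \id)$ using $c^{-2d(w)} = \|w\|$; the condition on $N$ is preserved because $i_c$ is the identity on the $\widehat{G}$-factor; and injectivity follows immediately from the $\bG_m$-rigidity above.

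For surjectivity, given $(\varphi, N) \in \WDP_{G,k}(A)$, write $\varphi(w) = (\check{\varphi}(w), w)$. The constraint $p_C \circ \varphi = (\|\cdot\|, \id)$ combined with the map $w \mapsto c^{-d(w)}$ produces an $A$-point $(\check{\varphi}(w), c^{-d(w)})$ of $\check{G} \times_{\bG_m} \bG_m$. The isomorphism above then yields a unique $g_w \in \widehat{G}(A)$ satisfying $[(g_w, c^{-d(w)})] = \check{\varphi}(w)$. Setting $\varphi'(w) \defeq (g_w, w)$ gives a candidate inverse with $i_c \circ \varphi' = \varphi$ by construction.

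The main remaining task is to verify that $\varphi'$ is genuinely a cross-section homomorphism landing in $\WDP'_{G,k}(A)$. The homomorphism property follows from uniqueness in the lifting step: both $i_c(\varphi'(w_1)) i_c(\varphi'(w_2))$ and $i_c(\varphi'(w_1 w_2))$ are lifts of $\varphi(w_1 w_2)$ whose $\bG_m$-coordinate is $c^{-d(w_1)} c^{-d(w_2)} = c^{-d(w_1 w_2)}$ (by additivity of $d$), and so they coincide. The continuity / scheme-theoretic property of $\varphi'$ is inherited from $\varphi$ via the fact that morphisms of $A$-schemes from $\cI_{F,A}$ to finite-type $A$-schemes factor through some $\underline{I_F/I_K}$, and the character $w \mapsto c^{-d(w)}$ preserves this factorization. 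Functoriality in $A$ is built into the canonical nature of all the constructions, so these fit together into a natural inverse to $i_c^{\mathrm{WD}}$.
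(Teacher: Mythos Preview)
Your argument is correct and is essentially the same as the paper's, just unpacked. The paper phrases your key structural fact as the cartesianness of the square
\[
\begin{CD}
\LG_k @>{i_c}>> \CG_k \\
@VVV @VV{p_C}V \\
\cW_{F,k} @>{(\|\cdot\|,\id)}>> \bG_{m,k}\times\cW_{F,k}
\end{CD}
\]
which is exactly your isomorphism $\wh{G}\times\bG_m\isomto\check{G}\times_{\bG_m}\bG_m$ promoted to the semidirect products; your explicit construction of $g_w$ is the fiber-product universal property spelled out. The second ingredient you invoke, that scheme morphisms from $\cI_{F,A}$ into finite-type targets factor through a finite level, is likewise the paper's second ingredient, used to pass between scheme morphisms out of $\cW_{F,A}$ and continuous maps out of $W_F$.
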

\begin{proof}
This follows from the cartesian diagram 
\begin{equation*}
\xymatrixcolsep{4pc}
    \xymatrix{{}^L G_k\ar[r]^{i_c}\ar[d] & {}^C G_k \ar[d]^{p_C}\\ \cW_{F,k}\ar[r]^-{(\| \cdot \|,\id )} & \bG_{m,k} \times \cW_{F,k} } 
\end{equation*}
and that any morphism 
$\cW_{F,A}\to \check{G}_A$ of schemes over $A$ factors through $(\cW_{F}/\cI_{K})_A$ for a 
finite extension $K$ of $F$.  
\end{proof}

Our discussion of the space $\WDP'_{G,k}$ is not strictly necessary for
the paper. However, while the $C$-group is better suited to the technical
setting of the paper, the $L$-group setting is more common. Thus, we
include $\WDP'_{G,k}$, and the space $\LP'_{G,k}$ as in \S\ref{s:L-param},
and so we discuss both to reconcile these two points of view.

\medskip

\paragraph{Representability} We now establish the representability of the functor $\WDP_{G}$. To this end, let us fix $K$ a finite extension of $F^\ast$ Galois over $F$. Note that for a $\Q$-algebra $A$ and an element $(\varphi,N)$ of $\WDP_G^K(A)$ we may define an element $\phi$ of $\underline{Z}^1(I_F/I_K,\wh{G})(A)$ as follows. 
First observe that condition \textbf{(WDP1)} implies that $\varphi|_{\cI_{F,A}}$ takes values in $\wh{G}_A \rtimes \cI_{F,A}$. Then, as $(\varphi,N)$ is in $\WDP_G^K(A)$, the composition of $\varphi|_{\cI_{F,A}}$ with the projection $\wh{G}_A\rtimes \cI_{F,A}\to \wh{G}_A\rtimes (\cI_F/\cI_K)_A$ factorizes through a cross-section homomorphism 
$(\cI_F/\cI_K)_A \to \wh{G}_A\rtimes (\cI_F/\cI_K)_A$. 
This gives an element $\phi$ of $\underline{Z}^1(I_F/I_K,\wh{G})(A)$ since $\cI_F/\cI_K \cong \underline{I_F/I_K}$. This association defines a morphism of presheaves $\WDP_G^K\to \underline{Z}^1(I_F/I_K,\wh{G})$. 

Let us now fix a lift $w_0$ of arithmetic Frobenius in $W_F$. Define a morphism of presheaves
\begin{equation*}
    j_{w_0}\colon \WDP_G^K\to \check{G}\times \underline{Z}^1(I_F/I_K,\wh{G})\times \wh{\mc{N}},\qquad (\varphi,N)\mapsto (\check{\varphi}(w_0),\phi,N).
\end{equation*}
On the other hand, we have a diagram
\begin{equation*}
    \mc{D}^\WD\colon \xymatrix{\check{G}\times \underline{Z}^1(I_F/I_K,\wh{G})\times \wh{\mc{N}} \ar@<-.5ex>[r] \ar@<.5ex>[r]& \underline{\Hom}(I_F/I_K,\wh{G})\times \bb{G}_{m,\Q}\times \wh{\mc{N}}^{[I_F:I_K]+1}}
\end{equation*}
given by 
\begin{equation*}
    \begin{aligned} (g,f,M)\mapsto & \bigg(\Int(g,w_0)\circ f,p_{\bG_m}(g), (\mathrm{Ad}(f(i))(M))_{i\in I_F/I_K},\Ad(g,w_0)(M)\bigg)\\  (g,f,M) \mapsto &\bigg(f\circ \Int(w_0),q,(M)_{i\in I_F/I_K},qM\bigg).\end{aligned}
\end{equation*} 
We then have the following explicit description of $\WDP_G^K$.

\begin{prop}\label{prop:wd-finite-level-rep} The morphism $j_{w_0}$ identifies $\WDP_G^K$ with the equalizer $\mathsf{Eq}(\mc{D}^\WD)$. Thus, $\WDP_G^K$ is representable by a finite type affine $\Q$-scheme and $j_{w_0}$ is a closed embedding.
\end{prop}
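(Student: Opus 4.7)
The plan is to prove the proposition by exhibiting a natural bijection $\WDP_G^K(A) \cong \mathsf{Eq}(\mc{D}^{\WD})(A)$, functorial in the $\bQ$-algebra $A$, realized on the nose by $j_{w_0}$. From this the representability and closed-embedding claims follow at once. The forward direction is essentially a reading exercise: for $(\varphi, N) \in \WDP_G^K(A)$, the four components of the two parallel arrows of $\mc{D}^{\WD}$ evaluated at $j_{w_0}(\varphi, N) = (\check{\varphi}(w_0), \phi, N)$ correspond respectively to (a) the Weil relation $\varphi(w_0)\varphi(i)\varphi(w_0)^{-1} = \varphi(w_0 i w_0^{-1})$ for $i \in I_F/I_K$, automatic because $\varphi$ is a homomorphism; (b) the normalization $p_{\bG_m}(\check{\varphi}(w_0)) = \|w_0\| = q$ from \textbf{(WDP1)}; (c) the inertia-invariance $\Ad(\varphi(i))(N) = \|i\| N = N$; and (d) the Frobenius monodromy relation $\Ad(\varphi(w_0))(N) = qN$. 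Each of these follows directly from \textbf{(WDP1)} and \textbf{(WDP2)}, so $j_{w_0}$ factors through the equalizer.

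For the reverse direction, starting from $(g, f, M) \in \mathsf{Eq}(\mc{D}^{\WD})(A)$ one needs to construct an inverse Weil--Deligne parameter $(\varphi, M)$. The $\bQ$-point $w_0$ of $\cW_F$ splits the map $d\colon \cW_F \to \underline{\bZ}$ and exhibits $\cW_F = \cI_F \rtimes \underline{\bZ}$ as a semidirect product of group $\bQ$-schemes, so a homomorphism $\varphi\colon \cW_{F,A} \to \CG_A$ is specified by two compatible pieces. On the inertia factor we pull back the cross-section homomorphism underlying $f$ along the quotient $\cI_{F,A} \twoheadrightarrow \underline{I_F/I_K}_A$ and include into $\CG_A$; on the Frobenius factor we send the generator of $\underline{\bZ}$ to $(g, w_0) \in \CG(A)$. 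The first equalizer condition is precisely what is required for these two pieces to assemble, via the semidirect product structure, into a well-defined group scheme homomorphism $\varphi$. Setting $N = M$, the remaining equalizer conditions force \textbf{(WDP1)} and \textbf{(WDP2)} to hold, and the fact that $\cI_K$ is killed by $\check{\varphi}|_{\cW_{F^\ast}}$ is automatic from the construction. This assignment is visibly functorial in $A$ and inverse to $j_{w_0}$.

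Representability then follows formally. The target of $\mc{D}^{\WD}$ is affine and therefore separated, so the equalizer is a closed subscheme of the source $\check{G} \times \underline{Z}^1(I_F/I_K, \wh{G}) \times \wh{\mc{N}}$, which is finite-type affine over $\bQ$ by Proposition \ref{prop:cocycle-scheme} (for the middle factor). The identification $\WDP_G^K \cong \mathsf{Eq}(\mc{D}^{\WD})$ thus presents $j_{w_0}$ as a closed embedding into a finite-type affine $\bQ$-scheme. The main obstacle is constructing $\varphi$ as an honest morphism of group schemes rather than merely a map on $A$-points; the essential ingredients are the scheme-theoretic semidirect product decomposition of $\cW_F$ and the fact that a morphism from $\cI_F$ to a finite-type scheme factors through $\underline{I_F/I_K}$ for $K$ sufficiently large, which is what allows the inertia datum to be encoded in the single finite-dimensional object $f$.
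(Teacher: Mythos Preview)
Your proof is correct and is precisely the routine verification the paper omits (the proposition is stated without proof). The argument via the semidirect product decomposition $\cW_F \cong \cI_F \rtimes \underline{\bZ}$ induced by $w_0$, together with the observation that the four equalizer conditions correspond exactly to the homomorphism compatibility, \textbf{(WDP1)}, and \textbf{(WDP2)}, is the intended one.
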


Observe that for an extension $K\subseteq K'$ of Galois extensions of $F$ containing $F^\ast$ there is a restriction morphism $\underline{Z}^1(I_F/I_{K'},\wh{G})\to \underline{Z}^1(I_K/I_{K'},\wh{G})$. By Proposition \ref{prop:cocycle-scheme} and Lemma \ref{lem:stratification-isom} the subspace consisting of only the trivial homomorphism is a clopen subset of the target, and thus so is its preimage in $\underline{Z}^1(I_F/I_{K'},\wh{G})$, but this is precisely $\underline{Z}^1(I_F/I_K,\wh{G})$. We deduce that the natural inclusion of functors $\WDP_G^K\to \WDP_G^{K'}$ is a clopen embedding. From the identification $\WDP_G=\varinjlim_K \WDP_G^K$ we deduce from Proposition \ref{prop:wd-finite-level-rep} that $\WDP_G$ is representable by a scheme locally of finite type over $\Q$, all of whose connected components are affine.

The following non-trivial result will play an important technical role below (e.g.\@ in the proof of Theorem \ref{thm:JM-omnibus}).

\begin{thm}[{\cite[Corollary 2.3.7]{BeGeGdef} and \cite[Corollary 3.1.10]{ZhuCohLp}}]\label{thm:WD-reduced} The schemes $\WDP_G^K$ are reduced for all $K$, and thus, a fortiori, $\WDP_G$ is reduced.
\end{thm}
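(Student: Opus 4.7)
My strategy would be to prove the stronger statement that $\WDP_G^K$ is smooth over $\Q$, from which reducedness follows immediately. The plan is to argue by a local tangent-space computation at each closed point, leveraging the equalizer description from Proposition \ref{prop:wd-finite-level-rep} and the smoothness of the cocycle scheme $\underline{Z}^1(I_F/I_K, \wh{G})$ provided by Proposition \ref{prop:cocycle-scheme}.

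The first step is to fiber $\WDP_G^K$ over $\underline{Z}^1(I_F/I_K, \wh{G})$ via the morphism $(\varphi, N) \mapsto \phi$, where $\phi$ is the cocycle determined by $\varphi|_{\cI_F}$. Over a fixed $\phi$, the fiber parametrizes pairs $(F, N)$, where $F = \check{\varphi}(w_0)$ must satisfy the semi-direct product compatibility with $\phi$, and $N \in \wh{\mc{N}}$ must satisfy $\Ad(F) N = q N$ together with $\Ad(\phi(i))N = N$ for all $i$. The compatible $F$'s form a torsor under the centralizer of $\phi$ in $\check{G}$, and one easily checks that this torsor fits into a smooth morphism by appealing again to the smoothness of orbit maps on $\underline{Z}^1$. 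Thus the $F$-direction contributes smoothly and it suffices to analyze the $N$-direction.

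The main obstacle is this $N$-direction. The conditions $\Ad(F)N = qN$ and $\Ad(\phi(i))N=N$ cut out, by Lemma \ref{lem:Gm-Ad-ad} (applied to a suitable cocharacter built from $F$), the $q$-weight subspace $V(F,\phi) \subseteq \wh{\mf{g}}^{\phi(I_F)}$, which is a linearly embedded affine space. Then $N$ must lie in $V(F,\phi) \cap \wh{\mc{N}}$, and the difficulty is that $\wh{\mc{N}}$ is singular at the origin, so reducedness of this intersection is not automatic. To overcome this, I would appeal to the classical Jacobson--Morozov theorem (Theorem \ref{thm:JM-classical}): every nonzero element of $V(F,\phi) \cap \wh{\mc{N}}(\ov{\Q})$ is the nilpositive element of an $\mf{sl}_2$-triple, whose neutral element gives a canonical cocharacter. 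This cocharacter produces a Slodowy-type transversal slice through each such $N$, on which the scheme structure can be shown to be manifestly smooth and to have the expected dimension.

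The final step is to match this local dimension with the intrinsic scheme structure on $\WDP_G^K$ globally. I would carry this out by computing $\dim T_x \WDP_G^K$ at each $x = (\varphi, N)$ via the explicit equations cutting $\WDP_G^K$ out of the smooth ambient scheme, and then recognizing the answer as the $\Q$-dimension of a $W_F$-cocycle complex for the adjoint representation twisted by the monodromy filtration coming from $N$. An Euler--Poincar\'e computation for $W_F$-cohomology of this complex, combined with the fact that the $H^2$-contribution is precisely absorbed by the $\wh{\mc{N}}$-freedom, gives equality with the expected dimension at every closed point, hence smoothness by the standard criterion. This cocycle-complex computation is really the heart of the argument and is where the technical weight of \cite[Corollary 2.3.7]{BeGeGdef} and \cite[Corollary 3.1.10]{ZhuCohLp} lies.
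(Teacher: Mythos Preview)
The paper does not give its own proof of this theorem; it is quoted as a black box from \cite{BeGeGdef} and \cite{ZhuCohLp}. So the relevant question is whether your proposed argument is correct on its own terms, and it is not: the scheme $\WDP_G^K$ is \emph{not} smooth in general, so a strategy whose goal is to establish smoothness is doomed from the outset. Already for $G=\GL_2$ (with trivial inertial datum) the unramified part of $\WDP_G^K$ has two $4$-dimensional irreducible components, one with $N=0$ generically and one with $N\neq 0$ generically, which meet along the $3$-dimensional locus where $N=0$ and the eigenvalue ratio of $\check\varphi(w_0)$ equals $q$; the tangent space there has dimension $>4$. This is exactly why the paper introduces $\WDP_G^{K,\sqcup}$ in the first place: it is the stratified object, not $\WDP_G^K$, that is smooth (Corollary~\ref{cor:WDP-pi0}).

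The place your sketch actually breaks is the passage ``the $F$-direction contributes smoothly and it suffices to analyze the $N$-direction''. Even granting that for each fixed $(F,\phi)$ the $N$-locus $V(F,\phi)=\{N:\Ad(F)N=qN,\ \Ad(\phi(i))N=N\}$ is an affine space (which is true, and the nilpotency condition is indeed automatic since invariant polynomials scale by powers of $q\neq 1$), the dimension of $V(F,\phi)$ jumps as $F$ varies, so the total space is not a smooth fibration and the promised Euler--Poincar\'e equality of tangent and expected dimensions fails precisely at those jumping points. A secondary issue is your appeal to Lemma~\ref{lem:Gm-Ad-ad} ``applied to a suitable cocharacter built from $F$'': there is no such cocharacter unless $F$ is semi-simple, and at the interesting (singular) points it need not be. The arguments in \cite{BeGeGdef} and \cite{ZhuCohLp} instead show that $\WDP_G^K$ is a local complete intersection of the expected dimension and is generically smooth; reducedness then follows because a complete intersection is Cohen--Macaulay, hence has no embedded components, so generic reducedness upgrades to reducedness. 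Your tangent-space and Jacobson--Morozov ingredients are relevant to the \emph{generic smoothness} step, but they cannot be pushed to global smoothness.
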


\subsection{Semi-simplicity of parameters}

As in the Theorem \ref{thm:JM-params-classical} one requires Frobenius semi-simplicity conditions to get a Jacobson--Morozov result in the relative setting. Therefore, we now develop a sufficient notion of Frobenius semi-simplicity for a Weil--Deligne and $L$-parameter over a $\Q$-algebra $A$.

\begin{defn}\label{defn:s-s-elem} Let $R$ be a $\Q$-algebra and $H$ is a smooth group $R$-scheme such that $H^\circ$ is reductive. We then say that an element $h$ of $H(R)$ is \emph{semi-simple} if there exists some $m\geqslant 1$, an \'etale cover $\Spec(S)\to\Spec(R)$, and a torus $T$ of $H^\circ_S$ such that $h^m$ is in $T(S)$.
\end{defn} 

By \cite[Expos\'{e} VIB, Corollaire 4.4]{SGA3-1} $H^\circ$ is representable so the above makes sense. Moreover, by \cite[Proposition B.3.4]{ConRgrsch} we may assume that $T$ is split in the above definition.

\begin{prop}\label{prop:ss-properties}  Let $R$ be a $\Q$-algebra and $H$ is a smooth group $R$-scheme such that $H^\circ$ is reductive, and let $h$ be an element of $H(R)$. Then, the following statements are true.
\begin{enumerate}
    \item If $h$ is semi-simple, there exists an \'etale cover $\Spec(S)\to\Spec(R)$, an integer $m\geqslant 1$, and a split maximal torus $T$ of $H_S^\circ$ such that $h^m$ is in $T(S)$.
    \item If $Z$ is a closed subgroup $R$-scheme of $Z(H^\circ)$ which is flat over $R$, then $h$ is semi-simple if and only if its image in $(H/Z)(R)$ is semi-simple.
\end{enumerate}
\end{prop}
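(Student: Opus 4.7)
The plan is as follows. For part (1), starting from the data $h^m \in T(S)$ given by the definition, I will enlarge $T$ \'etale-locally to a split maximal torus of $H^\circ_S$. After a first further \'etale cover I may assume $T$ is split by \cite[Proposition B.3.4]{ConRgrsch}. Then I take a maximal torus $T'$ of the identity component $C$ of the centralizer $Z_{H^\circ_S}(T)$, which exists after a further \'etale cover on $S$ and which automatically satisfies $T \subseteq T'$ because $T$ lies in the center of the connected reductive group scheme $C$. Any torus of $H^\circ_S$ containing $T'$ centralizes $T$, hence lies in $C$, so by maximality in $C$ equals $T'$; thus $T'$ is maximal in $H^\circ_S$. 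A final \'etale cover splits $T'$, yielding $h^m \in T'(S)$ with $T'$ a split maximal torus.

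For the forward direction of part (2), the image of the torus $T$ under the quotient $H^\circ_S \to (H/Z)^\circ_S = H^\circ_S/Z_S$ is the torus $T/(T \cap Z_S)$, and it contains the image of $h^m$; hence $\bar h \in (H/Z)(R)$ is semi-simple. The real content, and the main obstacle, is the reverse direction.

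For that reverse direction, suppose $\bar h^m \in T'(S)$ for a torus $T' \subseteq (H/Z)^\circ_S$. Form the preimage $\tilde T \defeq H^\circ_S \times_{(H/Z)^\circ_S} T'$, fitting in a short exact sequence $1 \to Z_S \to \tilde T \to T' \to 1$ of commutative flat group schemes over $S$. Because $Z \subseteq H^\circ$ yields an isomorphism $\pi_0(H) \isomto \pi_0(H/Z)$, left exactness of sections gives $h^m \in H^\circ(S)$, whence $h^m \in \tilde T(S)$. The crux is that $\tilde T$ is of multiplicative type: indeed, $Z$ is a flat closed subgroup scheme of $Z(H^\circ)$, which is of multiplicative type, so $Z$ itself is of multiplicative type (in characteristic zero this is well known), and $\tilde T$ is then an extension of multiplicative-type groups and hence of multiplicative type. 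In particular $\tilde T^\circ$ is a torus in $H^\circ_S$. After a further \'etale cover making the finite \'etale quotient $\tilde T/\tilde T^\circ$ constant of some order $n$, the power $h^{mn}$ lies in $\tilde T^\circ(S)$, witnessing the semi-simplicity of $h$.
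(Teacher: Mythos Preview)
Your argument for part (1) is essentially the paper's: both enlarge the given torus to a maximal torus of its centralizer in $H^\circ_S$ and verify this is maximal in $H^\circ_S$ itself; your maximality check (any torus containing $T'$ centralizes $T$, hence lies in $C$) is a variant of the paper's fiberwise verification.

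For part (2), the forward direction matches. For the reverse direction the approaches genuinely differ. The paper first applies part (1) to upgrade $T'$ to a split \emph{maximal} torus of $(H^\circ/Z)_S$ and then invokes \cite[Corollary~3.3.5]{ConRgrsch}, which says that $T\mapsto T/Z$ and $T'\mapsto f^{-1}(T')$ are mutually inverse bijections between the maximal tori of $H^\circ_S$ and of $(H^\circ/Z)_S$; thus $f^{-1}(T')$ is already a torus containing $h^m$, and no further power of $h$ or passage to an identity component is needed. Your route---forming $\tilde T=f^{-1}(T')$ for an arbitrary torus $T'$ and arguing that $\tilde T$ is of multiplicative type as an extension of such groups---is valid, but the assertion that a central extension of multiplicative-type groups is of multiplicative type over a general base is not a triviality: one must first check that the induced commutator pairing $T'\times T'\to Z_S$ vanishes (so that $\tilde T$ is actually commutative, which you assert without argument) and then invoke the appropriate result from SGA3, Exp.~IX. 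Indeed, the cleanest way to justify your $\tilde T$ being of multiplicative type is to embed $T'$ in a maximal torus and observe that $f^{-1}$ of that is a torus by Conrad's corollary---which is exactly the paper's argument. Your explicit observation that $h^m\in H^\circ(S)$ follows from $\pi_0(H)\isomto\pi_0(H/Z)$ and left-exactness of sections is a detail the paper leaves implicit; it is good that you spelled it out.
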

\begin{proof} To show (1) let $\Spec(S')\to \Spec(R)$ be an \'etale cover and $T'$ a torus of $H^\circ_{S'}$ such that $h^m$ is in $T'(S')$. Note that $Z_{H^\circ}(T')$ is a reductive group (combine \cite[Lemma 2.2.4]{ConRgrsch} and \cite[Corollary 17.59]{MilneGroups}). By \cite[Corollary 3.2.7]{ConRgrsch} there exists an \'etale cover $\Spec(S)\to\Spec(S')$ and a maximal torus $T$ of $Z_{H^\circ}(T')_S$. Observe that $T$ is also a maximal torus of $H^\circ_S$. Indeed, it is evidently a torus, and its maximality can be checked over each point $x$ of $\Spec(S)$. Then, as $T_x$ is contained in a maximal torus of $H^\circ_x$, and $T'_x\subseteq T_x$, we see that this maximal torus is actually contained in $Z_{H_x^\circ}(T'_x)=Z_{H^\circ}(T')_x$, and so must be equal to $T_x$. As $T'_S$ is central in $Z_{H_{S'}^\circ}(T')_S$ it is clear that $T$ contains $T'_S$ and thus $h^m$ is contained in $T(S)$. As we may pass to a futher \'etale extension to split $T$ the claim follows. 

Let $f\colon H^\circ\to H^\circ/Z$ be the tautological map. To prove (2) it is sufficient to note that for any $R$-algebra $S$ one has that the map $T\mapsto T/Z$ and $T'\mapsto f^{-1}(T')$ are mutually inverse bijections between the maximal tori of $H^\circ_S$ and $(H^\circ/Z)_S$ by \cite[Corollary 3.3.5]{ConRgrsch}. 
\end{proof}

Consider a representation $\rho\colon H\to \GL(M)$ where $M$ is a finitely generated $R$-module. Let $h$ be an element of $H(R)$ and $I$ a finite subgroup of $H(R)$ that is stable under conjugation by $h$. For any $R$-algebra $S$ and any $\lambda$ in $S^\times$ let us set
\begin{equation*}
    M_S^I(h,\lambda)\defeq\ker\left(\rho(h)-\lambda|M_S^{\rho(I)}\to M_S^{\rho(I)}\right).
\end{equation*}
Abbreviate $M_R^I(h,\lambda)$ to $M^I(h,\lambda)$, and further abbreviate to $M^I(\lambda)$ if $h$ is clear from context. Finally, we omit $I$ from the notation if $I$ is trivial. Evidently $M_S^I(h,\lambda)\otimes_S S'$ is equal to $M_{S'}^I(h,\lambda)$ for any flat map of $R$-algebras $S\to S'$.

\begin{prop}\label{prop:eigen-decomp} Assume that $h$ is semi-simple. Then, there exists a unique decomposition
\begin{equation*}
    M^{\rho(I)}=\bigoplus_{\lambda \in R^\times}M^I(h,\lambda)\oplus M'
\end{equation*}
such that for any flat map $R\to S$ one has that 
\begin{equation*}
    \bigoplus_{\lambda\in S^\times-R^\times}M_S^I(h,\lambda)
\end{equation*}
is a direct summand of $M'_S$, and such that this is an equality if for some $m\geqslant 1$:
\begin{enumerate}
    \item $h^m$ is contained in a split torus of $H^\circ_S$ and commutes with $I$, 
    \item $S$ is a $\Q(\zeta_r)$-algebra, where $r\defeq [\langle h\rangle : \langle h^m\rangle]$ and $\zeta_r$ is a primitive $r^\text{th}$-root of unity, 
    \item and $S$ contains an $r^\text{th}$-root of all $\lambda$ such that $M(h^r,\lambda)\ne 0$.
\end{enumerate}
\end{prop}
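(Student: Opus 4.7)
The plan is to reduce to the case $I = 1$, then establish the equality in the second part by exploiting that $h^r$ lies in a split torus of $H^\circ_S$; the complement $M'$ in the general decomposition is then built by descent from a sufficiently large extension.

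First, since $|I|$ is invertible in the $\Q$-algebra $R$, the averaging idempotent $|I|^{-1} \sum_{i \in I} \rho(i)$ realizes $M^{\rho(I)}$ as an $R$-direct summand of $M$; because $h$ normalizes $I$ the operator $\rho(h)$ preserves this summand, so one may replace $M$ by $M^{\rho(I)}$ and reduce to the case $I = 1$ and a single endomorphism $\rho(h)$ of $M$.

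For the equality under conditions (1)--(3), the key structural fact is that $h^r \in \langle h^m \rangle \subseteq T(S)$, so $\rho(h^r)$ acts on $M_S$ through the split torus $T$ (which commutes with $I$), giving a $T$-weight decomposition
\[ M_S = \bigoplus_{\mu \in S^\times} M_S(h^r, \mu). \]
On each summand $\rho(h)$ satisfies $T^r - \mu = 0$. Condition (2) supplies $\zeta_r \in S$, and condition (3), combined with the observation that each such $\mu$ is a monomial in the coordinates of $h^r$ in $T(S) \cong \Gm^n_S$, supplies $\mu^{1/r} \in S$. Hence $T^r - \mu$ splits into $r$ distinct linear factors over $S$, and $\rho(h)$ diagonalizes on each $M_S(h^r, \mu)$ with eigenvalues in $S^\times$, yielding
\[ M_S = \bigoplus_{\nu \in S^\times} M_S(h, \nu). \]

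For existence and uniqueness of the decomposition in general, I would pick a faithfully flat $R \to S$ satisfying (1)--(3), form the submodule $\bigoplus_{\nu \in S^\times \setminus R^\times} M_S(h, \nu)$ of $M_S$, and descend it along the Galois-type symmetries of $S$ over $R$ (which permute the non-$R$-rational eigenvalues among themselves) to an $R$-submodule $M' \subseteq M$; faithfully flat descent then delivers the global decomposition $M = \bigoplus_{\lambda \in R^\times} M(h, \lambda) \oplus M'$. The containment $\bigoplus_{\lambda \in (S')^\times \setminus R^\times} M_{S'}(h, \lambda) \subseteq M'_{S'}$ for any flat $R \to S'$ is automatic from the construction, and uniqueness follows because any other complement with this containment property must agree with $M'$ after base change to $S$ (both are forced to be the non-$R$-rational piece of the complete $S$-decomposition), hence equals $M'$ by faithful flatness.

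The main obstacle is furnishing $\mu^{1/r} \in S$ for $\mu \in S^\times \setminus R^\times$ in the second step: condition (3) directly supplies $r$-th roots only for the $R$-rational eigenvalues of $\rho(h^r)$, so handling the extra $S$-rational eigenvalues requires exploiting that they arise as monomials in the coordinates of $h^r \in T(S)$ and then compatibilizing $r$-th roots of those coordinates within $S$ (possibly after a further étale enlargement, with canonicity of $M'$ preserved by the descent argument).
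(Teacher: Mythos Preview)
Your approach is essentially the same as the paper's: decompose $M_S$ under $h^r$ via the split torus, refine to an $h$-eigenspace decomposition after adjoining $\zeta_r$ and the necessary $r$-th roots, then descend the non-$R$-rational piece along the \'etale cover to obtain $M'$. Two points of comparison are worth noting.

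First, your reduction to $I=1$ via the averaging idempotent is cleaner than what the paper does. The paper keeps $I$ throughout and, on each $M_{S'}(h^r,\lambda)$, extends the $I$-action to an action of the finite group $I \rtimes (\langle h\rangle/\langle h^m\rangle)$ by letting $h$ act as $\lambda^{-1/r}\rho(h)$; it then decomposes under the characters of this finite group. Your route avoids that bookkeeping at no cost.

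Second, the obstacle you flag is not a real gap but rather an ambiguity in the statement that the paper's proof resolves by brute force. The paper does \emph{not} rely on condition~(3) as literally stated; instead, having chosen an \'etale cover $S$ where $h^m$ lands in a split torus, it observes that only finitely many $\lambda_1,\ldots,\lambda_e \in S^\times$ occur as eigenvalues of $h^r$ on $M_S$ (finiteness of $M_S$) and then passes to the further \'etale extension $S' \defeq S[\lambda_1^{1/r},\ldots,\lambda_e^{1/r},\zeta_r]$. So the paper's $S'$ contains $r$-th roots of all $S$-rational eigenvalues by construction, exactly as in your suggested fix. There is no need to extract these roots from the torus coordinates of $h^r$.
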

\begin{proof} Take an \'etale cover $\Spec(S)\to \Spec(R)$ and $m\geqslant 1$ such that $h^m$ is contained in a split torus $T$ of $H^\circ_S$ and $h^m$ commutes with $I$. Then $h^r \in \langle h^m\rangle$ is contained in $T$ and commutes with $I$. By \cite[Lemma A.8.8]{CGP} one may decompose $M_S$ into character spaces $M_S(\chi)$. One then observes that $M_S(h^r,\lambda)$ is precisely the direct sum of those character spaces $M_S(\chi)$ such that $\chi(h^r)=\lambda$. So, $M_S$ admits a direct sum decomposition with respect to the spaces $M_S(h^r,\lambda)$. 

As $M_S$ is finitely generated, we know that $M_S(h^r,\lambda)$ is trivial for all but finitely many $\lambda_1,\ldots,\lambda_e$, as a finitely generated module can have only finitely many non-zero direct summands. In particular, we may further pass to the \'etale extension $S'\defeq S[\lambda_1^{\nicefrac{1}{r}},\ldots,\lambda_e^{\nicefrac{1}{r}},\zeta_r]$. We extend the action of $I$ on each nontrivial $M_{S'}(h^r,\lambda)$ by $\rho$ to the action of the finite group $I \rtimes (\langle h\rangle/\langle h^m\rangle)$ letting $h$ act $\lambda^{-\nicefrac{1}{r}}\rho (h)$. As $S'$ is a $\Q (\zeta_r)$-algebra, we have a decomposition of $M_{S'}^I(h^r,\lambda)$ into character spaces $M_{S'}(h^r,\lambda)[\nu]$ where $\nu$ travels over the characters $I \rtimes (\langle h\rangle/\langle h^m\rangle) \to \langle h\rangle/\langle h^m\rangle\to S'$. We then see that for each $\tau \in (S')^\times$ such that $\tau^r=\lambda$ the space $M_{S'}^I(h,\tau)$ admits a direct decomposition into the spaces $M_{S'}(h^r,\lambda)[\nu]$ as $\nu$ ranges over those characters with $\nu(h)=\lambda^{-\nicefrac{1}{r}}\tau$. 

One may then check that the module $\bigoplus_{\tau}M_{S'}^I(h,\tau)$ as $\tau$ ranges over those elements of $(S')^\times-R^\times$ is stabilized under the \'etale descent data associated to $M_{S'}^{\rho(I)}$, and therefore (see \stacks{023N}) descends to a submodule $M'$ of $M^{\rho(I)}$. One sees that $M'$ is a complement of $\bigoplus_\lambda M^I(h,\lambda)$ as $\lambda$ travels over the elements of $R^\times$, as this may be checked over the faithfully flat extension $S'$. One may then check that $M'$ is independent of all choices, and satisfies the desired conditions.
\end{proof}

The following proposition will be helpful to define Frobenius semi-simple in a way that does not require the choice of an explicit arithmetic Frobenius lift.

\begin{prop}\label{prop:Frob-factor}
Let $\varphi\colon \cW_{F,A}\to \CG_A$ be a morphism of group schemes over a $\mathbb{Q}$-algebra $A$. 
Then there is a positive integer $m$ divisible by $[F^\ast:F]$ such that the morphism $\cW_{F,A}\to \check{G}_A$ given by $w \mapsto \check{\varphi}(w^m)$ admits a factorization
\begin{equation*}
    \cW_{F,A} \stackrel{d}{\lra} \underline{\mathbb{Z}}_A \stackrel{\check{\varphi}_m}{\lra} \check{G}_A
\end{equation*}
and $\check{\varphi}_m$ takes values in $Z_{\check{G}}(\varphi)$. 
\end{prop}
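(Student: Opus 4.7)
The plan is to reduce the statement to a concrete combinatorial computation at finite level and then verify the two assertions explicitly. Since the proposition involves only $\check{\varphi}$, the essential data is carried by its restriction to inertia. I will first choose a finite Galois extension $K/F$ containing $F^\ast$ such that $\check{\varphi}|_{\cI_{F,A}}\colon \cI_{F,A}\to\check{G}_A$ factors through $\underline{I_F/I_K}_A$. Such a $K$ exists because $\check{G}_A$ is of finite presentation and $\cI_F=\varprojlim\underline{I_F/I_{K'}}$, so any morphism out of $\cI_{F,A}$ factors at finite level by the property recalled in \S\ref{ss:L-and-C}; consequently $\check{\varphi}$ is insensitive to multiplication by $\cI_K$.

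Next, I fix a lift $w_0$ of arithmetic Frobenius, set $H=I_F/I_K$, and let $m_0$ denote the (finite) order of the $w_0$-conjugation action on $H$. I will then take $m$ to be any positive integer divisible by $[F^\ast:F]\cdot m_0\cdot |H|$. Using the decomposition $\cW_F=\bigsqcup_n d^{-1}(n)$, any $B$-point of $\cW_F$ on the $n$-th stratum is of the form $w=iw_0^{-n}$ for some $i\in\cI_F(B)$, and a direct expansion in $W_F/I_K$ yields
\begin{equation*}
    w^m \;\equiv\; \Bigl(\prod_{j=0}^{m-1}\beta^j(i)\Bigr)\,w_0^{-mn} \pmod{\cI_K},
\end{equation*}
where $\beta$ denotes conjugation by $w_0^{-n}$ on $H$. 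The order $r$ of $\beta$ divides $m_0$, so periodicity collapses the product to $P^{m/r}$ for the single element $P=\prod_{j=0}^{r-1}\beta^j(i)\in H$; the divisibility imposed on $m$ guarantees $|H|\mid m/r$, and Lagrange forces $P^{m/r}=1$. The resulting identity $w^m\equiv w_0^{-mn}\pmod{\cI_K}$, valid on all $B$-points, upgrades scheme-theoretically by Yoneda since $\cI_K$ is closed in $\cW_F$.

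Now applying $\check{\varphi}$ and using that $\check{\varphi}|_{\cW_{F^\ast,A}}$ is a group homomorphism (note $w_0^{-m}\in\cW_{F^\ast,A}$ as $[F^\ast:F]\mid m$), I will obtain $\check{\varphi}(w^m)=\check{\varphi}(w_0^{-m})^{d(w)}$. Defining $\check{\varphi}_m\colon\underline{\Z}_A\to\check{G}_A$ by $n\mapsto\check{\varphi}(w_0^{-m})^n$ then produces the required factorization through $d$. For the centralizer claim, unwinding the multiplication in $\CG=\check{G}\rtimes\cW_F$ shows that $(\check{\varphi}(w_0^{-m}),1)$ commutes with $\varphi(v)$ precisely when $\check{\varphi}(w_0^{-m}v)=\check{\varphi}(vw_0^{-m})$. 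Since $m_0\mid m$ makes $w_0^{-m}$ central in $W_F/I_K$, the elements $vw_0^{-m}$ and $w_0^{-m}v$ differ by an element of $\cI_K$, where $\check{\varphi}$ is trivial; this yields the equality, and all values of $\check{\varphi}_m$ then lie in $Z_{\check{G}}(\varphi)$ because the latter is a subgroup scheme.

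The principal obstacle is the combinatorial content of the middle paragraph: I must choose a single integer $m$ that simultaneously annihilates the inertial residue $\prod_j\beta^j(i)$ modulo $\cI_K$ for every stratum $n$ and every $B$-point $i\in\cI_F(B)$. Both $\beta=\alpha^{-n}$ and its order $r=m_0/\gcd(n,m_0)$ depend on $n$, and the residue also depends on $i$, so this uniformity is not immediate; the divisibility $[F^\ast:F]\cdot m_0\cdot|H|\mid m$ together with the periodicity-plus-Lagrange argument is exactly what makes it work.
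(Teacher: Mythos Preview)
Your proposal is correct and follows essentially the same strategy as the paper's proof: choose $K$ so that $\check{\varphi}$ is trivial on $\cI_K$, find $m$ so that $w^m\equiv w_0^{-m\,d(w)}\pmod{I_K}$ for all $w$, and then use centrality of $w_0^{-m}$ in $W_F/I_K$ for the centralizer claim. The only noteworthy difference is in how the congruence $w^m\equiv w_0^{-m\,d(w)}$ is obtained: you expand $(iw_0^{-n})^m$ explicitly, collapse the product by periodicity of $\beta$, and then kill the residue $P^{m/r}$ by Lagrange in $H=I_F/I_K$; the paper instead observes that $W_F/(I_K\langle w_0^{m_0}\rangle)$ is already a finite group and applies Lagrange there directly, which makes the combinatorics disappear in one line. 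Both routes give the same conclusion and the same order of magnitude for $m$.
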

\begin{proof}
Take a finite extension $K$ of $F^\ast$ Galois over $F$ such that 
$\check{\varphi}|_{\cI_{K,A}}$ is trivial. 
Take a lift $w_0 \in W_F$ of arithmetic Frobenius and choose $m_0$ such that 
the image of $w_0^{m_0}$ in $W_F/I_K$ is central. 
Let $m$ be the order of $W_F/(I_K \langle w_0^{m_0} \rangle)$. Then for any $w \in W_F$, since $w^m$ is trivial in $W_F/(I_K\langle w^{m_0}_0 \rangle)$, we have that $w^m=iw^{d(w)m}_0$ for some $i \in I_K$. Hence, the images of $w^m$ 
and $w_0^{md(w)}$ in $W_{F^\ast}/I_K$ are the same. 
Since $\check{\varphi}|_{\cW_{F^\ast,A}}$ factors through 
$(\cW_{F^\ast}/\cI_{K})_A$, we have  $\check{\varphi}(w^m)=\check{\varphi}(w_0^m)^{d(w)}$ 
for any point $w$ of $\cW_{F,A}$. Hence we have the factorization  $\check{\varphi}_m \colon \underline{\mathbb{Z}}_A \to \check{G}_A$. 
The composition 
\begin{equation*}
    \cW_{F,A} \stackrel{\varphi}{\lra} \CG \lra \check{G}_A \rtimes (\cW_{F}/\cI_{K})_A 
\end{equation*}
factors through $\varphi_K \colon (\cW_{F}/\cI_{K})_A \to \check{G}_A \rtimes (\cW_{F}/\cI_{K})_A$. To show that $\check{\varphi}_m$ factors through $Z_{\check{G}}(\varphi)$, it suffices to show $\check{\varphi}(w_0^m) \in Z_{\check{G}}(\varphi_K)$. Since the image of $w_0^m$ in $W_F/I_K$ is central, we have $\varphi_K(w_0^m) \in Z_{\check{G}_A \rtimes (\cW_{F}/\cI_{K})_A}(\varphi_K)$. Since the image of $(1,w_0^m)$ in $\check{G}_A \rtimes (\cW_{F}/\cI_{K})_A$ is central, we obtain $\check{\varphi}(w_0^m) \in Z_{\check{G}_A}(\varphi_K)$. 
\end{proof}

To define the notion of Frobenius semi-simple parameters, it is useful to have the following analogue of Lemma \ref{lem:L-group-ss}.

\begin{prop}\label{prop:Frob-ss-equiv}Let $(\varphi,N)$ be an element of $\WDP_G(A)$. Then, the following are equivalent:
\begin{enumerate}
    \item for any (equiv.\@ one) lift $w_0\in W_F$ of arithmetic Frobenius, $\ov{\varphi}(w_0)$ is semi-simple,
    \item for some $m$ as in Proposition \ref{prop:Frob-factor}, the morphism $\check{\varphi}_{m}$ \'etale locally factorizes through a torus of $\check{G}_A$.
\end{enumerate}
\end{prop}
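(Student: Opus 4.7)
The plan is to prove $(1)\Leftrightarrow(2)$, from which the ``any equivalently one'' claim in $(1)$ will follow as a byproduct. Indeed, the factorization $\check{\varphi}_m$ produced by Proposition \ref{prop:Frob-factor} depends only on $\varphi$ and $m$, not on the choice of Frobenius lift $w_0$, so $(2)$ is manifestly independent of $w_0$; once $(1)\Leftrightarrow(2)$ is established for one $w_0$, it therefore holds for any $w_0$.

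To prove the equivalence I would first reduce $(2)$ to a condition on the single element $\check{\varphi}(w_0^m)\in\check{G}(A)$. Any multiple of an integer $m$ satisfying Proposition \ref{prop:Frob-factor} still satisfies it, so we may assume $[F^\ast:F]\mid m$. Then $w_0^m$ acts trivially on $\check{G}_A$ via $\cW_F\to\underline{\Gamma_\ast}$, and a direct computation in the semi-direct product yields $\check{\varphi}(w_0^{mk}) = \check{\varphi}(w_0^m)^k$ for all $k\in\Z$. Since $\underline{\Z}_A$ is a disjoint union of copies of $\Spec A$, the image of $\check{\varphi}_m$ is the cyclic subgroup generated by $\check{\varphi}(w_0^m)$; consequently $\check{\varphi}_m$ factors through a torus $T\subseteq\check{G}_S$ on an \'etale cover $\Spec S\to\Spec A$ if and only if $\check{\varphi}(w_0^m)\in T(S)$.

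The final step is to match this condition with $(1)$ via Proposition \ref{prop:ss-properties}. By Definition \ref{defn:s-s-elem}, the condition that $\check{\varphi}(w_0^m)$ lies in a torus of $\check{G}_A$ \'etale locally (for some $m$ as above) is precisely semi-simplicity of $\check{\varphi}(w_0^m)$ in $\check{G}_A$. As $\bG_{m,A}\subseteq\check{G}_A$ is a flat closed central subgroup, Proposition \ref{prop:ss-properties}(2) identifies this with semi-simplicity of the image of $\check{\varphi}(w_0^m)$ in $\check{G}_A/\bG_{m,A}$. The structural quotient $\CG_A\to(\wh{G}\rtimes\underline{\Gamma_\ast})_A$ defining $\ov{\varphi}$ kills $\bG_{m,A}$, and as $[F^\ast:F]\mid m$ the element $\ov{\varphi}(w_0^m)$ lies in the identity component $\wh{G}_A$ of $(\wh{G}\rtimes\underline{\Gamma_\ast})_A$ and agrees with the above image of $\check{\varphi}(w_0^m)$. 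Finally, $\ov{\varphi}$ being a group scheme homomorphism gives $\ov{\varphi}(w_0^m)=\ov{\varphi}(w_0)^m$, and a direct check from Definition \ref{defn:s-s-elem} shows that an element is semi-simple if and only if any of its positive powers is. Chaining these equivalences yields $(1)\Leftrightarrow(2)$.

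The main obstacle I anticipate is bookkeeping: coordinating the various integers $m$ and \'etale covers arising in each step so that a single $m$ (after enlargement) simultaneously satisfies the constraints of Proposition \ref{prop:Frob-factor}, the divisibility $[F^\ast:F]\mid m$, and places $\check{\varphi}(w_0^m)$ in a torus after an \'etale cover. This is routine, as each condition is stable under passage to multiples of $m$ and to refinements of the \'etale cover.
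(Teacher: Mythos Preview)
Your approach is essentially the same as the paper's: both arguments reduce (2) to the claim that the single element $\check{\varphi}_m(1)=\check{\varphi}(w_0^m)$ lies in a torus \'etale locally, and then identify this with $\ov{\varphi}(w_0)^m$ up to a central factor to recover the definition of semi-simplicity in (1). The paper compresses this into three sentences; you have unpacked it carefully, including the routine bookkeeping about enlarging $m$.

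One small point to tidy: the central $\bG_{m,A}\subseteq\check{G}_A$ has quotient $\check{G}_A/\bG_{m,A}\cong(\wh{G}/\langle z_G\rangle)_A$, not $\wh{G}_A$ itself, so the image of $\check{\varphi}(w_0^m)$ there need not literally coincide with $\ov{\varphi}(w_0^m)\in\wh{G}_A$ but rather with its further image modulo $\langle z_G\rangle$. This costs you nothing---one more application of Proposition~\ref{prop:ss-properties}(2) with $Z=\langle z_G\rangle$ (a finite central subgroup, hence flat) closes the gap. The paper's own proof is somewhat loose on exactly this point (compare the target of $\ov{\varphi}$ in the definition with the group appearing in the proof), so you are in good company.
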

\begin{proof} By definition, (1) holds if and only if $\ov{\varphi}(w_0)$ has the property that $\ov{\varphi}(w_0)^{m}$ \'etale locally lies in a torus of $(\check{G}\rtimes \underline{\Gamma_\ast})_A^\circ=\check{G}_A$ for some $m$  as in Proposition \ref{prop:Frob-factor}. But, as an element of $\check{G}_A$, one easily sees that $\ov{\varphi}(w_0)^{m}$ is precisely $\check{\varphi}_{m}(1)$. As it is clear that (2) is equivalent to claim that \'etale locally on $A$ there exists a torus containing $\check{\varphi}_{m}(1)$ the claim follows.
\end{proof}

\begin{defn}\label{defn:Frob-ss-WD-param} For a $\Q$-algebra $A$, we call an element $(\varphi,N)$ of $\WDP_G(A)$ \emph{Frobenius semi-simple} if it satisfies any of the equivalent conditions of Proposition \ref{prop:Frob-ss-equiv}.
\end{defn}

For each $\Q$-algebra $A$, let us denote by $\WDP^\ss_G(A)$ (resp.\@ $\WDP_G^{K,\ss}(A)$) the subset of $\WDP_G(A)$ (resp.\@ $\WDP_G^K(A)$) consisting of Frobenius semi-simple parameters. It is clear that this forms a $\wh{G}$-stable subpresheaf of $\WDP_G$ (resp.\@ $\WDP_G^K$). Note that one does not expect this presheaf to be representable as the semi-simple elements in algebraic group form a constructible, but not locally closed, subset. Note also that by Proposition \ref{prop:ss-properties}, under the bijection of $\WDP_G(\C)$ with $\Phi^{\WD,\square}_G$ the set $\WDP_G^\ss(\C)$ corresponds to $\Phi^{\WD,\ss,\square}_G$.

The following technical result will play an important role later in the paper.

\begin{prop}\label{prop:red-cent-ss} If $A$ is a reduced $\Q$-algebra and $(\varphi,N)$ is an element of $\WDP_G(A)$ such that $Z_{\widehat{G}}(\varphi,N)^\circ_x$ is reductive of dimension $n$ for all $x$ in $\Spec(A)$, then $(\varphi,N)$ is Frobenius semi-simple.
\end{prop}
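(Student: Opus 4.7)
The plan is to reduce to a fiberwise Jordan-decomposition argument and then globalize using the constant-dimension hypothesis. First, using Proposition \ref{prop:Frob-factor}, I would choose $m\geqslant 1$ divisible by $[F^\ast:F]$ so that the morphism $\check{\varphi}_m$ factorizes through $Z_{\check{G}}(\varphi)$; since $m$ is divisible by $[F^\ast:F]$, the image of $w_0^m$ under the composition $\cW_{F,A}\xrightarrow{\varphi}\CG_A\to(\wh{G}\rtimes \underline{\Gamma_\ast})_A$ lands in $\wh{G}(A)$, giving an element $g\in\wh{G}(A)$ that commutes with $\varphi$ and satisfies $\Ad(g)(N)=q^mN$. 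By Proposition \ref{prop:Frob-ss-equiv} it then suffices to prove that $g$ is semi-simple in the sense of Definition \ref{defn:s-s-elem}.

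The second step is fiberwise: I would check that $g_x$ is semi-simple for every geometric point $x$ of $\Spec(A)$ by relativizing the classical Jordan-decomposition argument. Writing $g_x=s_xu_x$ for the Jordan decomposition in $\wh{G}_{\overline{\kappa(x)}}$, the elements $s_x$ and $u_x$ commute with anything that commutes with $g_x$; in particular they commute with $\varphi_x$. An eigenvalue analysis of $\Ad(g_x)(N_x)=q^mN_x$ shows that $\Ad(u_x)(N_x)=N_x$, so $u_x\in Z_{\wh{G}}(\varphi_x,N_x)$; conversely, every element of $Z_{\wh{G}}(\varphi_x,N_x)$ commutes with $g_x$ and hence with $u_x$, so $u_x$ lies in the center of $Z_{\wh{G}}(\varphi_x,N_x)$. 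Being unipotent, $u_x$ belongs to the identity component, which is reductive by hypothesis and therefore contains no non-trivial central unipotent element; hence $u_x=1$ and $g_x=s_x$ is semi-simple.

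The hard part will be the final globalization step, since fiberwise semi-simplicity does not in general imply semi-simplicity in the sense of Definition \ref{defn:s-s-elem} -- for instance $\left(\begin{smallmatrix}1 & t\\ 0 & 1+t^2\end{smallmatrix}\right)\in \GL_2(\bQ[t])$ is fiberwise semi-simple but not relatively so, its centralizer having jumping fiber dimension. The constant-dimension assumption rescues the argument: together with reductivity of the fibers and reducedness of $A$, it forces $Z_{\wh{G}}(\varphi,N)^\circ$ to be a reductive group scheme over $A$ (via smoothness of the fibers combined with miracle flatness). Since each section of $Z_{\wh{G}}(\varphi,N)^\circ$ commutes with $g$, I would étale-locally choose a maximal torus $T\subseteq Z_{\wh{G}}(\varphi,N)^\circ$; the centralizer $Z_{\wh{G}}(T)$ is then a reductive group scheme over $A$ containing $g$, in which $T$ is central and $g$ remains fiberwise semi-simple. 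Extending $T$ to a maximal torus $T'$ of $Z_{\wh{G}}(T)$, which by a rank argument is also a maximal torus of $\wh{G}_A$, and using conjugacy of maximal tori of reductive group schemes to move $g$ into $T'$ étale locally, one concludes that $g$ is semi-simple.
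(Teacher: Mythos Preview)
Your Steps 1 and 2 are essentially sound (a minor point: the natural home for $g=\check\varphi(w_0^m)$ is $\check G(A)$, but the unipotent part $u_x$ does land in $\wh G$ via $p_{\bG_m}(u_x)=1$, so the fiberwise argument survives). The gap is in Step 3, at the last move. After passing to $Z_{\check G}(T)$ you still only know that $g$ is \emph{fiberwise} semi-simple there; invoking ``conjugacy of maximal tori to move $g$ into $T'$'' presupposes that $g$ already lies \'etale-locally in some maximal torus, which is exactly the relative semi-simplicity you are trying to prove. Your own counterexample applies just as well inside the reductive group $Z_{\check G}(T)$: nothing you have established prevents $Z_{Z_{\check G}(T)}(g)$ from having jumping fiber dimension, because $g$ is not central in $Z_{\check G}(T)$ --- it only commutes with the smaller subgroup $Z_{\wh G}(\varphi,N)$.

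The paper sidesteps the fiberwise/globalize dichotomy by exhibiting a concrete torus over $A$ containing a power of $g$. The trick is to enlarge $Z_{\wh G}(\varphi,N)$ just enough to contain $g$ while keeping $g$ \emph{central}: set $S(N)=\{h\in\check G:hNh^{-1}=p_{\bG_m}(h)\,N\}$, so that $g\in Z_{S(N)}(\varphi)$ (from $\Ad(g)N=q^mN=p_{\bG_m}(g)\,N$) and $g$ is central there (anything centralizing $\varphi$ centralizes $\varphi(w_0^m)$, hence $g$). The hypothesis together with the fiberwise exact sequence $1\to Z_{\wh G}(\varphi,N)_x\to Z_{S(N)}(\varphi)_x\to\bG_{m,x}\to 1$ (surjectivity because $p_{\bG_m}(g)=q^m$ is not a root of unity) forces $Z_{S(N)}(\varphi)^\circ$ to be reductive over $A$ of constant dimension $n+1$; a suitable power of $g$ then lies in the torus $Z(Z_{S(N)}(\varphi)^\circ)^\circ$. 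In hindsight your Step 2 was detecting precisely that $u_x$ is a central unipotent in the fiber of this group; the missing idea is that the group itself, and hence its connected center, is already defined globally over $A$.
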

\begin{proof}
Define $S(N)$ to be the closed subgroup scheme of $\check{G}_A$ cut out by the closed condition $gNg^{-1} =p_{\bG_m}(g)N$. We have the equality $Z_{\widehat{G}}(\varphi,N)=\ker(p_{\bG_m}|_{Z_{S(N)}(\varphi)})$. Note that for all $x$ in $\Spec(A)$ one has a short exact sequence
\begin{equation*}
    1\to Z_{\wh{G}}(\varphi,N)_x\to Z_{S(N)}(\varphi)_x\to \bb{G}_{m,x}\to 1,
\end{equation*}
and as $Z_{\wh{G}}(\varphi,N)^\circ_x$ is assumed to be reductive of dimension $n$ for all $x$ in $\Spec(A)$, that $Z_{S(N)}(\varphi)^\circ_x$ is reductive of dimension $n+1$, and thus $Z_{S(N)}(\varphi)^\circ$ is representable and smooth over $A$, and thus reductive over $A$, by \cite[Expos\'{e} VIB, Corollaire 4.4]{SGA3-1} and \cite[Theorem 3.23]{MilneGroups}.

We take $m$ as Proposition \ref{prop:Frob-factor}. 
Then $\check{\varphi}_m$ factors through 
$Z_{S(N)}(\varphi)$. 
Further it factors through $Z(Z_{S(N)}(\varphi))$, 
since $\varphi(w^m)$ and $(1,w^m)$ commutes with $Z_{S(N)}(\varphi)$ for any point $w$ of $\cW_{F,A}$. 
Then there is an $m'$ such that 
$\check{\varphi}_m^{m'}=\check{\varphi}_{mm'}$ 
factors through $Z(Z_{S(N)}(\varphi)^{\circ})^{\circ}$. 
As $Z_{S(N)}(\varphi)^\circ$ is reductive, $Z(Z_{S(N)}(\varphi)^\circ)^\circ$ is a torus. Hence $(\varphi,N)$ is Frobenius semi-simple. 
\end{proof}

\subsection{\texorpdfstring{The space $\WDP^\sqcup_G$}{The space Locsqcup}} In this section we study the moduli space of Weil--Deligne parameters $(\varphi,N)$ where $N$ lies in $\mc{N}^\sqcup$ and show that this moduli space has an exceedingly simple structure.

\begin{defn} We denote by $\WDP_G^{K,\sqcup}$ (resp.\@ $\WDP^\sqcup_{G}$) the space $\WDP_G^K\times_{\wh{\mc{N}}}\wh{\mc{N}}^\sqcup$ (resp.\@ $ \WDP_{G}\times_{\wh{\mc{N}}}\wh{\mc{N}}^\sqcup=\varinjlim_K \WDP_G^{K,\sqcup}$).
\end{defn}

Now, let us fix a finite extension $K$ of $F^\ast$ Galois over $F$ and a lift $w_0$ of arithmetic Frobenius. Then, by Proposition \ref{prop:wd-finite-level-rep} we have an identification $j_{w_0}$ of $\WDP_G^K(\ov{\Q})$ with
\vspace*{1 pt}
\begin{equation*}
    \left\{(\gamma,\phi,N)\in \check{G}(\ov{\Q})\times \underline{Z}^1(I_F/I_K,\wh{G})(\ov{\Q})\times \wh{\mc{N}}(\ov{\Q}):\begin{aligned}(1)&\quad \Int(\gamma,w_0)\circ \phi=\phi\circ\Int(w_0),\\ (2)&\quad p_{\bG_m}(\gamma)=q,\\ (3)&\quad \mathrm{Ad}(\phi(i))(N)=N\text{ for all }i\in I_F/I_K,\\ (4)&\quad \Ad(\gamma,w_0)(N)=qN\end{aligned}\right\}.
\end{equation*}
\vspace*{1 pt}
Now, for $(\gamma,\phi,N)$ in $\WDP_G^K(\ov{\Q})$ let us define $Z_{\phi,N}\defeq Z_{\wh{G}}(\phi,N)$. 

\begin{defn}
An element $(\gamma',\phi',N')$ in $\WDP_G^K(A)$, for a $\ov{\Q}$-algebra $A$, is \emph{locally movable to $(\gamma,\phi,N)$} if there exists an \'etale cover $\Spec(A')\to\Spec(A)$ and $(g,h)\in(\wh{G}\times Z_{\phi,N}^\circ)(A')$ such that $(\gamma',\phi',N')=g(h\gamma,\phi,N)g^{-1}$.
\end{defn}

As this definition is clearly functorial, we observe that we may define a subpresheaf $U(\gamma,\phi,N)$ of $\WDP_{G,\ov{\Q}}^{K,\sqcup}$ whose $A$-points are given by
\begin{equation*}
    U(\gamma,\phi,N)(A)\defeq \left\{(\gamma',\phi',N')\in\WDP_{G,\ov{\Q}}^{K,\sqcup}(A): (\gamma',\phi',N')\text{ is locally movable to }(\gamma,\phi,N)\right\}.
\end{equation*}
We then have the following.

\begin{prop}\label{prop:wd-loc-mov-is-open} The morphism of presheaves $U(\gamma,\phi,N)\to \WDP_{G,\ov{\Q}}^{K,\sqcup}$ is representable by an open immersion. Moreover, the $\ov{\Q}$-scheme $U(\gamma,\phi,N)$ is smooth and irreducible.
\end{prop}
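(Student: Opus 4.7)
The plan is to realize $U(\gamma,\phi,N)$ as an algebraic-group orbit inside $\WDP_{G,\ov\Q}^{K,\sqcup}$, apply the orbit-separation machinery, and then upgrade the resulting locally closed immersion to an open immersion via a tangent-space calculation. Define the action morphism
\[
    \mu\colon H\defeq \wh{G}_{\ov\Q}\times Z_{\phi,N,\ov\Q}^\circ \longrightarrow \WDP_{G,\ov\Q}^{K,\sqcup}, \qquad (g,h)\longmapsto g\cdot (h\gamma,\phi,N)\cdot g^{-1},
\]
where $\wh{G}$ acts by conjugation on the whole triple and $Z_{\phi,N}^\circ$ acts by left multiplication on the $\gamma$-coordinate only. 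Well-definedness is a short verification: any $h\in Z_{\phi,N}$ commutes with each $\phi(i)$, fixes $N$ under the adjoint action, and has $p_{\bG_m}(h)=1$ since $h\in \wh{G}$. By construction $U(\gamma,\phi,N)$ is exactly the $H$-orbit of $(\gamma,\phi,N)$ in the orbit-separation sense. Noting that $\WDP_{G,\ov\Q}^{K,\sqcup}$ is reduced, separated and of finite type over $\ov\Q$ (via Proposition \ref{prop:wd-finite-level-rep}, Theorem \ref{thm:WD-reduced}, and the disjoint-union structure of $\wh{\mc{N}}^\sqcup$), Proposition \ref{prop:orbit-map} produces representability of $U$ by a reduced, smooth, locally closed subscheme, together with smoothness and surjectivity of $\mu\colon H\to U$. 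Irreducibility of $U$ then follows from the connectedness of $H$ (both $\wh{G}$ and $Z^\circ$ are connected) combined with the smoothness of $U$.

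The main technical point is to upgrade ``locally closed'' to ``open.'' My plan is to show that $\mu$ is smooth as a morphism into the full target $\WDP^{K,\sqcup}$ (not just onto $U$), so that its image is automatically open by \stacks{056G}. By $H$-equivariance, smoothness reduces to smoothness at $(e,e)$, which via the standard criterion for a morphism from a smooth source reduces to showing that
\[
    d\mu_{(e,e)}\colon \wh{\mf{g}}\oplus \mathrm{Lie}(Z_{\phi,N}^\circ)\longrightarrow T_{(\gamma,\phi,N)}\WDP^{K,\sqcup}
\]
is surjective. Using the closed embedding $j_{w_0}$ of Proposition \ref{prop:wd-finite-level-rep}, one identifies $d\mu_{(e,e)}(X,Y)$ with the triple whose $\phi$-component is $X\phi(\cdot)-\phi(\cdot)X$, whose $N$-component is $[X,N]$, and whose $\gamma$-component is $(X-\Ad(\gamma)X+Y)\gamma$; and one cuts out the target tangent space from $T_\gamma \check{G}\oplus T_\phi \underline{Z}^1(I_F/I_K,\wh{G})\oplus T_N\wh{\mc{N}}^\sqcup$ via the linearizations of conditions \textbf{(WDP1)} and \textbf{(WDP2)}. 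The surjectivity of $d\mu_{(e,e)}$ will then follow by combining three ingredients: (i) the openness of the $\wh{G}$-orbit of $\phi$ in $\underline{Z}^1(I_F/I_K,\wh{G})$, a consequence of the smoothness of the orbit map in Proposition \ref{prop:cocycle-scheme}; (ii) the fact that $\mc{O}_N$ is a connected component of $\wh{\mc{N}}^\sqcup$, so every tangent vector to $\wh{\mc{N}}^\sqcup$ at $N$ is of the form $[X,N]$ for some $X\in \wh{\mf{g}}$; and (iii) the freedom of $Y\in\mathrm{Lie}(Z_{\phi,N}^\circ)$ to absorb any residual mismatch in the $\gamma$-direction.

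The delicate step will be the compatibility check in this tangent-space calculation: the linearizations of \textbf{(WDP1)} and \textbf{(WDP2)} couple the components $v_\gamma$, $v_\phi$, and $v_N$, and one must argue that a \emph{single} $X\in\wh{\mf{g}}$ simultaneously realizes both $v_\phi$ and $v_N$, rather than producing merely separate lifts for each. Once this compatibility is carried through, smoothness of $\mu$ at $(e,e)$ and hence everywhere follows, and the image $U$ is open in $\WDP^{K,\sqcup}$ by \stacks{056G}, completing the proof that $U\to \WDP^{K,\sqcup}$ is an open immersion of a smooth irreducible $\ov\Q$-scheme.
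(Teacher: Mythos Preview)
Your approach is genuinely different from the paper's and can be made to work, but two points need attention.

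First, your invocation of Proposition~\ref{prop:orbit-map} is not quite licit: the group $H=\wh G_{\ov\Q}\times Z_{\phi,N}^\circ$ does \emph{not} act on $\WDP_{G,\ov\Q}^{K,\sqcup}$, since the left-multiplication of $Z_{\phi,N}^\circ$ on the $\gamma$-slot only makes sense at points with inertial part $\phi$ and monodromy $N$. (Relatedly, reducedness of $\WDP^{K,\sqcup}_G$ does not follow directly from Theorem~\ref{thm:WD-reduced}, which concerns $\WDP^K_G$; it is in fact a consequence of the very proposition you are proving.) Fortunately this step is unnecessary: once you show $\mu$ is smooth, its image is open, and since a smooth surjection admits \'etale-local sections, the open image represents exactly the presheaf $U(\gamma,\phi,N)$. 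Smoothness and irreducibility of $U$ then follow from connectedness of $H$.

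Second, and more substantively, the ``delicate compatibility'' you flag is the whole point, and ingredients (i)--(iii) alone do not settle it. The missing idea is an averaging argument. With $\mf l=\Lie Z_{\wh G}(\phi)$ and $P\colon\wh{\mf g}\to\mf l$ the Reynolds projector for the finite group $\phi(I_F/I_K)$, the relation $\Ad(\phi(i))N=N$ gives $P([X,N])=[P(X),N]$, whence $\mf l\cap[\wh{\mf g},N]=[\mf l,N]$. Concretely: choose any $X_1$ with $v_\phi=X_1\cdot\phi$; the linearized constraint (3) forces $v_N-[X_1,N]\in\mf l\cap[\wh{\mf g},N]=[\mf l,N]$, so $v_N-[X_1,N]=[X',N]$ with $X'\in\mf l$, and $X\defeq X_1+X'$ realizes both $v_\phi$ and $v_N$. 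For the $\gamma$-direction, set $Y'\defeq Z-X+\Ad((\gamma,w_0))X$ where $v_\gamma=Z\gamma$; the linearization of condition (1) gives $(1-\Ad(\phi(j)))Y'=0$ for all $j$, so $Y'\in\mf l$, and the linearization of (4) gives $[Y',N]=0$, so $Y'\in\Lie Z_{\phi,N}=\Lie Z_{\phi,N}^\circ$. This proves $d\mu_{(e,e)}$ is surjective; by $\wh G$-equivariance and replacing $\gamma$ by $h\gamma$, surjectivity holds at every point of $H$, and smoothness of $\mu$ follows.

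For comparison, the paper argues differently: it pulls back the transporter scheme $\Transp_{\wh G}(\phi^{\univ},\phi)$ to an $L$-torsor $T\to X(\phi,N)$ (with $L=Z_{\wh G}(\phi)$), and then proves a scheme-level decomposition $\mc O_N\cap\mc N_L=\bigsqcup_i\mc O_{L,N_i}$ (Lemma~\ref{lem:intersection-decomp}) via a specialization argument over DVRs together with a deformation argument for reducedness. Your tangent-space route is more direct for this proposition and only needs the Lie-algebra identity $\mf l\cap[\wh{\mf g},N]=[\mf l,N]$ rather than the full scheme-level Lemma~\ref{lem:intersection-decomp}; the paper's approach, on the other hand, yields that stronger lemma as a byproduct.
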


Before we prove this proposition, we observe its major consequence. To this end, let us define an equivalence relation on $\WDP_G^K(\ov{\Q})$ by declaring that $(\gamma,\phi,N)$ is equivalent to $(\gamma',\phi',N')$ if there exists some $(g,h)\in (\wh{G}\times Z_{\phi,N})(\ov{\Q})$ such that $(\gamma',\phi',N')$ is equal to $g(h\gamma,\phi,N)g^{-1}$. Let us denote an equivalence class under this relation by $[(\gamma,\phi,N)]$. Observe that as we do not require that $h$ to actually lie in $Z^\circ_{\phi,N}(\ov{\Q})$ that $[(\gamma,\phi,N)]$ differs from $U(\gamma,\phi,N)(\ov{\Q})$. For each such equivalence class, let us choose an element $(\gamma,\phi,N)$. We consider $\pi_0(Z_{\phi,N})$ as a finite abstract group, and we define an equivalence relation on it by declaring that $c$ is equivalent to $c_1 c \gamma c_1^{-1} \gamma^{-1}$ for any $c_1$ in $\pi_0(Z_{\phi,N})$. We denote by $[c]$ an equivalence class for this relation.
\begin{rem}
The group $\langle \gamma \rangle$ acts on $\pi_0(Z_{\phi,N})$ by $\gamma \cdot c = \gamma c \gamma^{-1}$. Note that $\langle \gamma \rangle \cong \Z$ since $p_{\mathbb{G}_m}(\gamma)=q$. Hence, the map $z \mapsto z(\gamma)$ for $z \in Z^1(\langle \gamma \rangle, \pi_0(Z_{\phi,N}))$ induces a bijection between $H^1( \langle \gamma \rangle, \pi_0(Z_{\phi,N}))$ and equivalence classes in $\pi_0(Z_{\phi, N})$.
\end{rem}

We then have the following decomposition of $\WDP_{G,\ov{\Q}}^{K,\sqcup}$ into explicit connected components. 

\begin{thm}\label{thm:WD-const-decomp} The choice of $(\gamma,\phi,N)$ in each class $[(\gamma,\phi,N)]$ of $\WDP_G^K(\ov{\Q})$ gives a scheme-theoretic decomposition
\begin{equation*}
    \WDP_{G,\ov{\Q}}^{K,\sqcup}=\bigsqcup_{[(\gamma,\phi,N)]}\,\bigsqcup_{[c]}\,\,U(c\gamma,\phi,N).
\end{equation*}
\end{thm}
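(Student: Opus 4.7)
The plan is to verify the claimed decomposition on $\ov{\Q}$-points and then upgrade to a scheme-theoretic one via reducedness. By Proposition \ref{prop:wd-loc-mov-is-open} each $U(c\gamma,\phi,N)$ is represented by a smooth irreducible (hence connected) open subscheme of $\WDP_{G,\ov{\Q}}^{K,\sqcup}$. The ambient scheme is of finite type over $\ov{\Q}$ (combining Proposition \ref{prop:wd-finite-level-rep} with the finiteness of $\wh{\mc{N}}(\ov{\Q})/\wh{G}(\ov{\Q})$) and reduced (Theorem \ref{thm:WD-reduced}); on such a scheme, closed points coincide with $\ov{\Q}$-points and are dense in every nonempty open. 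It therefore suffices to prove (a) every $\ov{\Q}$-point of $\WDP_{G,\ov{\Q}}^{K,\sqcup}$ lies in some $U(c\gamma,\phi,N)$, and (b) the sets $U(c\gamma,\phi,N)(\ov{\Q})$ are pairwise disjoint. Combined with connectedness of each piece, this will exhibit the claimed decomposition into connected components.

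For (a), I take a $\ov{\Q}$-point $(\gamma',\phi',N')$, identify the equivalence class it belongs to, and choose $g\in\wh{G}(\ov{\Q})$ with $g(\phi',N')g^{-1}=(\phi,N)$ for our chosen representative $(\gamma,\phi,N)$ of that class. The key step is to show that $g\gamma'g^{-1}\gamma^{-1}$ lies in $Z_{\phi,N}(\ov{\Q})$. This uses that $g\gamma'g^{-1}$ and $\gamma$ satisfy the same three defining relations of $\WDP_G^K$ relative to $(\phi,N)$: the quotient lies in $\wh{G}$ (since $p_{\bG_m}=q$ for each factor), it fixes $N$ under $\Ad$ (from $\Ad(\gamma,w_0)(N)=qN$ applied to each factor), and it fixes $\phi(i)$ under $\Int$ for $i\in I_F/I_K$ (from $\Int(\gamma,w_0)\circ\phi=\phi\circ\Int(w_0)$). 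Writing this element as $h_0 h_c$ with $h_0\in Z_{\phi,N}^\circ(\ov{\Q})$ and $h_c$ the chosen lift of a class $c\in\pi_0(Z_{\phi,N})$, the pair $(g^{-1},h_0)\in(\wh{G}\times Z_{\phi,N}^\circ)(\ov{\Q})$ witnesses that $(\gamma',\phi',N')$ is locally movable to $(c\gamma,\phi,N)$.

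For (b), if a point is locally movable to both $(c_1\gamma_1,\phi_1,N_1)$ and $(c_2\gamma_2,\phi_2,N_2)$, then those two triples are equivalent, so the single-representative convention forces $(\gamma_1,\phi_1,N_1)=(\gamma_2,\phi_2,N_2)=(\gamma,\phi,N)$. That $(c_1\gamma,\phi,N)$ and $(c_2\gamma,\phi,N)$ are locally movable translates directly, by reducing the witnessing pair $(g,h)\in\wh{G}\times Z_{\phi,N}^\circ$ modulo $Z_{\phi,N}^\circ$, to the identity $c_2=\bar{g}c_1\gamma\bar{g}^{-1}\gamma^{-1}$ in $\pi_0(Z_{\phi,N})$ for some $\bar{g}\in\pi_0(Z_{\phi,N})$, which is precisely the equivalence relation on $\pi_0(Z_{\phi,N})$ used to define $[c]$.

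The main obstacle is the verification in (a) that $g\gamma'g^{-1}\gamma^{-1}$ commutes with both $N$ and $\phi(I_F/I_K)$: this is a careful but essentially routine manipulation of the three defining relations, carried out inside the semidirect-product structure of $\CG$. Once that computation is in hand, part (b) is immediate from unwinding the definition of local movability modulo $Z_{\phi,N}^\circ$, and the passage from $\ov{\Q}$-points to a scheme-theoretic decomposition is formal given reducedness and finite typeness over a field.
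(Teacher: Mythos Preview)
Your proof is correct and follows essentially the same approach as the paper: reduce to $\ov{\Q}$-points using that each $U(c\gamma,\phi,N)$ is open in a finite type $\ov{\Q}$-scheme, establish covering via the observation that any two valid $\gamma$'s over a fixed $(\phi,N)$ differ by an element of $Z_{\phi,N}(\ov{\Q})$, and establish disjointness by translating local movability into the $[c]$-equivalence on $\pi_0(Z_{\phi,N})$. One small remark: your appeal to reducedness (Theorem~\ref{thm:WD-reduced}) is unnecessary here, since openness of the pieces together with density of $\ov{\Q}$-points in any nonempty locally closed subset of a finite type $\ov{\Q}$-scheme already forces the scheme-theoretic decomposition; the paper does not invoke it.
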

\begin{proof} From Proposition \ref{prop:wd-loc-mov-is-open} we know that each $U(c\gamma,\phi,N)$ is an open subset of $\WDP_{G,\ov{\Q}}^{K,\sqcup}$. As $\WDP_{G,\ov{\Q}}^{K,\sqcup}$ is a finite type $\ov{\Q}$-scheme, it thus suffices to prove this claim at the level of $\ov{\Q}$-points. But, note that by Proposition \ref{prop:wd-finite-level-rep}, if $(\gamma,\phi,N)$ satisfies the conditions to be in $\WDP_G^K(\ov{\Q})$ then $(\gamma',\phi,N)$ does if and only if $\gamma'=h\gamma$ for $h$ in $Z_{\phi,N}(\ov{\Q})$. Thus,  we have a decomposition
\begin{equation*}
    \WDP_{G,\ov{\Q}}^{K,\sqcup}=\bigsqcup_{[(\gamma,\phi,N)]} \,\bigcup_{c\in \pi_0(Z_{\phi,N})}U(c\gamma,\phi,N).
\end{equation*}
 Next observe that an element $(h\gamma,\phi,N)$ may be written in the form $g(h'\gamma,\phi,N)g^{-1}$ if and only if $g$ is in $Z_{\phi,N}(\ov{\Q})$ and $h\gamma=gh'\gamma g^{-1}$ which implies that $h=gh'\gamma g^{-1}\gamma^{-1}$. With this, it is easy to see that 
\begin{equation*}
    \bigcup_{c\in \pi_0(Z_{\phi,N})}U(c\gamma,\phi,N)=\bigsqcup_{[c]}\,\,U(c\gamma,\phi,N)
\end{equation*}
from where the desired equality follows.
\end{proof}

From this we deduce the following non-trivial result. Let us denote the set of equivalence classes for $\WDP_G^K(\ov{\Q})$ (resp.\@ $\pi_0(Z_{\phi,N})$) by $[\WDP_G^K(\ov{\Q})]$ (resp.\@ $[\pi_0(Z_{\phi,N})]$).

\begin{cor}\label{cor:WDP-pi0} The $\Q$-scheme $\WDP_G^{K,\sqcup}$ is smooth, and there is a non-canonical $\Gamma_\Q$-equivariant bijection
\begin{equation*}
    \pi_0\left(\WDP_{G,\ov{\Q}}^{K,\sqcup}\right)\isomto \left\{([(\gamma,\phi,N)],[c]):\begin{aligned}(1)&\quad [(\gamma,\phi,N)]\in [\WDP_G^K(\ov{\Q})]\\ (2)&\quad [c]\in [\pi_0(Z_{\phi,N})] \end{aligned}\right\}
\end{equation*}
where the $\Gamma_\Q$ action on the target is inherited from $\WDP_G^{K,\sqcup}$ and $\wh{G}$.
\end{cor}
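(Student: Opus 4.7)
The plan is to deduce this corollary directly from Theorem \ref{thm:WD-const-decomp} together with Proposition \ref{prop:wd-loc-mov-is-open}. Indeed, Theorem \ref{thm:WD-const-decomp} exhibits $\WDP_{G,\ov{\Q}}^{K,\sqcup}$ as a scheme-theoretic disjoint union, indexed by pairs $([(\gamma,\phi,N)],[c])$, of the subspaces $U(c\gamma,\phi,N)$, each of which is open by the first assertion of Proposition \ref{prop:wd-loc-mov-is-open}, and smooth and irreducible over $\ov{\Q}$ by the second. This immediately gives smoothness of $\WDP_{G,\ov{\Q}}^{K,\sqcup}$ over $\ov{\Q}$, and smoothness of $\WDP_G^{K,\sqcup}$ over $\Q$ will then follow by faithfully flat descent of smoothness along $\Spec(\ov{\Q})\to\Spec(\Q)$.

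For the description of $\pi_0$, since each $U(c\gamma,\phi,N)$ is irreducible it is in particular connected, and being a disjoint union of pairwise disjoint connected open subschemes exhibits the $U(c\gamma,\phi,N)$ as precisely the connected components of $\WDP_{G,\ov{\Q}}^{K,\sqcup}$. The assignment $U(c\gamma,\phi,N)\mapsto ([(\gamma,\phi,N)],[c])$ then yields the desired bijection. The non-canonicity reflects the fact that forming the index set $[\pi_0(Z_{\phi,N})]$ requires a choice of representative of each class $[(\gamma,\phi,N)]$.

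It remains to establish $\Gamma_\Q$-equivariance. The left side carries a $\Gamma_\Q$-action because $\WDP_G^{K,\sqcup}$ is defined over $\Q$, while the right side inherits an action from the natural $\Gamma_\Q$-actions on $\WDP_G^K(\ov{\Q})$ and on $\wh{G}(\ov{\Q})$ (the latter inducing the action on the component groups $\pi_0(Z_{\phi,N})$). Unwinding the definitions, for any $\sigma\in\Gamma_\Q$ the equality
\begin{equation*}
    \sigma\bigl(U(c\gamma,\phi,N)\bigr)=U\bigl(\sigma(c)\sigma(\gamma),\sigma(\phi),\sigma(N)\bigr)
\end{equation*}
of open subschemes of $\WDP_{G,\ov{\Q}}^{K,\sqcup}$ follows formally from the definition of local movability, and this gives the asserted equivariance.

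The main subtlety, which needs to be treated carefully, is verifying that the $\Gamma_\Q$-action on the target set is genuinely well-defined independently of the choice of representative in each equivalence class $[(\gamma,\phi,N)]$; this reduces to tracing through how conjugation by an element realizing an equivalence between two representatives transports the identification of component groups and the equivalence relation on $\pi_0(Z_{\phi,N})$, which is purely formal once the decomposition of Theorem \ref{thm:WD-const-decomp} is in hand.
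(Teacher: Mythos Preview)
Your proposal is correct and follows essentially the same approach as the paper: the paper states this corollary without proof, but the analogous Corollary~\ref{cor:LP-pi0} is proved in exactly the way you describe, namely by invoking the decomposition theorem together with the smoothness and irreducibility of each piece $U(c\gamma,\phi,N)$. Your treatment is in fact more thorough, as you spell out the faithfully flat descent for smoothness and the $\Gamma_\Q$-equivariance, neither of which the paper makes explicit.
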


\medskip

\paragraph{The proof of Proposition \ref{prop:wd-loc-mov-is-open}} Define the morphism $\pi_K\colon \WDP_G^{K,\sqcup}\to \underline{Z}^1(I_F/I_K,\wh{G})\times\wh{\mc{N}}^\sqcup$ by $\pi_K(\varphi,N)= (\phi,N)$. This morphism is $\wh{G}$-equivariant when the target is endowed with the diagonal $\wh{G}$-action. Now, by Proposition \ref{prop:cocycle-scheme} there is a decomposition
\begin{equation*}
    \underline{Z}^1(I_F/I_K,\wh{G})_{\ov{\Q}}\times\wh{\mc{N}}^\sqcup_{\ov{\Q}}=\bigsqcup_{[(\phi_0,N_0)]\in\mc{J}}\mc{O}_{\phi_0}\times \mc{O}_{N_0}
\end{equation*}
where $\mc{J}
$ is the set of $\wh{G}(\ov{\Q})^2$ orbits of $(\underline{Z}^1(I_F/I_K,\wh{G})\times\wh{\mc{N}}^\sqcup)(\ov{\Q})$. Observe though that if $(\varphi,N)$ is in $\WDP_G^{K,\sqcup}(\ov{\Q})$ with $\pi_K(\varphi,N)=(\phi,N)$ then $\phi$ centralizes $N$. So, if we set $\mc{J}'$ to be the subset of $\mc{J}$ consisting of those $[(\phi_0,N_0)]$ with $\phi_0$ centralizing $N_0$ then we may produce a factorization
\begin{equation*}
    \pi_K\colon \WDP_{G,\ov{\Q}}^{K,\sqcup}\longrightarrow \bigsqcup_{[(\phi_0,N_0)]\in\mc{J}'}\mc{O}_\phi\times \mc{O}_N
\end{equation*}
which is $\wh{G}$-equivariant. For each $[(\phi_0,N_0)]$ in $\mc{J}'$ let us set $X(\phi_0,N_0)\defeq \pi_K^{-1}(\h_{\phi_0}\times \h_{N_0})$, which is a clopen subset of $\WDP_{G,\ov{\Q}}^{K,\sqcup}$. 

Set $L\defeq Z_{\wh{G}}(\phi)$ which, by Lemma \ref{lem:fixed-points-reductive} applied to the image of $\phi$ in $\wh{G}(\ov{\Q})\rtimes (I_F/I_K)$, is a closed subgroup scheme of $\wh{G}_{\ov{\Q}}$ with reductive identity component. Let $\mf{l}$ be the Lie algebra of $L$. Define $\mc{O}_N\cap\mc{N}_{L}\defeq \mc{O}_N\times_{\wh{\mc{N}}}\mc{N}_L$. For each $M$ in $(\mathcal{O}_N\cap \mc{N}_L)(\ov{\Q})$ we denote by $\mc{O}_{L,M}$ the locally closed $L$-orbit subscheme of $(\mc{O}_N\cap\mc{N}_L)_\red$.

\begin{lem}\label{lem:intersection-decomp} There exists a finite set $\{N=N_1,N_2,\ldots,N_m\}$ in $(\mc{O}_N\cap \mc{N}_L)(\ov{\Q})$ such that one has an equality of schemes $\mathcal{O}_N\cap \mathcal{N}_L=\bigsqcup_i \mathcal{O}_{L,N_i}$. In particular, $\mc{O}_N\cap\mc{N}_L$ is reduced.
\end{lem}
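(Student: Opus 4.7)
The plan splits into three steps: finiteness of the $L$-orbits in the intersection, reduction of the scheme-theoretic equality to a local smoothness question at each $N_i$, and verification of this local smoothness via a Slodowy slice argument.

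First, Lemma \ref{lem:fixed-points-reductive} ensures that $L^\circ$ is reductive over $\ov{\Q}$, and the classical theory of nilpotent orbits in characteristic $0$ (see for instance \cite[\S 2.8, Theorem 1]{Jantzen}) then tells us that $\mc{N}_L = \mc{N}_{L^\circ}$ has only finitely many $L^\circ$-orbits, hence only finitely many $L$-orbits since $L/L^\circ$ is finite. Consequently the $L$-stable set $(\mc{O}_N \cap \mc{N}_L)(\ov{\Q})$ is a finite disjoint union of $L(\ov{\Q})$-orbits; choose representatives $N = N_1, \ldots, N_m$. By Proposition \ref{prop:orbit-map}, each $\mc{O}_{L,N_i}$ is a smooth, $L$-stable, locally closed subscheme of $\mc{N}_L$, and the subsets $\mc{O}_{L,N_i}(\ov{\Q})$ partition $(\mc{O}_N \cap \mc{N}_L)(\ov{\Q})$.

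To upgrade this set-theoretic decomposition to the scheme-theoretic equality $\mc{O}_N \cap \mc{N}_L = \bigsqcup_i \mc{O}_{L,N_i}$, I invoke Lemma \ref{lem:stratification-isom}(3): it suffices to show that $\mc{O}_N \cap \mc{N}_L$ is reduced and that each $\mc{O}_{L,N_i}$ is open in it. By $L$-equivariance both of these are local questions, reducing to showing that $\mc{O}_N \cap \mc{N}_L$ is smooth of local dimension $\dim \mc{O}_{L,N_i}$ at each $N_i$.

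The main obstacle is proving this local smoothness. I would apply Theorem \ref{thm:JM-classical} inside $L^\circ$ to complete $N_i$ to an $\mathfrak{sl}_2$-triple $(N_i, H_i, F_i)$ contained in $\mf{l}$, yielding the transversal $\mathfrak{sl}_2$-decompositions $\wh{\mf{g}} = [\wh{\mf{g}}, N_i] \oplus \mathfrak{z}_{\wh{\mf{g}}}(F_i)$ and $\mf{l} = [\mf{l}, N_i] \oplus \mathfrak{z}_\mf{l}(F_i)$. Together with Slodowy's transversal slice theorem for $\wh{G}$, these produce an $L$-equivariant \'etale-local identification of $\mc{O}_N \cap \mc{N}_L$ near $N_i$ with the product $\mc{O}_{L,N_i} \times (\mc{O}_N \cap (N_i + \mathfrak{z}_\mf{l}(F_i)))$. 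The slice factor should then be shown to equal the reduced point at $N_i$ via a $\bG_m$-contraction argument using the cocharacter associated to $H_i$: the orbit $\mc{O}_N$ is preserved by the $\bG_m$-action, and $\mathfrak{z}_\mf{l}(F_i)$ carries no strictly positive $\ad(H_i)$-weights, so a suitably twisted contracting $\bG_m$-action has $N_i$ as its unique attracting fixed point on the slice factor and forces the scheme-theoretic intersection to retract onto $N_i$. Combined, these observations yield the required local smoothness.
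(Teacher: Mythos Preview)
Your approach is correct and genuinely different from the paper's. You reduce to a local smoothness statement via a Slodowy-slice/transversality argument, whereas the paper argues in two separate stages: first it shows each $\mc{O}_{L,N_i}$ is \emph{closed} in $(\mc{O}_N\cap\mc{N}_L)_\red$ via the valuative criterion---if an $R$-point $\mathbf{N}$ over a DVR had generic and special fibers in different $L$-orbits, then $Z_L(\mathbf{N})=Z_{\wh{G}}(\mathbf{N})^{\phi(I_F/I_K)}$ would be flat (as $Z_{\wh{G}}(\mathbf{N})$ is \'etale-locally constant and taking fixed points by a finite group preserves flatness in characteristic~$0$), contradicting the jump in fiber dimension---and then handles reducedness by a separate infinitesimal lifting argument, again using averaging over $\phi(I_F/I_K)$. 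Your route is more direct and treats reducedness and openness simultaneously; the paper's route avoids slice machinery but leans heavily on the specific feature that $\mf{l}$ arises as a fixed locus under a finite group.

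One point needs tightening. The $\bG_m$-contraction argument as stated does not show that the slice factor $\mc{O}_N\cap(N_i+\mathfrak{z}_{\mf{l}}(F_i))$ is a single reduced point: a contracting $\bG_m$-action with unique fixed point only forces every component to contain $N_i$, not that the scheme is zero-dimensional (think of $\mathbb{A}^1$ with the standard scaling). The fix is already implicit in what you wrote: since $\mathfrak{z}_{\mf{l}}(F_i)\subseteq\mathfrak{z}_{\wh{\mf{g}}}(F_i)$, the slice factor is a closed subscheme of $\mc{O}_N\cap(N_i+\mathfrak{z}_{\wh{\mf{g}}}(F_i))$, and the latter is the reduced point $\{N_i\}$ by transversality of the $\wh{G}$-Slodowy slice to $\mc{O}_N$ (equivalently, its Zariski tangent space $[\wh{\mf{g}},N_i]\cap\mathfrak{z}_{\wh{\mf{g}}}(F_i)$ vanishes). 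That one-line containment replaces the contraction step.
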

\begin{proof} We first show that the claimed decomposition holds for $(\mc{O}_N\cap\mc{N}_L)_\red$. Now, there are only finitely many $L(\ov{\Q})$ orbits in $(\mc{O}_N\cap\mc{N}_L)(\ov{\Q})$ as there are only finitely many $L^\circ$-orbits in $\mathcal{N}_L(\ov{\Q})$. Let $N=N_1,\ldots,N_m$ represent these orbits. By Lemma \ref{lem:stratification-isom} it suffices to show that each $\mc{O}_{L,N_i}$ is open or, as they form a set-theoretic partition of $(\mc{O}_N\cap \mc{N}_L)_\red$, that each is closed. Then, by the Noetherian valuative criterion for properness (see \stacks{0208}) it suffices to show if $R$ is a discrete valuation ring and $f\colon \Spec(R)\to (\mathcal{O}_N\cap\mc{N}_L)_\red$ is a morphism with $f(\eta)\in \mc{O}_{N_i,L}$ then $f(\Spec(R))\subseteq \mc{O}_{N_i,L}$. Assume not, and let $f\colon\Spec(R)\to (\mc{O}_N\cap\mc{N}_L)_\red$ be a morphism such that $f(\eta)\in \mathcal{O}_{L,N_i}(k(\eta))$ and $f(s)\in\mathcal{O}_{L,N_j}(k(s))$ with $i\ne j$. Note that $f$ corresponds to an element $\mb{N}$ in $\mc{N}_L(R)$ which is, as an element of $\wh{\mc{N}}(R)$, lies in $\mc{O}_N(R)$. Let us consider $Z_{L}(\mb{N})$. On the one hand, $Z_{L}(\mb{N})$ cannot be flat, as its generic fiber (resp.\@ special fiber) is a twisted form of $Z_L(N_i)$ (resp.\@ $Z_L(N_j)$) which has dimension $\dim(L)-\dim(\mc{O}_{L,N_i})$ (resp.\@ $\dim(L)-\dim(\mc{O}_{L,N_j})$). Note though that as $f(s)$ lies in $\ov{\mathcal{O}_{L,N_i}}$, whose $\ov{\Q}$-points are unions of $\ov{\Q}$-points of orbits of smaller dimension (cf.\@ \cite[Proposition 1.66]{MilneGroups}), $\dim(\mc{O}_{L,N_j})$ is strictly less than $\dim(\mc{O}_{N_i,L})$, and thus the fibers of $Z_{L}(\mb{N})$ have different dimensions, and so it cannot be flat over $R$ (see \cite[Corollary 14.95]{GortzWedhorn}). On the other hand, $Z_{\widehat{G}}(\mb{N})$ is flat as it is \'etale locally isomorphic to $Z_{\wh{G}}(N)=Z_{\wh{G}}(N)_R$. But, by Lemma \ref{lem:fixed-points-reductive} this implies that $Z_{\widehat{G}}(\mb{N})^{\phi(I_F)}=Z_{L}(\mb{N})$ is flat, which is a contradiction.

As $(\mc{O}_N\cap \mc{N}_L)_\red\to \mc{O}_N\cap\mc{N}_L$ is a homeomorphism, there is a scheme-theoretic decomposition $\mc{O}_N\cap\mc{N}_L=\bigsqcup_i U_i$ where $U_i$ is the open subscheme of $\mc{O}_N\cap\mc{N}_L$ with underlying space $\mc{O}_{L,N_i}$. As these schemes are Noetherian, to finish it suffices to show that for all $i$ and all Noetherian $\ov{\Q}$-algebras $A$ every morphism $\Spec(A)\to U_i$ factorizes through $\mc{O}_{L,N_i}$. As $\mc{O}_N=\mc{O}_{N_i}$ we may assume without loss of generality that $i=1$, and so $N_i=N$. Let $\mb{N}$ be the element of $\mf{l}_A$ coresponding to $\Spec(A)\to U_i$. We must then show that \'etale locally on $A$, $\mb{N}$ is conjugate to $N$. Let $I$ denotes the nilradical of $A$, and write $A_0=A/I$. As $A$ is Noetherian, $I^m=(0)$ for some $m$, and thus by inducting we may assume that $I^2=(0)$. Now, as $A_0$ is reduced the map $\Spec(A_0)\to U_i$ factorizes through $\mc{O}_{L,N}$ and thus $\mb{N}_{A_0}$ is \'etale locally conjugate to $N$. As the \'etale covers of $A$ and $A_0$ are equivalent (see \stacks{04DY}), and we are free to work \'etale locally on $A$, we may assume without loss of generality that $\Ad(l_0)(\mb{N}_{A_0})=N$ for some $l_0$ in $L(A_0)$. As $L$ is smooth, we may apply the infinitesimal lifting criterion to find a lift $l$ in $L(A)$ of $l_0$. Replacing $\mb{N}$ by $\Ad(l)(\mb{N})$ we may assume without loss of generality that $\mb{N}_{A_0}=N$. Now, as $\Transp_{\wh{G}}(\mb{N},N)\to \Spec(A)$ is a $Z_{\wh{G}}(N)$-torsor, and thus smooth, we know by the infinitesimal lifting criterion that there exists some $g$ in $\Transp_{\wh{G}}(\mb{N},N)(A)$ lifting the identity. Using the notation of \cite[II, \S4, \textnumero 3, 3.7]{DemazureGabriel}, we may write $g=e^x$ for $x$ in $I\wh{\mf{g}}_A$. Then, by \cite[II, \S4, \textnumero 4, 4.2]{DemazureGabriel} we have
\begin{equation*}
    N=\Ad(g)(\mb{N})=\mb{N}+\ad(x)(\mb{N}). 
\end{equation*} 
As $N$ and $\mb{N}$ lie in $\mf{l}_A$, they are invariant for the action of the finite group $\phi(I_F/I_K)$, and so if $y$ denotes the average of $x$ over the action of $\phi(I_F/I_K)$ then 
\begin{equation*}
    N=\mb{N}+\ad(y)(\mb{N}). 
\end{equation*}
But, by loc.\@ cit.\@ this right-hand side is equal to $\Ad(e^y)(\mb{N})$. By Lemma \ref{lem:fixed-points-reductive} we see that $e^y$ lies in $L(A)$, from where the claim follows.\end{proof}

Let us now denote by $(\gamma^\univ,\phi^\mathrm{univ},N^\mathrm{univ})$ the universal object over $X(\phi,N)$. Consider the transporter scheme $ \Transp_{\widehat{G}}(\phi^\mathrm{univ},\phi)\to \underline{Z}^1(I_F/I_K,\wh{G})$ and set $T$ to be the pullback to $X(\phi,N)$. Set $b\colon T\to X(\phi,N)$ to be the tautological map, which is smooth as $T$ is visibly an $L$-torsor. Note that we have a morphism $a\colon T\to \mathcal{O}_N\cap \mathcal{N}_L$ given by $a(g)= \mathrm{Ad}(g)(N^\mathrm{univ})$ and observe then that we have a scheme-theoretic decomposition $T=\bigsqcup_i a^{-1}(\mathcal{O}_{L,N_i})$. But, for each $i$ we also have a map $\kappa_i\colon a^{-1}(\mathcal{O}_{L,N_i})\to \pi_0(Z_{\widehat{G}}(\phi,N))$ given by sending $g$ to the component containing $\mathrm{Int}(g)(\gamma^\univ)\gamma^{-1}$, and we define for each $i$ and each $c\in \pi_0(Z_{\widehat{G}}(\phi,N))$  the open subscheme $U_{i,c}\defeq\kappa_i^{-1}(c)$ of $a^{-1}(\mathcal{O}_{L,N_i})$. We then obtain a decomposition $T=\bigsqcup_{i,c}U_{i,c}$. 

As $b\colon T\to X(\phi,N)$ is smooth, we see that $b(U_{1,\mathrm{id}})$ is an open subset of $X(\phi,N)$ whose $A$-points are precisely (by \cite[Corollaire 17.16.3.(ii)]{EGA4-4}) the set of $A$-points $(\gamma',\phi',N')$ of $X(\phi,N)$ which are \'etale locally in the image of $b$. It is simple to see that this implies that $U(\gamma,\phi,N)=b(U_{1,\mathrm{id}})$, which implies $U(\gamma,\phi,N)$ is representable by an open immersion. 

Finally, to show that $U(\gamma,\phi,N)$ is smooth and irreducible consider the natural morphism $\wh{G}\times Z_{\phi,N}^\circ\to U(\gamma,\phi,N)$. To simplify notation let us write $S=\wh{G}\times Z_{\phi,N}^\circ$. Note that, by definition, $S\to U(\gamma,\phi,N)$ is surjective as \'etale sheaves and thus a fortiori surjective as schemes, and thus $U(\gamma,\phi,N)$ is irreducible. To see that $U(\gamma,\phi,N)$ is smooth, note that as $S\to U(\gamma,\phi,N)$ is surjective as \'etale sheaves there exists an \'etale cover $V\to U(\gamma,\phi,N)$ such that $p\colon S_V\to V$ admits a section. Note though that as $S_V\to S$ is \'etale and the target is reduced, so is the source (see \stacks{025O}). But, as $p$ has a section, this implies that $V$ is reduced as the morphism of sheaves of rings $\mc{O}_V\to p_\ast \mc{O}_S$ has a section and thus is injective. This implies that $U(\gamma,\phi,N)$ is reduced by \stacks{033F}. But, as we're in characteristic $0$, this implies that $U(\gamma,\phi,N)$ is generically smooth over $\ov{\Q}$ (see \stacks{056V}). But, as $S(\ov{\Q})$ acts $U(\gamma,\phi,N)$ by scheme automorphisms acting transitively on $U(\gamma,\phi,N)(\ov{\Q})$ we deduce that every point of $U(\gamma,\phi,N)(\ov{\Q})$ has regular local ring, and thus $U(\gamma,\phi,N)$ is smooth over $\ov{\Q}$ as desired (see \stacks{0B8X}). This completes the proof of Proposition \ref{prop:wd-loc-mov-is-open}.

\section{The moduli space of $L$-parameters and the Jacobson--Morozov morphism}\label{s:L-param}

In this  section we define the moduli space $\LP_G^K$ of $L$-parameters for $G$, show it has favorable geometric properties, construct the Jacobson--Morozov morphism $\LP_G^K\to \WDP_G^{K,\sqcup}$, and show that an analogue of Theorem \ref{thm:JM-params-classical} holds for any $\Q$-algebra $A$.

\subsection{The moduli space of $L$-parameters}\label{ss:L-param-def} We begin with a slight modification of the Langlands group scheme $\mc{W}_F\times \SL_{2,\Q}$ better suited to arithmetic discussions over $\Q$. Specifically, as in the case of the $C$-group, this concept allows us to avoid extraneous choices of a square root of $q$ (e.g\@ the embedding $\iota$ below).
\begin{defn}We call the $\Q$-scheme representing the functor
\begin{equation*}
        \cat{Alg}_\Q\to\cat{Grp},\qquad A\mapsto \left\{(w,g)\in \mc{W}_F(A)\times \GL_2(A) :  \|w\|=\det(g)\right\}
    \end{equation*}
the \emph{twisted Langlands group scheme} and denote it $\mc{L}^\tw_F$.
\end{defn}
    
To justify the naming of $\mc{L}^\tw_F$, note that if $k$ is any extension of $\Q$ and $c$ is any element of $k$ such that $c^2=q$, then the morphism 
\begin{equation*}
    \eta_c\colon \mc{W}_{F,k}\times \SL_{2,k}\to \mc{L}^\tw_{F,k},\qquad (w,g)\mapsto \left(w,g\left(\begin{smallmatrix}c^{-d(w)} & 0
     \\ 0 & c^{-d(w)}\end{smallmatrix}\right)\right),
\end{equation*}
is an isomorphism. For future reference, we observe that we have a morphism
\begin{equation*}
    p_{\tw} \colon \mc{L}^\tw_F \to \bG_{m,\bQ}\times \cW_F  , \qquad 
    (w,g) \mapsto (\|w\|,w) . 
\end{equation*}
Let us also observe that there is a natural embedding of group schemes $\SL_{2,\Q}\to \mc{L}^\tw_F$ given by sending $g$ to $(1,g)$, as well as an embedding 
\begin{equation*}
    \iota\colon \mc{W}_F\to \mc{L}^\tw_F\qquad w\mapsto\left(w,\left(\begin{smallmatrix}\|w\| & 0\\ 0 & 1\end{smallmatrix}\right)\right).
\end{equation*} 
With these embeddings, we shall implicitly think of $\SL_{2,\Q}$ and $\mc{I}_F$ as subfunctors of $\mc{L}^\tw_F$. Finally, we observe that the embedding of $\mc{W}_{K}$ into $\mc{W}_F$ for any finite extension $K$ of $F$ gives rise to an embedding of $\mc{L}^\tw_{K}\to \mc{L}^\tw_F$ which we implicitly use to think of $\mc{L}^\tw_{K}$ as a subgroup scheme of $\mc{L}^\tw_F$. 

\begin{defn} For a $\Q$-algebra $A$ we define an \emph{$L$-parameter over $A$} to be a homomorphism of group $A$-schemes $\psi\colon \mc{L}^\tw_{F,A}\to \CG_A$ such that $p_C \circ \psi=p_{\tw}$.
\end{defn}

Denote by $\LP_G(A)$ the set of $L$-parameters over $A$, which is functorial in $A$. Note that $\LP_G$ has a natural conjugation action by $\widehat{G}$ and so one has the centralizer group presheaf $Z_{\wh{G}}(\psi)$.

For an $L$-parameter $\psi$ over $A$ we define the morphism $\check{\psi}\colon \mc{L}^\tw_{F,A}\to \check{G}_A$ as the composition of $\psi$ with the projection $\CG_A\to \check{G}_A$. We denote by $\ov{\psi}$ the homomorphism of group $A$-schemes $\mc{L}^\tw_{F,A}\to (\wh{G}\rtimes\underline{\Gamma_\ast})_A$ obtained by composing $\psi$ with the quotient homomorphism $\check{G}_A\to (\wh{G}\rtimes\underline{\Gamma_\ast})_A$. Let us observe that while $\check{\psi}$ may not be a homomorphism, it becomes so after restriction to $\mc{L}^\tw_{F^\ast,A}$. Finally, by our assumptions on $\psi$ the restriction to $\SL_{2,A}$ takes values in $\wh{G}_A$ and we denote this resulting morphism $\SL_{2,A}\to\wh{G}_A$ by $\theta$ (or $\theta_\psi$ when we want to emphasize $\psi$).

To relate this to more familiar objects, fix $k$ to be an extension of $\Q$ containing  an element $c$ such that $c^2=q$. For a $k$-algebra $A$, we endow $\widehat{G}(A)$ with the discrete topology and set
\begin{equation*}
    \LP'_{G,k}(A)\defeq \left\{W_F\times \SL_2(A) \xrightarrow{\psi} \wh{G}(A) \rtimes W_F : \begin{aligned}(1)&\,\, \psi\text{ is a  homomorphism over $W_F$},\\
    (2)&\,\, W_F \stackrel{\psi|_{W_F}}{\to} \wh{G}(A) \rtimes W_F \to \wh{G}(A) \textrm{ is continuous,}\\
    (3)&\,\, \psi|_{\SL_2(A)}\colon \SL_2(A) \to \wh{G}(A) \text{ is algebraic}\end{aligned}\right\}.
\end{equation*}
There is a morphism $i_c^{\mathrm{L}} \colon \LP'_{G,k}\to \LP_{G,k}$ constructed as follows. Fix a $k$-algebra $A$ and an element $\psi'$ of $\mathsf{LP}'_{G,k}(A)$. Note that the restriction $\psi'|_{\SL_2(A)}$ has an algebraization by assumption, call it $\theta_{\psi'}$ which we interpret as a map to $\LG_A$. The projection of $\psi'|_{W_F}$ onto the first coordinate factorizes set-theoretically through $W_F/N$ for some open normal subgroup $N$. This map $W_F/N\to \widehat{G}(A)$ induces a map $\underline{W_F/N}\to \wh{G}_A$ and as $\underline{W_F/N}$ is a quotient of $\mc{W}_{F,A}$, we obtain a morphism $\alpha_{\psi'} \colon \mc{W}_{F,A}\to \LG_A$ whose first projection is the composition $\mc{W}_F\to \underline{W_F/N}\to \wh{G}_A$ and whose second projection is the identity. We can then define $\psi_1 \colon \mc{W}_{F,A}\times\SL_{2,A}\to \LG_A$ as the map $(w,g)\mapsto \alpha_{\psi'}(w)\theta_{\psi'}(g)$, which is well-defined as $\alpha_{\psi'}$ and $\theta_{\psi'}$ commute by Proposition \ref{prop:hom-schem-omnibus}. We then define $i^{\mathrm{L}}_c(\psi')$ to be $i_c\circ \psi_1\circ \eta_c^{-1}$. This construction is independent of all choices, and is functorial. We can show the following proposition in the same way as Proposition \ref{prop:WD-C-L-comparison}.

\begin{prop}\label{L-C-L-comparison} The morphism $i_c^{\mathrm{L}} \colon \LP'_{G,k}\to \LP_{G,k}$ is an isomorphism.
\end{prop}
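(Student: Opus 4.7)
The plan is to mimic the proof of Proposition~\ref{prop:WD-C-L-comparison}, combining the cartesian diagram
\[
\xymatrixcolsep{4pc}
\xymatrix{
\LG_k \ar[r]^{i_c}\ar[d] & \CG_k \ar[d]^{p_C}\\
\cW_{F,k}\ar[r]^-{(\|\cdot\|,\id)} & \bG_{m,k}\times \cW_{F,k}
}
\]
with the isomorphism $\eta_c \colon (\cW_F\times\SL_2)_k \isomto \mc{L}^\tw_{F,k}$ in order to peel off the $\SL_2$-factor. As before, the scheme-theoretic factorization of any morphism $\cW_{F,A}\to X$, for $X$ locally of finite presentation over $A$, through $(\cW_F/\cI_K)_A$ for some finite Galois $K/F$ will translate scheme-theoretic morphisms into the continuous homomorphisms appearing in the definition of $\LP'_{G,k}$.

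The main content is the construction of an inverse functor. Given $\psi \in \LP_{G,k}(A)$, I form $\psi\circ\eta_c \colon (\cW_F\times\SL_2)_A\to\CG_A$ and restrict to the two commuting factors. Since $p_C\circ(\psi\circ\eta_c)|_{\SL_{2,A}}=(1,1)$ and the fiber of $p_C$ over $(1,1)$ is canonically identified with $\wh{G}_A$, the restriction to $\SL_{2,A}$ produces an algebraic homomorphism $\theta_\psi\colon\SL_{2,A}\to\wh{G}_A$. Since $p_C\circ(\psi\circ\eta_c)|_{\cW_{F,A}}=(\|\cdot\|,\id)$, the cartesian square produces a unique lift $\tilde{\alpha}_\psi\colon\cW_{F,A}\to\LG_A$ of group schemes.

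On $A$-points, $\tilde\alpha_\psi(A)\colon W_F\to \wh{G}(A)\rtimes W_F$ is continuous in the first coordinate because the composition with the projection to $\wh{G}_A$ factors through the constant group scheme $(\cW_F/\cI_K)_A$ for some finite Galois $K/F$. Since $\theta_\psi$ and $\tilde\alpha_\psi$ are restrictions of $\psi\circ\eta_c$ to commuting subgroups of $(\cW_F\times\SL_2)_A$, their images commute, and so the formula $\psi'(w,g)\defeq\tilde\alpha_\psi(A)(w)\cdot\theta_\psi(A)(g)$ defines an element of $\LP'_{G,k}(A)$. Functoriality in $A$ is clear from the uniqueness of the cartesian-square lift.

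Verifying that $\psi\mapsto\psi'$ is a two-sided inverse of $i_c^{\mathrm{L}}$ amounts to the observation that, starting from a $\psi'$, the lifts $\alpha_{\psi'}$ and $\tilde\alpha_\psi$ are both characterized by the same universal property of the cartesian square, and that the commutativity of $\theta_{\psi'}$ and $\alpha_{\psi'}$ (ensured by Proposition~\ref{prop:hom-schem-omnibus}) matches the commutativity of $\theta_\psi$ and $\tilde\alpha_\psi$ used in our construction. The main technical obstacle is extracting continuity from the finite-level factorization and tracking the $\SL_2$-factor through $\eta_c$, both of which are direct once the cartesian square is in hand.
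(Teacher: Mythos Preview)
Your proposal is correct and follows essentially the same approach as the paper: the paper's proof simply states that the argument is the same as for Proposition~\ref{prop:WD-C-L-comparison}, and you have faithfully unpacked that, using the cartesian square together with $\eta_c$ to separate the $\SL_2$- and $\cW_F$-factors and invoking the finite-level factorization of morphisms out of $\cW_{F,A}$ to recover continuity. Your explicit construction of the inverse and the verification via the universal property of the cartesian square are exactly what the paper's one-line reference is pointing to.
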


For a finite extension $K$ of $F^\ast$ Galois over $F$ define 
\begin{equation*}
    \LP_G^K(A)\defeq \left\{\psi\in\LP_G(A): \mc{I}_K\subseteq \ker\left(\check{\psi}|_{\mc{L}^\tw_{F^\ast,A}}\right)\right\},
\end{equation*}
which clearly forms a subpresheaf of $\LP_G$. We have the equality of presheaves $\LP_G=\varinjlim_K \LP_G^K$. As in the case of Weil--Deligne parameters, may associate to an $L$-parameter $\psi$ in $\LP_G^K(A)$ an element $\phi$ of $\underline{Z}^1(I_F/I_K,\wh{G})(A)$ and thus obtain a morphism of presheaves $\LP_G^K\to \underline{Z}^1(I_F/I_K,\wh{G})$.

Fix a lift $w_0$ of arithmetic Frobenius in $W_F$ and define a morphism of presheaves
\begin{equation*}
    j_{w_0}\colon \LP_G^K\to \check{G}\times \underline{Z}^1(I_F/I_K,\wh{G})\times \underline{\Hom}(\SL_{2,\Q},\wh{G}),\qquad \psi\mapsto \left(\check{\psi}\left(w_0,\left(\begin{smallmatrix}q & 0\\ 0 & 1\end{smallmatrix}\right)\right),\phi,\theta\right).
\end{equation*}
On the other hand, we have a diagram
\begin{equation*}
   \mc{D}^L\colon \xymatrix{\check{G}\times \underline{Z}^1(I_F/I_K,\wh{G})\times \underline{\Hom}(\SL_{2,\Q},\wh{G}) \ar@<-.5ex>[r] \ar@<.5ex>[r]& \underline{\Hom}(I_F/I_K,\wh{G})\times \bb{G}_{m,\Q}\times \underline{\Hom}(\SL_{2,\Q},\wh{G})^{[I_F:I_K]+1}}
\end{equation*}
given by the two maps 
\begin{equation*}
    \begin{aligned} (g,f,\nu)\mapsto & \bigg(\Int(g,w_0)\circ f,p_{\bG_m}(g), (\mathrm{Int}(f(i))\circ \nu)_{i\in I_F/I_K},\Int(g,w_0)\circ \nu\bigg)\\  (g,f,\nu) \mapsto &\bigg(f\circ \Int(w_0),q,(\nu)_{i\in I_F/I_K},\nu\circ \Int\left(\left( w_0,\left(\begin{smallmatrix}q & 0\\ 0 & 1\end{smallmatrix}\right)\right)\right)\bigg).\end{aligned}
\end{equation*} 

We then have the following explicit description of $\LP_G^K$.

\begin{prop}\label{prop:L-finite-level-rep} The morphism $j_{w_0}$ gives an identification of $\LP_G^K$ with $\mathrm{Eq}(\mc{D}^L)$. In particular, $\LP_G^K$ is representable by a finite type affine $\Q$-scheme and $j_{w_0}$ is a closed embedding.
\end{prop}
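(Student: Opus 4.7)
The plan mimics the strategy used for Proposition \ref{prop:wd-finite-level-rep} in the Weil--Deligne case, upgraded to accommodate the additional $\SL_2$-piece. The starting point is a factorization: every $A$-point of $\mc{L}^\tw_F$ can be uniquely written as $\iota(w)\cdot(1,g_0)$ with $g_0\in \SL_2(A)$, since $(w,g)=\iota(w)\cdot(1,\mathrm{diag}(\|w\|,1)^{-1}g)$ and the determinant condition forces the second factor into $\SL_2(A)$. Consequently, specifying an $L$-parameter $\psi$ is equivalent to specifying a compatible pair $(\psi\circ\iota,\theta)$ where $\psi\circ\iota\colon \cW_{F,A}\to \CG_A$ is a group homomorphism and $\theta\colon \SL_{2,A}\to \wh{G}_A$ is algebraic.

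First I will check that $j_{w_0}$ factorizes through $\mathrm{Eq}(\mc{D}^L)$. The four coordinates of $\mc{D}^L$ encode, in order: the Frobenius-intertwining property for $\phi$, coming from $\check{\psi}(\iota(w_0))\phi(i)\check{\psi}(\iota(w_0))^{-1}=\phi(w_0 i w_0^{-1})$; the relation $p_{\bG_m}(\gamma)=q$, which is $p_C\circ\psi=p_{\tw}$ evaluated at $(w_0,\mathrm{diag}(q,1))$; the commutativity of $\theta$ with each $\phi(i)$, reflecting $(1,h)(i,1)=(i,1)(1,h)$ in $\mc{L}^\tw_F$; and the Frobenius-twist rule coming from the identity $(w_0,\mathrm{diag}(q,1))(1,h)(w_0,\mathrm{diag}(q,1))^{-1}=(1,\mathrm{diag}(q,1)h\mathrm{diag}(q,1)^{-1})$ in $\mc{L}^\tw_F$.

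Next I construct the inverse: given $(\gamma,\phi,\theta)\in\mathrm{Eq}(\mc{D}^L)(A)$, first reassemble a homomorphism $\psi^{\cW}\colon\cW_{F,A}\to\CG_A$ from $(\gamma,\phi)$ exactly as in the proof of Proposition \ref{prop:wd-finite-level-rep} (using equalizer conditions $(\text{a})$ and $(\text{b})$), and then define
\begin{equation*}
    \psi(w,g)\defeq \psi^{\cW}(w)\cdot\theta\bigl(\mathrm{diag}(\|w\|,1)^{-1}g\bigr).
\end{equation*}
Conditions $(\text{c})$ and $(\text{d})$ are exactly what is needed for $\psi$ to respect the multiplication in $\mc{L}^\tw_{F,A}$, which can be verified by a direct computation on a product $(w_1,g_1)(w_2,g_2)=(w_1 w_2, g_1 g_2)$: condition $(\text{c})$ ensures that inertia contributions commute through $\theta$, while $(\text{d})$ handles the Frobenius twist when interchanging $\psi^{\cW}$ and $\theta$. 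The identity $p_C\circ\psi=p_{\tw}$ then follows from $(\text{b})$ and the fact that $p_C$ is trivial on the image of $\theta$. By construction this inverts $j_{w_0}$ on the nose, and the condition $\mc{I}_K\subseteq\ker(\check{\psi}|_{\mc{L}^\tw_{F^\ast,A}})$ is automatic since $\phi\in \underline{Z}^1(I_F/I_K,\wh{G})(A)$.

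For representability, Proposition \ref{prop:hom-schem-omnibus} gives that $\underline{\Hom}(\SL_{2,\Q},\wh{G})$ is an affine $\Q$-scheme of finite type (as $\SL_{2,\Q}$ is semi-simple), and Proposition \ref{prop:cocycle-scheme} gives the same for $\underline{Z}^1(I_F/I_K,\wh{G})$; since $\check{G}$ is also affine of finite type, the equalizer $\mathrm{Eq}(\mc{D}^L)$ sits as a closed subscheme of a finite type affine $\Q$-scheme. The main difficulty lies in the multiplicativity check for the formula defining $\psi$: the bookkeeping of how the two factors $\psi^{\cW}$ and $\theta$ interact across a product ultimately distributes neatly among conditions $(\text{c})$ and $(\text{d})$, and the analogy with the simpler Weil--Deligne situation of Proposition \ref{prop:wd-finite-level-rep} provides a reliable blueprint.
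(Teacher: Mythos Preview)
Your proposal is correct. The paper actually states Proposition~\ref{prop:L-finite-level-rep} without proof (just as it does for the companion Proposition~\ref{prop:wd-finite-level-rep}), implicitly treating the verification as a routine unwinding of definitions; your write-up supplies exactly that unwinding along the expected lines, using the factorization $(w,g)=\iota(w)\cdot(1,\mathrm{diag}(\|w\|,1)^{-1}g)$ and checking that the four equalizer conditions encode precisely the relations needed to reassemble $\psi$ from $(\gamma,\phi,\theta)$.
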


As already observed, for an extension $K\subseteq K'$ of finite extensions of $F^\ast$ Galois over $F$, there is a restriction morphism $\underline{Z}^1(I_F/I_{K'},\wh{G})\to \underline{Z}^1(I_K/I_{K'},\wh{G})$ which is a clopen embedding, and thus $\LP_G^K\to\LP_{G}^{K'}$ is also a clopen embedding. As we have the identification of presheaves $\LP_G=\varinjlim_K \LP_G^K$ we deduce from Proposition \ref{prop:wd-finite-level-rep} that $\LP_G$ is representable by a scheme locally of finite type over $\Q$, all of whose connected components are affine.

\subsection{Decomposition into connected components}\label{ss:L-param-conn-comp-decomp} We now establish the analogue of Theorem \ref{thm:WD-const-decomp} for $\LP_G$. Let us fix $K$ a finite extension of $F^\ast$ Galois over $F$, and a lift $w_0$ of arithmetic Frobenius. Then, by Proposition \ref{prop:L-finite-level-rep} we have an identification $j_{w_0}$ of $\LP_G^K(\ov{\Q})$ with
\begin{equation*}
    \left\{(\gamma,\phi,\theta)\in \check{G}(\ov{\Q})\times \underline{Z}^1(I_F/I_K,\wh{G})(\ov{\Q})\times \underline{\Hom}(\SL_{2,\Q},\wh{G})(\ov{\Q}):\begin{aligned}(1)&\quad \Int(\gamma,w_0)\circ \phi=\phi\circ\Int(w_0),\\ (2)&\quad p_{\bG_m}(\gamma)=q,\\ (3)&\quad \Int(\phi(i))\circ \theta=\theta\text{ for all }i\in I_F/I_K,\\ (4)&\quad \Int((\gamma,w_0))\circ \theta = \theta\circ \Int\left(\left( w_0, \left(\begin{smallmatrix}q & 0\\ 0 & 1\end{smallmatrix}\right)\right)\right)
    \end{aligned}\right\}.
\end{equation*}
Now, for $(\gamma,\phi,\theta)$ in $\LP_G^K(\ov{\Q})$ let us define $Z_{\phi,\theta}$ to be $Z_{\wh{G}}(\phi,\theta)$. This is a linear algebraic group over $\ov{\Q}$ whose identity component is reductive. Let us then say that an element $(\gamma',\phi',\theta')$ in $\LP_G^K(A)$, for a $\ov{\Q}$-algebra $A$, is \emph{locally movable to $(\gamma,\phi,\theta)$} if there exists an \'etale cover $\Spec(A')\to\Spec(A)$ and $(g,h)\in(\wh{G}\times Z_{\phi,\theta}^\circ)(A')$ such that $(\gamma',\phi',\theta')=g(h\gamma,\phi,\theta)g^{-1}$. As this definition is clearly functorial, we obtain a subpresheaf of $\LP_{G,\ov{\Q}}^K$ as follows:
\begin{equation*}
    U(\gamma,\phi,\theta)(A)\defeq \left\{(\gamma',\phi',\theta')\in\LP_{G,\ov{\Q}}^K(A): (\gamma',\phi',\theta')\text{ is locally movable to }(\gamma,\phi,\theta)\right\}.
\end{equation*}
We then have the following, whose proof is identical to Proposition \ref{prop:wd-loc-mov-is-open} except the analogue of Lemma \ref{lem:intersection-decomp} is simpler since for any closed subgroup scheme $L$ of $\widehat{G}_{\overline{\mathbb{Q}}}$ with reductive identity component, $\underline{\Hom}(\SL_{2,\overline{\mathbb{Q}}},L)$ is the disjoint union of the orbit schemes under the conjugation action of $L$ by Proposition \ref{prop:SL2-Hom-open-orbits}.

\begin{prop}\label{prop:l-loc-mov-is-open} The morphism of presheaves $U(\gamma,\phi,\theta)\to \LP_{G,\ov{\Q}}^K$ is representable by an open immersion. Moreover, the $\ov{\Q}$-scheme $U(\gamma,\phi,\theta)$ is smooth and irreducible.
\end{prop}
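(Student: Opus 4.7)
The plan is to parallel the proof of Proposition \ref{prop:wd-loc-mov-is-open}, with the nilpotent element $N$ replaced by the $\SL_2$-morphism $\theta$, and $\wh{\mc{N}}^\sqcup$ replaced by $\underline{\Hom}(\SL_{2,\Q},\wh{G})$. First I would define the $\wh{G}$-equivariant morphism $\pi_K \colon \LP_{G,\ov{\Q}}^K \to \underline{Z}^1(I_F/I_K,\wh{G})_{\ov{\Q}} \times \underline{\Hom}(\SL_{2,\ov{\Q}},\wh{G}_{\ov{\Q}})$ sending $\psi$ to $(\phi,\theta_\psi)$. By Proposition \ref{prop:cocycle-scheme} for the first factor, and Proposition \ref{prop:hom-schem-omnibus} combined with Proposition \ref{prop:SL2-Hom-open-orbits} for the second, the target decomposes into open $\wh{G}^2$-orbits $\mc{O}_{\phi_0} \times \mc{O}_{\theta_0}$. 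Restricting to those pairs with $\phi_0$ centralizing $\theta_0$, I would work inside a fixed clopen piece $X(\phi,\theta) \defeq \pi_K^{-1}(\mc{O}_\phi \times \mc{O}_\theta)$ of $\LP_{G,\ov{\Q}}^K$.

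Set $L \defeq Z_{\wh{G}}(\phi)$, which has reductive identity component by Lemma \ref{lem:fixed-points-reductive}. Here the analogue of Lemma \ref{lem:intersection-decomp} is essentially immediate: since $\SL_{2,\ov{\Q}}$ is connected, every morphism $\SL_{2,\ov{\Q}} \to L$ factors through $L^\circ$, so $\underline{\Hom}(\SL_{2,\ov{\Q}},L) = \underline{\Hom}(\SL_{2,\ov{\Q}},L^\circ)$, and Proposition \ref{prop:SL2-Hom-open-orbits} applied to the reductive group $L^\circ$ decomposes this space into a disjoint union of open $L^\circ$-orbit schemes, hence also of $L$-orbit schemes. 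Consequently the fibre product $\mc{O}_\theta \times_{\underline{\Hom}(\SL_{2,\ov{\Q}},\wh{G}_{\ov{\Q}})} \underline{\Hom}(\SL_{2,\ov{\Q}},L)$ is a finite disjoint union $\bigsqcup_i \mc{O}_{L,\theta_i}$ of smooth $L$-orbit schemes, with $\theta_1 = \theta$; this sidesteps the delicate flatness and valuative criterion arguments needed in the WD setting, and is the only step where the two proofs materially diverge.

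With this decomposition in place, the rest is a direct translation. I would introduce the $L$-torsor $T \defeq \Transp_{\wh{G}}(\phi^{\univ},\phi) \times_{\underline{Z}^1(I_F/I_K,\wh{G})} X(\phi,\theta)$ with smooth structure map $b\colon T \to X(\phi,\theta)$, together with $a\colon T \to \mc{O}_\theta \cap \underline{\Hom}(\SL_{2,\ov{\Q}},L_{\ov{\Q}})$ given by $g \mapsto \Int(g) \circ \theta^{\univ}$. Refining the decomposition $T = \bigsqcup_i a^{-1}(\mc{O}_{L,\theta_i})$ by the locally constant maps $\kappa_i \colon a^{-1}(\mc{O}_{L,\theta_i}) \to \pi_0(Z_{\wh{G}}(\phi,\theta))$ defined by $g \mapsto [\Int(g)(\gamma^{\univ})\gamma^{-1}]$, I obtain open subschemes $U_{i,c} \defeq \kappa_i^{-1}(c)$, and the smoothness of $b$ together with \cite[Corollaire 17.16.3.(ii)]{EGA4-4} identifies the open image $b(U_{1,\mathrm{id}})$ with $U(\gamma,\phi,\theta)$, yielding the open immersion claim.

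Finally, for smoothness and irreducibility I would mimic the last paragraph of the proof of Proposition \ref{prop:wd-loc-mov-is-open} verbatim: the natural map $\wh{G} \times Z_{\phi,\theta}^\circ \to U(\gamma,\phi,\theta)$ is surjective as an \'etale sheaf, giving irreducibility; an \'etale cover $V \to U(\gamma,\phi,\theta)$ admitting a section to this pullback forces $V$, and hence $U(\gamma,\phi,\theta)$, to be reduced (via \stacks{025O} and \stacks{033F}); and transitivity of the $(\wh{G} \times Z_{\phi,\theta}^\circ)(\ov{\Q})$-action on $\ov{\Q}$-points together with generic smoothness in characteristic zero upgrades to smoothness at every geometric point. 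I expect the main obstacle to be purely organizational bookkeeping of the $\pi_0(Z_{\wh{G}}(\phi,\theta))$-stratification, since the crucial orbit-decomposition input, which required real work in the WD case, is free here.
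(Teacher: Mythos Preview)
Your proposal is correct and follows essentially the same approach as the paper: the paper's proof simply states that the argument is identical to that of Proposition \ref{prop:wd-loc-mov-is-open}, with the sole simplification that the analogue of Lemma \ref{lem:intersection-decomp} is immediate because $\underline{\Hom}(\SL_{2,\ov{\Q}},L)$ already decomposes into open $L$-orbit schemes by Proposition \ref{prop:SL2-Hom-open-orbits}. You have spelled out precisely this, including the small care needed to pass through $L^\circ$ when invoking that proposition.
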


Define an equivalence relation on $\LP_G^K(\ov{\Q})$ by declaring that $(\gamma,\phi,\theta)$ is equivalent to $(\gamma',\phi',\theta')$ if there exists some $(g,h)\in (\wh{G}\times Z_{\phi,\theta})(\ov{\Q})$ such that $(\gamma',\phi',\theta')=g(h\gamma,\phi,\theta)g^{-1}$. Let us denote an equivalence class under this relation by $[(\gamma,\phi,\theta)]$. Observe that here we do not require $h$ to lie in $Z^\circ_{\phi,\theta}(\ov{\Q})$, so that these equivalence classes differ from $U(\gamma,\phi,\theta)(\ov{\Q})$. For each such equivalence class, let us choose an element $(\gamma,\phi,\theta)$. We consider $\pi_0(Z_{\phi,\theta})$ as a finite abstract group, and we define an equivalence relation on it by declaring that $c$ is equivalent to $c_1 c \gamma c_1^{-1} \gamma^{-1}$ for any $c_1$ in $\pi_0(Z_{\phi,\theta})$. We denote by $[c]$ an equivalence class for this relation. 

We then have the following decomposition of $\LP_{G,\ov{\Q}}^K$ into explicit connected components, whose proof is exactly the same as that of Theorem \ref{thm:WD-const-decomp}.

\begin{thm}\label{thm:L-const-decomp} The choice of $(\gamma,\phi,\theta)$ in each class $[(\gamma,\phi,\theta)]$ of $\LP_G^K(\ov{\Q})$ gives an identification
\begin{equation*}
    \LP_{G,\ov{\Q}}^K=\bigsqcup_{[(\gamma,\phi,\theta)]}\bigsqcup_{[c]}\,\,U(c\gamma,\phi,\theta).
\end{equation*}
\end{thm}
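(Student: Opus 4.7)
My plan is to mimic the proof of Theorem \ref{thm:WD-const-decomp} essentially word-for-word, with the $L$-parameter analogues substituted in. The two inputs I would use are Proposition \ref{prop:l-loc-mov-is-open}, which embeds each $U(c\gamma,\phi,\theta)$ as an open subscheme of $\LP_{G,\ov{\Q}}^K$, and Proposition \ref{prop:L-finite-level-rep}, which realizes $\LP_G^K$ as a finite-type closed subscheme of $\check{G}\times\underline{Z}^1(I_F/I_K,\wh{G})\times \underline{\Hom}(\SL_{2,\Q},\wh{G})$ via the embedding $j_{w_0}$. Since the asserted decomposition is a union of opens in a finite-type $\ov{\Q}$-scheme, it suffices to verify it at the level of $\ov{\Q}$-points.

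First I would unwind the equalizer description of $\LP_G^K(\ov{\Q})$ coming from $j_{w_0}$: once $(\gamma,\phi,\theta)$ is fixed, a tuple $(\gamma',\phi,\theta)$ still satisfies the cut-out conditions if and only if $\gamma'=h\gamma$ for some $h \in Z_{\phi,\theta}(\ov{\Q})$. Indeed, the relations $(1)$, $(3)$ and $(4)$ only depend on $(\phi,\theta)$ once compatibility with $(w_0,(\begin{smallmatrix}q & 0\\ 0 & 1\end{smallmatrix}))$ is imposed, forcing $h$ to centralize both $\phi$ and $\theta$, while $(2)$ is automatic because $\wh{G}$ is contained in the kernel of $p_{\bG_m}$. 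Grouping parameters by the coarser equivalence class $[(\gamma,\phi,\theta)]$ thus gives the preliminary identification
\begin{equation*}
    \LP_{G,\ov{\Q}}^K(\ov{\Q}) = \bigsqcup_{[(\gamma,\phi,\theta)]}\,\bigcup_{c\in\pi_0(Z_{\phi,\theta})}U(c\gamma,\phi,\theta)(\ov{\Q}),
\end{equation*}
where only the inner union may still be redundant.

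Finally I would determine when two pieces $U(c\gamma,\phi,\theta)$ and $U(c'\gamma,\phi,\theta)$ coincide. An equality of the form $(h\gamma,\phi,\theta) = g(h'\gamma,\phi,\theta)g^{-1}$ with $g \in \wh{G}(\ov{\Q})$ and $h, h' \in Z_{\phi,\theta}(\ov{\Q})$ forces $g \in Z_{\phi,\theta}(\ov{\Q})$ (since it fixes $\phi$ and $\theta$), and then comparing $\check{G}$-components yields $h=gh'\gamma g^{-1}\gamma^{-1}$. Reducing modulo $Z_{\phi,\theta}^\circ$, this is exactly the relation $c \sim c_1 c \gamma c_1^{-1}\gamma^{-1}$ defining the equivalence classes $[c]$. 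Therefore the inner union collapses to a disjoint union over $[c]$, yielding the stated formula.

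I do not anticipate any real obstacle in carrying this out; the argument is essentially formal and, since we are verifying the decomposition on $\ov{\Q}$-points of opens in a finite-type scheme, no delicate flatness or reducedness issue arises. The only step one might have worried about — the analogue of Lemma \ref{lem:intersection-decomp} in the $L$-parameter setting, where one intersects an orbit with $\underline{\Hom}(\SL_{2,\ov{\Q}},L)$ for a subgroup $L\subseteq \wh{G}_{\ov{\Q}}$ with reductive identity component — has already been handled cleanly by Proposition \ref{prop:SL2-Hom-open-orbits}, and is folded into the proof of Proposition \ref{prop:l-loc-mov-is-open}.
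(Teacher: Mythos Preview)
Your proposal is correct and follows essentially the same approach as the paper, which explicitly states that the proof of Theorem~\ref{thm:L-const-decomp} is ``exactly the same as that of Theorem~\ref{thm:WD-const-decomp}.'' Your identification of the key inputs (Proposition~\ref{prop:l-loc-mov-is-open} and Proposition~\ref{prop:L-finite-level-rep}) and your handling of the inner disjoint union via the relation $h=gh'\gamma g^{-1}\gamma^{-1}$ in $\pi_0(Z_{\phi,\theta})$ match the paper's argument precisely.
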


We derive from this two corollaries neither of which is a priori obvious.

\begin{cor} For all $(\gamma,\phi,\theta)$ in $\LP_G^K(\ov{\Q})$ the $\ov{\Q}$-scheme $U(\gamma,\phi,\theta)$ is affine.
\end{cor}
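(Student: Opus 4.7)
The plan is to deduce this directly from the decomposition in Theorem \ref{thm:L-const-decomp} together with the fact that $\LP_G^K$ is affine. First I would recall from Proposition \ref{prop:L-finite-level-rep} that $\LP_G^K$ is a finite type affine $\Q$-scheme, so that $\LP_{G,\ov{\Q}}^K$ is an affine finite type (in particular, Noetherian) $\ov{\Q}$-scheme.

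Next I would invoke Theorem \ref{thm:L-const-decomp}, which exhibits a scheme-theoretic disjoint union decomposition
\begin{equation*}
    \LP_{G,\ov{\Q}}^K=\bigsqcup_{[(\gamma',\phi',\theta')]}\,\bigsqcup_{[c']}\,\,U(c'\gamma',\phi',\theta').
\end{equation*}
Since $\LP_{G,\ov{\Q}}^K$ is Noetherian and each $U(c'\gamma',\phi',\theta')$ is nonempty (e.g.\@ by Proposition \ref{prop:l-loc-mov-is-open}, each is irreducible and contains $(c'\gamma',\phi',\theta')$), this disjoint union must be finite. Consequently each piece $U(c'\gamma',\phi',\theta')$ is a clopen subscheme of $\LP_{G,\ov{\Q}}^K$.

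Finally, a clopen subscheme of an affine scheme is affine, since a closed subscheme of an affine scheme is affine. Applying this to the clopen subscheme $U(\gamma,\phi,\theta)\subseteq \LP_{G,\ov{\Q}}^K$ yields the claim. There is no real obstacle here: the only non-formal inputs are the representability by an affine scheme (Proposition \ref{prop:L-finite-level-rep}) and the existence of the disjoint decomposition (Theorem \ref{thm:L-const-decomp}), both of which are already established, and the finiteness of the decomposition is automatic from finite typeness.
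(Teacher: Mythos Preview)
Your proof is correct and follows essentially the same approach as the paper: the paper's proof simply notes that $\LP_{G,\ov{\Q}}^K$ is affine by Proposition \ref{prop:L-finite-level-rep} and that $U(\gamma,\phi,\theta)$ is a clopen subset (implicitly via Theorem \ref{thm:L-const-decomp}), hence affine. Your version just makes the finiteness of the decomposition explicit.
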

\begin{proof} By Proposition \ref{prop:L-finite-level-rep} the scheme $\LP_{G,\ov{\Q}}^K$ is affine, and thus so is the clopen subset $U(\gamma,\phi,\theta)$.
\end{proof}

Denote the set of equivalence classes for $\LP_G^K(\ov{\Q})$ (resp.\@ $\pi_0(Z_{\phi,\theta})$) by $[\LP_G^K(\ov{\Q})]$ (resp.\@ $[\pi_0(Z_{\theta,N})]$).

\begin{cor}\label{cor:LP-pi0} The affine $\Q$-scheme $\LP_G^K$ is smooth, and there is a non-canonical $\Gamma_\Q$-equivariant bijection
\begin{equation*}
    \pi_0\left(\LP_{G,\ov{\Q}}^K\right)\isomto \left\{([(\gamma,\phi,\theta)],[c]):\begin{aligned}(1)&\quad [(\gamma,\phi,\theta)]\in \left[\LP_G^K(\ov{\Q})\right]\\ (2)&\quad [c]\in [\pi_0(Z_{\phi,\theta})] \end{aligned}\right\}
\end{equation*}
where the $\Gamma_\Q$ action on the target is inherited from $\LP_G^K$ and $\wh{G}$.
\end{cor}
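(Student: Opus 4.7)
The plan is to deduce both assertions directly from the explicit decomposition of $\LP_{G,\ov{\Q}}^K$ given in Theorem \ref{thm:L-const-decomp}, combined with the smoothness and irreducibility of each open piece provided by Proposition \ref{prop:l-loc-mov-is-open}, in exact parallel with the deduction of Corollary \ref{cor:WDP-pi0} from Theorem \ref{thm:WD-const-decomp}.

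First I would address smoothness. By Proposition \ref{prop:l-loc-mov-is-open}, each $U(c\gamma,\phi,\theta)$ appearing in the decomposition is smooth over $\ov{\Q}$. Since Theorem \ref{thm:L-const-decomp} expresses $\LP_{G,\ov{\Q}}^K$ as a scheme-theoretic disjoint union of these pieces and smoothness is Zariski-local on the source, it follows that $\LP_{G,\ov{\Q}}^K$ is smooth over $\ov{\Q}$. As smoothness of a morphism is fpqc-local on the base, this implies $\LP_G^K$ is smooth over $\Q$ by faithfully flat descent along $\Q\to\ov{\Q}$.

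Next I would identify the connected components. Each $U(c\gamma,\phi,\theta)$ is irreducible by Proposition \ref{prop:l-loc-mov-is-open} and hence connected; moreover, being a term in a scheme-theoretic disjoint union, it is automatically clopen in $\LP_{G,\ov{\Q}}^K$. Since $\LP_G^K$ is affine of finite type by Proposition \ref{prop:L-finite-level-rep}, it is quasi-compact, so only finitely many of these pieces can be non-empty. The non-empty pieces are therefore precisely the connected components of $\LP_{G,\ov{\Q}}^K$, and the explicit indexing of the decomposition delivers the claimed bijection with the set of pairs $([(\gamma,\phi,\theta)],[c])$.

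For the $\Gamma_\Q$-equivariance, I would observe that the $\Q$-structures of $\LP_G^K$ and of $\wh{G}$ induce compatible $\Gamma_\Q$-actions on both sides: an element $\sigma\in\Gamma_\Q$ carries the class $[(\gamma,\phi,\theta)]$ to $[(\sigma\gamma,\sigma\phi,\sigma\theta)]$ and identifies $\pi_0(Z_{\phi,\theta})$ with $\pi_0(Z_{\sigma\phi,\sigma\theta})$ in a manner compatible with the $\gamma$-conjugation equivalence relation defining the classes $[c]$. Since $\sigma$ also sends the piece $U(c\gamma,\phi,\theta)$ to $U(\sigma c\cdot\sigma\gamma,\sigma\phi,\sigma\theta)$, the bijection on $\pi_0$ is $\Gamma_\Q$-equivariant, the non-canonicity arising solely from the choice of representative in each equivalence class. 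I do not anticipate any substantive obstacle here: this corollary essentially repackages Theorem \ref{thm:L-const-decomp}, with the only mildly technical point being the faithfully flat descent of smoothness from $\ov{\Q}$ to $\Q$, which is standard.
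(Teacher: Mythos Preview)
Your proposal is correct and follows essentially the same approach as the paper's proof, which simply cites Proposition \ref{prop:l-loc-mov-is-open} (smoothness and connectedness of each $U(c\gamma,\phi,\theta)$) and Theorem \ref{thm:L-const-decomp} (the disjoint union decomposition) and concludes immediately. You have spelled out the descent of smoothness from $\ov{\Q}$ to $\Q$ and the $\Gamma_\Q$-equivariance in slightly more detail than the paper does, but the underlying argument is identical.
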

\begin{proof} By Proposition \ref{prop:l-loc-mov-is-open}, each $U(\gamma,\phi,\theta)$ is smooth and connected, and thus the disjoint union, which is $\LP^K_{G,\ov{\Q}}$, is smooth and the claim concerning connected components follows.
\end{proof}

\subsection{The Jacobson--Morozov morphism}\label{ss:JM-mor}
We now come to the definition of the Jacobson--Morozov map in the geometric setting. 

\begin{defn} The morphism $\JM\colon \LP_G \to \WDP_G$ given by sending $\psi$ to $(\psi\circ \iota,d\theta_\psi(e_0))$ is called the \emph{Jacobson--Morozov morphism}.
\end{defn}

It is clear that $\JM$ is $\wh{G}$-equivariant. By Theorem \ref{thm:geom-JM-split} it is also clear that $\JM$ factorizes uniquely through $\WDP^\sqcup_G$. Moreover, for any finite extension $K$ of $F^\ast$ Galois over $F$, one sees that $\JM^{-1}(\WDP_G^K)$ is precisely $\LP_G^K$ and so we get factorizations $\LP_G^K\to \WDP_G^K$ and $\LP_G^K\to\WDP_G^{K,\sqcup}$. We denote all these factorizations also by $\JM$.

Observe that over $\ov{\Q}$ we may give a simpler description of the Jacobson--Morozov morphism on each connected component. Namely, let us fix $(\gamma,\phi,\theta)$ in $\LP_G^K(\ov{\Q})$ as in the notation of \S\ref{ss:L-param-conn-comp-decomp}. Then, first observe that $\JM(\gamma,\phi,\theta)$ is equal to $\left(\gamma,\phi,N\right)$ where $N= \JM(\theta)$. We may then observe that $\JM$ restricted to $U(\gamma,\phi,\theta)$ maps into $U(\gamma,\phi,N)$ and is the \'etale sheafification of the map which on $A$-points is the map
\begin{equation*}
    \left\{g(h\gamma,\phi,\theta)g^{-1}:(g,h)\in \wh{G}(A)\times Z_{\phi,\theta}^\circ(A)\right\}\to \left\{g(h'\gamma,\phi,N)g^{-1}:(g,h')\in \wh{G}(A)\times Z_{\phi,N}^\circ(A)\right\}
\end{equation*}
given by sending $g(h\gamma,\phi,\theta)g^{-1}$ to $g(h\gamma,\phi,N)g^{-1}$. 

We also observe that if $k$ is an extension of $\Q$ and $c$ is an element of $k$ such that $c^2=q$ then under the isomorphisms described in Proposition \ref{prop:WD-C-L-comparison} and Proposition \ref{L-C-L-comparison} the Jacobson--Morozov morphism corresponds to the morphism $\LP'_{G,k}\to \WDP'_{G,k}$ which on $A$-points sends $\psi$ to $(\psi\circ \iota'_A,d\theta_\psi(e_0))$ where $\iota'_A$ is the map
\begin{equation*}
    \iota'_A\colon W_F\to W_F\times \SL_2(A) ,\qquad w\mapsto \left(w,\left(\begin{smallmatrix} c^{-d(w)} & 0\\ 0 & c^{d(w)}\end{smallmatrix}\right) \right),
\end{equation*}
and by $\theta_\psi$ we mean the map $\SL_{2,A}\to \wh{G}_A$ associated to the (unique) algebraization of $\psi|_{\SL_2(A)}$. So, on the level of $\C$-points we see that our Jacobson--Morozov map agrees with that from \S\ref{ss:JM-for-params-classical}.

We now move towards stating the analogue of Theorem \ref{thm:JM-params-classical} at the level of $A$-points. To begin, we must define the notion of semi-simplicity for $L$-parameters in the relative setting.

\begin{prop}\label{prop:Frob-factor-psi}
Let $\psi$ be an L-parameter over a $\mathbb{Q}$-algebra $A$. 
Then there is a positive integer $m$ divisible by $[F^\ast:F]$ such that the morphism 
\begin{equation*}
    \cW_{F,A}\to \check{G}_A,\qquad w \mapsto \check{\psi} \left(w^{2m},\left(\begin{smallmatrix}q^{-md(w)} & 0\\ 0 & q^{-md(w)}\end{smallmatrix}\right)\right)
\end{equation*} admits a factorization 
\begin{equation*}
    \cW_{F,A} \stackrel{d}{\lra} \underline{\mathbb{Z}}_A \stackrel{\check{\psi}_m}{\lra} \check{G}_A .
\end{equation*}
\end{prop}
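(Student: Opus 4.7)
The plan is to adapt the strategy of Proposition \ref{prop:Frob-factor}. First, I would enlarge $K$ to be a finite Galois extension of $F$ containing $F^\ast$ with $\psi\in\LP_G^K(A)$, and pick $w_0\in W_F$, $m_0$, and $m$ exactly as in the proof of Proposition \ref{prop:Frob-factor}, additionally arranging that $[F^\ast:F]\mid m$ (by taking a multiple if necessary). As in that proof, for every $w\in W_F$ there exists $i'\in I_K$ such that $w^{2m}=i'\,w_0^{2md(w)}$. Setting $E\defeq(w_0^{2m},\,q^{-m}I)\in \mc{L}^\tw_{F^\ast,A}$ (the containment uses $[F^\ast:F]\mid m$ together with the evident determinant check), a componentwise computation in $\mc{L}^\tw_{F,A}$ gives the decomposition
\begin{equation*}
    \bigl(w^{2m},\,q^{-md(w)}I\bigr) \;=\; \iota(i')\cdot E^{d(w)} .
\end{equation*}

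Next, I would apply $\psi$ and extract the $\check G$-component of the resulting product in $\CG_A=\check G_A\rtimes \cW_{F,A}$. Since $\check\psi\circ\iota$ is trivial on $\cI_{K,A}$ by hypothesis, we have $\psi(\iota(i'))=(1,i')$. Since $E$ lies in $\mc{L}^\tw_{F^\ast,A}$, on which $\check\psi$ restricts to a genuine homomorphism, one has $\check\psi(E^{d(w)})=\check\psi(E)^{d(w)}$. Combining these observations via the semidirect product law, and noting that the $\cW_F$-action of $i'\in \cI_K$ on $\check G$ is trivial (because the action factors through $\underline{\Gamma_\ast}$ and $\cI_K\subseteq \cW_{F^\ast}$), one concludes
\begin{equation*}
    \check\psi\bigl(w^{2m},\,q^{-md(w)}I\bigr) \;=\; \check\psi(E)^{d(w)}
\end{equation*}
at every $W_F$-point $w$ of $\cW_{F,A}$.

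Finally, defining $\check\psi_m\colon \underline{\bZ}_A\to \check G_A$ by $n\mapsto \check\psi(E)^n$ yields the desired factorization on $W_F$-points. To promote this to an equality of morphisms of $A$-schemes, I would use that any morphism $\cW_{F,A}\to \check G_A$ factors through a constant scheme $\underline{W_F/I_{K'}}_A$ for some sufficiently large finite Galois extension $K'/F$ containing $K$ (as $\check G_A$ is of finite type), on which equality of morphisms can be verified component by component. The main obstacle, inherited from Proposition \ref{prop:Frob-factor}, is verifying that the inertial correction $\iota(i')$ does not affect the $\check G$-component; this is precisely where the assumption $K\supseteq F^\ast$ is essential, as it forces $\cI_K$ to lie in the kernel of the $\cW_F$-action on $\wh G$.
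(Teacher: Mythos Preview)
Your proposal is correct and follows exactly the approach the paper intends: its own proof consists of the single sentence ``This is proved in the same way as Proposition~\ref{prop:Frob-factor},'' and you have spelled that out faithfully, including the extra care needed to handle the $\SL_2$-component via the element $E$. One small bookkeeping slip: with the paper's convention $d(w_0)=-1$ for an arithmetic Frobenius lift, the determinant check you allude to actually forces $E=(w_0^{-2m},q^{-m}I)$ rather than $(w_0^{2m},q^{-m}I)$ (equivalently, $w^{2m}=i'\,w_0^{-2m d(w)}$), but this is a harmless sign typo that does not affect the argument.
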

\begin{proof}
This is proved in the same way as Proposition \ref{prop:Frob-factor}. 
\end{proof}

\begin{defn}\label{defn:Frob-ss-L-param} For $A$ a $\Q$-algebra, we call an element $\psi$ of $\LP_G(A)$ \emph{Frobenius semi-simple} if there exists an integer $m$ as in Proposition \ref{prop:Frob-factor-psi}  such that $\check{\psi}_m$ factors through a subtorus of $\check{G}_A$ etale locally on $A$. 
\end{defn}

Let us denote by $\LP^\ss_G(A)$ (resp.\@ $\LP_G^{K,\ss}(A)$) the subset of Frobenius semi-simple elements of $\LP_G(A)$ (resp.\@ $\LP_G^K(A)$). This evidently forms a $\wh{G}$-stable subfunctor of $\LP_G$ (resp.\@ $\LP_G^K$). 

\begin{rem}To understand the reasoning for this definition, observe that under the isomorphism in Proposition \ref{L-C-L-comparison}, this condition corresponds to an element $\psi'$ of $\LP'_{G,k}(A)$ satisfying the property that the projection of $\psi'(w_0^{2m},1)$ to $\wh{G}(A)$ is semi-simple for some $m$ as in Proposition \ref{prop:Frob-factor-psi}. In particular, this notion of semi-simple agrees with that from \S\ref{ss:JM-for-params-classical} for $\C$-points by Lemma \ref{lem:L-group-ss}. 
\end{rem}

We now prove the following surprisingly subtle semi-simplicity preservation property for the Jacobson--Morozov morphism.

\begin{prop}\label{prop:JM-preserves-ss} Let $A$ be a $\Q$-algebra and $\psi$ an element of $\LP_G(A)$. Then, $\psi$ is Frobenius semi-simple if and only if $\mathsf{JM}(\psi)$ is.
\end{prop}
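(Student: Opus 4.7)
The plan is to reduce the claim to an elementary identity in $\mathcal{L}^\tw_F$ together with a product-preservation lemma for semi-simple elements. First I would fix a lift $w_0\in W_F$ of arithmetic Frobenius and an integer $m$ divisible by $[F^\ast:F]$ large enough that the factorization of Proposition \ref{prop:Frob-factor} applies to $\varphi\defeq\psi\circ\iota$ and that of Proposition \ref{prop:Frob-factor-psi} applies to $\psi$. Setting $s\defeq\check{\varphi}(w_0^m)=\check{\psi}(w_0^m,\mathrm{diag}(\|w_0\|^m,1))$ and $t\defeq\check{\psi}(w_0^{2m},\|w_0\|^m I_2)$, the equivalences of Proposition \ref{prop:Frob-ss-equiv} and Definition \ref{defn:Frob-ss-L-param} (together with Proposition \ref{prop:ss-properties}) express Frobenius semi-simplicity of $\varphi$ (resp.\ $\psi$) as semi-simplicity of $s$ (resp.\ $t$) in $\check{G}(A)$ in the sense of Definition \ref{defn:s-s-elem}.

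A direct computation in $\mathcal{L}^\tw_F$ gives
\[
    (w_0^{2m},\|w_0\|^m I_2)\;=\;\bigl(1,\mathrm{diag}(\|w_0\|^{-m},\|w_0\|^m)\bigr)\cdot\bigl(w_0^m,\mathrm{diag}(\|w_0\|^m,1)\bigr)^{\!2},
\]
all three factors of which lie in $\mathcal{L}^\tw_{F^\ast,A}$ since $w_0^m\in W_{F^\ast}$. As $\check{\psi}$ is a homomorphism on $\mathcal{L}^\tw_{F^\ast,A}$, applying it yields
\[
    t\;=\;t_0^m\cdot s^2,\qquad t_0\defeq\theta_\psi\bigl(\mathrm{diag}(\|w_0\|^{-1},\|w_0\|)\bigr)\in\widehat{G}(A),
\]
and the same property shows that $s$ and $t_0$ commute, since the corresponding elements of $\mathcal{L}^\tw_{F^\ast,A}$ commute (both have diagonal matrix components). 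The element $t_0$ is tautologically semi-simple, being the value at $\|w_0\|^{-1}\in\mathbb{G}_m(A)$ of the cocharacter $\mu\colon\mathbb{G}_{m,A}\to\widehat{G}_A$ obtained by composing $z\mapsto\mathrm{diag}(z,z^{-1})$ with $\theta_\psi$; such a cocharacter \'etale locally factors through a split maximal torus of $\widehat{G}_A$ by the structure theory of reductive group schemes from \cite{ConRgrsch}.

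Granting the auxiliary lemma that the product of two commuting semi-simple elements of $H(A)$ is semi-simple — for any smooth $A$-group scheme $H$ with reductive identity component — the proposition follows symmetrically from the identity $t=t_0^m s^2$. Indeed, if $\psi$ is Frobenius semi-simple then $t$ is semi-simple, and since $t_0^{-m}$ is semi-simple and commutes with $t$, the product $s^2=t\cdot t_0^{-m}$ is semi-simple; as Definition \ref{defn:s-s-elem} trivially implies that $s$ is semi-simple whenever some power of it is, $s$ is semi-simple, hence $\varphi=\psi\circ\iota$ is Frobenius semi-simple. The reverse implication is identical.

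To sketch the auxiliary lemma, given commuting semi-simple $x,y\in H(A)$ I would pass \'etale locally to a split maximal torus $T_1$ of $H^\circ$ containing some power $x^{m_1}$ (Proposition \ref{prop:ss-properties}(1)). The centralizer $Z_{H^\circ}(x^{m_1})^\circ$ is reductive by standard results on centralizers of subtori in reductive group schemes from \cite{ConRgrsch}, and $y$ lies in $Z_{H^\circ}(x^{m_1})(A)$ since it commutes with $x$. Applying Proposition \ref{prop:ss-properties}(1) inside this reductive subgroup yields, after a further \'etale cover, a split maximal torus $T\subseteq Z_{H^\circ}(x^{m_1})^\circ$ containing a power of $y$; as $x^{m_1}$ is central in $Z_{H^\circ}(x^{m_1})^\circ$, some further power of $x$ lies in $Z(Z_{H^\circ}(x^{m_1})^\circ)^\circ\subseteq T$, giving a common power of $x$ and $y$ — hence of $xy$ — in $T$. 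The main obstacle is this auxiliary lemma: over a field it is an immediate consequence of Jordan decomposition, but in the relative setting it requires careful iterated use of the structure theory of reductive group schemes.
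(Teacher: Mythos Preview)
Your reduction to the identity $t=t_0^m s^2$ with commuting $s,t_0$ is correct and more conceptual than the paper's route. The paper instead constructs, for each direction, an explicit (pseudo-)Levi subgroup $L$ of $\check{G}_A$ from the $1$-eigenspace of $\Ad(\check{\psi}_m(1))$ (resp.\ $\Ad(\check{\varphi}_m(1))$) on $\check{\mathfrak{g}}_A$, shows that $\theta$ factors through $L$ using Proposition~\ref{prop:hom-schem-omnibus} (and, in the harder converse direction, an $\mathfrak{sl}_2$-triple argument via Theorem~\ref{thm:rel-JM-triples}), and then finds a common maximal torus of $L$ accommodating both $\theta|_{T_2}$ and the Frobenius element.

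The gap in your approach is the auxiliary lemma. The centralizer $Z_{H^\circ}(x^{m_1})$ of a single \emph{element} of a torus, as opposed to a sub\emph{torus}, need not be flat over $A$: for example $x^{m_1}=\mathrm{diag}(1,1+t)\in\GL_2(\Q[t])$ has centralizer with fiber dimension $4$ at $t=0$ and $2$ elsewhere, so $Z_{H^\circ}(x^{m_1})^\circ$ is not representable and the appeal to \cite{ConRgrsch} fails. Replacing this centralizer by the pseudo-Levi $L$ generated by $T_1$ and the root groups for roots annihilating $x^{m_1}$ (which \emph{is} reductive, and is what the paper uses) does not immediately help: you would then need both $y\in L(A)$, which is not implied by $[y,x^{m_1}]=1$ since $L$ may be strictly smaller than the scheme-theoretic centralizer, and $y$ semi-simple \emph{in $L$}, which is not implied by semi-simplicity in $H$ since the witnessing torus of $H^\circ$ need not lie in $L$. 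In your specific setting the extra structure --- that $t_0=\mu(q^{-1})$ for a cocharacter $\mu$ with which $s$ and $t$ commute --- lets you work in the genuine Levi $M=Z_{\check{G}}(\mu(\bb{G}_m))$, but the burden then becomes showing that $s$ (resp.\ $t$) is semi-simple in $M$, and this is essentially the structure-theoretic work the paper carries out directly using the $\SL_2$-nature of $\theta$.
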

\begin{proof}Suppose that $\psi$ is Frobenius semi-simple. As the conclusion is insensitive to passing to an \'etale extension and conjugating, we do so freely. Take $m$ as in Proposition \ref{prop:Frob-factor-psi} and a split maximal torus $T$ of $\check{G}_A$ such that $\check{\psi}_m$ factors through $T$. 
Note that the eigenspace $\check{\mf{g}}_A(1)$ with respect to $\check{\psi}_m(1)$ is the Lie algebra of a Levi subgroup $L$ of $\check{G}_A$ such that $\check{\psi}_m$ factors through $Z(L)$. Indeed, we may assume that $T=(T_0)_A$ for a maximal torus $T_0$ of $\check{G}$. Let $L'$ be the Levi subgroup of $\check{G}$ generated by $T_0$ and the root groups for the roots $\alpha$ which annihilate $\check{\psi}_m(1)$. Then, we may take $L= L'_A$, where $\check{\psi}_m$ factors through $Z(L)$ by \cite[Corollary 3.3.6]{ConRgrsch}.

Note that $\theta$ factorizes through $L$ as by Proposition \ref{prop:hom-schem-omnibus} it suffices to check this on the level of Lie algebras, from where it is clear.
Let $T_2$ denote the standard diagonal subtorus of $\SL_{2,A}$. Since $\theta$ factorizes through $L$, by \cite[Lemma 5.3.6]{ConRgrsch} we may assume that the map $\theta|_{T_2}$ factorizes through a maximal torus $T'$ of $L$. But, as $Z(L)\subseteq T'$ both $\theta|_{T_2}$ and $\check{\psi}_m$ factorize through $T'$. Hence, if we write $\mathsf{JM}(\psi)=(\varphi,N)$ then the morphism $\mc{W}_{F,A} \to \check{G}_A$ given by $w \mapsto \varphi(w^m)$ factors through $T'$. This implies that  $\mathsf{JM}(\psi)$ is Frobenius semi-simple. 

Conversely, suppose that $\mathsf{JM}(\psi)=(\varphi,N)$ is Frobenius semi-simple. Let $m$ be any integer as Proposition \ref{prop:Frob-factor}. 
As above, we may build a reductive subgroup $L_m$ of $\check{G}_A$ such that $\mathrm{Lie}(L_m)$ is identified with $\check{\mf{g}}_A(1)$ with respect to $\check{\varphi}_m(1)$. We claim that the group $L_{km}$ stabilizes for $k$ sufficiently large. Indeed, the roots of $\alpha$ of $\check{G}$ relative to $T_0$ that annihilate $\check{\varphi}_{km}(1)=\check{\varphi}_m(1)^k$ stabilize for $k$ sufficiently large, from where the claim follows by the construction. Denote by $L$ the group $L_{km}$ for $k$ sufficiently large, say for $k\geqslant k_0$. Let us write $Z$ for the torus $Z(L)^{\circ}$ (see \cite[Theorem 3.3.4]{ConRgrsch}). Observe that as $\check{\varphi}_{km}$, for $k\geqslant k_0$, centralizes $\Lie(L)$ that $\check{\varphi}_{km}$ factors through $Z(L)$. So then, for some $k_1\geqslant k_0$ we have that $\check{\varphi}_{k_1m}$ factors through $Z$. We put $m_1=k_1 m$.  We will be done if we can show that $\theta|_{T_2}$ factorizes through the reductive group $A$-scheme $Z_{\check{G}}(Z)$ (see \cite[Lemma 2.2.4]{ConRgrsch} and \cite[Corollary 17.59]{MilneGroups}). Indeed, in this case by \cite[Lemma 5.3.6]{ConRgrsch}, we know that after passing to an \'etale extension, $\theta|_{T_2}$ factorizes through a maximal torus $T'$ of $Z_{\check{G}}(Z)$. Then $\theta|_{T_2}$ and $\check{\varphi}_{m_1}$ factor through $T'$. 
Hence 
\begin{equation*}
\mc{W}_{F,A} \to \check{G}_A,\qquad w \mapsto \check{\psi} \left(w^{2m_1},\left(\begin{smallmatrix}q^{-m_1d(w)} & 0\\ 0 & q^{-m_1d(w)}\end{smallmatrix}\right)\right) 
\end{equation*}
factors through $T'$. This implies that $\psi$ is Frobenius semi-simple. 

Working etale locally, and by passing to a $\check{G}(A)$-conjugate, we may assume that $Z$ is equal to $Z'_A$ for a split subtorus $Z'$ of $\check{G}$. Let $R_0$ be the set of nontrivial characters of $Z'$ appearing in the adjoint action of $Z'$ on $\check{\mf{g}}_A$. Note that these characters are already defined over $\mathbb{Q}$. Consider the functor on $\cat{Alg}_\Q$ with
\begin{equation*}
    Y(B)\defeq \left\{z\in Z'(B):\begin{aligned}(1)&\quad \chi(z)\ne 1\text{ for all }\chi\in R_0,\\ (2)&\quad \chi(z)=q^{m_1}\text{ for all }\chi\in R_0\text{ such that }\chi(\check{\varphi}_{m_1}(1))=q^{m_1}\end{aligned}\right\}.
\end{equation*}
Clearly $Y$ defines a locally closed subscheme of $Z'$ which is non-empty as $\check{\varphi}_{m_1}(1)$ is an element of $Y(A)$. Take $y \in Y(F)$ for a finite extension $F$ of $\mathbb{Q}$. By passing to an \'etale extension, we may assume that $A$ contains $F$. We claim that inclusion $Z_{\check{G}}(Z)\subseteq Z_{\check{G}}(y)^\circ_A$ is an equality. As $Z_{\check{G}}(Z)$ is flat over $\Spec(A)$, we know from the fibral criterion for isomorphism (see \cite[Corollaire 17.9.5]{EGA4-4}), that it suffices to check this after base change to every point of $\Spec(A)$. But, as $A$ is $\Q$-algebra, and $Z_{\check{G}}(Z)$ and $Z_{\check{G}}(y)^\circ_A$ are both connected, it then suffices to check they have the same Lie algebra (e.g.\@ see \cite[Corollary 10.16]{MilneGroups}), but this is true by construction.

In the following, we use the notation $\check{\mathfrak{g}}_A(\lambda)$ for $\lambda \in A^{\times}$ with respect to $\check{\varphi}_{m_1}(1)$. 
By construction, we know that $\mathrm{Int}(y)$ acts on $\check{\mathfrak{g}}_A(q^{\pm m_1})$ by multiplication by $q^{\pm m_1}$. Moreover, the $\SL_2$-triple $(N,f,h)$ associated to $\theta$ by Theorem \ref{thm:rel-JM-triples} satisfies $N \in \check{\mathfrak{g}}_A(q^{m_1})$, $f \in \check{\mathfrak{g}}_A(q^{-m_1})$ and $h \in \check{\mathfrak{g}}_A(1)$. Therefore, the $\mathfrak{sl}_2$-triple attached to $\mathrm{Int}(y) \circ \theta$ is $(q^{m_1}N,q^{-m_1}f,h)$. Thus, the $\mathfrak{sl}_2$-triple attached to $\Int(y)\circ \theta\circ \mu$ is $(N,f,h)$ where 
\begin{equation*}
\mu\colon \SL_{2,A}\isomto\SL_{2,A},\qquad \begin{pmatrix} a & b \\ c & d \end{pmatrix} \mapsto \begin{pmatrix} a & q^{-m_1}b \\ q^{m_1}c & d \end{pmatrix}.
\end{equation*}
By Theorem \ref{thm:rel-JM-triples} $\mathrm{Int}(y) \circ \theta\circ \mu=\theta$, so $\theta|_{T_2}$ factorizes through $Z_{\check{G}}(y)^{\circ}_A =Z_{\check{G}}(Z)$ as desired.
\end{proof}

We end this section by proving a relative version of Proposition \ref{prop:Zphidec}. Fix a $\Q$-algebra $A$ and let $N$ be an element of $\mc{N}^\sqcup(A)$. Let us denote by $\mf{u}^N$ the $A$-submodule $\mathrm{im}(\ad (N)) \cap \Ker (\ad (N))$ of $\wh{\mf{g}}_A$, which we also treat as a subfunctor of $\wh{\mf{g}}_A$ in the obvious way. Note that $\mf{u}^N$ is in fact a closed subscheme of $\wh{\mc{N}}_A$ and for all $A$-algebras $B$ there is an equality
\begin{equation*}
    \mf{u}^N(B) = \mathrm{im}(\ad (N\otimes 1)) \cap \Ker (\ad (N\otimes 1)).
\end{equation*}
As these claims are \'etale local, we may assume that $N=gN_0g^{-1}$ for some $N_0$ in $\wh{\mc{N}}(\Q)$ and $g$ in $\wh{G}(A)$. Observe then that $\mf{u}^N$ is equal to $g(\mf{u}^{N_0})_A g^{-1}$ where $\mf{u}^{N_0} \subseteq \wh{\mf{g}}$ is defined in the same way as $\mf{u}^N$. As $\wh{\mc{N}}_A$ is $\wh{G}(A)$-equivariant it suffices to show that $\mf{u}^{N_0}$ factorizes through $\wh{\mc{N}}$ which may be checked on $\ov{\Q}$-points which is then clear. One similarly proves the claimed equality. 

As $\mf{u}^N$ is a closed subscheme of $\wh{\mc{N}}_A$, we obtain a closed subscheme $U^N\defeq \exp(\mf{u}^N)$ of $\wh{G}_A$. We claim that $U^N$ is a closed subgroup scheme of $\wh{G}_A$ flat over $A$. As this may be checked \'etale locally we are again reduced to checking that $\exp(\mf{u}^{N_0})$ is a closed subgroup $\Q$-scheme of $\wh{G}$ (automatically flat over $\Q$), but this is true by Proposition \ref{prop:exp-omnibus}. For an element $(\varphi,N)$ of $\WDP^{\sqcup}_G(A)$ we set
\begin{equation*}
    U^N(\varphi)\defeq U^N\times_{\wh{G}_A}Z_{\wh{G}}(\varphi). 
\end{equation*}
Concretely this means that for every $A$-algebra $B$ one has an identification of $U^N(\varphi)(B)$ with $U^N(B)\cap Z_{\wh{G}}(\varphi)(B)$ where this intersection is taken in $\wh{G}(B)$.

Let us first establish the following relative version of Proposition \ref{prop:Zudec}, which follows easily (using the same reduction arguments as already used above) from Proposition \ref{prop:Zudec}

\begin{lem}\label{lem:zudec-rel} Let $\theta$ be an element of $\underline{\Hom}(\SL_{2,\Q},\wh{G})(A)$ and define $N=\JM(\theta)$. Then, $Z_{\wh{G}}(N)=U^N\rtimes Z_{\wh{G}}(\theta) $.
\end{lem}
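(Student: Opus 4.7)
The plan is to construct the multiplication map $m\colon U^N\rtimes Z_{\wh{G}}(\theta)\to Z_{\wh{G}}(N)$ of group $A$-schemes and then to show it is an isomorphism by reducing \'etale-locally to Proposition~\ref{prop:Zudec} over $\Q$.

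First I would verify that $m$ is well-defined. The subgroup $Z_{\wh{G}}(\theta)$ evidently centralizes $N=d\theta(e_0)$. The subscheme $U^N$ also lies in $Z_{\wh{G}}(N)$: by construction $\mf{u}^N\subseteq \ker(\ad(N))$, so for any $A$-algebra $B$ and any $x\in\mf{u}^N(B)$ the identity $\Ad(\exp(x))=\sum_{i\geqslant 0}\frac{1}{i!}\ad(x)^i$ from Proposition~\ref{prop:exp-omnibus}(2) gives $\Ad(\exp(x))(N)=N$. Finally, $Z_{\wh{G}}(\theta)$ normalizes $U^N$ because conjugation by an element centralizing $N$ preserves $\ker(\ad(N))$ and $\mathrm{im}(\ad(N))$, hence stabilizes $\mf{u}^N$; the naturality of $\exp$ from Proposition~\ref{prop:exp-omnibus}(1) transports this to a normalization of $U^N=\exp(\mf{u}^N)$.

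To show $m$ is an isomorphism of $A$-schemes, since both sides are finitely presented over $A$ it suffices to check this \'etale-locally. By Proposition~\ref{prop:SL2-Hom-open-orbits} the $A$-point $\theta$ factors through a single orbit scheme in $\underline{\Hom}(\SL_{2,\Q},\wh{G})$, and by Proposition~\ref{prop:split-nilp-desc} together with the splitness of $\wh{G}$ we may choose a representative $\theta_0\in \underline{\Hom}(\SL_{2,\Q},\wh{G})(\Q)$ for this orbit. Proposition~\ref{prop:orbit-desc-gen} then yields an \'etale cover $A\to A'$ and an element $g\in\wh{G}(A')$ with $\theta_{A'}=g\theta_{0,A'}g^{-1}$; conjugating throughout by $g$, we may assume $\theta=\theta_{0,A'}$ is the base change of the $\Q$-morphism $\theta_0$.

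In this reduced situation, setting $N_0\defeq \JM(\theta_0)$, both sides of the claimed equality are the base change to $A'$ of their $\Q$-counterparts: this uses Proposition~\ref{prop:transp-rep} for the centralizer schemes and the explicit description of $U^N$ in the paragraph preceding the lemma for the unipotent factor. The equality then follows from the identity $Z_{\wh{G}}(N_0)=U^{N_0}\rtimes Z_{\wh{G}}(\theta_0)$ of $\Q$-group schemes, which is precisely Proposition~\ref{prop:Zudec} applied to $H=\wh{G}$. The only genuine bookkeeping is the base-change compatibility of $\mf{u}^N$ and $U^N$ under \'etale extensions and $\wh{G}$-conjugation, and this is exactly what is established in the paragraph preceding the lemma statement.
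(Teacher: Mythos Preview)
Your proof is correct and follows essentially the same approach as the paper: the paper's proof consists of a single sentence stating that the lemma ``follows easily (using the same reduction arguments as already used above) from Proposition~\ref{prop:Zudec}'', where ``above'' refers precisely to the \'etale-local reduction to the $\Q$-case via conjugation that you have spelled out. You have simply written out the details the paper leaves implicit, including the verification that the multiplication map is well-defined; the one minor point to watch is that $\Spec(A)$ need not be connected, so $\theta$ may not factor through a single orbit scheme globally, but this is harmless since you are already working \'etale-locally and the orbit schemes are clopen.
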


\begin{prop}\label{prop:Zphidesc-rel} Let $A$ be a $\Q$-algebra, $\psi$ is an element of $\LP_G(A)$, and set $(\varphi,N)=\mathsf{JM}(\psi)$. Then, $Z_{\widehat{G}}(\varphi, N) = U^N(\varphi)\rtimes Z_{\wh{G}}(\psi)$.
\end{prop}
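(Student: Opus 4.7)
The plan is to follow the strategy of the proof of Proposition~\ref{prop:Zphidec}, with Lemma~\ref{lem:zudec-rel} playing the role of Proposition~\ref{prop:Zudec}. Fix an $A$-algebra $B$ and let $h$ be a $B$-point of $Z_{\wh{G}}(\varphi,N)$. Since $Z_{\wh{G}}(\varphi,N)\subseteq Z_{\wh{G}}(N)$, Lemma~\ref{lem:zudec-rel} supplies a unique factorization $h=ua$ with $u\in U^N(B)$ and $a\in Z_{\wh{G}}(\theta)(B)$; the task is then to prove that $u\in U^N(\varphi)(B)$ and $a\in Z_{\wh{G}}(\psi)(B)$, the reverse containment $U^N(\varphi)\rtimes Z_{\wh{G}}(\psi)\subseteq Z_{\wh{G}}(\varphi,N)$ being immediate from the definitions.

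The crucial intermediate step is to show that for every $B$-point $w$ of $\mc{W}_{F,A}$, the inner automorphism $\Int(\varphi(w))$ of $\wh{G}_B$ stabilizes $U^N$ and $Z_{\wh{G}}(\theta)$ as subfunctors. For $U^N$ this is straightforward: the identity $\Ad(\varphi(w))(N)=\|w\|N$ implies that $\Ad(\varphi(w))$ scales both $\mathrm{im}(\ad N)$ and $\ker(\ad N)$, hence preserves $\mf{u}^N=\mathrm{im}(\ad N)\cap\ker(\ad N)$ and therefore the subgroup scheme $U^N=\exp(\mf{u}^N)$. For $Z_{\wh{G}}(\theta)$ the argument is carried out directly in $\mc{L}^\tw_F$: using $\iota(w)=(w,\mathrm{diag}(\|w\|,1))$, a computation in the semidirect product $\mc{L}^\tw_F$ gives
\begin{equation*}
    \iota(w)\cdot(1,g)\cdot\iota(w)^{-1}=\bigl(1,\,\mathrm{diag}(\|w\|,1)\,g\,\mathrm{diag}(\|w\|,1)^{-1}\bigr)
\end{equation*}
for any $B$-point $g$ of $\SL_{2,A}$, with the right-hand side lying in $\SL_{2,A}\subseteq\mc{L}^\tw_{F,A}$ because conjugation preserves determinants. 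Applying $\psi$ then yields the identity $\Int(\varphi(w))\circ\theta=\theta\circ\Int(\mathrm{diag}(\|w\|,1))$, which shows that $\Int(\varphi(w))$ permutes the image of $\theta$ and therefore stabilizes its centralizer $Z_{\wh{G}}(\theta)$.

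With the stability in hand, the identity $\Int(\varphi(w))(u)\cdot\Int(\varphi(w))(a)=\Int(\varphi(w))(ua)=ua$ combined with the uniqueness of the semidirect product decomposition in Lemma~\ref{lem:zudec-rel} will force $\Int(\varphi(w))(u)=u$ and $\Int(\varphi(w))(a)=a$. As this holds functorially in $w$, we will obtain $u\in U^N(\varphi)(B)$ and $a\in Z_{\wh{G}}(\varphi)(B)\cap Z_{\wh{G}}(\theta)(B)$. To identify this last intersection with $Z_{\wh{G}}(\psi)(B)$, note that every $B$-point $(w,g)$ of $\mc{L}^\tw_{F,A}$ factors as $\iota(w)\cdot(1,\mathrm{diag}(\|w\|,1)^{-1}g)$ with the second factor in $\SL_{2,A}(B)$; hence $\psi$ is determined by its restrictions to $\iota(\mc{W}_{F,A})$ and to $\SL_{2,A}$, so that $Z_{\wh{G}}(\varphi)\cap Z_{\wh{G}}(\theta)=Z_{\wh{G}}(\psi)$ as group presheaves.

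The main technical obstacle will be establishing the stability of $Z_{\wh{G}}(\theta)$ under conjugation by $\varphi(w)$: the proof of Proposition~\ref{prop:Zphidec} relied on the decomposition $\varphi(w)=\psi(w,1)\cdot\theta(i_2(w))$, which required a square root of $\|w\|$ and is not available over an arbitrary $\Q$-algebra. The use of the twisted Langlands group scheme $\mc{L}^\tw_F$, together with the replacement of $i_2(w)$ by $\mathrm{diag}(\|w\|,1)$, circumvents this difficulty at the cost of a slightly more delicate computation at the level of functors of points.
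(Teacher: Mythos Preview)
Your proposal is correct and follows essentially the same approach as the paper: reduce to the Levi decomposition of Lemma~\ref{lem:zudec-rel}, verify that $\Int(\varphi(w))$ stabilizes both $U^N$ and $Z_{\wh{G}}(\theta)$, and conclude via uniqueness of the decomposition exactly as in Proposition~\ref{prop:Zphidec}. The paper's proof is terser---it only spells out the $U^N$ stability and asserts that the remainder of the classical argument ``goes through verbatim''---whereas you carefully address the point that the classical factorization $\varphi(w)=\psi(w,1)\theta(i_2(w))$ requires a square root of $\|w\|$; your direct computation of $\iota(w)(1,g)\iota(w)^{-1}$ in $\mc{L}^\tw_F$ is the right way to avoid this, and indeed recovers the same conjugation relation $\Int(\varphi(w))\circ\theta=\theta\circ\Int(\mathrm{diag}(\|w\|,1))$ without any extension of scalars. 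One small correction: $\mc{L}^\tw_F$ is a subgroup of the \emph{direct} product $\mc{W}_F\times\GL_{2,\Q}$, not a semidirect product; your computation is unaffected by this slip.
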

\begin{proof} Let $B$ be an $A$-algebra. Given Lemma \ref{lem:zudec-rel} it clearly suffices to show that conjugation by an element in the image of $\varphi$ stabilizes $U^N$, as the rest of the argument for Proposition \ref{prop:Zphidec} then goes through verbatim. Let $u=\exp(n)$ be an element of $U^N(B)$ and observe that  $\Int(\varphi(w))(u)$ is equal to $\exp(\Ad(\varphi(w))(n))$, and so we are done as clearly $\Ad(\varphi(w))(n)\in \mf{u}^N(B)$.
\end{proof}

\subsection{The relative Jacobson--Morozov theorem for parameters}

We now arrive at the relative analogue of Theorem \ref{thm:JM-params-classical}. Let us set $\WDP^{\sqcup,\ss}_G$ to be the presheaf whose $A$-points consist of Frobenius semi-simple Weil--Deligne parameters $(\varphi,N)$ such that $N$ lies in $\mc{N}^\sqcup(A)$.

\begin{thm}[Relative Jacobson--Morozov theorem for parameters]\label{thm:rel-JM-param} The Jacobson--Morozov morphism $ \mathsf{JM}\colon \LP^\ss_G\to \WDP^{\sqcup,\ss}_G$ is surjective, and induces an isomorphism of quotient presheaves
\begin{equation*}
    \JM\colon \LP^\ss_G/\wh{G}\isomto \WDP^{\sqcup,\ss}_G/\wh{G}.
\end{equation*}
\end{thm}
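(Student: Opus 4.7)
The plan is to prove the surjectivity and injectivity of the induced map on $\wh{G}(A)$-quotients, for each $\Q$-algebra $A$, by working with $\mathfrak{sl}_2$-triples inside the Lie algebra $\mathfrak{a} \defeq \wh{\mf{g}}_A^{\varphi(\cI_F)}$ of the smooth closed subgroup scheme $H \defeq Z_{\wh{G}}(\varphi|_{\cI_F})^\circ \subseteq \wh{G}_A$, which has reductive identity component by Lemma \ref{lem:fixed-points-reductive} and contains $N$ in its Lie algebra because $\|i\|=1$ for every $i \in \cI_F$. The key technical input is the direct-sum eigenspace decomposition of $\mathfrak{a}$ under $\Ad(\varphi(w_0))$ for a Frobenius lift $w_0$, which exists globally over $A$ by Proposition \ref{prop:eigen-decomp} thanks to the Frobenius semi-simplicity of $(\varphi,N)$; the relevant weight-shifting identity is $\Ad(\varphi(w_0))\circ \ad(N)=\|w_0\|\cdot \ad(N)\circ \Ad(\varphi(w_0))$.

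For surjectivity, given $(\varphi,N)\in\WDP^{\sqcup,\ss}_G(A)$, I would first obtain an $\mathfrak{sl}_2$-triple $(N,h_0,f_0)$ in $\wh{\mf{g}}_A$ from Theorem \ref{thm:rel-JM-triples}, average over the finite group $\varphi(\cI_F/\cI_K)$ to land in $\mathfrak{a}$, and then project $h_0$ (resp.\ $f_0$) onto the $1$-eigenspace (resp.\ $\|w_0\|^{-1}$-eigenspace) of $\Ad(\varphi(w_0))|_\mathfrak{a}$. Because $[h_0,N]=2N$ lies in the $\|w_0\|$-eigenspace, the projected $h$ satisfies $[h,N]=2N$, and the weight shift applied to $h_0=\ad(N)(f_0)$ shows the projected $\widetilde{f}$ witnesses $h=\ad(N)(\widetilde{f})$; Proposition \ref{prop:Kostant-triples-prop} now supplies a unique $f\in\mathfrak{a}$ completing $(N,h)$ to an $\mathfrak{sl}_2$-triple, and applying Kostant's uniqueness to the rescaled element $\|w_0\|\,\Ad(\varphi(w_0))f$ forces $f$ into the $\|w_0\|^{-1}$-eigenspace. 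Exponentiating via Proposition \ref{prop:hom-schem-omnibus}(2) yields $\theta\colon\SL_{2,A}\to H$, and the formula
\begin{equation*}
    \psi(w,g) \defeq \varphi(w)\,\theta(\mathrm{diag}(\|w\|^{-1},1)\,g)
\end{equation*}
is directly verified to define an element of $\LP_G(A)$ with $\JM(\psi)=(\varphi,N)$; Frobenius semi-simplicity of $\psi$ then follows from Proposition \ref{prop:JM-preserves-ss}.

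For injectivity, after a preliminary $\wh{G}(A)$-conjugation we reduce to $\JM(\psi_1)=\JM(\psi_2)=(\varphi,N)$. The associated triples $(N,h_i,f_i)$ both lie in $\mathfrak{a}$ with $h_i$ in the $1$-eigenspace and with $h_i\in\mathrm{im}(\ad(N)|_\mathfrak{a})$ (witnessed by $f_i$), whence
\begin{equation*}
    h_1-h_2 \,\in\, \ker(\ad(N)|_\mathfrak{a}) \cap \mathrm{im}(\ad(N)|_\mathfrak{a}) \cap \Lie Z_{\wh{G}}(\varphi) \,=\, \mf{u}^N(\varphi).
\end{equation*}
This decisive cancellation rules out any contribution from the Levi factor $\Lie Z_{\wh{G}}(\psi_1)$ in the decomposition of Proposition \ref{prop:Zphidesc-rel}. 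Using the $\mathfrak{sl}_2$-weight grading of $\Lie Z_{\wh{G}}(N)$ from Proposition \ref{prop:Zudec}, for a pure-weight $x\in\mf{u}^N(\varphi)$ of $\theta_1$-weight $k>0$ one computes $\Ad(\exp(x))h_1=h_1-kx$ (all higher-order terms vanish as $[x,x]=0$), so assembling pure-weight contributions via Baker--Campbell--Hausdorff gives a unique $u\in U^N(\varphi)(A)$ with $\Ad(u)h_1=h_2$. By Kostant's uniqueness $\Ad(u)f_1=f_2$, so $\Int(u)\theta_1=\theta_2$; and since $U^N(\varphi)\subseteq Z_{\wh{G}}(\varphi)$, also $\Int(u)\psi_1=\psi_2$.

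The principal technical obstacle is ensuring that the eigenspace projections used for surjectivity define direct-summand decompositions over $A$ itself rather than only after étale base change, which is precisely the content of Proposition \ref{prop:eigen-decomp} applied to the Frobenius-semisimple element $\ov{\varphi}(w_0)$. A secondary subtlety in the injectivity step is the precise assembly of $u\in U^N(\varphi)(A)$ from its pure-weight contributions via Baker--Campbell--Hausdorff, which requires careful bookkeeping supported by the strictly positive grading of $\mf{u}^N(\varphi)$ under the $\theta_1$-weight decomposition.
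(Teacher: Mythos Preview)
Your surjectivity argument is essentially the paper's own (both implement Proposition~\ref{prop:gr-prop}); one small imprecision is that averaging the initial $h_0$ over $\varphi(\cI_F/\cI_K)$ destroys the triple property, so you must re-invoke Proposition~\ref{prop:Kostant-triples-prop} inside $\mathfrak{a}$ before projecting to the Frobenius eigenspaces, as the paper does explicitly. Your injectivity argument, however, takes a genuinely different route. The paper first conjugates $\psi_1$ to $\psi_2$ only \emph{\'etale locally} on $A$ (via the uniqueness clause of Proposition~\ref{prop:gr-prop}, whose proof begins by passing to an \'etale cover so that $\theta$ descends to $\Q$) and then descends by a cohomological argument: the transporter $\Transp_{\wh{G}}(\psi_1,\psi_2)$ defines a class in $\ker\bigl(H^1_\et(\Spec(A),Z_{\wh{G}}(\psi_1))\to H^1_\et(\Spec(A),Z_{\wh{G}}(\varphi,N))\bigr)$, and this kernel is trivial because the semi-direct product of Proposition~\ref{prop:Zphidesc-rel} splits the map of centralizers. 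You instead exploit that $\theta_1$ is already defined over $A$ (it comes with $\psi_1$), so the $\theta_1$-weight grading of $\mf{u}^N$ exists globally, and the same inductive filtration argument that is buried inside the proof of Proposition~\ref{prop:gr-prop} produces the conjugating element $u=\exp(x)\in U^N(\varphi)(A)$ directly over $A$. Your approach is more elementary---no \'etale descent, no torsor cohomology---at the cost of verifying by hand that each weight component of $h_1-h_2$ remains $\Ad(\varphi)$-fixed; this holds because the image of $\varphi$ commutes with $\theta_1$ restricted to the diagonal torus, so $\Ad(\varphi(w))$ preserves each $\theta_1$-weight space.
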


Let us fix a $\Q$-algebra $A$, an element $(\varphi,N)$ of $\WDP^{\sqcup,\ss}_G(A)$, and an arithmetic Frobenius lift $w_0 \in \cW_{F,A}$. 
In the notation from Proposition \ref{prop:eigen-decomp}, with $\rho\colon (\check{G}\rtimes \underline{\Gamma_\ast})_A\to \GL(\wh{\mf{g}}_A)$ the adjoint action, $h=\ov{\varphi}(w_0)$, and $I=\phi(I_F/I_K)$, let $\mf{h}$ and $\mf{h}(\lambda)$ be $\wh{\mf{g}}_A^I$ and $\wh{\mf{g}}_A^I(\lambda)$ respectively. 

\begin{prop}[{cf.\@ \cite[Lemma 2.1]{GRAinv}}]\label{prop:gr-prop}
 There exists an $\mf{sl}_2$-triple in $\widehat{\mf{g}}_A$ of the form $(N,h,f)$ where $N\in\mf{h}(q)$, $h\in\mf{h}(1)$, and $f\in\mf{h}(q^{-1})$. Moreover, any two such $\mf{sl}_2$-triples are conjugate by an element of $Z_{\widehat{G}}(\varphi,N)$ \'etale locally on $A$.
\end{prop}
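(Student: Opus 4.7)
For existence, the plan is to apply Theorem~\ref{thm:rel-JM-triples} to $\widehat{G}$ and $N \in \wh{\mathcal{N}}^{\sqcup}(A)$ to obtain an $\mathfrak{sl}_2$-triple $(N, h_1, f_1)$ in $\widehat{\mathfrak{g}}_A$ whose last two entries need not lie in $\mathfrak{h}$, and then combine a projection argument with Proposition~\ref{prop:Kostant-triples-prop}. Let $\pi_I \colon \widehat{\mathfrak{g}}_A \to \mathfrak{h}$ be the $I$-invariant projection given by averaging over the finite group $\phi(I_F/I_K)$ (available in characteristic zero), and let $\pi_\lambda \colon \mathfrak{h} \to \mathfrak{h}(\lambda)$ be the projection onto the weight-$\lambda$ summand from Proposition~\ref{prop:eigen-decomp}. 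Since $N$ is $I$-invariant and lies in $\mathfrak{h}(q)$, one checks that $\pi_I$ commutes with $\mathrm{ad}(N)$ on $\widehat{\mathfrak{g}}_A$ and that $\pi_\lambda \circ \mathrm{ad}(N) = \mathrm{ad}(N) \circ \pi_{\lambda/q}$ on $\mathfrak{h}$. Defining $f_0 := \pi_{q^{-1}} \pi_I(f_1) \in \mathfrak{h}(q^{-1})$ and $h := \mathrm{ad}(N)(f_0) \in \mathfrak{h}(1)$, these commutation relations give $h = \pi_1 \pi_I(h_1)$, and then $\mathrm{ad}(N)(h) = \pi_q \pi_I(\mathrm{ad}(N)(h_1)) = \pi_q \pi_I(-2N) = -2N$, i.e.\ $[h,N]=2N$. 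As $h$ visibly lies in the image of $\mathrm{ad}(N)|_{\mathfrak{h}}$, Proposition~\ref{prop:Kostant-triples-prop} (with $\mathfrak{a}=\mathfrak{h}$) supplies $f \in \mathfrak{h}$ completing $(N,h)$ to an $\mathfrak{sl}_2$-triple. Inspecting that proof, $f = f_0 - g$ where $g \in \mathfrak{h}^N$ is uniquely determined by $(\mathrm{ad}(h) + 2)(g) = [h, f_0] + 2 f_0$; the right-hand side lies in $\mathfrak{h}(q^{-1})$, and since $h \in \mathfrak{h}(1)$ the operator $\mathrm{ad}(h) + 2$ preserves the weight grading and is an isomorphism of $\mathfrak{h}^N$ by Lemma~\ref{lem:ad(x)+2-invertible}, so it restricts to an isomorphism of $\mathfrak{h}^N \cap \mathfrak{h}(q^{-1})$; this forces $g$, hence $f$, to lie in $\mathfrak{h}(q^{-1})$.

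For uniqueness, suppose $(N, h, f)$ and $(N, h', f')$ are two such triples. By Theorem~\ref{thm:rel-JM-triples} applied to $\widehat{G}$, the two triples lie in a single $\widehat{G}(A)$-orbit, so there exists $g \in Z_{\widehat{G}}(N)(A)$ conjugating one to the other. Let $\theta \colon \SL_{2,A} \to \widehat{G}_A$ be the group homomorphism integrating the Lie-algebra map $\mathfrak{sl}_{2,A} \to \widehat{\mathfrak{g}}_A$ attached to $(N, h, f)$ via Proposition~\ref{prop:hom-schem-omnibus}. The relative Levi decomposition $Z_{\widehat{G}}(N) = U^N \rtimes Z_{\widehat{G}}(\theta)$ of Lemma~\ref{lem:zudec-rel} lets us write $g = u z$ with $u \in U^N(A)$ and $z \in Z_{\widehat{G}}(\theta)(A)$; since $z$ fixes the triple, $u$ alone conjugates it and is uniquely determined as an element of $U^N(A)$. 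Now $\phi(i)$ normalizes $U^N$ (because $\phi(i)$ centralizes $N$) and fixes both triples (which lie in $\mathfrak{h}$), so $\mathrm{Int}(\phi(i))(u)$ again lies in $U^N(A)$ and conjugates the triples; uniqueness forces $u$ to centralize $\phi(I_F/I_K)$. An analogous argument, using the weight identities $\mathrm{Ad}(\overline{\varphi}(w_0))(N, h, f) = (qN, h, q^{-1}f)$ (and similarly for the primed triple) together with the fact that $\mathrm{Ad}(\overline{\varphi}(w_0))$ preserves $\mathfrak{u}^N = \mathrm{image}(\mathrm{ad}(N)) \cap \ker(\mathrm{ad}(N))$, shows that $\mathrm{Int}(\overline{\varphi}(w_0))(u) \in U^N(A)$ conjugates the triples, so uniqueness forces $u$ to centralize $\overline{\varphi}(w_0)$. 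Hence $u \in Z_{\widehat{G}}(\varphi, N)(A)$, with passage to an \'etale cover invoked only as needed to guarantee the auxiliary ingredients (the relative Jacobson--Morozov theorem or the Levi decomposition) globally.

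The main technical obstacle I anticipate is the careful combinatorial bookkeeping for the $\overline{\varphi}(w_0)$-weight structure on $\mathfrak{h}$: in particular, verifying that $\mathrm{ad}(N)$ preserves the complement $M'$ of Proposition~\ref{prop:eigen-decomp} (so that the weight-shift relation $\pi_\lambda \circ \mathrm{ad}(N) = \mathrm{ad}(N) \circ \pi_{\lambda/q}$ really holds on all of $\mathfrak{h}$, which reduces \'etale locally to the observation that if $\tau \notin A^\times$ then $q\tau \notin A^\times$), and that the operator $\mathrm{ad}(h) + 2$ restricts to an isomorphism on each weight-graded piece of $\mathfrak{h}^N$ in the sense required for the Kostant step to produce an $f$ lying in the prescribed weight-$q^{-1}$ eigenspace.
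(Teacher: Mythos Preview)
Your existence argument is correct and follows the same line as the paper's: obtain an initial triple via Theorem~\ref{thm:rel-JM-triples}, project to the relevant invariant and weight subspaces, and then invoke Proposition~\ref{prop:Kostant-triples-prop}. You combine the $I$-averaging and the weight projection into a single step, whereas the paper first lands in $\mathfrak{h}$ via averaging and only then appeals to the argument of \cite[Lemma~2.1]{GRAinv} for the weight refinement; the two are equivalent. The point you flag about $\mathrm{ad}(N)$ preserving the complement $M'$ of Proposition~\ref{prop:eigen-decomp} does hold, for exactly the reason you indicate (since $q\in A^\times$, one has $q\tau\in A^\times$ iff $\tau\in A^\times$), and the observation that $\mathrm{ad}(h)+2$ respects the weight grading on $\mathfrak{h}^N$ is likewise correct.

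Your uniqueness argument, however, takes a genuinely different route from the paper. The paper constructs an explicit unipotent group $U=\exp(\mathfrak{u})$ with $\mathfrak{u}\subset\mathfrak{h}^N\cap\mathfrak{h}(1)$, proves via a filtration induction that $\{\mathrm{Ad}(u)(h):u\in U\}=h+\mathfrak{u}$ so that $U$ acts transitively on the admissible triples, and then checks directly that $U\subseteq Z_{\widehat{G}}(\varphi,N)(A)$. You instead invoke Theorem~\ref{thm:rel-JM-triples} to obtain a conjugating element in $Z_{\widehat{G}}(N)(A)$, reduce to its $U^N$-component $u$ via the Levi decomposition of Lemma~\ref{lem:zudec-rel}, and then use the \emph{uniqueness} of such $u$ (from $U^N\cap Z_{\widehat{G}}(\theta)=\{1\}$) together with the fact that $\mathrm{Int}(\phi(i))$ and $\mathrm{Int}(\overline{\varphi}(w_0))$ each preserve $U^N$ and send $u$ to another element of $U^N(A)$ conjugating the triples. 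This works: the key computation, that $\mathrm{Int}(\overline{\varphi}(w_0))(u)$ conjugates $(N,h,f)$ to $(N,h',f')$ because scaling the outer entries of a triple by $q^{\pm 1}$ is harmless, is valid. In fact your argument never genuinely needs an \'etale cover---Theorem~\ref{thm:rel-JM-triples} gives conjugacy over $A$ and Lemma~\ref{lem:zudec-rel} gives the Levi decomposition over $A$---whereas the paper passes to an \'etale cover so that $\theta$ (and hence the grading $\widehat{\mathfrak{g}}=\bigoplus_i\widehat{\mathfrak{g}}_i$) can be taken over $\mathbb{Q}$. Your approach is cleaner and avoids the filtration induction; the paper's is more self-contained and constructive, since it does not lean on the already-established Theorem~\ref{thm:rel-JM-triples}.
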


\begin{proof} By Theorem \ref{thm:rel-JM-triples} there exists an $\mf{sl}_2$-triple $(N,h_{-1},f_{-1})$ in $\wh{\mf{g}}_A$. We take a finite extension $K$ of $F^\ast$ Galois over $F$ such that $\cI_{K,A} \subseteq \ker (\check{\varphi}|_{\cW_{F^\ast,A}})$. Observe that $N$ is in $\mf{h}$ by definition and if we set $h_0$ to be the average over the action of $\phi(I_F/I_K)$, then $h_0$ is also in $\mf{h}$ and $(N,h_0)$ satisfies the conditions of Proposition \ref{prop:Kostant-triples-prop} for $\mf{h}$. Therefore there exists an $\mf{sl}_2$-triple in $\mf{h}$ of the form $(N,h_0,f_0)$. Given this, the decomposition result from Proposition \ref{prop:eigen-decomp}, and Proposition \ref{prop:Kostant-triples-prop}, the existence argument as in \cite[Lemma 2.1]{GRAinv} goes through without further comment. 

To show the uniqueness part of the statement, let $(N,h,f)$ and $(N,h_1,f_1)$ be two $\mf{sl}_2$-triples satisfying the conditions of the proposition. We shall pass to an \'etale extension freely in the following. By Proposition \ref{prop:SL2-Hom-open-orbits}, we may assume that there exists a morphism $\theta\colon \SL_{2,\Q}\to\widehat{G}$ such that $(N,h,f)$ is the associated $\mf{sl}_2$-triple. Set $\mf{m}\defeq\mf{h}^N\cap \mf{h}(1)$, and for each $i\in\mathbb{N}$ set $\mf{m}_i$ to be $\left\{x\in\mf{m}:[h,x]=ix\right\}$. We can check that $\mf{m}=\bigoplus_{i}\mf{m}_i$ by using the adjoint action of $\theta|_{T_2}$ and Lemma \ref{lem:Gm-Ad-ad}, where $T_2$ is the diagonal subtorus of $\SL_{2,\Q}$. Let us now set $\mf{u}\defeq\bigoplus_{i>0}\mf{m}_i$. Then $\mf{u}$ is Lie subalgebra of $\wh{\mf{g}}_A$ contained in $\wh{\mc{N}}(A)$ as it is contained in $\bigoplus_{i>0}\wh{\mf{g}}_{i,A}$, the base change to $A$ of $\bigoplus_{i>0}\wh{\mf{g}}_i$ where $\wh{\mf{g}}_i=\{x\in \wh{\mf{g}}: [h,x]=ix\}$, and $\bigoplus_{i>0}\wh{\mf{g}}_i$ is quickly checked to be contained in $\wh{\mc{N}}(\Q)$. Consider $U\defeq\exp(\mf{u})$, which is a subgroup of $H(A)$ by (3) of Proposition \ref{prop:exp-omnibus}.

We claim that $\left\{\mathrm{Ad}(u)(h):u\in U\right\}$ is equal to $h+\mf{u}$. To see this, we note that if we write $u=\exp(x)$ for  $x\in\mf{u}$ then by (2) of Proposition \ref{prop:exp-omnibus} $\mathrm{Ad}(u)(h)$ is equal to $\sum_{n \geq 0} \frac{1}{n!}\mathrm{ad}(x)^n(h)$. We need to show that for any $x_0 \in \mf{u}$ there is $x \in \mf{u}$ such that 
$x_0=\sum_{n \geq 1} \frac{1}{n!}\mathrm{ad}(x)^n(h)$. 
We define a filtration $\mathrm{Fil}^i(\mathfrak{u})=\bigoplus_{j\geqslant i}\mathfrak{m}_j$ for $i \geq 1$. It suffices to prove that there is $x_i \in \mf{u}$ such that 
\begin{equation*}
    x_0 \equiv \sum_{n \geq 1} \frac{1}{n!}\mathrm{ad}(x_i)^n(h) \mod \mathrm{Fil}^i(\mathfrak{u})
\end{equation*} 
by induction on $i$. This is trivial for $i=1$. We assume that it is proved for $i$. We take $x_i' \in \mathrm{Fil}^i(\mathfrak{u})$ such that $[x_i',h]=x_0 - \sum_{n \geq 1} \frac{1}{n!}\mathrm{ad}(x_i)^n(h)$. Then $x_{i+1}=x_i +x_i'$ is seen to satisfy 
\begin{equation*}
    x_0 \equiv \sum_{n \geq 1} \frac{1}{n!}\mathrm{ad}(x_{i+1})^n(h) \mod \mathrm{Fil}^{i+1}(\mathfrak{u})
\end{equation*} 
since $[\mathfrak{u},\mathrm{Fil}^{i}(\mathfrak{u})] \subseteq \mathrm{Fil}^{i+1}(\mathfrak{u})$.

Note now that $y=h_1-h=[N,f_1-f]$ is in $\mf{u}$. Indeed, by inspection $[N,y]=0$ so that $y$ is in $\mf{h}^N$, but since $h_1$ and $h$ are both in $\mf{h}(1)$, so is their difference $y$. Note though that as $y=[N,f_1-f]$ we have $y$ is in $\mf{u}$. Indeed, it again suffices to show that $\widehat{\mf{g}}_A\cap [N,\widehat{\mf{g}}_A]$ is equal to $\bigoplus_{i>0}\widehat{\mf{g}}_{i,A}$ which, again, may be verified over $\Q$ in which case it is again classical (cf. \cite[Proposition 2.2]{GRAinv}). Thus, we know that there exists some $u$ in $U$ such that $\mathrm{Ad}(u)(h)=h+y=h_1$. One then verifies that $\Ad(u)(f)=f_1$ as in loc.\@ cit. 

Finally, we now observe that the inclusion $U\subseteq Z_{\widehat{G}}(\varphi,N)(A)$ holds. Indeed, writing $u=\exp(x)$ we see that $\Ad(u)(N)=N$ since $x$ is in $\mf{h}^N$ and using the formula from (2) of Proposition \ref{prop:exp-omnibus}. Similarly, as $\Int(\varphi(w))(\exp(x))$ is equal to $\exp(\Ad(\varphi(w))(x))$, this is just $\exp(x)$ as $x$ is in $\mf{h}(1)$.
\end{proof}

To show the surjectivity claim in Theorem \ref{thm:rel-JM-param} let $(N,f,h)$ be as in Proposition \ref{prop:gr-prop}, and consider the morphism $\theta\colon \SL_{2,A}\to \wh{G}_A$ associated by Theorem \ref{thm:relative-jm}. We then consider the morphism of schemes
\begin{equation*}
\psi\colon \mc{L}^\tw_{F,A}\to \CG_A,\quad (w,g)\mapsto \theta\left(g\left(\begin{smallmatrix}\|w\| & 0\\ 0 & 1\end{smallmatrix}\right)^{-1}\right)\varphi(w).
\end{equation*}
We claim that this a morphism of group $A$-schemes. To prove this, it suffices to show \begin{equation*}
    \Ad(\varphi(w))(\theta (g))=\theta \left( \Ad \left(\left(\begin{smallmatrix}\|w\| & 0\\ 0 & 1\end{smallmatrix}\right)\right)(g)\right)
\end{equation*} 
for $w \in \mc{W}_{F,A}(B)$ and $g \in \SL_2 (B)$, where $B$ is any $A$-algebra. This follows from Proposition \ref{prop:hom-schem-omnibus} and the construction of $\theta$. One then easily check that $\psi$ is an element of $\LP_G(A)$ such that $\mathsf{JM}(\psi)=(\varphi,N)$ as desired.

We now show that $\mathsf{JM}$ induces a bijection $\LP^\ss_G(A)/\wh{G}(A)\isomto \WDP^{\sqcup,\ss}_G(A)/\wh{G}(A)$, which now only requires the demonstration of injectivity. By the $\wh{G}(A)$-equivariance of $\mathsf{JM}$ it suffices to show that if $\psi_1$ and $\psi_2$ are elements of $\LP^\ss_G(A)$ such that $\mathsf{JM}(\psi_1)$ and $\mathsf{JM}(\psi_2)$ both equal $(\varphi,N)$, then $\psi_1$ and $\psi_2$ are $\wh{G}(A)$-conjugate. Note that the $\mf{sl}_2$-triples associated to $\theta_{\psi_i}$ for $i=1,2$ both satisfy the conditions of Proposition \ref{prop:gr-prop} for $(\varphi,N)$. Therefore, \'etale locally on $A$ the $\mf{sl}_2$-triples associated to $\psi_1$ and $\psi_2$ are conjugate in a way that centralizes $(\varphi,N)$ and so $\psi_1$ and $\psi_2$ are \'etale locally conjugate. From this we deduce that $\psi_2$ defines a class in $H^1_\et(\Spec(A),Z_{\widehat{G}}(\psi_1))$ given by $\Transp_{\wh{G}}(\psi_1,\psi_2)$. Note that we have a natural map
\begin{equation*}
    H^1_\et(\Spec(A),Z_{\widehat{G}}(\psi_1))\to H^1_\et(\Spec(A),Z_{\widehat{G}}(\varphi,N))
\end{equation*}
which maps $\Transp_{\wh{G}}(\psi_1,\psi_2)$ to the trivial element, and so $\Transp_{\wh{G}}(\psi_1,\psi_2)$ belongs to
\begin{equation*}
    \ker\bigg(H^1_\et(\Spec(A),Z_{\widehat{G}}(\psi_1))\to H^1_\et(\Spec(A),Z_{\widehat{G}}(\varphi,N))\bigg),
\end{equation*}
and so we are done if this kernel is trivial. But, this map on cohomology groups has a set-theoretic splitting from the semi-direct product decomposition of Proposition \ref{prop:Zphidesc-rel}, and so the claim follows.

\section{Geometric properties of the Jacobson--Morozov map} 

In this final section we use the material developed so far to prove that the Jacobson--Morozov morphism satisfies favorable geometric properties. Namely, we show that $\JM\colon \LP_G^K\to \WDP_G^{K,\sqcup}$ (resp.\@ $\JM\colon \LP_G^K\to \WDP_G^K$) is birational (resp.\@ weakly birational). We do this by exhibiting a more explicit space which embeds into all three moduli spaces weakly birationally. This is the geometric analogue of the reductive centralizer locus from \S\ref{ss:red-loc-classical}. We then finally show that as a particular application of these ideas one may prove that the Jacobson--Morozov map is an isomorphism between the discrete loci in $\LP_G^K$ and $\WDP_G^K$.

\subsection{Birationality properties} To begin, note that as the morphism $\mc{N}^\sqcup\to\mc{N}$ is surjective and satisfies the conditions of Lemma \ref{lem:stratification-isom}, $\WDP_G^{K,\sqcup}\to \WDP_G^K$ is then also surjective and satisfies the same conditions. We therefore deduce from Lemma \ref{lem:stratification-isom} the following.
\begin{prop}\label{prop:sqcup-to-square-bir} The morphism $\WDP_G^{K,\sqcup}\to \WDP_G^K$ is weakly birational.
\end{prop}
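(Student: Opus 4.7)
The plan is to apply Lemma \ref{lem:stratification-isom}(2) to the morphism $f\colon \WDP_G^{K,\sqcup}\to \WDP_G^K$, transporting the weakly-birational surjective monomorphism $\wh{\mc{N}}^\sqcup\to\wh{\mc{N}}$ from Proposition \ref{prop:sqcup-omnibus}(2) across the defining fiber product $\WDP_G^{K,\sqcup}=\WDP_G^K\times_{\wh{\mc{N}}}\wh{\mc{N}}^\sqcup$. Since the $\wh{G}$-action on $\wh{\mc{N}}$ has only finitely many $\ov{\Q}$-orbits, Proposition \ref{prop:sqcup-omnibus}(2) realizes $\wh{\mc{N}}^\sqcup=\bigsqcup_{N}\mc{O}_{N}$ as a finite scheme-theoretic disjoint union in which each $\mc{O}_N\hookrightarrow\wh{\mc{N}}$ is a locally closed immersion.

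Base-changing this decomposition, one obtains a finite clopen decomposition $(\WDP_G^{K,\sqcup})_{\ov{\Q}}=\bigsqcup_{N}(\WDP_{G,\ov{\Q}}^K\times_{\wh{\mc{N}}_{\ov{\Q}}}\mc{O}_N)$, each stratum of which maps to $\WDP_{G,\ov{\Q}}^K$ by a locally closed immersion (stability of locally closed immersions under base change); moreover the strata have pairwise disjoint images on $\ov{\Q}$-points, since the nilpotent component of such a point lies in a single $\wh{G}(\ov{\Q})$-orbit. Reducedness of $\WDP_G^K$ is Theorem \ref{thm:WD-reduced}, while reducedness of $\WDP_G^{K,\sqcup}$ descends from the smoothness of $\WDP_{G,\ov{\Q}}^{K,\sqcup}$ established in Proposition \ref{prop:wd-loc-mov-is-open} via the decomposition of Theorem \ref{thm:WD-const-decomp}; combined with the finite type hypothesis (Proposition \ref{prop:wd-finite-level-rep}), this verifies the structural hypotheses of Lemma \ref{lem:stratification-isom}.

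For the density hypothesis of Lemma \ref{lem:stratification-isom}(2), the surjection $\wh{\mc{N}}^\sqcup(\ov{\Q})\to\wh{\mc{N}}(\ov{\Q})$ from Proposition \ref{prop:sqcup-omnibus}(2) base-changes to surjectivity $\WDP_G^{K,\sqcup}(\ov{\Q})\to \WDP_G^K(\ov{\Q})$, and the $\ov{\Q}$-points of the finite type $\Q$-scheme $\WDP_G^K$ are Zariski dense, so $f(\WDP_G^{K,\sqcup}(\ov{\Q}))$ is dense in $\WDP_G^K$. Lemma \ref{lem:stratification-isom}(2) then delivers weak birationality. I do not anticipate any serious obstacle: the result is a formal consequence of Proposition \ref{prop:sqcup-omnibus} together with the standard stability of clopen decompositions, locally closed immersions, and density of $\ov{\Q}$-points under the base change along $\WDP_G^K\to\wh{\mc{N}}$.
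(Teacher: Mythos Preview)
Your proposal is correct and follows essentially the same approach as the paper: both argue that the hypotheses of Lemma~\ref{lem:stratification-isom}(2) transfer along the defining fiber product from $\wh{\mc{N}}^\sqcup\to\wh{\mc{N}}$ to $\WDP_G^{K,\sqcup}\to\WDP_G^K$, with surjectivity on $\ov{\Q}$-points yielding density. The paper's proof is a one-line observation to this effect preceding the proposition, whereas you have written out the details (reducedness via Theorem~\ref{thm:WD-reduced} and Corollary~\ref{cor:WDP-pi0}, stability of locally closed immersions under base change, density of $\ov{\Q}$-points), but the substance is identical.
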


We now give a more explicit effective version of this result. To start, we observe the following where we denote by $(\varphi^\univ,N^\univ)$ the universal pair over $\WDP_G^K$.

\begin{prop}\label{prop:red-equi-dim-loc-closed} For each $n\geqslant 0$, the subset
\begin{equation*}
   \WDP_G^{K,n}\defeq \left\{x\in \WDP_G^K:Z_{\wh{G}}(\varphi^\univ,N^\univ)_x^\circ \emph{ is reductive of dimension }n+\dim(Z_0(\wh{G}))\right\}
\end{equation*}
of $\WDP_G^K$ is locally closed, is open if $n=0$, and is empty if $n>\dim(\wh{G}/Z_0(\wh{G}))$. 
\end{prop}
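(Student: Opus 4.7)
The plan is to prove the three claims separately, handling the easy ones first.

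The empty claim for $n > \dim(\wh{G}/Z_0(\wh{G}))$ is immediate: since $Z_{\wh{G}}(\varphi^\univ,N^\univ)^\circ_x$ is a closed subgroup scheme of $\wh{G}_x$, its dimension is at most $\dim \wh{G}$, so $n = \dim Z^\circ_x - \dim Z_0(\wh{G}) \leq \dim(\wh{G}/Z_0(\wh{G}))$. For the case $n=0$, observe that $Z_0(\wh{G})$ is central in $\wh{G}$, so $Z_0(\wh{G})_x \subseteq Z_{\wh{G}}(\varphi^\univ, N^\univ)^\circ_x$ for every geometric point $x$, giving the lower bound $\dim Z_{\wh{G}}(\varphi^\univ, N^\univ)^\circ_x \geq \dim Z_0(\wh{G})$. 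By upper semi-continuity of fiber dimension applied to the finitely presented group scheme $Z_{\wh{G}}(\varphi^\univ, N^\univ)\to \WDP_G^K$ (cf.\ Proposition \ref{prop:transp-rep}, noting that over a $\Q$-scheme $\dim Z_x^\circ = \dim Z_x$ since all fibers are smooth), the locus where equality holds is open. On this open set, $Z_{\wh{G}}(\varphi^\univ, N^\univ)^\circ_x$ is a connected smooth group scheme of the same dimension as the torus $Z_0(\wh{G})_x$ containing it, so it equals $Z_0(\wh{G})_x$ and is in particular reductive.

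For the general $n$, the main plan is as follows. Upper semi-continuity of fiber dimension exhibits
\begin{equation*}
    W_n \defeq \left\{ x \in \WDP_G^K \,:\, \dim Z_{\wh{G}}(\varphi^\univ, N^\univ)^\circ_x = n + \dim Z_0(\wh{G}) \right\}
\end{equation*}
as a locally closed subset of $\WDP_G^K$, which I endow with its reduced scheme structure. It then suffices to show that the reductivity condition cuts out an open subset of $W_n$. For this, the key observation is that the coherent sheaf $\mathrm{Lie}(Z_{\wh{G}}(\varphi^\univ, N^\univ)) \subseteq \wh{\mf{g}}_{\WDP_G^K}$ restricted to $W_n$ has constant fiber dimension $n + \dim Z_0(\wh{G})$, and hence is locally free on $W_n$ (since $W_n$ is reduced). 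This means $Z_{\wh{G}}(\varphi^\univ, N^\univ)^\circ|_{W_n} \to W_n$ is a smooth family (flat with smooth fibers of constant dimension).

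With this smoothness in hand, the plan is to characterize reductivity in families via the Killing form: in characteristic zero, a smooth connected linear algebraic group is reductive if and only if the Killing form on its Lie algebra satisfies an appropriate non-degeneracy condition (Cartan's criterion), and such non-degeneracy is an open condition on the base. Applying this to the globally defined Killing pairing on the locally free sheaf $\mathrm{Lie}(Z_{\wh{G}}(\varphi^\univ, N^\univ))|_{W_n}$ shows that $\WDP_G^{K,n}$ is open in $W_n$, hence locally closed in $\WDP_G^K$.

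The main obstacle will be making the reductivity-in-families argument rigorous. The subtlety lies in correctly formulating and globalizing Cartan's criterion: the derived Lie algebra is not obviously a locally free subsheaf of the ambient family, so one must either restrict to the semisimple-type quotient carefully, or, alternatively, invoke the general fact that in a smooth family of linear algebraic groups over a $\Q$-scheme the reductive locus is open (see e.g.\ variants of arguments in \cite{ConRgrsch}). Either route then combines cleanly with the fiber-dimension stratification above to yield the full claim.
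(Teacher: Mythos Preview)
Your approach is essentially the same as the paper's: stratify by fiber dimension via upper semi-continuity, pass to the reduced stratum $W_n$, obtain smoothness of $Z_{\wh{G}}(\varphi^\univ,N^\univ)^\circ$ over $W_n$, and then argue that reductivity is an open condition on a smooth family. The paper works with the quotient $Q=Z_{\wh{G}}(\varphi^\univ,N^\univ)/Z_0(\wh{G})$ rather than $Z_{\wh{G}}(\varphi^\univ,N^\univ)$ itself, but this is cosmetic.

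Two remarks on your execution. First, your detour through ``$\mathrm{Lie}(Z)$ is locally free, hence $Z^\circ$ is smooth'' is not self-evident: local freeness of the Lie algebra does not formally imply flatness of the group scheme without further input (in characteristic $0$ one can push this through via the formal exponential, but it is work). The paper instead invokes \cite[Expos\'{e} VIB, Corollaire 4.4]{SGA3-1} directly, which gives representability and smoothness of $Z^\circ$ over $W_n$ from constant fiber dimension and smooth fibers; this is cleaner and you should do the same. Second, for the reductive-locus-is-open step the paper simply cites \cite[Proposition 3.1.9]{ConRgrsch}, exactly the fallback you mention; your Killing-form route is genuinely obstructed by the fact that the derived subalgebra need not form a subbundle, so you were right to flag it. (Minor point: $Z_0(\wh{G})$ need not itself be connected, so write $Z_0(\wh{G})^\circ$ when you argue that $Z^\circ_x$ equals it in the $n=0$ case.)
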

\begin{proof} Consider the quotient $Q\defeq Z_{\wh{G}}(\varphi^\univ,N^\univ)/Z_0(\wh{G})_{\WDP_G^K}$. By \cite[Expos\'{e} VIB, Proposition 4.1]{SGA3-1}, the function $f\colon \WDP_G^K\to \mathbb{N}$ given by $f(x)=\dim(Q_x)$ is upper semi-continuous. In particular the set $D_n=f^{-1}([0,n+1))\cap f^{-1}([n,\infty)]$ of points where $Q_x$ is of dimension $n$ is locally closed, and as $D_0=f^{-1}([0,1))$, $D_0$ is open. Let us endow $D_n$ with the reduced substructure. Let us then note that by \cite[Expos\'{e} VIB, Corollaire 4.4]{SGA3-1} for all $n\geqslant 0$ the identity component functor $Q_{D_n}^\circ$ is representable and is smooth over $D_n$. Thus, by \cite[Proposition 3.1.9]{ConRgrsch}, we deduce that the locus of $x$ in $D_n$ where $Q_x^\circ$ is reductive is open, and thus locally closed in $\WDP_G^K$ and open if $n=0$. But, evidently this locus is equal to $\WDP_G^{K,n}$. Finally, as $Z_{\widehat{G}}(\varphi^\mathrm{univ},N^\mathrm{univ})^\circ_x$ is a subgroup of $\widehat{G}_x$ we have that its dimension is at most $\mathrm{dim}(\widehat{G})$, and thus evidently $\mathsf{WDP}^{K,n}_G$ is empty for $n>\dim(\widehat{G}/Z_0(\widehat{G}))$.
\end{proof}

\begin{defn}\label{defn:locus-of-red} We define the \emph{reductive centralizer locus} in $\WDP_G^K$ to be the $\Q$-scheme  $\WDP_G^{K,\mathrm{rc}}\defeq \bigsqcup_n \WDP_G^{K,n}$ (where each $\WDP_G^{K,n}$ is given the reduced subscheme structure). We call the open subset $\WDP_G^{K,0}$ the \emph{discrete locus} and denote it by $\WDP_G^{K,\disc}$.
\end{defn}

Let us observe that by the proof of Proposition \ref{prop:red-equi-dim-loc-closed}, if $A$ is a reduced $\Q$-algebra and $(\varphi,N)$ is a Weil--Deligne parameter over $A$ such that the corresponding morphism $\Spec(A)\to \WDP_G^K$ factorizes through $\WDP_G^{K,\mathrm{rc}}$, then $Z_{\wh{G}}(\varphi,N)^\circ$ is representable and reductive over $A$.

Now we show that the reducedness of $\WDP_G^{K,\mathrm{rc}}$ implies that $N^\univ$ pulled back to this reductive centralizer locus lies in $\mc{N}^\sqcup$. More precisely, we have the following.

\begin{prop}\label{prop:red-cent-constant-N} The morphism $\WDP_G^{K,\mathrm{rc}}\to\WDP_G^K$ factorizes through $\WDP_G^{K,\sqcup,\mathrm{ss}}$. 
\end{prop}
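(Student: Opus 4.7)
Unwinding definitions, the claim amounts to two assertions about the universal parameter $(\varphi^\mathrm{univ}, N^\mathrm{univ})$ pulled back along $\WDP_G^{K,\mathrm{rc}}\to\WDP_G^K$: (a) it is Frobenius semi-simple, and (b) the universal $N^\mathrm{univ}$ takes values in $\wh{\mathcal{N}}^\sqcup$. Part (a) is immediate: by construction $\WDP_G^{K,\mathrm{rc}}$ is reduced, and as recorded in the remark following Proposition \ref{prop:red-equi-dim-loc-closed} the identity component $Z_{\wh G}(\varphi^\mathrm{univ},N^\mathrm{univ})^\circ$ is representable and reductive over it --- of dimension $n+\dim Z_0(\wh G)$ on each stratum $\WDP_G^{K,n}$ --- so Proposition \ref{prop:red-cent-ss} applies verbatim.

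For part (b), I would use that $\wh{\mathcal{N}}^\sqcup=\bigsqcup_{N_0}\mathcal O_{N_0}$ is the disjoint union of its reduced orbit subschemes, together with Proposition \ref{prop:ever-geom-conj} characterizing $\wh{\mathcal{N}}^\sqcup(A)$ for reduced $A$ via everywhere-geometric-conjugacy. This reduces the problem to showing that every connected component $C$ of $\WDP_G^{K,\mathrm{rc}}$ is mapped into a single $\wh G(\overline{\mathbb Q})$-orbit inside $\wh{\mathcal{N}}(\overline{\mathbb Q})$. Since each $C$ lies in a single stratum $\WDP_G^{K,n}$, the dimension $\dim Z_{\wh G}(\varphi,N)^\circ$ is constant on $C$. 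I then want to upgrade this constancy to constancy of $\dim\mathcal O_{N_{\bar x}}$ (equivalently, of $\dim Z_{\wh G}(N_{\bar x})$) as $\bar x$ ranges over $C$: since nilpotent orbits are locally closed with the usual closure order, constancy of $\dim \mathcal O_{N_{\bar x}}$ on the connected reduced scheme $C$ is enough to force the orbit itself to be constant.

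The bridge from constancy of $\dim Z_{\wh G}(\varphi,N)^\circ$ to constancy of $\dim Z_{\wh G}(N)$ runs through the $\SL_2$-structure: at each geometric point $\bar x$, Proposition \ref{prop:red-cent-equiv} (applied after (a)) produces a Frobenius semi-simple $L$-parameter $\psi_{\bar x}$ with $\mathsf{JM}(\psi_{\bar x})=(\varphi_{\bar x},N_{\bar x})$ and $Z_{\wh G}(\psi_{\bar x})=Z_{\wh G}(\varphi_{\bar x},N_{\bar x})$; combined with Proposition \ref{prop:Zphidec} this forces $U^{N_{\bar x}}(\varphi_{\bar x})=1$, and Proposition \ref{prop:Zudec} then gives a Levi decomposition expressing $\dim Z_{\wh G}(N_{\bar x})$ in terms of the (constant) quantity $\dim Z_{\wh G}(\varphi_{\bar x},N_{\bar x})$ together with weight-space dimensions for $\mathrm{Ad}\circ\theta_{\psi_{\bar x}}$ acting on $\wh{\mathfrak g}$. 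A specialization $\bar x\leadsto\bar y$ within $C$ that shrinks the orbit would strictly increase $\dim Z_{\wh G}(N)$ and, combined with the rigidity furnished by the above decomposition together with upper semi-continuity, would strictly increase $\dim Z_{\wh G}(\varphi,N)^\circ$, contradicting its constancy on $\WDP_G^{K,n}$.

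The main obstacle I anticipate is making the previous paragraph rigorous across the family, since the unipotent group $U^N$ and its $\varphi$-fixed part $U^N(\varphi)$ vary in a non-flat way along $C$ and the pointwise semidirect product decompositions do not assemble globally. A possible workaround --- which I would pursue in parallel as a cleaner alternative --- is to upgrade Proposition \ref{prop:red-cent-equiv} relatively, constructing \'etale-locally on $\WDP_G^{K,\mathrm{rc}}$ an $L$-parameter $\psi$ with $\mathsf{JM}(\psi)=(\varphi^\mathrm{univ},N^\mathrm{univ})$; once such a $\psi$ is available, $N^\mathrm{univ}=d\theta_\psi(e_0)$ factors through $\wh{\mathcal{N}}^\sqcup$ automatically by the geometric Jacobson--Morozov theorem (Theorem \ref{thm:geom-JM-split}, via Proposition \ref{prop:SL2-Hom-open-orbits}), completing the proof.
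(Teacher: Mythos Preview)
Your treatment of part (a) is correct and exactly what the paper does.

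For part (b) there is a genuine gap in both of your proposed routes. In the first route, the key implication ``a jump in $\dim Z_{\wh{G}}(N)$ forces a jump in $\dim Z_{\wh{G}}(\varphi,N)^\circ$'' is unjustified. The decompositions you invoke give $Z_{\wh{G}}(N)=U^N\rtimes Z_{\wh{G}}(\theta)$ and $Z_{\wh{G}}(\varphi,N)=U^N(\varphi)\rtimes Z_{\wh{G}}(\psi)$; knowing $U^N(\varphi)=1$ at each geometric point tells you $\dim Z_{\wh{G}}(\psi)=\dim Z_{\wh{G}}(\varphi,N)$, but it says nothing about $\dim U^N$ or $\dim Z_{\wh{G}}(\theta)$ individually. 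The ``weight-space dimensions for $\Ad\circ\theta_{\psi_{\bar x}}$'' you appeal to are precisely the invariants that determine the nilpotent orbit, so invoking their constancy is circular. Your alternative route is also circular: every mechanism in the paper for producing an $\SL_2$-triple or an $L$-parameter over $A$ (Proposition \ref{prop:Kostant-triples-prop}, Theorem \ref{thm:rel-JM-triples}, Proposition \ref{prop:gr-prop}, Theorem \ref{thm:rel-JM-param}) takes $N\in\wh{\mc{N}}^\sqcup(A)$ as a hypothesis, which is exactly what you are trying to establish.

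The paper proves (b) as a separate statement (Proposition \ref{prop:reductive-cent-constant-N}) by a rather different route. It first reduces, via specialization chains, to a strictly Henselian DVR $A$ with generic point $\eta$ and special point $s$. It then passes to a Levi subgroup to reduce to the discrete case. After disposing of the degenerate case $N_s=0$ by a dimension comparison (this much of your intuition is salvaged here), the core argument constructs $L$-parameters $\psi_1,\psi_2$ \emph{fiberwise} at $\eta$ and $s$ and shows, using discreteness, that the cocharacter $\mu_1$ of $\theta_{\psi_1}$ extends over $A$ and specializes to $\mu_2$. Since the nilpotent orbit is recovered from the associated parabolic $P(\mu)$ via its open orbit on $\bigoplus_{i\geqslant 2}\wh{\mf{g}}(i)$, this forces $N_\eta$ and $N_s$ into the same orbit. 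The substantive input is the relationship between discrete parameters and cocharacters, not a blanket semi-continuity argument.
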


Indeed, as $\WDP_G^{K,\mathrm{rc}}$ is reduced by definition, this follows from Proposition \ref{prop:red-cent-ss} and the following proposition.

\begin{prop}\label{prop:reductive-cent-constant-N} If $A$ is a reduced $\Q$-algebra, and $(\varphi,N)$ is an element of $\WDP_G(A)$ such that $Z_{\wh{G}}(\varphi,N)_x^\circ$ is a reductive group scheme of dimension $n$ for all $x$ in $\Spec(A)$, then $(\varphi,N)$ is an element of $\WDP^\sqcup_G(A)$.
\end{prop}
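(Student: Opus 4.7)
The aim is to show $N \in \wh{\mc{N}}^\sqcup(A)$. By Proposition \ref{prop:ever-geom-conj}, since $A$ is reduced, this amounts to showing that the classifying map $N\colon \Spec A \to \wh{\mc{N}}$ lies, on each connected component of $\Spec A$, in a single geometric $\wh{G}$-orbit. The overall strategy I would pursue is to build, etale-locally on $\Spec A$, an $\mf{sl}_2$-triple $(N, h, f)$ in $\wh{\mf{g}}_A$; by Theorem \ref{thm:rel-JM-triples}, the existence of such a triple immediately forces $N$ to belong to $\wh{\mc{N}}^\sqcup(A)$.

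My first move would be to record that by Proposition \ref{prop:red-cent-ss} the hypothesis already forces $(\varphi, N)$ to be Frobenius semi-simple, and that the argument of Proposition \ref{prop:red-equi-dim-loc-closed} upgrades the pointwise reductivity assumption to reductivity of $H \defeq Z_{\wh{G}}(\varphi, N)^\circ$ as an $A$-group scheme. Next, by Proposition \ref{prop:Frob-ss-equiv}, after passing to an etale cover of $\Spec A$ we may arrange that $\check{\varphi}_m$ factors through a split torus $T \subset \check{G}_A$ for some suitable $m$. The adjoint action of $T$ (equivalently, of $\Ad(\check{\varphi}_m(1))$) on $\wh{\mf{g}}_A$ gives a weight decomposition $\wh{\mf{g}}_A = \bigoplus_i \wh{\mf{g}}_A(i)$ in which $N$ lies in a specific weight space (the one on which $T$ scales $N$ by the appropriate character). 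This grading furnishes a canonical candidate for the semi-simple element $h$ of the $\mf{sl}_2$-triple, namely the grading operator in $\wh{\mf{g}}_A$, so that $[h, N] = 2N$ and $h$ automatically lies in the image of $\ad(N)$ (appealing to Lemma \ref{lem:Gm-Ad-ad} to identify the grading in terms of $h$).

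The remaining task is to produce $f$ completing the $\mf{sl}_2$-triple, which is exactly the content of Proposition \ref{prop:Kostant-triples-prop}, provided its three hypotheses hold. Conditions (2) and (3) of that proposition are built into the construction of $h$; condition (1) is precisely $N \in \wh{\mc{N}}^\sqcup(A)$, which is what we are trying to prove. To break the apparent circularity, I would apply Kostant's criterion not inside $\wh{G}_A$ but inside a reductive subgroup $H' \subseteq \wh{G}_A$ in which the orbit of $N$ is visibly constant (so that $N \in \mc{N}_{H'}^\sqcup(A)$ is automatic by more elementary means), and then use the functoriality of the orbit-separation construction (whereby $H' \hookrightarrow \wh{G}_A$ induces $\mc{N}_{H'}^\sqcup \to \wh{\mc{N}}^\sqcup$, since $H'$-orbits in $\mc{N}_{H'}$ are contained in $\wh{G}$-orbits in $\wh{\mc{N}}$) to deduce $N \in \wh{\mc{N}}^\sqcup(A)$. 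The candidate $H'$ would be extracted from the reductive structure of $H$ together with the splitting provided by $T$; a natural choice is a Levi of $\wh{G}_A$ containing $H$ and the image of the prospective $\mf{sl}_2$, constructed so that $N$ lies in a uniformly distinguished position \`a la Bala--Carter.

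The principal obstacle is precisely this last step: identifying $H'$ explicitly and verifying that $N$ lies in a single orbit of $\mc{N}_{H'}$ without invoking the result under proof. I expect this to hinge on an eigenvalue analysis along the lines of Proposition \ref{prop:gr-prop} but adapted to a setting where the $\mf{sl}_2$-triple is not yet available, using essentially the $A$-group reductivity of $H$ and its action on the $T$-weight spaces to propagate fiber-wise classical Jacobson--Morozov data (Proposition \ref{prop:red-cent-equiv}) to a family.
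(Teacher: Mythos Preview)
Your proposal has a genuine gap, and you identify it yourself: the circularity in applying Proposition~\ref{prop:Kostant-triples-prop} is real, and your suggested escape route---passing to a reductive subgroup $H'$ in which the orbit of $N$ is ``visibly constant''---is not substantiated. You offer no concrete candidate for $H'$, and the natural ones (a Levi adapted to $N$ via Bala--Carter, or the centralizer $H$ itself) all presuppose knowing that the nilpotent orbit is constant along $\Spec(A)$, which is exactly the statement under proof. Your closing paragraph is honest about this, but ``I expect this to hinge on an eigenvalue analysis'' is not an argument. There is also a secondary confusion: the grading operator you extract from the torus $T$ through which $\check{\varphi}_m$ factors is \emph{not} the $h$ of an $\mf{sl}_2$-triple for $N$; the relation $\Ad(\varphi(w))(N)=\|w\|N$ does not translate directly into $[h,N]=2N$ for any such $h$, because $\varphi$ mixes the Weil and $\SL_2$ contributions (cf.\ the relation $\varphi(w)=\psi(w,1)\theta(i_2(w))$ in the classical setting).

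The paper's proof takes a completely different route and avoids building a global $\mf{sl}_2$-triple altogether. It first reduces (by Noetherian approximation and a specialization-chain argument) to the case where $A$ is a strictly Henselian DVR, so one need only compare the generic and special fibres $N_\eta$ and $N_s$. It then reduces to the \emph{discrete} case by choosing a maximal torus of $Z_{\wh{G}^\der}(\varphi,N)^\circ$ and passing to its centralizer, which is a Levi; the parameter factors through the $C$-group of this Levi and is discrete there. After disposing of the case $N_s=0$ by a dimension count, the paper works fibrewise: on each fibre the classical Jacobson--Morozov theorem (already available over fields) produces an $L$-parameter, hence a cocharacter $\mu_i$. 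The key step is that the generic cocharacter $\mu_1$ extends over the DVR (since it lands in a torus defined over $A$) and its specialization coincides with $\mu_2$ by a uniqueness characterization. One then invokes the Bala--Carter fact that $N$ lies in the unique open $P(\mu)$-orbit on $\bigoplus_{i\geqslant 2}\wh{\mf{g}}(i)$, which is defined over $\ov{\Q}$, forcing $N_\eta$ and $N_s$ to be conjugate. The moral is that the reduction to a DVR converts the problem into comparing two \emph{points}, where classical theory applies and the family aspect is handled by extending a single cocharacter rather than an entire $\mf{sl}_2$-triple.
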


\begin{proof} We break the argument into several steps to make the structure clear.

\medskip

\noindent\textbf{Step 1:} It suffices to prove that if $A$ is a strictly Henselian discrete valuation ring, then $N$ is egc to some $N_0$ in $\wh{\mc{N}}(\Q)$. Indeed, we must show that the map $\Spec(A)\to \mathcal{N}$ induced by $(\varphi,N)$ factorizes through $\mc{N}^\sqcup$. By standard Noetherian approximation arguments we may assume that $A$ is Noetherian. We may then assume that $A$ is connected, in which case we must show that this morphism factorizes through some $\mc{O}_N$. As $A$ is reduced, it suffices to show that $\Spec(A)\to \mc{N}$ factorizes through some $\mc{O}_N$ set-theoretically. As $A$ is connected, any two points of $\Spec(A)$ may be connected by a finite chain of specialization and generalizations. This reduces us to showing that if $x$ is a generalization of $y$ in $\Spec(A)$ then these points map into a common $\mc{O}_N$. We are then reduced to the case of a discrete valuation ring by \stacks{054F}, and then trivially to the case of a strictly Henselian discrete valuation ring.

\medskip

\noindent\textbf{Step 2:} We claim we may assume that $(\varphi,N)$ is in $\WDP^{K,\disc}_G(A)$. Write $\eta$ (resp.\@ $s$) for the generic point (resp.\@ special) of $\Spec(A)$. As $Z_{\widehat{G}}(\varphi,N)$ has constant fiber dimension, the same is true for $Z_{\widehat{G}^{\mathrm{der}}}(\varphi,N)$. Indeed, for each point $x$ of $\Spec(A)$ the group $\wh{G}_x$ is the quotient $(\wh{G}^\mathrm{der}_x\times Z(\wh{G})_x)/Z(\wh{G}^\mathrm{der})_x$ where $Z(\wh{G}^\mathrm{der})_x$ is a finite group scheme (see \cite[Example 19.25]{MilneGroups}). As a pair $(g,z)$ in $\wh{G}^\mathrm{der}_x\times Z(\wh{G})_x$ centralizes $(\varphi,N)$ if and only if $g$ does, we deduce that $Z_{\wh{G}}(\varphi,N)_x$ is the quotient of $Z_{\wh{G}^\der}(\varphi,N)_x\times Z(\wh{G})_x$ by the finite group scheme $Z(\wh{G}^\der)_x\cap Z_{\wh{G}^\der}(\varphi,N)_x$. Thus, $\dim(Z_{\wh{G}}(\varphi,N)_x)$ is $\dim(Z_{\wh{G}^\der}(\varphi,N)_x)+\dim(Z(\wh{G})_x)$. As $\dim(Z(\wh{G})_x)$ is also constant, the claim follows. 

So, again \cite[Expos\'{e} VIB, Corollaire 4.4]{SGA3-1} shows that $Z_{\widehat{G}^{\mathrm{der}}}(\varphi,N)^\circ$ is representable and reductive over $A$. As $A$ is strictly Henselian, for any reductive group over $A$, all its tori are split, all its maximal tori are conjugate, and all its Borel subgroups are conjugate. Then, as $\CG$ is equal to ${^L}\!\wt{G}$ the arguments in the second paragraph of the proof of \cite[Lemma 3.5]{BorelCorvallis} show that if $T$ is a maximal torus of $Z_{\wh{G}^\der}(\varphi,N)^\circ$ then $Z_{\CG_A}(T)$ is a Levi subgroup of $\CG_A$ since $Z_{\CG_A}(T)$ contains the image of $\varphi$ and hence projects onto $\mathcal{W}_{F,A}$. Further, there exists some $g\in \check{G}(A)$ and a Levi subgroup $H$ of $G^\ast$ (where $G^\ast$ is the quasi-split inner form of $G$) such that $g Z_{\CG_A}(T)g^{-1}={^C}\!H_A$. Therefore $g(\varphi,N)g^{-1}$ factorizes through ${^C}\!H_A$ because $T \subset Z_{\wh{G}^\der}(\varphi,N)^\circ$. We claim then that $g(\varphi,N)g^{-1}$ is in $\WDP^{K,\disc}_H(A)$. By Proposition \ref{prop:red-cent-ss} $g(\varphi_\eta,N_\eta)g^{-1}$ and $g(\varphi_s,N_s)g^{-1}$ are Frobenius semi-simple. Moreover, the argument given in \cite[Proposition 3.6]{BorelCorvallis} shows that neither $g(\varphi_\eta,N_\eta)g^{-1}$ nor $g(\varphi_s,N_s)g^{-1}$ factorizes through a proper Levi (in the sense of loc.\@ cit.\@) which, as they are both Frobenius semi-simple, implies by the usual arguments (cf.\@ \cite[Lemma 10.3.1]{KotStfcus}) that they are discrete. As $N$ is in $\mc{N}^\sqcup(A)$ if and only if $gNg^{-1}$ is, the claimed reduction follows.

\medskip

\noindent\textbf{Step 3:} We now show that we may assume $N_s\ne 0$. If both $N_s$ and $N_\eta$ are zero we're done, and so it suffices to show that if $N_\eta\ne 0$ then $N_s\ne 0$. To see this, assume otherwise. But the inequality $\dim Z_{\check{G}}(\varphi_\eta)\leqslant \dim Z_{\check{G}}(\varphi_s)=\dim Z_{\check{G}}(\varphi_s,N_s)$ holds by \cite[Expos\'{e} VIB, Proposition 4.1]{SGA3-1}. Also, $\dim Z_{\check{G}}(\varphi_\eta,N_\eta)<\dim Z_{\check{G}}(\varphi_\eta)$. Indeed, it suffices to note that if $w_0$ is any lift of arithmetic Frobenius then (as in Proposition \ref{prop:Frob-factor}) for $m$ sufficiently large $\check{\varphi}_\eta(w_0^m)$ defines a point of $Z_{\check{G}}(\varphi_\eta)^\circ$ but, as $N_\eta \ne0$, does not define a point of $Z_{\check{G}}(\varphi_\eta,N_\eta)$ and thus $ Z_{\check{G}}(\varphi_\eta,N_\eta)^\circ \subsetneq Z_{\check{G}}(\varphi_\eta)^\circ$ from where the claim follows. But, observe that $\dim(Z_{\check{G}}(\varphi_\eta,N_\eta))$ (resp.\@ $\dim(Z_{\check{G}}(\varphi_s,N_s))$) is equal to  $\dim(Z_{\widehat{G}}(\varphi_\eta,N_\eta))+1$ (resp.\@ $\dim(Z_{\widehat{G}}(\varphi_s,N_s))+1$) and so we arrive at a contradiction with the assumption that the dimensions of the fibers of $Z_{\wh{G}}(\varphi,N)$ are constant.

\medskip

\noindent\textbf{Step 4:} Taking the image of $(\varphi,N)$ under $\CG \to {{^C}\!(G^\der)}$, we may replace $G$ with $G^\der$. Then $Z_0(\wh{G})$ is finite since $G$ is semisimple. Proposition \ref{prop:red-cent-ss} together with Theorem \ref{thm:rel-JM-param} imply that $(\varphi_\eta,N_\eta)$ (resp.\@ $(\varphi_s,N_s)$) comes from an $L$-parameter $\psi_1$ (resp.\@ $\psi_2$). Write $\mu_i$ for the restriction of $\theta_{\psi_i}$ to the diagonal maximal torus, which we view as a cocharacter via the map $z\mapsto \left(\begin{smallmatrix}z & 0\\ 0 & z^{-1}\end{smallmatrix}\right)$. Fix $w_0$ to be an arithmetic Frobenius lift. By Frobenius semi-simplicity and the fact that $A$ is strictly Henselian, there is, up to conjugacy, a positive integer $m_0$ divisible by $[F^\ast:F]$ such that $\check{\varphi} (w_0^{m_0})$ is contained in the $A$-points of a maximal torus $T$ of $\check{G}_{\ov{\Q}}$. By the relation between $\psi_1$ and $\varphi_\eta$, as well as the relationship between $\psi_2$ and $\varphi_s$, and the argument of \cite[Lemma 3.1]{GRAinv}, we see that up to replacing $m_0$ by a power, we may further assume that $\check{\varphi}_\eta (w_0^{2m_0})=\mu_1 (q^{m_0})$ and  $\check{\varphi}_s(w_0^{2m_0})=\mu_2 (q^{m_0})$. From this first equality it is simple to see that $\mu_1$ factorizes through $T_\eta$, and thus there exists a unique lift $\mu_A$ of $\mu_1$ to $T_A$ where $\mu$ is a cocharacter of $T$. We note as $N_s\ne 0$, that $\mu_2$ is characterized by the property that the image of $\mu_2$ contains $\check{\varphi}_s (w_0^{2m_0})$ and $\mathrm{Ad}(\mu_2 (q^{m_0}))(N_s)=q^{2m_0}N_s$. As $\check{G}_A$ and $\wh{\mf{g}}_A$ are separated over $A$, we have that the image of $\mu$ contains $\check{\varphi} (w_0^{2m_0})$ and $\mathrm{Ad}(\mu(q^{m_0}))(N)=q^{2m_0}N$. Hence, $\mu_s$ satisfies the above characterization of $\mu_2$, so $\mu_s=\mu_2$. 
Let $P(\mu)$ be the parabolic subgroup of $\widehat{G}_{\ov{\Q}}$ associated to $\mu$. Define $\widehat{\mathfrak{g}}_\eta(j)$ (resp.\@ $\widehat{\mathfrak{g}}_s(i)$) using $\mu_\eta$ (resp.\@ $\mu_s$) as in \cite[\S5.7]{CarFinLie}. Then by \cite[Proposition 5.7.3]{CarFinLie} $N_\eta$ (resp.\@ $N_s$) is in the unique open $P(\mu)_\eta$-orbit (resp.\@ $P(\mu)_s$) of $\bigoplus_{i \geq 2} \widehat{\mathfrak{g}}_\eta(i)$ (resp.\@ $\bigoplus_{i \geq 2} \widehat{\mathfrak{g}}_s(i)$). But, by the uniqueness of this open orbit, we then see that $N_\eta$ and $N_s$ are both conjugate to any $\ov{\Q}$-point of the unique open orbit of $P(\mu)$ on $\bigoplus_{i \geq 2} \widehat{\mathfrak{g}}(i)$, from where the conclusion follows. We are then done by Proposition \ref{prop:split-nilp-desc}.
\end{proof}

We next show the pleasant property that $\WDP_G^{K,\mathrm{rc}}$ actually has dense image in $\WDP_G^{K,\sqcup}$.

\begin{lem}\label{lem:stab-dim}
Let $k$ be a field, $X$ an irreducible finite type $k$-scheme equipped with an action of an algebraic $k$-group $H$, and $Y$ an irreducible locally closed subscheme of $X$. Assume that the action morphism $\mu \colon H \times Y \to X$ is dominant. Then there is a dense open subset $U$ of $Y$ such that $\dim Z_H(y)\leqslant \dim(H) +\dim(Y) -\dim(X)$ for all $y \in U$. 
\end{lem}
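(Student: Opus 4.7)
The plan is to reduce the problem to a generic fiber dimension computation for a natural auxiliary morphism that simultaneously records the stabilizer data over $Y$ and the action into $X$. As a preliminary reduction, one may replace $H$ by its identity component $H^\circ$: since $H$ has finitely many irreducible components, each contributing a translate of $\mu(H^\circ \times Y)$ to the image of $\mu$, and all such translates have the same dimension, dominance of $\mu$ forces $\mu|_{H^\circ \times Y}$ to be dominant as well; moreover $\dim H = \dim H^\circ$ and $Z_H(y)$ and $Z_{H^\circ}(y)$ have the same dimension. So we may assume $H$ is connected, in which case $H \times Y$ is irreducible. Next, upper semi-continuity of the fiber dimension, applied to the closed stabilizer subscheme $\{(h,y) \in H \times Y : h \cdot y = y\} \to Y$ (see, e.g., \cite[Expos\'e VI$_B$, Proposition 4.1]{SGA3-1}), produces a dense open $U \subseteq Y$ on which $\dim Z_H(y)$ attains its minimum value $d$; it then suffices to prove $d \leqslant \dim H + \dim Y - \dim X$.

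To this end, consider the morphism
\begin{equation*}
    \alpha \colon H \times Y \to X \times Y,\qquad (h, y) \mapsto (h \cdot y, y).
\end{equation*}
A direct computation shows that the fiber of $\alpha$ over any point $(x, y)$ is either empty or a $Z_H(y)$-torsor, and so has dimension $\dim Z_H(y)$ whenever nonempty. Since $\alpha$ is dominant onto its scheme-theoretic image closure $\overline{\mathrm{im}(\alpha)}$ (both source and image being irreducible), the standard fiber dimension formula yields
\begin{equation*}
    \dim \overline{\mathrm{im}(\alpha)} = \dim(H \times Y) - d = \dim H + \dim Y - d.
\end{equation*}
Here the generic value $d$ enters because $\overline{\mathrm{im}(\alpha)}$ surjects onto $Y$ via the second projection (the composition $\pi_Y \circ \alpha$ is the projection $H \times Y \to Y$), so its generic point lies above the generic point of $Y$, where $\dim Z_H(y) = d$.

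On the other hand, the composition of $\alpha$ with the first projection $X \times Y \to X$ is precisely $\mu$, which is dominant by hypothesis. Hence $\overline{\mathrm{im}(\alpha)}$ maps with dense image to $X$ as well, forcing $\dim \overline{\mathrm{im}(\alpha)} \geqslant \dim X$. Combining the two inequalities gives $d \leqslant \dim H + \dim Y - \dim X$, as desired. The argument is essentially routine; the only minor subtlety is the reduction to connected $H$ and the verification that the generic fiber of $\alpha$ is controlled by the minimum value of $\dim Z_H(y)$, both of which are straightforward once the right morphism $\alpha$ is introduced.
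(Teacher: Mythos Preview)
Your proof is correct. Both arguments rest on the generic fiber-dimension formula, but they are organized differently. The paper works directly with $\mu\colon H\times Y\to X$: one finds a dense open $V\subseteq X$ on which $\dim\mu^{-1}(x)=\dim H+\dim Y-\dim X$, replaces $V$ by $HV$ to make it $H$-stable, sets $U=V\cap Y$ (nonempty because $\mu$ is dominant and $V$ is $H$-stable, so any $hy\in V$ forces $y\in V$), and then simply notes $Z_H(y)\times\{y\}\subseteq\mu^{-1}(y)$ for $y\in U$. You instead factor $\mu=\pi_X\circ\alpha$ with $\alpha\colon H\times Y\to X\times Y$, so that the nonempty fibers of $\alpha$ are \emph{exactly} stabilizer torsors rather than merely containing them; computing $\dim\overline{\mathrm{im}(\alpha)}$ via the generic fiber and comparing with $\dim X$ then yields the bound. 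The paper's route is shorter---no reduction to connected $H$, no separate invocation of upper semi-continuity, no auxiliary morphism---while yours isolates the stabilizer cleanly as an honest fiber. The difference is organizational rather than substantive.
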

\begin{proof}
By \cite[Corollary 14.116]{GortzWedhorn} there exists a dense open subset $V$ of $X$ with the property that $\dim \mu^{-1}(y)=\dim H +\dim Y -\dim X$ for all $y \in V$. As $\mu$ is $H$-equivariant when $H$ is made to act on the first component of $H\times Y$, we may assume that $V$ is $H$-stable by replacing $V$ with $HV$. We put $U=V \cap Y$, which is non-empty as $\mu$ is dominant and $V$ is $H$-stable. As $Z_H(y) \times \{ y \}  \subseteq \mu^{-1}(y)$ for $y \in U$, we obtain the claim. 
\end{proof}

\begin{prop}\label{prop:dense-tor-cent} 
The set 
\[
 \{ x \in \WDP_G^{K,\sqcup} : Z_{\wh{G}}(\varphi^\univ,N^\univ)^\circ_x \textrm{ is a torus}\}
\] 
contains an open dense subset of $\WDP_G^{K,\sqcup}$.
\end{prop}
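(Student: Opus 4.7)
By Corollary~\ref{cor:WDP-pi0}, after base change to $\ov{\Q}$ the scheme $\WDP_{G,\ov{\Q}}^{K,\sqcup}$ decomposes as a disjoint union of smooth irreducible open subsets $U(c\gamma,\phi,N)$. It thus suffices to exhibit, in each such component, a non-empty open subset on which the identity component of $Z_{\wh{G}}(\varphi^\univ,N^\univ)$ is a torus. Openness of the torus-centralizer locus inside $\WDP_G^{K,\sqcup}$ follows from standard considerations: the fiber dimension of the centralizer identity component is upper semi-continuous by \cite[Expos\'e VIB, Proposition 4.1]{SGA3-1}; on the (open) locus of minimal dimension the identity component is a smooth group scheme of constant dimension; the reductive locus in a smooth affine group scheme is open (Demazure--Grothendieck); and inside a reductive group scheme the condition of being a torus is clopen, since the root datum is locally constant. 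Hence it suffices to produce, in each component, one $\ov{\Q}$-point whose centralizer identity component is a torus.

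To do so for $U(\gamma,\phi,N)$, I would exploit Theorem~\ref{thm:WD-const-decomp}: every point of the component is, \'etale-locally, of the form $g(h\gamma,\phi,N)g^{-1}$ for $(g,h)\in\wh{G}\times Z_{\phi,N}^\circ$, and $\wh{G}$-conjugation preserves the centralizer up to isomorphism. It therefore suffices to find some $h\in Z_{\phi,N}^\circ(\ov{\Q})$ making $Z_{\wh{G}}(\varphi_{h\gamma},N)^\circ$ a torus. The strategy is to route this through the $L$-parameter side. I would first show that for $h$ in a non-empty open subset of $Z_{\phi,N}^\circ$ the parameter $(h\gamma,\phi,N)$ is Frobenius semi-simple: by Proposition~\ref{prop:Zudec} applied to $L=Z_{\wh{G}}(\phi)^\circ$ and to an $\mf{sl}_2$-triple in $\mf{l}$ containing $N$ (supplied by Theorem~\ref{thm:rel-JM-triples}), one has the Levi decomposition $Z_{\phi,N}=U^N_L\rtimes Z_L(\theta_0)$, and then appeals to the density of semi-simple elements in cosets of a connected reductive group after passing to the reductive quotient $Z_L(\theta_0)$. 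Given Frobenius semi-simplicity, Theorem~\ref{thm:rel-JM-param} produces an $L$-parameter $\psi$ with $\JM(\psi)=(\varphi_{h\gamma},N)$, and Proposition~\ref{prop:Zphidesc-rel} gives the semidirect product decomposition $Z_{\wh{G}}(\varphi_{h\gamma},N)=U^N(\varphi_{h\gamma})\rtimes Z_{\wh{G}}(\psi)$.

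Since Frobenius semi-simplicity makes the automorphism $\Int(h\gamma,w_0)$ of $Z_{\phi,\theta}^\circ$ semi-simple, the fixed-point identity component $Z_{\wh{G}}(\psi)^\circ$ is reductive. I would then invoke the classical theory of twisted conjugacy in reductive groups: for a semi-simple algebraic automorphism $\tau$ of a connected reductive group $R$ over $\ov{\Q}$, the subset of $R$ whose $\tau$-twisted centralizer has identity component a torus is non-empty and open. Applied to $R=Z_{\phi,\theta}^\circ$ with $\tau$ coming from $(\gamma,w_0)$, this yields, for generic $h$, that $Z_{\wh{G}}(\psi)^\circ$ is a torus. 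A complementary open genericity condition ensures $U^N(\varphi_{h\gamma})=\{1\}$: it holds precisely when the $\varphi_{h\gamma}$-fixed subspace of $\mf{u}^N$ vanishes, and the eigenvalues of $\Ad(\varphi_{h\gamma}(w_0))$ on $\mf{u}^N$ vary algebraically with $h$ and generically avoid $1$. The main obstacle is establishing density of Frobenius semi-simplicity: because $Z_{\phi,N}$ has a non-trivial unipotent radical $U^N_L$ in general, the usual density arguments for reductive groups do not apply directly, and one must carefully lift semi-simplicity from the reductive Levi quotient to $Z_{\phi,N}$ itself before combining it with the two genericity conditions above.
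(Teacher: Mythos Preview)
Your overall architecture matches the paper's: pass to $\ov{\Q}$, reduce to a single connected component $U(\gamma,\phi,N)$ via Theorem~\ref{thm:WD-const-decomp}, and look for one point with toral centralizer. The paper also fixes a Frobenius semi-simple base point and an $L$-parameter $\psi$ with $\JM(\psi)=(\varphi,N)$, then uses the Levi decomposition of $Z_H(N)$ coming from $\theta=\theta_\psi$. But two of your ``genericity'' steps are asserted rather than proved, and these are precisely where the work lies.

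First, the claim that the eigenvalues of $\Ad(\varphi_{h\gamma}(w_0))$ on $(\mf{u}^N)^\phi$ ``generically avoid $1$'' is a non-emptiness statement: you must exhibit one $h$ for which this determinant does not vanish. Nothing you have written excludes the possibility that the determinant is identically zero on $Z_{\phi,N}^\circ$. The paper supplies this missing input by factoring $Z_{\phi,N}^\circ\gamma = t_1 Z_{\phi,N}^\circ s_1$, where $s_1$ is the image of $\theta\left(\left(\begin{smallmatrix}q^{1/2}&0\\0&q^{-1/2}\end{smallmatrix}\right)\right)$ in $\check{G}$ and $t_1$ lies in a maximal quasi-torus $T$ of $Z_H(\theta)$ in the sense of \cite{HaPaCryChe}. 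The eigenvalues of $\ad(s_1)$ on $\Lie(U_H(N))$ are contained in $\{q^{i/2}:1\le i\le n_0\}$ by Proposition~\ref{prop:Zudec}; since $t_1$ commutes with $s_1$, one may choose a power $t_0=t_1^{mm_1}\in (T^{t_1})^\circ$ so that the eigenvalues of $\ad((t_1t_0)^{-1})$ are disjoint from this finite set, making $\ad((t_1t_0)^{-1})-\ad(s_1)$ invertible on $\Lie(U_H(N))$. This is the substantive step, and it requires the $\SL_2$-structure in an essential way. Second, your appeal to ``the classical theory of twisted conjugacy'' for the toral-centralizer locus in $Z_{\phi,\theta}^\circ$ is a black box; the relevant group is disconnected and the automorphism mixes inner and outer parts, so a precise reference is needed. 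The paper instead shows via \cite[Theorem~8.9(c)]{HaPaCryChe} that the conjugation map $Z_H(\theta)^\circ\times t_1(T^{t_1})^\circ\to t_1 Z_H(\theta)^\circ$ is dominant, and then uses a dimension-count lemma (Lemma~\ref{lem:stab-dim}) to force the generic stabilizer down to $(T^{t_1})^\circ$.

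A smaller point: your claim that Frobenius semi-simplicity holds on a non-empty \emph{open} subset is too strong; even in a connected reductive group the semi-simple locus is only constructible. You correctly flag this as the main obstacle, but the paper bypasses it entirely: rather than proving density, it takes the Jordan decomposition of $\ov{\varphi}(w_0)$ inside the auxiliary group $S_H(N)$ and replaces $\gamma$ by $u_0^{-1}\gamma$ (the unipotent part $u_0$ lies in $Z_{\phi,N}^\circ$, so this stays in the same component). After this single adjustment the base point is Frobenius semi-simple, and no density argument is needed. Finally, your openness argument in the first paragraph shows only that the intersection of the minimal-dimension, reductive, and toral loci is open, not that the toral locus itself is; the paper avoids this by directly exhibiting the toral locus as containing the image of a dominant map and invoking Chevalley's theorem.
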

\begin{proof} Observe that this may be checked over $\ov{\Q}$, as the morphism $\Spec(\ov{\Q})\to\Spec(\Q)$ is surjective and universally open (see \stacks{0383}). Thus, from Theorem \ref{thm:WD-const-decomp} it suffices to show that for each $(\gamma,\phi,N)$ in $\WDP_G^K(\ov{\Q})$ corresponding to $(\varphi,N)$, one has that the set of points $x$ in $U(\gamma,\phi,N)$ such that $Z_{\wh{G}}(\varphi^\univ,N^\univ)^\circ_x$ is a torus contains a dense open subset.

Let $H$ be the normalizer of $\phi$ in $(\wh{G} \rtimes \underline{\Gamma_\ast})_{\ol{\bQ}}$. Then $H^\circ=Z_{\wh{G}}(\phi)^\circ$ which is a reductive group by Lemma \ref{lem:fixed-points-reductive} as $Z_{\widehat{G}}(\phi)$ is the same as $\widehat{G}^\Sigma$ where $\Sigma$ is $\phi(I_F/I_K)\subseteq \wh{G}(\ov{\Q})\rtimes (I_F/I_K)$ (which is finite as $I_F/I_K$ is) acting on $\widehat{G}$ by conjugation. Consider the linear algebraic $\ov{\Q}$-group $S'_H(N)$ representing the functor
\begin{equation*}
    \cat{Alg}_{\ol{\bQ}}\to \cat{Grp},\qquad A\mapsto \left\{(h,z)\in H(A)\times A^\times: \Ad(h)(N)=z^2N \right\},
\end{equation*}
which is clearly seen to be a closed subgroup scheme of $((\wh{G}\times \bb{G}_{m}) \rtimes \underline{\Gamma_\ast})_{\ov{\Q}}$ by changing the order of the components. 
Let $S_H(N)$ be the image of $S'_H(N)$ in $(\check{G} \rtimes \underline{\Gamma_\ast})_{\ov{\Q}}$. Let $s_0 u_0$ be the Jordan decomposition of $\ov{\varphi}(w_0)$ in $S_H(N)$. 
Then the image of $u_0$ in $\mathbb{G}_{m,\ol{\bQ}}$ is trivial. Hence $u_0$ is an element of $Z_{\phi,N}^{\circ}$. Replacing $\gamma$ by $u_0^{-1} \gamma$, we may assume that $\varphi$ is Frobenius semi-simple from the beginning.

Let $\psi$ be an element of $\LP_G^K(\ov{\Q})$ such that $\JM(\psi)=(\varphi,N)$ and write $\theta=\theta_\psi$. Let $U_{H}(N)$ be the unipotent radical of $Z_{H}(N)$. Then, as in Proposition \ref{prop:Zudec}, we have $Z_{H}(N)=U_{H}(N)\rtimes Z_{H}(\theta)$. We take a maximal quasi-torus $T$ of $Z_{H}(\theta)$ in the sense of \cite[Definition 8.6]{HaPaCryChe}. Set $s_1$ to be the image of $\left( \theta \left( \left(\begin{smallmatrix} q^{1/2} & 0 \\ 0 & q^{-1/2} \end{smallmatrix}\right)\right),q^{1/2}\right)$ in $\check{G}(\ov{\Q})$. Then $Z_{\phi,N}^{\circ} \gamma s_1^{-1} \subseteq Z_H(N)$. Note that $Z_{\phi,N}^{\circ} \gamma s_1^{-1}$ is a connected component of $Z_H(N)$ since $Z_{\phi,N}^{\circ}$ is the identity component of $Z_H(N)$. So we can write $T \cap Z_{\phi,N}^{\circ} \gamma s_1^{-1}=t_1 T^{\circ}$ for some $t_1 \in T(\ov{\Q})$ by \cite[Theorem 8.10 (d)]{HaPaCryChe}. Then, we have $Z_{\phi,N}^{\circ} \gamma=t_1 Z_{\phi,N}^{\circ} s_1$ as $Z_{\phi,N}^{\circ} \gamma s_1^{-1}$ is a connected component of $Z_H(N)$ containing $t_1$. 

We let $T^{t_1}$ be the closed subgroup scheme of $T$ of elements commuting with $t_1$. 
For $t_0$ in $(T^{t_1})^\circ(\ov{\Q})$, we consider the morphism 
\[
 \Lambda_{t_0} \colon Z_{H}(N)^{\circ} \times (T^{t_1})^\circ \to Z_{H}(N)^{\circ},\qquad (h,t) \mapsto (t_1t_0)^{-1}ht_1t_0 t s_1 h^{-1} s_1^{-1}. 
\]
This induces 
\[
 \Lie (\Lambda_{t_0}) \colon \mathrm{Lie}(Z_{H}(N)^{\circ}) \times \mathrm{Lie}((T^{t_1})^\circ) \to \mathrm{Lie}(Z_{H}(N)^{\circ}),\qquad (x,y) \mapsto \mathrm{ad}((t_1t_0)^{-1})x + y - \mathrm{ad}(s_1)x. 
\] 
This is identified with the direct sum of 
\begin{align*}
  &\Lie (\Lambda_{t_0})_1 \colon \mathrm{Lie}(Z_{H}(\theta)^{\circ}) \times \mathrm{Lie}((T^{t_1})^\circ) \to \mathrm{Lie}(Z_{H}(\theta)^{\circ}),\qquad (x,y) \mapsto \mathrm{ad}((t_1t_0)^{-1})x + y - x,\\
  &\Lie (\Lambda_{t_0})_2 \colon \mathrm{Lie}(U_{H}(N)^{\circ}) \to \mathrm{Lie}(U_{H}(N)^{\circ}),\qquad z \mapsto \mathrm{ad}((t_1t_0)^{-1})z  - \mathrm{ad}(s_1)z. 
\end{align*}
In the proof of \cite[Theorem 8.9 (c)]{HaPaCryChe}, it is shown that the morphism 
\begin{equation*}
    Z_{H}(\theta)^{\circ} \times t_1 (T^{t_1})^\circ \to t_1 Z_{H}(\theta)^{\circ},\qquad (g,t) \mapsto g t g^{-1}
\end{equation*} 
is dominant. Therefore, by Lemma \ref{lem:stab-dim} and the fact that $(T^{t_1})^\circ \subseteq Z_{Z_{H}(\theta)^{\circ}}(t_1t_0)^{\circ}$ for any $t_0 \in (T^{t_1})^\circ$, there is an open dense subset $U_{t_1,1} \subseteq (T^{t_1})^\circ$ such that $Z_{Z_{H}(\theta)^{\circ}}(t_1t_0)^{\circ}=(T^{t_1})^\circ$ for $t_0 \in U_{t_1,1}$. This implies that $\Lie (\Lambda_{t_0})_1$ is surjective for $t_0 \in U_{t_1,1}$. 

The eigenvalues of the diagonalizable $\mathrm{ad}(s_1)$ on $\mathrm{Lie}(U_{H}(N))$ are contained in $\{ q^{i/2} \}_{1 \leq i \leq n_0}$ for some $n_0$ by Proposition \ref{prop:Zudec}. Let $m_1$ be the order of $t_1$ in $\pi_0(T)$. 
Then there is a positive integer $m$ such that the eigenvalues of the diagonalizable  $\mathrm{ad}(t_1^{-1-mm_1})$ on $\mathrm{Lie}(U_{H}(N))$ are disjoint from $\{ q^{i/2} \}_{1 \leq i \leq n_0}$. Since $t_1^{-1-mm_1}$ and $s_1$ are commutative, $\mathrm{ad}(t_1^{-1-mm_1})$ and $\mathrm{ad}(s_1)$ are simultaneously diagonalizable. Hence we have the surjectivity of $\Lie (\Lambda_{t_1^{mm_1}})_2$. 
Since the surjectivity of $\Lie (\Lambda_{t_0})_2$ defines an open subset on $(T^{t_1})^\circ$, which we now know is  non-empty, there is an open dense subset $U_{t_1,2} \subseteq (T^{t_1})^\circ$ such that 
$\Lie (\Lambda_{t_0})_2$ is surjective for $t_0 \in U_{t_1,2}$. 

We put $U_{t_1}=U_{t_1,1} \cap U_{t_1,2}$. Then, 
for $t_0 \in U_{t_1}$, 
the map $\Lie (\Lambda_{t_0})$ is surjective, hence $\Lambda_{t_0}$ is dominant. 
This implies that 
\begin{equation*}
    Z_{H}(N)^{\circ} \times t_1 (T^{t_1})^\circ s_1 \to t_1 Z_{H}(N)^{\circ} s_1,\qquad  (g,t) \mapsto gtg^{-1}
\end{equation*}is dominant. 
Further, for $t_0 \in U_{t_1}$, 
the surjectivity of $\Lie (\Lambda_{t_0})$ 
implies that the kernel of
\begin{equation*}
    \mathrm{Lie}(Z_{H}(N)^{\circ}) \to \mathrm{Lie}(Z_{H}(N)^{\circ}),\qquad  x \mapsto \mathrm{ad}((t_1t_0)^{-1})x  - \mathrm{ad}(s_1)x
\end{equation*} 
is equal to $\mathrm{Lie}((T^{t_1})^\circ)$. This means that for $t_0 \in U_{t_1}$, we have $Z_{Z_{H}(N)} (t_1 t_0 s_1)^{\circ} =(T^{t_1})^\circ$. So we have toral centralizer for all points in the image of the dominant map
\begin{equation*}
    Z_{H}(N)^{\circ} \times t_1 U_{t_1} s_1 \to t_1 Z_{H}(N)^{\circ} s_1,\qquad (g,t) \mapsto gtg^{-1}, 
\end{equation*}
whose target is equal to $Z_{\phi,N}^{\circ} \gamma$, 
and so the conclusion follows from Chevalley's theorem (see \cite[Theorem 10.19]{GortzWedhorn}).
\end{proof}

From this, together with Proposition \ref{prop:sqcup-to-square-bir} and Lemma \ref{lem:stratification-isom}, we deduce that the two maps $\WDP_G^{K,\mathrm{rc}}\to \WDP_G^{K,\sqcup}$ and $\WDP_G^{K,\mathrm{rc}}\to \WDP_G^{K}$ are weakly birational. To connect this discussion to the Jacobson--Morozov map, we now show that $\JM$ is an isomorphism over $\WDP_G^{K,\mathrm{rc}}$.

\begin{prop}\label{prop:JM-isom-over-red-locus} The morphism $\JM\colon \JM^{-1}(\WDP_G^{K,\mathrm{rc}})\to \WDP_G^{K,\mathrm{rc}}$ is an isomorphism.
\end{prop}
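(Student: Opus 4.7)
The plan is to show that $\JM\colon T \to S$, where $T \defeq \JM^{-1}(\WDP_G^{K,\mathrm{rc}})$ and $S \defeq \WDP_G^{K,\mathrm{rc}}$, is bijective on $A$-points for every $\bQ$-algebra $A$, so that it is an isomorphism of representable schemes. By Theorem \ref{thm:rel-JM-param} (surjectivity of $\JM$ on the Frobenius semi-simple locus) together with Proposition \ref{prop:red-cent-constant-N} (which places $S$ inside $\WDP_G^{K,\sqcup,\mathrm{ss}}$), for any $(\varphi,N) \in S(A)$ an $L$-parameter $\psi$ with $\JM(\psi) = (\varphi,N)$ exists \'etale-locally on $A$. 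Using the relative Levi decomposition of Proposition \ref{prop:Zphidesc-rel}, any two such \'etale-local lifts differ by conjugation by an element of $U^N(\varphi)$, so the entire argument reduces to proving the following key claim: for every $(\varphi,N) \in S(A)$, the group $A$-scheme $U^N(\varphi)$ equals $\Spec(A)$.

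To prove this claim, I would work \'etale-locally on $A$, choose $\psi$ with $\JM(\psi)=(\varphi,N)$, and exploit the semi-direct product $Z_{\wh{G}}(\varphi,N) = U^N(\varphi) \rtimes Z_{\wh{G}}(\psi)$. The definition of $\WDP_G^{K,\mathrm{rc}}$ guarantees that $Z_{\wh{G}}(\varphi,N)^\circ$ is representable and reductive over $A$, and Proposition \ref{prop:JM-preserves-ss} ensures that $\psi$ is Frobenius semi-simple. An argument parallel to Proposition \ref{prop:red-cent-ss}, combined with the classical fact that the centralizer of a Frobenius semi-simple $L$-parameter has reductive identity component (as used in \cite[Proposition 3.2]{SilbergerZink}), should show that $Z_{\wh{G}}(\psi)^\circ$ is likewise representable and reductive over $A$. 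Proposition \ref{prop:red-cent-equiv}, applied fiberwise, yields the equality $Z_{\wh{G}}(\psi) = Z_{\wh{G}}(\varphi,N)$ on every geometric fiber, and a standard fibral criterion for closed immersions of smooth group schemes then promotes this to the scheme-theoretic equality $Z_{\wh{G}}(\psi)^\circ = Z_{\wh{G}}(\varphi,N)^\circ$. By the Levi decomposition this forces $U^N(\varphi)^\circ = 1$. Finally, under the exponential identification of Proposition \ref{prop:exp-omnibus}, the scheme $U^N(\varphi)$ corresponds to the closed $\Ad(\varphi)$-invariants subscheme of the vector $A$-scheme $\mf{u}^N$, which is connected because it is cut out by linear equations from a vector scheme, so the triviality of its identity component upgrades to triviality of the whole group scheme.

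Granted the key claim, the \'etale-local $\psi$'s satisfy the cocycle condition trivially and descend to a global $\psi$ over $A$, giving both surjectivity and injectivity of $\JM$ on every $A$-point, hence the claimed isomorphism of schemes. The main obstacle will be establishing the representability and reductivity of $Z_{\wh{G}}(\psi)^\circ$ over a general $A$ in the relative $L$-parameter setting, and correctly promoting the fiberwise centralizer equality to a scheme-theoretic equality of smooth reductive group schemes; the former parallels Proposition \ref{prop:red-cent-ss} but must be carried out for morphisms from the twisted Langlands group $\mc{L}^\tw_F$ rather than from $\mc{W}_F$, and the latter requires a careful application of the fibral isomorphism criterion to a closed immersion of smooth $A$-group schemes of equal fiber dimension.
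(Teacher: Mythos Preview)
Your overall reduction to the triviality of $U^N(\varphi)$ matches the paper's, but your route to that triviality is needlessly indirect and introduces exactly the obstacle you flag. The paper never establishes that $Z_{\wh{G}}(\psi)^\circ$ is reductive over $A$; instead it argues directly on fibers: for each $x\in\Spec(A)$ the group $U^N(\varphi)_x$ is unipotent over a characteristic-zero field, hence connected, hence contained in $Z_{\wh{G}}(\varphi,N)^\circ_x$, and it is normal there by the semi-direct product of Proposition~\ref{prop:Zphidesc-rel}. Since $Z_{\wh{G}}(\varphi,N)^\circ_x$ is reductive by the definition of $\WDP_G^{K,\mathrm{rc}}$, any normal unipotent subgroup is trivial. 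One then applies the fiberwise isomorphism criterion (\cite[Lemma~B.3.1]{ConRgrsch}) to the identity section of $U^N(\varphi)$ to conclude $U^N(\varphi)=\Spec(A)$. This two-line argument dispenses with your proposed Steps~A--C entirely; in particular there is no need to prove an $L$-parameter analogue of Proposition~\ref{prop:red-cent-ss}, nor to invoke Proposition~\ref{prop:red-cent-equiv} fiberwise.

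The paper also packages the bijectivity on $A$-points differently. Rather than constructing \'etale-local lifts and descending them, it uses the $\wh{G}(A)$-equivariance of $\JM$: a $\wh{G}(A)$-equivariant map is bijective on $A$-points as soon as it is bijective on $\wh{G}(A)$-orbits and preserves stabilizers. The orbit bijection is exactly Theorem~\ref{thm:rel-JM-param} once Proposition~\ref{prop:red-cent-constant-N} places $\WDP_G^{K,\mathrm{rc}}(A)$ inside $\WDP_G^{K,\sqcup,\ss}(A)$, and the stabilizer equality $Z_{\wh{G}}(\psi)=Z_{\wh{G}}(\varphi,N)$ is precisely the triviality of $U^N(\varphi)$. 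This formulation avoids the explicit descent step at the end of your proposal.
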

\begin{proof} Let $A$ be a $\Q$-algebra. As $\JM$ is $\wh{G}(A)$-equivariant, to show that this map is a bijection on $A$-points it suffices to prove that the map on $A$-points is a bijection upon quotienting both sides by $\wh{G}(A)$, and that for all $\psi$ in $\JM^{-1}(\WDP_G^{K,\mathrm{rc}}(A))$ the equality $Z_{\wh{G}}(\psi)=Z_{\wh{G}}(\varphi,N)$ holds where $(\varphi,N)=\JM(\psi)$. For the bijectivity on quotient sets, it suffices by Theorem \ref{thm:rel-JM-param} to show that every element of $\WDP_G^{K,\mathrm{rc}}(A)$ belongs to $\WDP_G^{K,\sqcup,\ss}(A)$. But, this follows from Proposition \ref{prop:red-cent-constant-N}. Suppose now that $\psi$ is an element of $\JM^{-1}(\WDP_G^{K,\mathrm{rc}}(A))$. To show that $Z_{\wh{G}}(\psi)=Z_{\wh{G}}(\varphi,N)$ it suffices by Proposition \ref{prop:Zphidesc-rel} to show that $U^N(\varphi)$ is trivial. Applying the fiberwise criterion for isomorphism (see \cite[Lemma B.3.1]{ConRgrsch}) to identity section of $U^N(\varphi)$ it suffices to show that $U^N(\varphi)_x$ is trivial for all $x$ in $\Spec(A)$. But, as $U^N(\varphi)_x$ is unipotent it is contained in $Z(\varphi,N)^\circ_x$, and as it is also normal it must be trivial by our assumption that $Z(\varphi,N)^\circ_x$ is reductive.
\end{proof}

We deduce that $\WDP_G^{K,\mathrm{rc}}$ also admits a weakly birational monomorphism to $\LP_G^K$. So, we now come to our main geometric result concerning the Jacobson--Morozov morphism.

\begin{thm}\label{thm:JM-omnibus} The morphism $\JM\colon \LP_G^K\to \WDP_G^{K,\sqcup}$ (resp. $\JM\colon \LP_G^K\to \WDP_G^K$) is birational (resp.\@ weakly birational).
\end{thm}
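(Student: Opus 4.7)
The plan is to deduce both assertions from Proposition~\ref{prop:JM-isom-over-red-locus} combined with the density result Proposition~\ref{prop:dense-tor-cent}, supplemented by Proposition~\ref{prop:sqcup-to-square-bir} for the second assertion, and then to upgrade the weakly birational statement to birational by comparing connected components combinatorially via the explicit descriptions from Section~5.

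First I would choose an open dense subset $U$ of $\WDP_G^{K,\sqcup}$ on which $Z_{\wh{G}}(\varphi^\univ,N^\univ)^\circ$ is a torus (available by Proposition~\ref{prop:dense-tor-cent}). Since tori are reductive, $U$ lies in the reductive centralizer locus $\WDP_G^{K,\mathrm{rc}}$, so Proposition~\ref{prop:JM-isom-over-red-locus} gives that $\JM^{-1}(U)\to U$ is an isomorphism; this already proves that $\JM\colon \LP_G^K\to\WDP_G^{K,\sqcup}$ is weakly birational. For the map to $\WDP_G^K$, I would combine this with Proposition~\ref{prop:sqcup-to-square-bir}: there is a dense open $W\subseteq \WDP_G^K$ over which $\WDP_G^{K,\sqcup}\to \WDP_G^K$ is an isomorphism, so intersecting the image of $U$ with $W$ produces a dense open of $\WDP_G^K$ over which $\JM$ is an isomorphism.

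It remains to promote the first statement to birationality, i.e.\@ to check that $\JM$ induces a bijection on irreducible components. By smoothness of both spaces (Corollary~\ref{cor:LP-pi0} and Corollary~\ref{cor:WDP-pi0}), irreducible components coincide with connected components, which Theorem~\ref{thm:L-const-decomp} and Theorem~\ref{thm:WD-const-decomp} parameterize as pairs $([(\gamma,\phi,\theta)],[c])$ and $([(\gamma,\phi,N)],[c])$; the Jacobson--Morozov morphism sends $(\gamma,\phi,\theta)$ to $(\gamma,\phi,d\theta(e_0))$. For the $[c]$-coordinate, Proposition~\ref{prop:Zphidesc-rel} gives $Z_{\phi,N}=U^N(\varphi)\rtimes Z_{\phi,\theta}$, and since $U^N(\varphi)$ is a unipotent algebraic group in characteristic zero hence connected, one gets $\pi_0(Z_{\phi,N})\cong \pi_0(Z_{\phi,\theta})$ equivariantly for the $\gamma$-conjugation, so the induced map of $[c]$-sets is a bijection. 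For the bijection on equivalence classes of triples, I would apply the classical Jacobson--Morozov theorem (Theorem~\ref{thm:JM-classical}) inside the reductive group $Z_{\wh{G}}(\phi)^\circ$ (reductive by Lemma~\ref{lem:fixed-points-reductive}), where both $\theta$ and $N$ naturally take values.

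The main obstacle I foresee is the injectivity of the map $[(\gamma,\phi,\theta)]\mapsto [(\gamma,\phi,N)]$: starting with two $L$-parameters whose Jacobson--Morozov images are equivalent, one may assume after a $\wh{G}$-conjugation that the $\phi$- and $N$-components coincide and $\gamma'=h\gamma$ for some $h\in Z_{\phi,N}$; decomposing $h=uz$ via Proposition~\ref{prop:Zphidesc-rel}, one must absorb the unipotent factor $u$ into a $\wh{G}$-conjugation compatible with the triple structure and simultaneously align the two $\SL_2$-factors $\theta,\theta'$. This alignment should ultimately follow from the uniqueness clause of Proposition~\ref{prop:gr-prop} applied within $Z_{\wh{G}}(\phi)^\circ$, but the bookkeeping to ensure that the resulting conjugator preserves the $\gamma$-relation up to multiplication by an element of $Z_{\phi,\theta}$ is where the real work lies.
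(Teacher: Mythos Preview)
Your overall strategy matches the paper's proof closely: weak birationality via the reductive centralizer locus (Propositions~\ref{prop:dense-tor-cent}, \ref{prop:JM-isom-over-red-locus}, \ref{prop:sqcup-to-square-bir}), then upgrading to birationality by matching connected components through Theorems~\ref{thm:L-const-decomp} and~\ref{thm:WD-const-decomp}, with the $[c]$-coordinate handled by the semi-direct product decomposition and the injectivity on triple-classes handled by the uniqueness clause of Proposition~\ref{prop:gr-prop}. These are exactly the tools the paper uses, and your diagnosis of where the real work lies (the $\gamma$-bookkeeping for injectivity) is accurate.

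There is, however, a gap in the surjectivity direction for the map $[(\gamma,\phi,\theta)]\mapsto[(\gamma,\phi,N)]$. Invoking the classical Jacobson--Morozov theorem inside $Z_{\wh{G}}(\phi)^\circ$ only produces some $\theta$ with $d\theta(e_0)=N$ and $\phi$ centralizing $\theta$; it does \emph{not} ensure condition~(4), the compatibility $\Int(\gamma,w_0)\circ\theta=\theta\circ\Int\bigl(w_0,\bigl(\begin{smallmatrix}q&0\\0&1\end{smallmatrix}\bigr)\bigr)$, and indeed Example~\ref{eg:JM-not-surj} shows that a given $(\gamma,\phi,N)$ need not lift to an $L$-parameter with the same $\gamma$. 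The paper resolves this by first replacing $(\gamma,\phi,N)$ within its equivalence class by a Frobenius semi-simple representative (such a representative exists because the toral-centralizer locus of Proposition~\ref{prop:dense-tor-cent} meets every component, and Proposition~\ref{prop:red-cent-ss} forces Frobenius semi-simplicity there), and then invoking the surjectivity in Theorem~\ref{thm:rel-JM-param}, which ultimately rests on the \emph{existence} clause of Proposition~\ref{prop:gr-prop} rather than its uniqueness clause. You should incorporate this step; once you do, your argument and the paper's coincide.
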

\begin{proof} The weak birationality of both maps is clear from the above discussion, and therefore it suffices to show that the map $\JM\colon \LP_G^K\to \WDP_G^{K,\sqcup}$ induces a bijection on irreducible components. It clearly suffices to check this after base changing to $\ov{\Q}$. By Theorem \ref{thm:WD-const-decomp} and Theorem \ref{thm:L-const-decomp} the connected components of $\LP_{G,\ov{\Q}}^K$ and $\WDP_{G,\ov{\Q}}^{K,\sqcup}$ are irreducible, so it suffices to show that the map $\JM\colon \pi_0(\LP_{G,\ov{\Q}}^K)\to \pi_0(\WDP_{G,\ov{\Q}}^{K,\sqcup})$ is bijective. 

To do this we first show that the Jacobson--Morozov map induces a bijection $[\mathsf{LP}_G^K(\overline{\mathbb{Q}})]\to [\mathsf{WDP}_G^K(\overline{\mathbb{Q}})]$. By Proposition \ref{prop:dense-tor-cent} and Proposition \ref{prop:red-cent-ss} every equivalence class of the target contains a Frobenius semi-simple element and thus surjectivity follows from Theorem \ref{thm:rel-JM-param}. To show injectivity suppose that $(\gamma_i,\phi_i,\theta_i)$ for $i=1,2$ are elements of $\mathsf{LP}_G^K(\overline{\mathbb{Q}})$ such that $(\gamma_i,\phi_i,N_i)$ are equivalent in $\WDP_G^K(\ov{\Q})$. Without loss of generality, we may assume that $\phi_1=\phi_2=:\phi$ and $N_1=N_2=:N$ and that $\gamma_2=h\gamma_1$ with $h$ in $Z_{\phi,N}(\overline{\mathbb{Q}})$. By  Proposition \ref{prop:gr-prop} there exists $z$ in $Z_{\phi,N}(\ov{\Q})$ such that $z\theta_1 z^{-1}=\theta_2$. Note then that $(\gamma_2,\phi,\theta_2)=z(s\gamma_1,\phi,\theta_1)z^{-1}$ where $s=z^{-1}\gamma_2z\gamma_1^{-1}$. Writing $s=z^{-1}h\gamma_1 z\gamma_1^{-1}$ one sees from the fact that $z^{-1}$ and $h$ both centralize $\phi$ and $\gamma_1$ normalizes $\phi$ that $s$ centralizes $\phi$. On the other hand, one can just as easily check that as $\gamma_1$ centralizes $\theta_1$ and $\gamma_2$ centralizes $\theta_2$ that $s=z^{-1}\gamma_2z\gamma_1^{-1}$ also centralizes $\theta_1$. Therefore as $(\gamma_2,\phi,\theta_2)=z(s\gamma_1,\phi,\theta_1)z^{-1}$ we deduce that $(\gamma_2,\phi,\theta_2)$ and $(\gamma_1,\phi,\theta_1)$ are equivalent in $\mathsf{LP}_G^K(\overline{\mathbb{Q}})$ as desired. 

But, for $(\gamma,\phi,\theta)$ with image $(\gamma',\phi,N)$ under the Jacobson--Morozov map, one has $\pi_0(Z_{\phi,N})=\pi_0(Z_{\theta,N})$ as follows quickly from Proposition \ref{prop:Zphidesc-rel}. These observations together with Corollary \ref{cor:WDP-pi0} and Corollary \ref{cor:LP-pi0} give the desired conclusion.
\end{proof}

Let us finally note that as a possibly useful corollary of the above results, we also obtain the density of Frobenius semi-simple parameters in all three of these moduli spaces.

\begin{cor} The subsets 
\begin{equation*}
    \LP_G^\ss(\ov{\Q})\subseteq \LP_G,\qquad \WDP_G^{\sqcup,\ss}(\ov{\Q}) \subseteq \WDP_G^{\sqcup},\qquad \WDP_G^\ss(\ov{\Q})\subseteq \WDP_G
\end{equation*}
are dense.
\end{cor}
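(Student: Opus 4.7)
The strategy is to establish density in the middle space $\WDP_G^\sqcup$ first, using the key input of Proposition~\ref{prop:dense-tor-cent}, and then transfer this to $\LP_G$ via the birationality of $\JM$ (Theorem~\ref{thm:JM-omnibus}) and to $\WDP_G$ via the weak birationality of $\WDP_G^\sqcup\to\WDP_G$ (Proposition~\ref{prop:sqcup-to-square-bir}). First I would reduce to finite level: write $\WDP_G=\bigsqcup_K\WDP_G^K$ (and similarly in the $\sqcup$ and $\LP$ variants) as a disjoint union of clopen finite type $\Q$-subschemes indexed by finite extensions $K/F^\ast$ Galois over $F$. Density on the whole then reduces to density on each piece, and on a finite type $\Q$-scheme the $\ov{\Q}$-points are Zariski dense in every non-empty open subscheme. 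Thus it suffices to exhibit in each case a dense open subscheme whose $\ov{\Q}$-points are all Frobenius semi-simple.

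For $\WDP_G^\sqcup$, Proposition~\ref{prop:dense-tor-cent} furnishes a dense open $U\subseteq\WDP_G^{K,\sqcup}$ on which $Z_{\wh{G}}(\varphi^\univ,N^\univ)^\circ_x$ is a torus at every point $x$. Since $\WDP_G^{K,\sqcup}$ is smooth (Corollary~\ref{cor:WDP-pi0}), hence reduced, and a torus is reductive, Proposition~\ref{prop:red-cent-ss} applies to any morphism $\Spec(A)\to U$ from a reduced $\Q$-algebra $A$; in particular, every $\ov{\Q}$-point of $U$ gives a Frobenius semi-simple Weil--Deligne parameter. This gives $U(\ov{\Q})\subseteq \WDP_G^{K,\sqcup,\ss}(\ov{\Q})$ and hence the desired density.

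For $\LP_G$, Theorem~\ref{thm:JM-omnibus} provides a dense open $V\subseteq\WDP_G^{K,\sqcup}$ on which $\JM$ restricts to an isomorphism $\JM^{-1}(V)\isomto V$. Shrinking, we arrange $V\subseteq U$ (an intersection of two dense opens in the smooth scheme $\WDP_G^{K,\sqcup}$, which remains dense open since each dense open meets every irreducible component densely). Then $\JM^{-1}(V)$ is a dense open of $\LP_G^K$, and its $\ov{\Q}$-points are Frobenius semi-simple by Proposition~\ref{prop:JM-preserves-ss}. For $\WDP_G$, Proposition~\ref{prop:sqcup-to-square-bir} gives a dense open $W\subseteq\WDP_G^K$ and a dense open $W^\sqcup\subseteq \WDP_G^{K,\sqcup}$ with $W^\sqcup\isomto W$; again shrinking so that $W^\sqcup\subseteq U$, we obtain a dense open $W$ of $\WDP_G^K$ whose $\ov{\Q}$-points are Frobenius semi-simple (the notion being insensitive to the $\sqcup$ decoration since it only involves $\varphi$).

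The proof is essentially bookkeeping: the genuine substance lives in Proposition~\ref{prop:dense-tor-cent} (density of the toral centralizer locus), Proposition~\ref{prop:red-cent-ss} (reductive centralizer forces Frobenius semi-simplicity on reduced bases), Theorem~\ref{thm:JM-omnibus} (birationality of $\JM$), and Proposition~\ref{prop:JM-preserves-ss} (preservation of Frobenius semi-simplicity by $\JM$). The main point requiring minor care is ensuring that intersections of dense opens in the possibly reducible but smooth $\WDP_G^{K,\sqcup}$ remain dense—an issue handled by the component-wise remark above.
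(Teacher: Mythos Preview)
Your proof is correct and follows essentially the same approach the paper intends: the corollary is stated without proof as an immediate consequence of Proposition~\ref{prop:dense-tor-cent}, Proposition~\ref{prop:red-cent-ss}, Proposition~\ref{prop:JM-isom-over-red-locus}, and Theorem~\ref{thm:JM-omnibus}. One minor point: your claim that Proposition~\ref{prop:red-cent-ss} applies to arbitrary reduced $A$ mapping into $U$ is slightly imprecise since that proposition also requires constant fiber dimension of the centralizer, but this is automatic for your actual application to $\ov{\Q}$-points.
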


\subsection{Isomorphism over the discrete locus}

In this final section we apply the material to give a geometric analogue of Corollary \ref{cor:bij-et-disc} or, in other words, we show that the Jacobson--Morozov morphism is an isomorphism over the discrete loci in $\LP_G^K$ and $\WDP_G^K$.

We have defined the discrete locus $\WDP_G^{K,\disc}$ in Definition \ref{defn:locus-of-red}, and we now do so for $\LP_G^K$.

\begin{defn}\label{defn:disc-locus-L} Let $\psi^\univ$ be the universal $L$-parameter over $\LP_G^K$. Then, the \emph{discrete locus} in $\LP_G^K$ is the subset
\begin{equation*}
   \LP^{K,\disc}_G\defeq \left\{x\in \LP_G^K: Z_{\widehat{G}}(\psi^\univ)_x/Z_0(\wh{G})_x\to \Spec(k(x))\text{ is finite}\right\}.
\end{equation*}
\end{defn}

The same argument as in the proof of Proposition \ref{prop:red-equi-dim-loc-closed} shows that $\LP_G^{K,\disc}$ is an open subset of $\LP_G^K$ and we endow it with the open subscheme structure. The following relates the discrete loci in $\WDP_G^K$ and $\LP_G^K$, giving a geometrization of Corollary \ref{cor:bij-et-disc}.

\begin{prop} The equality $\JM^{-1}(\WDP_G^{K,\disc})=\LP_G^{K,\disc}$ holds.
\end{prop}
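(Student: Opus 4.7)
The plan is to prove the equality of underlying topological spaces; both sides are open subschemes of $\LP_G^K$ (the left-hand side tautologically, the right-hand side by the semicontinuity argument noted just after Definition \ref{defn:disc-locus-L}), so set-theoretic equality will force equality as subschemes. Fix a point $x$ of $\LP_G^K$, let $\psi \in \LP_G(k(x))$ be the corresponding $L$-parameter, and write $(\varphi,N) = \JM(\psi)$. The key structural input is Proposition \ref{prop:Zphidesc-rel}, which on the fiber at $x$ gives a semi-direct product of algebraic $k(x)$-groups
\begin{equation*}
  Z_{\wh G}(\varphi,N)_x = U^N(\varphi)_x \rtimes Z_{\wh G}(\psi)_x.
\end{equation*}
Since $U^N(\varphi)_x$ is a closed subgroup scheme of the unipotent group $U^N_x$ in characteristic zero, it is smooth and connected, and as $Z_0(\wh G)_x$ is contained in both centralizers, adding dimensions yields
\begin{equation*}
  \dim\bigl(Z_{\wh G}(\varphi, N)_x / Z_0(\wh G)_x\bigr) = \dim U^N(\varphi)_x + \dim\bigl(Z_{\wh G}(\psi)_x / Z_0(\wh G)_x\bigr).
\end{equation*}

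Both summands on the right are non-negative, so $(\varphi, N)$ is discrete if and only if $\psi$ is discrete and $U^N(\varphi)_x$ is trivial. The inclusion $\JM^{-1}(\WDP_G^{K,\disc}) \subseteq \LP_G^{K,\disc}$ is therefore immediate. For the reverse inclusion we must show that discreteness of $\psi$ forces $U^N(\varphi)_x$ to be trivial, and for this we reduce to the classical setting. Since $\LP_G^K$ is of finite type over $\Q$ by Proposition \ref{prop:L-finite-level-rep}, the residue field $k(x)$ is finitely generated over $\Q$ and therefore embeds into $\C$; dimensions are preserved under this base change, so $\psi_\C$ is again discrete. Under the isomorphism of Proposition \ref{L-C-L-comparison}, $\psi_\C$ corresponds to a classical discrete $L$-parameter, and the compatibility of $\JM$ with this identification noted in \S\ref{ss:JM-mor} together with Proposition \ref{prop:WD-C-L-comparison} identifies $(\varphi, N)_\C$ with the image of this $L$-parameter under the classical Jacobson--Morozov map. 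Corollary \ref{cor:bij-et-disc} then yields that $(\varphi, N)_\C$ is classically discrete, and a final invocation of dimension invariance descends this to give discreteness of $(\varphi, N)$ over $k(x)$, as required.

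The main obstacle in this argument is the forward direction: its direct (Lie-algebraic) proof in the classical case (Proposition \ref{prop:temp-cent-equal}) relies on a compactness argument specific to $\C$ that does not carry over to an abstract base field. The Lefschetz-style reduction to $\C$ circumvents this difficulty, provided one carefully tracks the comparison isomorphisms between the relative and classical formalisms for $L$- and Weil--Deligne parameters.
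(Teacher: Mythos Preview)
Your proof is correct and follows essentially the same strategy as the paper: both reduce to the classical complex setting via an embedding of residue fields into $\C$ and then invoke the results of \S\ref{ss:red-loc-classical}. The paper's version is slightly terser---it checks only $\ov{\Q}$-points and cites Proposition \ref{prop:temp-cent-equal} and its proof for both directions---whereas you work at an arbitrary scheme-theoretic point and extract the inclusion $\JM^{-1}(\WDP_G^{K,\disc}) \subseteq \LP_G^{K,\disc}$ directly from the dimension formula coming from Proposition \ref{prop:Zphidesc-rel}, without needing to pass to $\C$ for that direction; this is a mild but genuine simplification.
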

\begin{proof} As these are both open subsets of the finite type affine $\Q$-scheme $\LP_G^K$, it suffices to show that they have the same $\ov{\Q}$-points. In other words, we must show that for an element $\LP_G^K(\ov{\Q})$ one has that $Z_{\wh{G}}(\psi)$ is finite (as a set) if and only if $Z_{\wh{G}}(\JM(\psi))$ is finite. Choosing an embedding $\ov{\Q}\to \bb{C}$ one then quickly deduces this from Proposition \ref{prop:temp-cent-equal} and its proof.
\end{proof}

From this, and Proposition \ref{prop:JM-isom-over-red-locus} we deduce the following.

\begin{thm}\label{thm:JM-isom-disc-locus} The morphism $\JM\colon \LP_G^{K,\disc}\to\WDP_G^{K,\disc}$ is an isomorphism.
\end{thm}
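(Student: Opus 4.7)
The plan is to deduce this theorem directly from the preceding proposition together with Proposition \ref{prop:JM-isom-over-red-locus}. There will be essentially no new technical work; the main task is to verify that $\WDP_G^{K,\disc}$ sits inside the reductive centralizer locus $\WDP_G^{K,\mathrm{rc}}$ and then restrict the known isomorphism.

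First, I would observe that $\WDP_G^{K,\disc} = \WDP_G^{K,0}$ is, by Definition \ref{defn:locus-of-red}, the locally closed subscheme cut out by the condition that $Z_{\wh{G}}(\varphi^{\univ},N^{\univ})^{\circ}$ has reductive identity component of dimension equal to $\dim Z_0(\wh{G})$. In particular, $\WDP_G^{K,\disc}$ is by construction an open subscheme of $\WDP_G^{K,\mathrm{rc}}$ (it is the stratum $n = 0$ in the definition of the reductive centralizer locus, and this stratum is open by Proposition \ref{prop:red-equi-dim-loc-closed}).

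Next, Proposition \ref{prop:JM-isom-over-red-locus} tells us that
\begin{equation*}
    \JM\colon \JM^{-1}\bigl(\WDP_G^{K,\mathrm{rc}}\bigr)\isomto \WDP_G^{K,\mathrm{rc}}
\end{equation*}
is an isomorphism of $\Q$-schemes. Since an isomorphism of schemes restricts to an isomorphism over any open subscheme of its target, we obtain that
\begin{equation*}
    \JM\colon \JM^{-1}\bigl(\WDP_G^{K,\disc}\bigr)\isomto \WDP_G^{K,\disc}
\end{equation*}
is also an isomorphism.

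Finally, the proposition immediately preceding the theorem statement gives the set-theoretic (and hence scheme-theoretic, since both sides are open subschemes of $\LP_G^K$) equality $\JM^{-1}(\WDP_G^{K,\disc}) = \LP_G^{K,\disc}$. Combining this with the previous display yields the desired isomorphism $\JM\colon \LP_G^{K,\disc}\isomto \WDP_G^{K,\disc}$. I do not anticipate any genuine obstacle here: the real content has already been packaged into Proposition \ref{prop:JM-isom-over-red-locus} (which required Proposition \ref{prop:Zphidesc-rel} and the fiberwise criterion for isomorphism) and into the preceding proposition identifying $\JM^{-1}(\WDP_G^{K,\disc})$ with $\LP_G^{K,\disc}$ (which in turn rested on Proposition \ref{prop:temp-cent-equal}).
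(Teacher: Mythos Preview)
Your proposal is correct and follows precisely the same approach as the paper, which simply notes that the theorem follows from the preceding proposition together with Proposition \ref{prop:JM-isom-over-red-locus}. Your write-up merely makes explicit the (trivial) observation that $\WDP_G^{K,\disc}=\WDP_G^{K,0}$ is an open piece of $\WDP_G^{K,\mathrm{rc}}$, so the isomorphism of Proposition \ref{prop:JM-isom-over-red-locus} restricts as desired.
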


\end{document}